\documentclass[a4paper,11pt]{article}
\usepackage{amsmath}
\usepackage{amsthm}
\usepackage{amssymb}
\usepackage{graphicx}
\usepackage{adjustbox}
\usepackage{mathtools}
\usepackage{bbm}
\usepackage[margin=2.5cm]{geometry}
\usepackage{setspace}
\usepackage{epstopdf}
\usepackage{subfig}
\usepackage{empheq}
\usepackage{algorithm}
\usepackage{algorithmicx}
\usepackage{algpseudocode}
\usepackage{multirow}
\usepackage{url}
\usepackage[title]{appendix}
\usepackage{enumitem}
\usepackage[svgnames]{xcolor}

\makeatletter

\theoremstyle{plain}
\newtheorem{thm}{\protect\theoremname}
  \theoremstyle{definition}
  \newtheorem{defn}[thm]{\protect\definitionname}
  \newtheorem{cond}[thm]{\protect\condname}
  \theoremstyle{plain}
  \newtheorem{lem}[thm]{\protect\lemmaname}
  \newtheorem{prop}[thm]{\protect\propname}

  \providecommand{\propname}{Proposition}
  \providecommand{\condname}{Condition}
  \providecommand{\probname}{Problem}
  
  \providecommand{\keywords}[1]
{
  \small	
  \textbf{\textit{Keywords---}} #1
}

\makeatother

  \providecommand{\definitionname}{Definition}
  \providecommand{\lemmaname}{Lemma}
  \providecommand{\corollaryname}{Corollary}
\providecommand{\theoremname}{Theorem}

\usepackage{endnotes}
\usepackage{color}
\usepackage{xparse}

\title{Biwhitening Reveals the Rank of a Count Matrix}

\author{ Boris Landa${^{1,*}}$~~~~Thomas T.C.K.\ Zhang${^{2}}$~~~~Yuval Kluger${^{1,3,4}}$\\
\small{${^1}$Program in Applied Mathematics, Yale University}\\
\small{${^2}$Department of Electrical and Systems Engineering, University of Pennsylvania}\\
\small{${^3}$Interdepartmental Program in Computational Biology and Bioinformatics, Yale University}\\
\small{${^4}$Department of Pathology, Yale University School of Medicine}\\
\small{${^*}$Corresponding author. Email: boris.landa@yale.edu}
}

\begin{document}

\maketitle

\begin{abstract}
Estimating the rank of a corrupted data matrix is an important task in data analysis, most notably for choosing the number of components in PCA. Significant progress on this task was achieved using random matrix theory by characterizing the spectral properties of large noise matrices. However, utilizing such tools is not straightforward when the data matrix consists of count random variables, e.g., Poisson, in which case the noise can be heteroskedastic with an unknown variance in each entry. In this work, we consider a Poisson random matrix with independent entries, and propose a simple procedure termed \textit{biwhitening} for estimating the rank of the underlying signal matrix (i.e., the Poisson parameter matrix) without any prior knowledge. Our approach is based on the key observation that one can scale the rows and columns of the data matrix simultaneously so that 
the spectrum of the corresponding noise agrees with the standard Marchenko-Pastur (MP) law, justifying the use of the MP upper edge as a threshold for rank selection. Importantly, the required scaling factors can be estimated directly from the observations by solving a matrix scaling problem via the Sinkhorn-Knopp algorithm. 
Aside from the Poisson, our approach is extended to families of distributions that satisfy a quadratic relation between the mean and the variance, such as the generalized Poisson, binomial, negative binomial, gamma, and many others. This quadratic relation can also account for missing entries in the data.
We conduct numerical experiments that corroborate our theoretical findings, and showcase the advantage of our approach for rank estimation in challenging regimes. Furthermore, we demonstrate the favorable performance of our approach on several real datasets of single-cell RNA sequencing (scRNA-seq), High-Throughput Chromosome Conformation Capture (Hi-C), and document topic modeling.
\end{abstract}

\keywords{rank estimation, PCA, heteroskedastic noise, Poisson noise, count data, \sloppy Marchenko-Pastur law, rank selection, matrix scaling, Sinkhorn, bi-proportional scaling, scRNA-seq, Hi-C}

\section{Introduction}
Principal Component Analysis (PCA) is a ubiquitous tool for processing and analyzing multivariate data~\cite{yao2015sample,jackson2005user}, and is widely used across multiple scientific fields for visualization, compression, denoising, and imputation. Yet, when applying PCA, one always faces the nontrivial task of setting the number of principal components that are retained for subsequent use.
To address this challenge, a popular approach is to assume the signal-plus-noise model, which serves as a guidance for selecting the number of components in PCA. Specifically, let $Y\in\mathbb{R}^{m\times n}$ be a data matrix to be analyzed by PCA, and suppose that
\begin{equation}
    Y = X + \mathcal{E}, \label{eq:Y model def}
\end{equation}
where $X$ is a signal matrix with $\operatorname{rank}\{X\} = r$, and $\mathcal{E}$ is a noise matrix with $\mathbb{E}[\mathcal{E}_{i,j}] = 0$ for all $i\in[m]$ and $j\in[n]$. 
For simplicity of presentation we also assume that $m\leq n$, noting that one can always replace $Y$ with $Y^T$ otherwise.
Given the model above, we consider the task of estimating the rank $r$ from the matrix of observations $Y$.

We mention that the literature on rank selection for PCA is vast -- spanning several decades of research across multiple disciplines; see for instance~\cite{cattell1966scree,wold1978cross,hoff2007model,owen2009bi,gavish2014optimal,chatterjee2015matrix,fan2014principal,johnstone2018pca,fan2020estimating,hong2020selecting,choi2017selecting,kritchman2008determining,johnstone2017roy} and references therein. In what follows, we only discuss lines of work that are relevant to our setting and approach.

\subsection{Homoskedastic noise} \label{sec:intorduction - homoskedastic noise}
It is well known that if the noise variables \sloppy $\{\mathcal{E}_{i,j}\}_{i\in[m],\;j\in[n]}$ are i.i.d with variance $\sigma^2$, namely, the noise is $\textit{homoskedastic}$, then in the asymptotic regime of $m,n\rightarrow \infty$ and $m/n\rightarrow \gamma \in (0,1]$, the spectrum of the noise matrix $\mathcal{E}$ is described by the well-known Marchenko-Pastur law~\cite{marvcenko1967distribution}. More precisely, letting $\Sigma = \frac{1}{n} \mathcal{E} \mathcal{E}^T$, we consider the \textit{Empirical Spectral Distribution} (ESD) of the eigenvalues of $\Sigma$, defined by 
\begin{equation}
    F_{\Sigma} (\tau)= \frac{1}{m} \sum_{i=1}^m \mathbbm{1}\left(\lambda_i \{ \Sigma \} \leq \tau \right), \label{eq:empirical spectral distribution}
\end{equation}
where $\mathbbm{1}(\cdot)$ is the indicator function, and $\lambda_i \{ \Sigma \}$ is the $i$'th largest eigenvalue of $\Sigma$. Then, as $m,n\rightarrow \infty$ and $m/n\rightarrow \gamma \in (0,1]$, the empirical spectral distribution $F_{\Sigma}(\tau)$ converges almost surely to the Marchenko-Pastur (MP) distribution $F_{\gamma,\sigma}(\tau)$~\cite{marvcenko1967distribution}, which is the cumulative distribution function of the MP density
\begin{equation}
    dF_{\gamma,\sigma}(\tau) = \frac{\sqrt{(\beta_+ - \tau)(\tau - \beta_-)}}{2\pi \sigma^2 \gamma \tau} \mathbbm{1}\left( \beta_- \leq \tau \leq \beta_+\right), \label{eq:MP density}
\end{equation}
where $\beta_{\pm} = \sigma^2 (1\pm \sqrt{\gamma})^2$. 
A particular quantity of interest is the upper edge of the support of the MP density $\beta_+$ (also known as the upper bulk edge), primarily because the density of the eigenvalues beyond that point is $0$.  
In fact, for many standard distributions for the noise entries $\mathcal{E}_{i,j}$~\cite{geman1980limit,yin1988limit}, the spectrum of $\Sigma$ satisfies the stronger property
\begin{equation}
    \lambda_1\{\Sigma\} \overset{p}{\longrightarrow} \beta_+, \label{eq:largest eigenvalue of noise limit in homoskedastic noise}
\end{equation}
where $\overset{p}{\longrightarrow}$ stands for convergence in probability (in the asymptotic regime $m,n\rightarrow \infty$, $m/n\rightarrow \gamma \in (0,1]$).

We note that the spectral properties of $\Sigma$ mentioned above are not restricted to the case of i.i.d noise variables~\cite{bai2010spectral}, and also hold for noise matrices $\mathcal{E}$ whose rows or columns were sampled independently from a random vector with mean zero and covariance $\sigma^2 I$ (where $I$ is the identity matrix). 

In the case of homoskedastic noise, 
a natural approach for estimating the rank $r$ is by the number of eigenvalues of $n^{-1} Y Y^T$ that exceed the threshold $\sigma^2 (1+\sqrt{m/n})^2$, or equivalently, the number of singular values of $Y$ that exceed $\sigma(\sqrt{m} + \sqrt{n})$. 
This approach has been extensively studied in the context of the spiked-model~\cite{baik2005phase,baik2006eigenvalues,benaych2011eigenvalues,nadler2008finite,paul2007asymptotics}, which states that in the asymptotic regime of $m,n\rightarrow \infty$, $m/n\rightarrow \gamma \in (0,1]$, and if the rank $r$ and the singular values of $X$ are fixed, each eigenvalue of $n^{-1} Y Y^T$ that is greater than $\beta_+$ corresponds to a nonzero eigenvalue of $n^{-1} X X^T$ (through a deterministic mapping), and the respective eigenvectors admit a nonzero correlation.

\subsection{Heteroskedastic noise and count data}
In many real-world applications the noise is not homoskedastic, and the noise variances can change arbitrarily across rows and columns. One notable example is count data, common in domains such as network traffic analysis~\cite{shen2005analysis}, photon imaging~\cite{salmon2014poisson}, document topic modeling~\cite{wallach2006topic}, Single-Cell RNA Sequencing (scRNA-seq)~\cite{hafemeister2019normalization}, and High-Throughput Chromosome Conformation Capture (Hi-C)~\cite{johanson2018genome}, among many others (typically in the biological sciences). Specifically, let us consider a prototypical model where $\{Y_{i,j}\}_{i\in[m],\;j\in[n]}$ are independent with
\begin{equation}
    Y_{i,j} {\sim}\operatorname{Poisson}(X_{i,j}), \label{eq:Y poisson model}
\end{equation}
and $\{X_{i,j}\}$ are Poisson parameters (rates) satisfying $X_{i,j}>0$.
Since $\mathbb{E}[Y_{i,j}] = X_{i,j}$, the noise entries $\mathcal{E}_{i,j}$ from~\eqref{eq:Y model def} are centered Poisson with variances $\mathbb{E}[\mathcal{E}_{i,j}^2] = X_{i,j}$. 

Clearly, in the Poisson model~\eqref{eq:Y poisson model} the noise variances $X_{i,j}$ can differ substantially, making the noise heteroskedastic in the most general sense. 
In this case, the MP law is not expected to hold, and the spectral distribution of the noise is determined by nonlinear equations known as the Dyson equations (see eq. (1.3) in~\cite{alt2017local}). These equations depend on the unknown variance profile of the noise. Hence, the limiting spectral distribution of the noise is nontrivial and is unavailable in advance, posing a major challenge for rank estimation. Naturally, the same challenge arises for other distributions of $Y_{i,j}$ that admit a relation between the mean and the variance, such as for the binomial, negative binomial, gamma, and others (not necessarily count random variables).

A large portion of existing literature on PCA in heteroskedastic noise is dedicated to the task of estimating $X$ or the subspaces spanning its rows and columns~\cite{hong2021heppcat,zhang2018heteroskedastic,hong2018optimally,leeb2018optimal,leeb2019matrix,liu2018pca,cai2019subspace,donoho2020screenot}. 
However, in this line of work it is typically assumed that at least some information on the rank of $X$ or on the noise variance profile is available (such as heteroskedasticity only across rows or across columns). 
Several recent works also considered the task of estimating $X$ in the particular setting of count data~\cite{bigot2017generalized,robin2019low,mcrae2019low,cao2015poisson,bigot2020low}, typically by solving a regularized optimization problem utilizing a low-rank model for $X$.  
Most recently, rank selection under heteroskedastic noise was considered in~\cite{hong2020selecting,leeb2019rapid,ke2020estimation}.
In particular,~\cite{leeb2019rapid} provided an algorithm for computing the upper edge of the noise's spectral distribution assuming that the noise variance profile is of rank one,~\cite{hong2020selecting} proposed a variant of parallel analysis that preserves the variance profile of the noise by random signflips, and~\cite{ke2020estimation} described a rank estimation procedure assuming a prior gamma distribution on the noise variances.

\subsection{Our approach and contributions} \label{sec:introduction - contributions}
In this work, we propose a new approach termed \textit{biwhitening} for estimating the rank $r$ in the Poisson model~\eqref{eq:Y poisson model} (and other models) without requiring any prior knowledge on $X$ or its spectrum.  Our main idea is to guarantee that the standard MP law holds by applying appropriate diagonal scaling to $Y$, namely, multiplying its rows and columns by judiciously chosen scaling factors. These scaling factors can be estimated directly from the observation matrix $Y$, and their purpose is to make the average noise variance in each row and each column of the scaled noise matrix precisely $1$. Then, we estimate $r$ from the spectrum of the scaled version of $Y$ as if the noise was homoskedastic with variance $1$, that is, via the upper edge $\beta_+$ of the MP density $dF_{\gamma,1}$, taking $\gamma = m/n$. We derive our approach and justify it theoretically in the standard Poisson model~\eqref{eq:Y poisson model}, and further extend it to (almost) any distribution that satisfies a quadratic relation between the mean and the variance, i.e., $\operatorname{Var}[Y_{i,j}] = a + b X_{i,j} + c X_{i,j}^2$, where $\operatorname{Var}[Y_{i,j}]$ is the variance of $Y_{i,j}$ (the Poisson is a special case with $a=c=0$, $b=1$). 

Our proposed biwhitening procedure for the Poisson model is described in Algorithm~\ref{alg:noise standardization}, where $\mathbf{x}$ and $\mathbf{y}$ are vectors of length $m$ and $n$, respectively, $D(\mathbf{x})$ is a diagonal matrix with $\mathbf{x}$ on its main diagonal, and $\mathbf{1}_m$ is a vector of $m$ ones.

\begin{algorithm}
\caption{Biwhitening and rank estimation for Poisson data}\label{alg:noise standardization}
\begin{algorithmic}[1]
\Statex{\textbf{Input:} Nonnegative $m\times n$ Poisson count matrix $Y$}.
\State \label{alg: step 1}Find positive vectors $\mathbf{x}$ and $\mathbf{y}$ so that $D(\mathbf{x}) Y D(\mathbf{y})$ has row sums $n \cdot \mathbf{1}_{m}$ and column sums $m \cdot \mathbf{1}_{n}$ by, e.g., the Sinkhorn-Knopp algorithm (see Algorithm~\ref{alg:SK} in Section~\ref{sec:Standardization of Poisson noise by diagonal scaling}).  
\State Form the biwhitened matrix $\hat{Y} = \sqrt{D(\mathbf{x})} {Y} \sqrt{D(\mathbf{y})}$. \label{alg:step 2}
\State Estimate the rank $r$ by the number of singular values of $\hat{Y}$ that exceed $\sqrt{n} + \sqrt{m}$. \label{alg:step 3}
\end{algorithmic}
\end{algorithm}

Figure~\ref{fig:introduction example} exemplifies the advantage of biwhitening on a simulated Poisson count matrix $Y$ with $m=300$, $n=1000$, and $r=10$. More details for reproducibility can be found in Appendix~\ref{reproducibility details introduction example}. 
Notably, it is difficult to visually determine the true rank of $X$ from the spectrum of $Y$, as the sorted eigenvalues of $n^{-1} Y Y^T$ admit a gradual decay and do not exhibit any significant gaps. Also, the histogram of these eigenvalues does not agree with the MP law (nor do we expect it to), and the MP upper edge $\beta_+$ does not provide an accurate threshold for determining the rank of $X$. On the other hand, once we apply biwhitening to $Y$, the rank $r=10$ is ``revealed'' in the sense that the first $r$ eigenvalues of $n^{-1} \hat{Y} \hat{Y}^T$ (where $\hat{Y}$ is from Algorithm~\ref{alg:noise standardization}) are significantly separated from the rest of the spectrum -- whose density agrees with the MP law, and whose upper edge is captured precisely by $\beta_+ = (1+\sqrt{m/n})^2$. 

\begin{figure} 
  \centering
  	{
  	\subfloat[][Sorted eigenvalues, original data]
  	{
    \includegraphics[width=0.4\textwidth]{./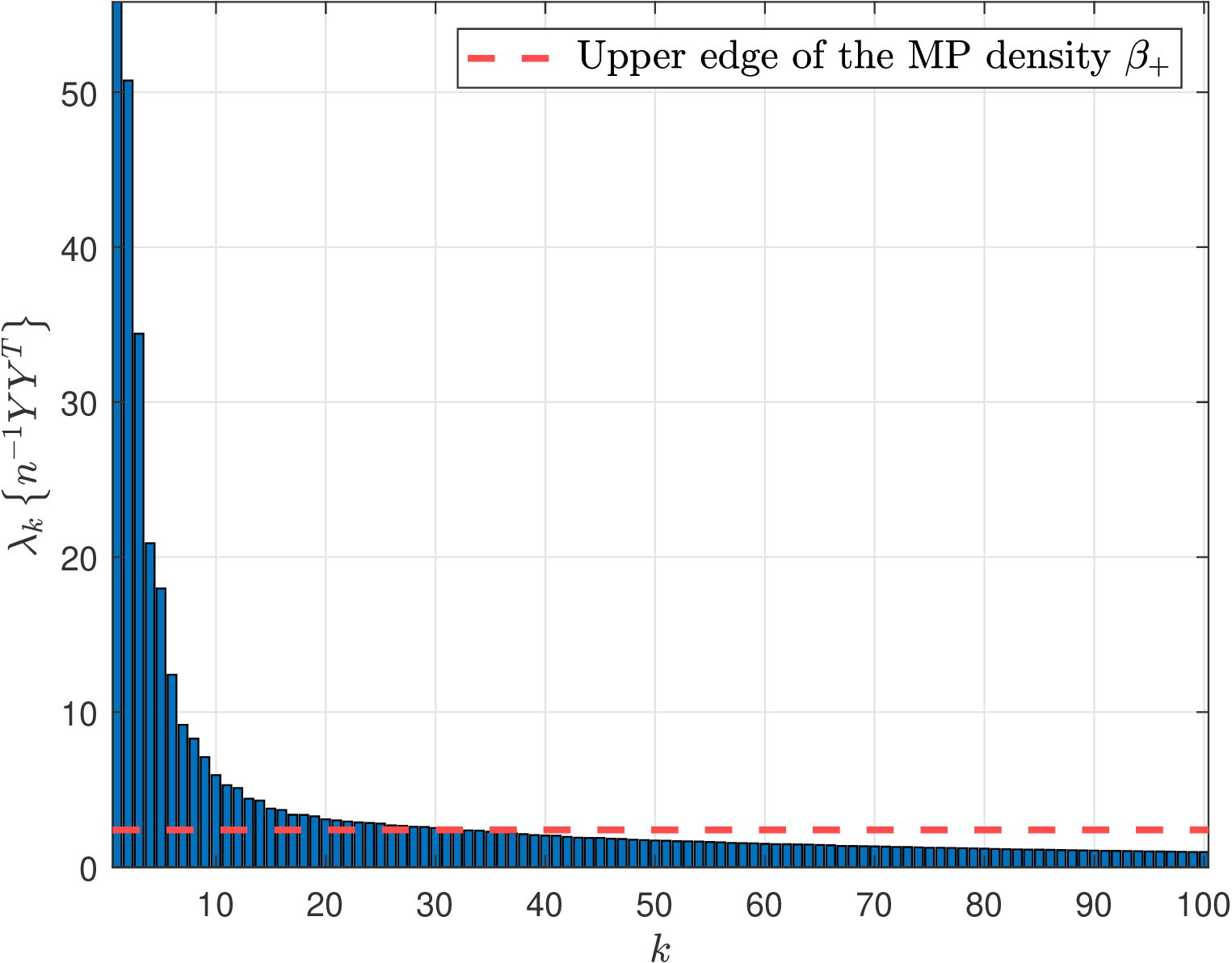} 
    }
    \subfloat[][Sorted eigenvalues, {after} biwhitening] 
  	{
    \includegraphics[width=0.4\textwidth]{./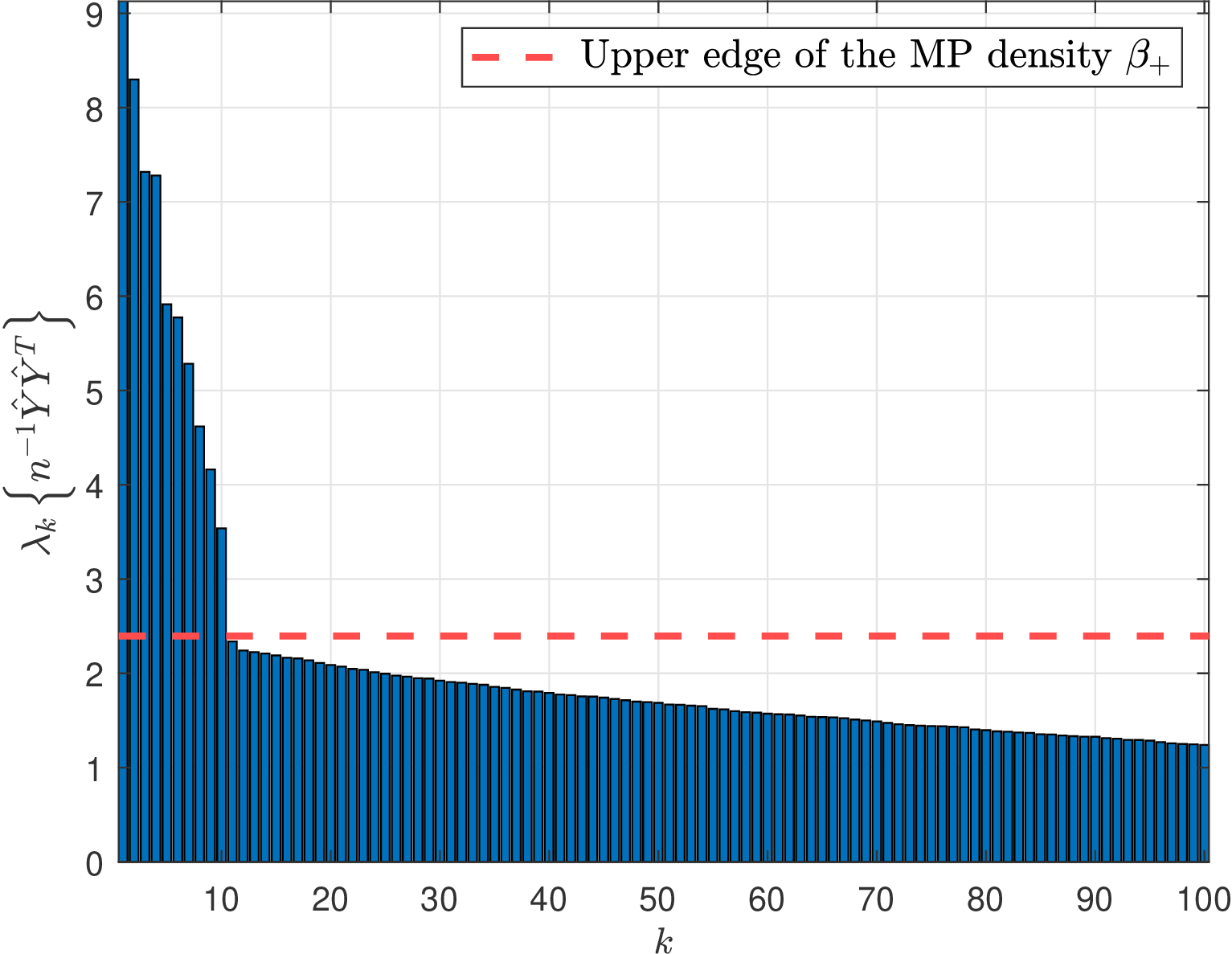} 
    }
    \\
    \subfloat[][Eigenvalue density, original data]  
  	{
    \includegraphics[width=0.4\textwidth]{./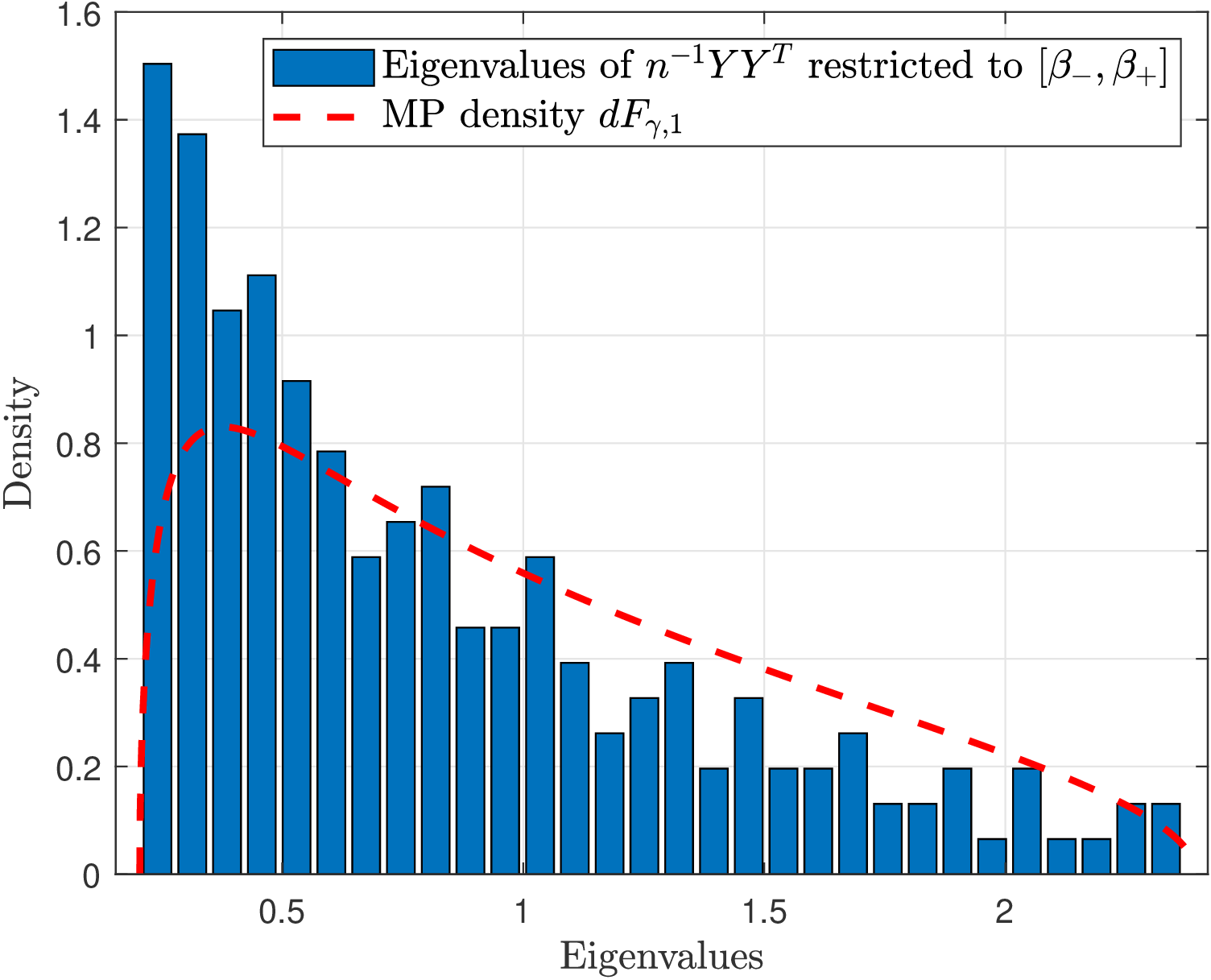}  
    }
    \subfloat[][Eigenvalue density, after biwhitening] 
  	{
    \includegraphics[width=0.4\textwidth]{./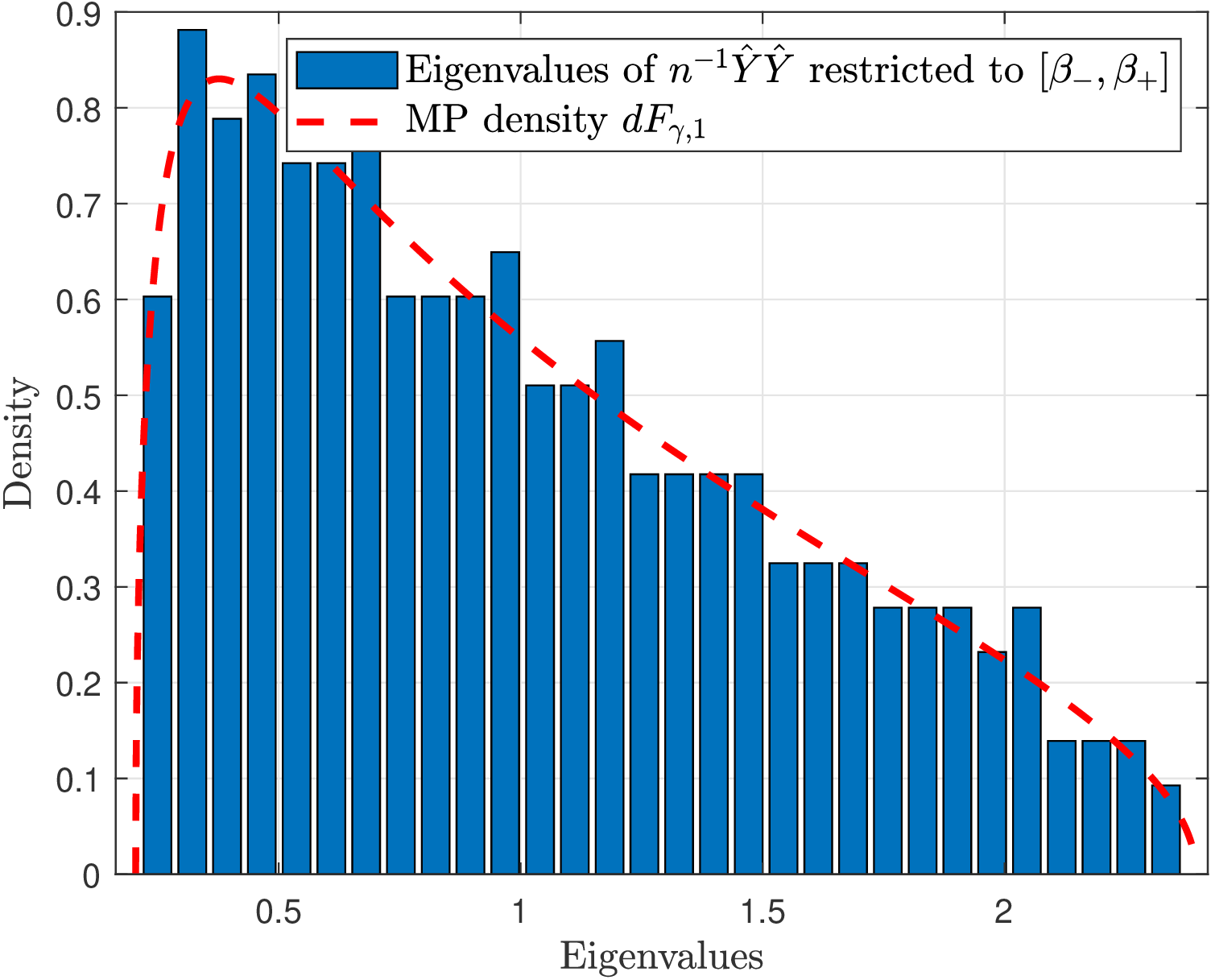} 
    } 

    }
    \caption
    {The spectrum of the original observation matrix and after applying biwhitening, for $m=300$, $n=1000$, and $r=10$. The top two panels depict the sorted eigenvalues of $n^{-1} Y Y^T$ (top left) and $n^{-1} \hat{Y} \hat{Y}^T$ (top right) versus the upper edge $\beta_+$ of the MP density  (dashed red line), where $\hat{Y}$ is the biwhitened matrix from Algorithm~\ref{alg:noise standardization}. The bottom two panels depict the empirical density (i.e., normalized histogram) of the eigenvalues of $n^{-1} Y Y^T$ (bottom left) and $n^{-1} \hat{Y} \hat{Y}^T$ (bottom right) restricted to the support of the MP density, i.e., $[\beta_-,\beta_+]$, versus the MP density $dF_{\gamma,1}$ for $\gamma = m/n$ (dashed red line). } \label{fig:introduction example}
    \end{figure} 
    
The organization of this paper is as follows. In Section~\ref{sec:method and results} we derive and analyze our approach in the Poisson model~\eqref{eq:Y poisson model}. In Section~\ref{sec:Poisson simulations} we conduct numerical experiments that validate our theoretical results and demonstrate that our approach is robust to challenging regimes such as strong heteroskedasticity. In Section~\ref{sec:beyond Poisson noise} we extend our approach to general families of distributions with quadratic variance functions and show that it can also account for missing entries in the data. In Section~\ref{sec:practical considerations and real data} we address important practical considerations such as adapting our approach to unknown data types, and exemplify our approach on several real datasets.

\section{Method derivation and main results} \label{sec:method and results}
\subsection{Standardization of Poisson noise by diagonal scaling} \label{sec:Standardization of Poisson noise by diagonal scaling}
The main idea underlying our approach is to appropriately scale the rows and columns of the Poisson matrix $Y$ from~\eqref{eq:Y poisson model}, such that the noise component in the resulting scaled matrix satisfies the Marchenko-Pastur (MP) law~\eqref{eq:MP density}. Towards that end, let $\mathbf{u} = [u_1,\ldots,u_m]$ and $\mathbf{v} = [v_1,\ldots,v_n]$ be positive vectors, and define
\begin{equation}
    \widetilde{Y} = D(\mathbf{u}) Y D(\mathbf{v}) = \widetilde{X} + \widetilde{\mathcal{E}}, \label{eq:Y_tilde def}
\end{equation}
where $Y$ is from~\eqref{eq:Y poisson model}, $D(\mathbf{u})$ and $D(\mathbf{v})$ are diagonal matrices with $\mathbf{u}$ and $\mathbf{v}$ on their main diagonals, respectively, and
\begin{equation}
    \widetilde{X} = D(\mathbf{u}) X D(\mathbf{v}), \qquad\qquad \widetilde{\mathcal{E}} = D(\mathbf{u}) \mathcal{E} D(\mathbf{v}).
\end{equation}
Notably, $\widetilde{\mathcal{E}}$ and $\widetilde{X}$ preserve several important properties of the noise matrix $\mathcal{E}$ and the signal matrix $X$. In particular, the scaled noise random variables $\{\widetilde{\mathcal{E}}_{i,j}\} = \{u_i \mathcal{E}_{i,j} v_j\}$ are independent with zero means, and 
\begin{equation}
    \operatorname{Rank}\{\widetilde{X}\} = \operatorname{Rank}\{ D(\mathbf{u}) X D(\mathbf{v}) \} = \operatorname{Rank}\{{X}\} = r, \label{eq:rank(Lambda_tilde) = rank(Lambda)}
\end{equation}
since diagonal scaling (with nonzero scaling factors) preserves the row and column spaces of a matrix.
Hence, we can translate the task of estimating the rank of $X$ to the analogous task of estimating the rank of $\widetilde{X}$. 
Crucially, the diagonal scaling in~\eqref{eq:Y_tilde def} allows us to control certain aspects of the variance profile of the scaled noise matrix $\widetilde{\mathcal{E}}$. While we cannot use this diagonal scaling to make all the (entrywise) variances $\mathbb{E}[\widetilde{\mathcal{E}}_{i,j}^2]$ equal (unless $X$ is of rank $1$), we use $\mathbf{u}$ and $\mathbf{v}$ that enforce the average variance in each row and each column of $\widetilde{\mathcal{E}}$ to be $1$. Specifically, we consider $\mathbf{u}$ and $\mathbf{v}$ that satisfy
\begin{equation}
    1 = \frac{1}{m} \sum_{i=1}^m \mathbb{E}[\widetilde{\mathcal{E}}_{i,j}^2] = \frac{1}{m} \sum_{i=1}^m u_i^2 X_{i,j} v_j^2, \qquad \text{and} \qquad 1 = \frac{1}{n} \sum_{j=1}^n \mathbb{E}[\widetilde{\mathcal{E}}_{i,j}^2] = \frac{1}{n} \sum_{i=1}^n u_i^2 X_{i,j} v_j^2, \label{eq:scaling equations from true variances}
\end{equation}
for all $i\in [m]$ and $j\in [n]$, using the fact that $\mathbb{E}[\mathcal{E}_{i,j}^2] = X_{i,j}$ in the Poisson model~\eqref{eq:Y model def}. 
It is worthwhile to point out that since $\{\widetilde{\mathcal{E}}_{i,j}\}$ are independent and have zero means, the equations in~\eqref{eq:scaling equations from true variances} are equivalent to
\begin{equation} \label{eq: biwhitening}
    \mathbb{E}[\frac{1}{n} \widetilde{\mathcal{E}} \widetilde{\mathcal{E}}^T] = I_m, \qquad \text{and} \qquad \mathbb{E}[\frac{1}{m} \widetilde{\mathcal{E}}^T \widetilde{\mathcal{E}}] = I_n, 
\end{equation}
where $I_m$ and $I_n$ are the $m\times m$ and $n \times n$ identity matrices, respectively. Observe that~\eqref{eq: biwhitening} is satisfied by typical homoskedastic noise models. In particular,~\eqref{eq: biwhitening} holds if we replace $\widetilde{\mathcal{E}}$ 
by any matrix whose entries are independent with mean zero and variance one, and more generally, if we replace $\widetilde{\mathcal{E}}$ 
by a matrix whose either rows or columns are sampled independently from an isotropic random vector (i.e., a random vector whose mean is zero and covariance is the identity matrix). Indeed, Equation~\eqref{eq: biwhitening} is the motivation for the name \textit{biwhitening}. 

Since $\mathbf{u}$ and $\mathbf{v}$ from~\eqref{eq:scaling equations from true variances} are the solution to a nonlinear system of equations, it may not be immediately obvious that such a solution exists, whether or not it is unique, and how to find it. These questions can be settled by observing that~\eqref{eq:scaling equations from true variances} is in fact an instance of a problem known as \textit{matrix scaling} (also \textit{matrix balancing} or \textit{bi-proportional scaling}); see~\cite{idel2016review} for an extensive review of the topic. Specifically, given a nonnegative matrix $A\in\mathbb{R}^{m\times n}$ and positive vectors $\mathbf{r} = [r_1,\ldots,r_m]$ and $\mathbf{c} = [c_1,\ldots,c_n]$, the goal in matrix scaling is to find positive vectors $\mathbf{x} = [x_1,\ldots,x_m]$ and $\mathbf{y} = [y_1,\ldots,y_n]$ such that the matrix $D(\mathbf{x}) A D(\mathbf{y})$ has prescribed row sums $\mathbf{r}$ and column sums $\mathbf{c}$, i.e.,
\begin{equation}
    c_j = \sum_{i=1}^m x_i A_{i,j} y_j, \qquad \text{and} \qquad r_i = \sum_{j=1}^n x_i A_{i,j} y_j, \label{eq:matrix scaling general equations}
\end{equation}
for all $i\in [m]$ and $j\in [n]$. We will refer to positive $\mathbf{x}$ and $\mathbf{y}$ that solve~\eqref{eq:matrix scaling general equations} as \textit{scaling factors} of $A$, and say that $\mathbf{x}$ and $\mathbf{y}$ scale $A$ to row sums $\mathbf{r}$ and column sums $\mathbf{c}$. Clearly, the equations in~\eqref{eq:scaling equations from true variances} are equivalent to those in~\eqref{eq:matrix scaling general equations} if we take $A = X$, $r_i = n$, $c_j = m$, $x_i = u_i^2$, and $y_j = v_j^2$.  

When the scaling factors of $A$ exist, they can be found by the Sinkhorn-Knopp algorithm~\cite{sinkhorn1967concerning,pukelsheim2014biproportional}; see Algorithm~\ref{alg:SK}. 
For the convergence rate of the Sinkhorn-Knopp algorithm, see~\cite{knight2008sinkhorn,altschuler2017near,chakrabarty2020better} and references therein.
As for the existence and uniqueness of the scaling factors, the subject has been extensively studied~\cite{sinkhorn1967diagonal,sinkhorn1974diagonal, brualdi1966diagonal,brualdi1974dad,csima1972dad}. 
In particular, 
we have the following proposition for the system of equations in~\eqref{eq:scaling equations from true variances}.
\begin{prop}[Existence and uniqueness of $\mathbf{u}$ and $\mathbf{v}$] \label{prop:existcne and uniquness for Lambda}
There exists a pair $(\mathbf{u},\mathbf{v})$ of positive vectors that satisfies~\eqref{eq:scaling equations from true variances} for all $i\in [m]$ and $j\in [n]$. Furthermore, $(\mathbf{u},\mathbf{v})$ is unique up to a positive scalar, namely it can only be replaced with $(a\mathbf{u},a^{-1}\mathbf{v})$ for any $a>0$.
\end{prop}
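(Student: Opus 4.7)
The plan is to reduce the statement directly to a classical matrix scaling theorem, as already hinted by the discussion preceding the proposition. Introduce the change of variables $x_i = u_i^2$, $y_j = v_j^2$. Then \eqref{eq:scaling equations from true variances} becomes exactly the matrix scaling system \eqref{eq:matrix scaling general equations} for the nonnegative matrix $A = X$ with target row sums $r_i = n$ and column sums $c_j = m$. Positivity of $(\mathbf{u},\mathbf{v})$ corresponds one-to-one to positivity of $(\mathbf{x},\mathbf{y})$, so existence and uniqueness for \eqref{eq:scaling equations from true variances} follow from the corresponding statements for the matrix scaling system.

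First I would verify the obvious necessary consistency condition: the prescribed row and column sums must share the same total, and indeed $\sum_i r_i = mn = \sum_j c_j$. Second, invoke the classical theorem of Sinkhorn (or its refinement by Sinkhorn--Knopp, as already cited in the paper): whenever $A$ has total support, the scaling factors exist and are unique up to the transformation $(\mathbf{x},\mathbf{y}) \mapsto (a\mathbf{x}, a^{-1}\mathbf{y})$, $a>0$. Since $X_{i,j}>0$ for all $(i,j)$ by the Poisson rate assumption, $X$ is strictly positive and hence trivially has total support, so the hypothesis is satisfied and both existence and uniqueness up to scalar follow.

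Finally, transport the conclusion back through the square root. If $(\mathbf{u},\mathbf{v})$ and $(\mathbf{u}',\mathbf{v}')$ both solve \eqref{eq:scaling equations from true variances}, then $(\mathbf{u}^2,\mathbf{v}^2)$ and $((\mathbf{u}')^2,(\mathbf{v}')^2)$ are two solutions of the scaling system, so there is $a>0$ with $(\mathbf{u}')^2 = a\mathbf{u}^2$ and $(\mathbf{v}')^2 = a^{-1}\mathbf{v}^2$ entrywise. Taking the positive square root and setting $b = \sqrt{a}$ yields $(\mathbf{u}',\mathbf{v}') = (b\mathbf{u}, b^{-1}\mathbf{v})$, which is the desired scalar ambiguity.

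There is no real obstacle here beyond the bookkeeping above: the potential pitfall would be if $X$ were only nonnegative rather than positive, in which case one would need to invoke the notion of total support rather than positivity, but the proposition explicitly assumes $X_{i,j}>0$. The only subtlety worth flagging is that the scalar freedom for $(\mathbf{u},\mathbf{v})$ is indexed by $b = \sqrt{a} > 0$ rather than by $a$ itself, which is exactly the form stated in the proposition.
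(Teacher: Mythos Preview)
Your proposal is correct and essentially identical to the paper's own argument: the paper also makes the substitution $x_i=u_i^2$, $y_j=v_j^2$, $A=X$, $r_i=n$, $c_j=m$, checks $\sum_i r_i = mn = \sum_j c_j$, and appeals directly to Sinkhorn's theorem using the strict positivity of $X$. Your explicit bookkeeping on passing the scalar ambiguity back through the square root is a welcome clarification that the paper leaves implicit.
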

The proof follows immediately from Theorem 1 in~\cite{sinkhorn1967diagonal} when taking $A = X$, $r_i = n$, $c_j = m$, $x_i = u_i^2$, and $y_j = v_j^2$, using the fact that $X$ is strictly positive and $\sum_{i=1}^m r_i = m n = \sum_{j=1}^n c_j$.

\begin{algorithm}
\caption{The Sinkhorn-Knopp algorithm}\label{alg:SK}
\begin{algorithmic}[1]
\Statex{\textbf{Input:} Nonnegative $m\times n$ matrix $A$, prescribed row sums $\mathbf{r}$ and column sums $\mathbf{c}$, tolerance $\delta>0$}.
\State Initialize: $\mathbf{x}^{(0)} = \mathbf{1}_m$, $\mathbf{y}^{(0)} = \mathbf{1}_n$, $\tau = 0$.
\State While $\max_{i \in [m]}\vert \sum_{j=1}^n x_{i}^{(\tau)} A_{i,j} y_j^{(\tau)}  - r_i \vert > \delta$ or $\max_{ j \in [n]}\vert \sum_{i=1}^m x_{i}^{(\tau)} A_{i,j} y_j^{(\tau)} - c_j \vert > \delta$, do:
\begin{itemize}
    \item $\mathbf{y}_{j}^{{(\tau+1)}} = c_j/({\sum_{i=1}^m A_{i,j} x_{i}^{(\tau)}})$, for $j=1,\ldots,n$.
    \item $\mathbf{x}_{i}^{{(\tau+1)}} = r_i/({\sum_{j=1}^n A_{i,j} y_{j}^{(\tau+1)}})$, for $i=1,\ldots,m$.
    \item Update $\tau \leftarrow \tau + 1$.
\end{itemize}
\State Return $\mathbf{x}^{(\tau)}$ and $\mathbf{y}^{(\tau)}$.
\end{algorithmic}
\end{algorithm}

According to Proposition~\ref{prop:existcne and uniquness for Lambda}, the products $\{u_i v_j\}_{i\in[m], \; j\in[n]}$ are determined uniquely by $X$, which means that the scaled noise matrix $\widetilde{\mathcal{E}} = (\mathcal{E}_{i,j} u_i v_j)_{i\in[m], \; j\in[n]}$ is a random matrix also uniquely determined by $X$. It is then of interest to characterize the spectral properties of $\widetilde{\mathcal{E}}$, particularly in the regime of large $m$ and $n$, noting that the singular values of $\widetilde{\mathcal{E}}$ can be directly obtained from the eigenvalues of the matrix $\widetilde{\Sigma} = n^{-1}\widetilde{\mathcal{E}} (\widetilde{\mathcal{E}})^T $.
To characterize the spectral behavior of $\widetilde{\mathcal{E}}$, let $\{m_n\}_{n\geq 1}$ be a sequence of positive integers such that $m_n\underset{n\rightarrow\infty}{\longrightarrow}\infty$ and $m_n/n \underset{n\rightarrow\infty}{\longrightarrow} \gamma \in (0,1]$. In addition, let $\{X^{(n)}\}_{n\geq 1}$ and $\{\widetilde{\mathcal{E}}^{(n)}\}_{n\geq 1}$ be sequence of $m_n\times n$ matrices defined equivalently to $X$ and $\widetilde{\mathcal{E}}$, respectively, and define $\widetilde{\Sigma}^{(n)} = n^{-1}\widetilde{\mathcal{E}}^{(n)} (\widetilde{\mathcal{E}}^{(n)})^T $. We then have the following result.
\begin{thm}[Marchenko-Pastur law and the limit of the largest eigenvalue of $\widetilde{\Sigma}^{(n)}$]\label{thm:Marchenko-Pastur for biwhitened noise}
Suppose that there exist universal constants $C,c>0$ such that $ c \leq \max_{i,j} X^{(n)}_{i,j} \leq C \min_{i,j} X^{(n)}_{i,j}$ for all $i\in[m]$, $j\in[n]$, and $n\geq 1$. Then, as $n\rightarrow \infty$, the empirical spectral distribution of $\widetilde{\Sigma}^{(n)}$, given by $F_{\widetilde{\Sigma}^{(n)}}$ (see~\eqref{eq:empirical spectral distribution}), converges almost surely to the Marchenko-Pastur distribution with parameter $\gamma$ and noise variance $\sigma^2=1$, i.e., $F_{\gamma,1}$. Furthermore, we have that $\lambda_1\{\widetilde{\Sigma}^{(n)} \} \overset{p}{\longrightarrow} \beta_+ = (1 + \sqrt{\gamma})^2$.
\end{thm}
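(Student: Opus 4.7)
The plan is to split Theorem~\ref{thm:Marchenko-Pastur for biwhitened noise} into a bulk statement (convergence of $F_{\widetilde{\Sigma}^{(n)}}$ to $F_{\gamma,1}$) and an edge statement (convergence of $\lambda_1\{\widetilde{\Sigma}^{(n)}\}$ to $\beta_+$). Both rest on a single cornerstone: uniform control on the biwhitened variance profile $\widetilde{\sigma}_{i,j}^{2} := u_{i}^{2} X^{(n)}_{i,j} v_{j}^{2}$. Even though this profile is defined only implicitly through the Sinkhorn--Knopp fixed point~\eqref{eq:scaling equations from true variances}, the assumption $c \le \max_{i,j} X^{(n)}_{i,j} \le C \min_{i,j} X^{(n)}_{i,j}$ should force the $\widetilde{\sigma}_{i,j}^{2}$ into a bounded interval $[c'',C'']$ that is independent of $n$, and the biwhitening identities already make the profile doubly regular with row averages and column averages equal to $1$.

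To extract these bounds, I would first note that the hypothesis implies $x_{\min} := \min_{i,j} X^{(n)}_{i,j} \ge c/C$ and $X^{(n)}_{i,j} \in [x_{\min}, C x_{\min}]$. Substituting into the row equation $\sum_{j} u_{i}^{2} X^{(n)}_{i,j} v_{j}^{2} = n$ sandwiches $u_{i}^{2}$ between $n/(C x_{\min} V)$ and $n/(x_{\min} V)$, where $V = \sum_{j} v_{j}^{2}$; the column equation treats $v_{j}^{2}$ symmetrically. In particular all $u_{i}^{2}$ lie within a factor $C$ of each other, and similarly for the $v_{j}^{2}$. The global mass identity $\sum_{i,j} \widetilde{\sigma}_{i,j}^{2} = mn$ then yields $U V \asymp mn / x_{\min}$ with $U = \sum_{i} u_{i}^{2}$, whence $u_{i}^{2} v_{j}^{2} \le C'/x_{\min}$, and substituting back gives the uniform bound $\widetilde{\sigma}_{i,j}^{2} \in [c'', C'']$. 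A direct corollary is that the centered Poisson moments $\mathbb{E}[\widetilde{\mathcal{E}}_{i,j}^{k}] = (u_{i} v_{j})^{k}$ times a polynomial in $X^{(n)}_{i,j}$ are uniformly bounded in $(i,j,n)$, because $u_{i}^{2} v_{j}^{2} \le C'/x_{\min}$ multiplied by any power of $X^{(n)}_{i,j} \le C x_{\min}$ stays $O(1)$.

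Given an array of independent, zero-mean, uniformly sub-exponentially bounded entries with a doubly-regular variance profile, the bulk half of the theorem follows from known limiting-spectral-distribution results for matrices with variance profiles (in the line of Girko, Hachem--Loubaton--Najim, and Ajanki--Erd\H{o}s--Kr\"uger). Under double stochasticity the Dyson-type fixed-point equation governing the Stieltjes transform of the ESD collapses to the scalar Marchenko--Pastur equation with $\sigma^{2}=1$, and the almost-sure convergence $F_{\widetilde{\Sigma}^{(n)}} \to F_{\gamma,1}$ then follows by the standard Stieltjes-transform truncation/concentration argument.

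The hard part, and the main obstacle, is the edge statement $\lambda_{1}\{\widetilde{\Sigma}^{(n)}\} \overset{p}{\longrightarrow} \beta_{+}$, which requires excluding outliers beyond the bulk. My plan is to obtain the matching upper bound via a non-asymptotic operator-norm inequality for matrices with independent heteroskedastic entries in the spirit of Bandeira--van Handel: under the uniform variance bound and uniform higher-moment control established above, it delivers $\|\widetilde{\mathcal{E}}^{(n)}\|_{\mathrm{op}} \le (1+o(1))(\sqrt{n}+\sqrt{m})$ with high probability, which after squaring and dividing by $n$ gives $\lambda_{1}\{\widetilde{\Sigma}^{(n)}\} \le \beta_{+}(1+o(1))$; the matching lower bound comes from the already-established bulk convergence. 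The delicate point is that sharp edge-universality theorems are usually formulated for i.i.d.\ entries or for symmetric Wigner-type ensembles, so the multiplicative $u_{i}v_{j}$ structure of the scaled Poisson entries, together with the fact that $x_{\min}$ may grow with $n$, has to be reduced to such a setting by exploiting the uniform variance and moment bounds proved in the cornerstone step.
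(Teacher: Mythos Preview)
Your plan for the bulk is essentially the paper's: you bound the biwhitened variance profile $u_i^2 X_{i,j} v_j^2$ into a compact interval independent of $n$ (the paper does this by quoting a lemma on Sinkhorn factors, you do it by hand from the row/column equations --- same content), you transfer this to uniform bounds on all moments of $\widetilde{\mathcal{E}}_{i,j}$, and then you invoke a Girko-type theorem under a Lindeberg condition, noting that the Dyson fixed point collapses to the scalar MP equation because the profile is doubly regular. That part is correct and matches the paper.

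The gap is in your edge argument. A Bandeira--van Handel style operator-norm inequality does \emph{not} deliver the sharp constant $(1+o(1))(\sqrt{n}+\sqrt{m})$ for centered Poisson entries. In its sharp form that inequality is a Gaussian statement; for general independent entries one only gets $\|\widetilde{\mathcal{E}}\|_{\mathrm{op}} \le C'(\sqrt{n}+\sqrt{m})$ for some $C'>1$ (or a sharp leading term plus a correction that is not $o(\sqrt{n})$ without sub-Gaussian tails), which after squaring yields $\lambda_1 \le C'' \beta_+$ rather than $\lambda_1 \le \beta_+ + o(1)$. Since centered Poisson variables are only sub-exponential, you cannot close this gap by a tail comparison. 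You correctly flag this as ``the delicate point'' but do not supply a reduction.

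The paper sidesteps this by citing the local law of Alt--Erd\H{o}s--Kr\"uger for Gram matrices with variance profile, which is tailored exactly to this situation: under uniform upper and lower bounds on the entrywise variances (your cornerstone step) and uniform moment bounds of all orders (which you also establish), it gives directly that no eigenvalue of $\widetilde{\Sigma}^{(n)}$ falls above $\beta_+ + \varepsilon$ with probability $\to 1$. Combined with the bulk convergence this yields $\lambda_1 \overset{p}{\to} \beta_+$. So your scaffolding is right, but for the upper edge you need to replace the Bandeira--van Handel reference by a variance-profile local law (or an edge-universality argument) that actually produces the sharp constant.
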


Essentially, under the conditions in Theorem~\ref{thm:Marchenko-Pastur for biwhitened noise} and in the asymptotic regime of $m,n\rightarrow\infty$, $m/n\rightarrow\gamma\in (0,1]$, the spectrum of the noise $\widetilde{\mathcal{E}}$ behaves as if the noise was homoskedastic, namely that the MP law holds and the largest eigenvalue of $\widetilde{\Sigma}$ converges to the upper edge of the MP bulk $\beta_+$ (see Section~\ref{sec:intorduction - homoskedastic noise}).
The condition $ c < \max_{i,j} X^{(n)}_{i,j} \leq C \min_{i,j} X^{(n)}_{i,j}$ in Theorem~\ref{thm:Marchenko-Pastur for biwhitened noise} requires that the Poisson parameters $X_{i,j}^{(n)}$ are always bounded away from zero, and that all of them admit the same growth rate with $n$ (i.e., none of the Poisson parameters can grow unbounded with $n$ relative to others). Even with this restriction, the ratios $X_{i,j}^{(n)} / X_{k,\ell}^{(n)}$ for $(i,j)\neq (k,\ell)$ can vary, and can be very large or very small for certain pairs $(i,j)$ and $(k,\ell)$, allowing for substantial heteroskedasticity of the noise in the model~\eqref{eq:Y model def}.
The proof of Theorem~\ref{thm:Marchenko-Pastur for biwhitened noise} can be found in Appendix~\ref{appendix:proof of theorem on MP law using true variances}, and relies on the results of~\cite{girko2001theory} and~\cite{alt2017local} with certain boundedness properties of the scaling factors $\mathbf{u}$ and $\mathbf{v}$.

Next, using the fact that $\operatorname{rank}\{\widetilde{X}\} = \operatorname{rank}\{{X}\}$ and $\lambda_1 \{\widetilde{\Sigma}^{(n)} \} \overset{p}{\longrightarrow} (1 + \sqrt{\gamma})^2$, it is natural to consider the following estimator for the rank $r$ of $X$:
\begin{equation}
    \widetilde{r}_\varepsilon = \max \left\{k: \; \lambda_k \{n^{-1} \widetilde{Y} \widetilde{Y}^T\} > \left(1+\sqrt{\frac{m}{n}}\right)^2 + \varepsilon \right\}, \label{eq:r_tilde def} 
\end{equation}
where $\varepsilon>0$ and $\lambda_k \{n^{-1} \widetilde{Y} \widetilde{Y}^T\}$ is the $k$'th largest eigenvalue of $n^{-1} \widetilde{Y} \widetilde{Y}^T$. In words, we take $\widetilde{r}_\varepsilon$ to be the number of eigenvalues of $n^{-1} \widetilde{Y} \widetilde{Y}^T$ that are not included in the $\varepsilon$-neighborhood of the Marchenko-Pastur bulk. 
Let us consider again the asymptotic setting of $m_n\rightarrow\infty$, $m_n/n \rightarrow \gamma \in (0,1]$, letting $\operatorname{rank}\{X^{(n)}\} = {r}^{(n)} < m_n$, and $\widetilde{r}^{(n)}_\varepsilon$ be as $\widetilde{r}_\varepsilon$ from~\eqref{eq:r_tilde def} when replacing $\widetilde{Y}$ with $\widetilde{Y}^{(n)}$. We then have the following property of the rank estimator $\widetilde{r}_\varepsilon^{(n)}$ in the asymptotic regime of $n\rightarrow\infty$.
\begin{thm} \label{thm:r_tilde no false detect}
Under the conditions in Theorem~\ref{thm:Marchenko-Pastur for biwhitened noise}, $\operatorname{Pr} \{ r^{(n)} < \widetilde{r}_\varepsilon^{(n)}\} \underset{n\rightarrow\infty}{\longrightarrow} 0$ for any $\varepsilon>0$.
\end{thm}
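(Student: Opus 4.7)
The plan is to argue that the only way $\widetilde{r}_\varepsilon^{(n)}$ can exceed the true rank $r^{(n)}$ is if the top eigenvalue of the scaled noise covariance $\widetilde{\Sigma}^{(n)}$ overshoots the Marchenko--Pastur upper edge by a macroscopic amount, and then invoke Theorem~\ref{thm:Marchenko-Pastur for biwhitened noise} to show this happens with vanishing probability.

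Concretely, I would first unfold the definition of $\widetilde{r}_\varepsilon^{(n)}$ in~\eqref{eq:r_tilde def} to rewrite the event of interest as
\begin{equation*}
\{r^{(n)} < \widetilde{r}_\varepsilon^{(n)}\} \;=\; \bigl\{\lambda_{r^{(n)}+1}\{n^{-1}\widetilde{Y}^{(n)}(\widetilde{Y}^{(n)})^T\} \,>\, (1+\sqrt{m_n/n})^2 + \varepsilon\bigr\}.
\end{equation*}
The second step is a deterministic singular-value interlacing bound. Writing $\widetilde{Y}^{(n)} = \widetilde{X}^{(n)} + \widetilde{\mathcal{E}}^{(n)}$ and using $\operatorname{rank}\{\widetilde{X}^{(n)}\} = r^{(n)}$ (from~\eqref{eq:rank(Lambda_tilde) = rank(Lambda)}), Weyl's inequality for singular values gives
\begin{equation*}
\sigma_{r^{(n)}+1}\{\widetilde{Y}^{(n)}\} \;\leq\; \sigma_{r^{(n)}+1}\{\widetilde{X}^{(n)}\} + \sigma_{1}\{\widetilde{\mathcal{E}}^{(n)}\} \;=\; \sigma_{1}\{\widetilde{\mathcal{E}}^{(n)}\},
\end{equation*}
so squaring and dividing by $n$ yields $\lambda_{r^{(n)}+1}\{n^{-1}\widetilde{Y}^{(n)}(\widetilde{Y}^{(n)})^T\} \leq \lambda_1\{\widetilde{\Sigma}^{(n)}\}$. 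This is the heart of the argument: a rank-$r^{(n)}$ perturbation can lift at most the first $r^{(n)}$ eigenvalues above the noise bulk, so any claimed spike beyond that index must be inherited from the noise.

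For the third step, combining the above with the event description shows that $\{r^{(n)} < \widetilde{r}_\varepsilon^{(n)}\}$ is contained in $\{\lambda_1\{\widetilde{\Sigma}^{(n)}\} > (1+\sqrt{m_n/n})^2 + \varepsilon\}$. Because $m_n/n \to \gamma$, for all sufficiently large $n$ we have $(1+\sqrt{m_n/n})^2 + \varepsilon \geq (1+\sqrt{\gamma})^2 + \varepsilon/2$, so
\begin{equation*}
\operatorname{Pr}\{r^{(n)} < \widetilde{r}_\varepsilon^{(n)}\} \;\leq\; \operatorname{Pr}\bigl\{\lambda_1\{\widetilde{\Sigma}^{(n)}\} > (1+\sqrt{\gamma})^2 + \varepsilon/2\bigr\}.
\end{equation*}
By Theorem~\ref{thm:Marchenko-Pastur for biwhitened noise}, $\lambda_1\{\widetilde{\Sigma}^{(n)}\} \overset{p}{\longrightarrow} (1+\sqrt{\gamma})^2$, so the right-hand side tends to $0$, completing the proof.

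There is no genuinely hard step here: the only substantive ingredient is Theorem~\ref{thm:Marchenko-Pastur for biwhitened noise}, which is assumed. The one place to be mildly careful is that the threshold in the definition of $\widetilde{r}_\varepsilon^{(n)}$ uses the finite-$n$ quantity $(1+\sqrt{m_n/n})^2$ rather than the limit $(1+\sqrt{\gamma})^2$; the buffer $\varepsilon$ absorbs this discrepancy for all large $n$, which is why the statement requires $\varepsilon>0$ (strict inequality) rather than an $\varepsilon=0$ consistency claim.
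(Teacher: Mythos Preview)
Your proposal is correct and follows essentially the same route as the paper's proof: both arguments reduce the event $\{r^{(n)} < \widetilde{r}_\varepsilon^{(n)}\}$ to the event that $\lambda_{r^{(n)}+1}\{n^{-1}\widetilde{Y}^{(n)}(\widetilde{Y}^{(n)})^T\}$ exceeds the threshold, apply Weyl's singular-value inequality (Theorem~3.3.16 in Horn--Johnson) together with $\operatorname{rank}\{\widetilde{X}^{(n)}\}=r^{(n)}$ to bound this by $\lambda_1\{\widetilde{\Sigma}^{(n)}\}$, absorb the discrepancy between $(1+\sqrt{m_n/n})^2$ and $(1+\sqrt{\gamma})^2$ into the $\varepsilon$ slack, and conclude via Theorem~\ref{thm:Marchenko-Pastur for biwhitened noise}. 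The only cosmetic difference is that the paper wraps the argument in a proof by contradiction along a subsequence, whereas you proceed directly; your version is slightly cleaner.
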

The proof can be found in Appendix~\ref{appendix:proof of theorem on rank estimator}, and relies on Theorem~\ref{thm:Marchenko-Pastur for biwhitened noise} and the fact that diagonal scaling preserves the rank of $X$ (see~\eqref{eq:rank(Lambda_tilde) = rank(Lambda)}). Fundamentally, Theorem~\ref{thm:r_tilde no false detect} states that in the asymptotic setting of $m,n\rightarrow\infty$, $m/n\rightarrow\gamma\in (0,1]$, the rank estimator $\widetilde{r}_\varepsilon$ does not overestimate the rank of $X$ for any $\varepsilon>0$ (with probability approaching $1$). Taking $\varepsilon\rightarrow 0$, this result implies that for sufficiently large $m$ and $n$, all eigenvalues of $n^{-1} \widetilde{Y} \widetilde{Y}^T$ that exceed $(1+\sqrt{m/n})^2$ by an arbitrarily-small positive value correspond to true signal components, i.e., to nonzero singular values of $X$. The reason that~\eqref{eq:r_tilde def} asymptotically underestimates the rank is that some of the signal components can be too weak for detection. In particular, the existence of small eigenvalues of $n^{-1} \widetilde{X} \widetilde{X}^T$ can be masked by the bulk of the eigenvalues of the noise $n^{-1} \widetilde{\mathcal{E}} \widetilde{\mathcal{E}}^T$ (given by the MP distribution); see~\cite{baik2005phase,baik2006eigenvalues,benaych2011eigenvalues,nadler2008finite,paul2007asymptotics} for more details on this phenomenon in the case of homoskedastic noise and the spiked-model (noting that our model does not require the rank of $X^{(n)}$ to be fixed).

\subsection{Estimating the scaling factors $\mathbf{u}$ and $\mathbf{v}$} \label{sec:estimating the scaling factors}
The immediate obstacle in employing $\mathbf{u}$ and $\mathbf{v}$ that satisfy~\eqref{eq:scaling equations from true variances} is that $\{X_{i,j}\}_{i\in[m],\;j\in[n]}$ are unknown. Nonetheless, while we do not have access to $X_{i,j}$, we do have access to an unbiased estimator of $X_{i,j}$, which is $Y_{i,j}$. Therefore, instead of solving~\eqref{eq:scaling equations from true variances}, we propose to find positive vectors $\hat{\mathbf{u}} = [\hat{u}_1,\ldots,\hat{u}_m]$ and $\hat{\mathbf{v}} = [\hat{v}_1,\ldots,\hat{v}_n]$ that solve the surrogate system of equations obtained by replacing $X_{i,j}$ in~\eqref{eq:scaling equations from true variances} with $Y_{i,j}$, i.e.,
\begin{equation}
    1 = \frac{1}{m} \sum_{i=1}^m \hat{u}_i^2 Y_{i,j} \hat{v}_j^2, \qquad \text{and} \qquad 1 = \frac{1}{n} \sum_{i=1}^n \hat{u}_i^2 Y_{i,j} \hat{v}_j^2, \label{eq:scaling equations from estimated variances}
\end{equation}
for all $i\in [m]$ and $j\in[n]$. 
Analogously to~\eqref{eq:Y_tilde def}, we then define
\begin{equation}
    \hat{Y} = D(\hat{\mathbf{u}}) Y D(\hat{\mathbf{v}}) = \hat{X} + \hat{\mathcal{E}}, \label{eq:Y_hat def}
\end{equation}
where $\hat{X} = D(\hat{\mathbf{u}}) X D(\hat{\mathbf{v}})$, $\hat{\mathcal{E}} = D(\hat{\mathbf{u}}) \mathcal{E} D(\hat{\mathbf{v}})$,
and equivalently to~\eqref{eq:rank(Lambda_tilde) = rank(Lambda)} we have $\operatorname{Rank}\{\hat{X}\} = \operatorname{Rank}\{{X}\} = r$.
Similarly to~\eqref{eq:scaling equations from true variances}, the system of equations in~\eqref{eq:scaling equations from estimated variances} is an instance of~\eqref{eq:matrix scaling general equations} if we set $A  = Y$, $r_i = n$, $c_j = m$, $x_i = \hat{u}_i^2$, and $y_j = \hat{v}_j^2$.

It is important to note that in contrast to the matrix $X$ in~\eqref{eq:scaling equations from true variances}, a realization of the random matrix $Y$ may not be strictly positive and may contain zeros, hence the existence and uniqueness of positive $\hat{\mathbf{u}}$ and $\hat{\mathbf{v}}$ that satisfy~\eqref{eq:scaling equations from estimated variances} is not obvious.
Indeed, in the most general case of matrix scaling, the existence and uniqueness of $\mathbf{x}$ and $\mathbf{y}$ that satisfy~\eqref{eq:matrix scaling general equations} depends on the particular zero pattern of $A$ (i.e., the set of indices $(i,j)$ for which $A_{i,j} = 0$); see the end of this section and Appendix~\ref{appendix: existence and uniqueness} for more details. 

Let us consider again the asymptotic setting where $\{m_n\}_{n\geq 1}$ is a sequence of positive integers such that $m_n\underset{n\rightarrow\infty}{\longrightarrow}\infty$ and $m_n/n \underset{n\rightarrow\infty}{\longrightarrow} \gamma \in (0,1]$, and let $(\mathbf{u}^{(n)},\mathbf{v}^{(n)})$ be the solution to~\eqref{eq:scaling equations from true variances} corresponding to $X^{(n)} \in \mathbb{R}^{m_n \times n}$ (replacing $X$) and satisfying $\Vert \mathbf{u}^{(n)}\Vert_2 = \Vert \mathbf{v}^{(n)} \Vert_2 $ (such a solution exists and is unique according to Proposition~\ref{prop:existcne and uniquness for Lambda}). In addition, let $\{Y^{(n)}\}_{n\geq 1}$ be a sequence of $m_n \times n$ random matrices with independent Poisson entries (analogous to $Y$) such that $\mathbb{E}[Y^{(n)}] = X^{(n)}$, and define $\hat{\mathbf{u}}^{(n)}$ and $\hat{\mathbf{v}}^{(n)}$ analogously to $\hat{\mathbf{u}}$ and $\hat{\mathbf{v}}$, respectively, when replacing $Y$ in~\eqref{eq:scaling equations from estimated variances} with $Y^{(n)}$. We now provide the following result.
\begin{lem}[Convergence of estimated scaling factors] \label{lem:convergence of scaling factors for estimated variances}
Suppose that there exist universal constants $C,c,\epsilon>0$ such that $c (\log n)^{1+\epsilon} \leq \max_{i,j} X_{i,j}^{(n)} \leq C \min_{i,j}X_{i,j}^{(n)}$ for all $i\in[m]$, $j\in [n]$, and $n\geq 1$. Then, there is a constant $\widetilde{C}>0$ such that with probability that tends to $1$ as $n\rightarrow\infty$,  there exists a pair of positive scaling factors $(\hat{u}^{(n)},\hat{v}^{(n)})$ that solves~\eqref{eq:scaling equations from estimated variances} and a scalar $a_n>0$, which satisfy
\begin{equation}
    {\left\vert \frac{a_n \hat{u}^{(n)}_i}{u^{(n)}_i} - 1 \right\vert} \leq  
      \widetilde{C} \sqrt{\frac{\log n}{n} } , \qquad \qquad
     {\left\vert\frac{a_n^{-1} \hat{v}^{(n)}_j}{v^{(n)}_j} - 1 \right\vert} \leq \widetilde{C} \sqrt{\frac{\log n}{n} },
\end{equation}
for all $i\in[m]$ and $j\in[n]$.
\end{lem}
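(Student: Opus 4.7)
The plan is to proceed in three stages: (i) show that the empirical scaling problem~\eqref{eq:scaling equations from estimated variances} is solvable with high probability; (ii) control the fluctuations of a natural auxiliary matrix around its (doubly balanced) expectation; and (iii) turn these row/column-sum fluctuations into multiplicative bounds on the scaling factors via a stability result for matrix scaling.

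For stage (i), since each $Y_{i,j}^{(n)}$ is Poisson with mean $X_{i,j}^{(n)} \geq c(\log n)^{1+\epsilon}$, we have $\Pr[Y_{i,j}^{(n)}=0] \leq e^{-c(\log n)^{1+\epsilon}}$, which is super-polynomially small. A union bound over the at most $n^2$ entries shows that $Y^{(n)}$ is entrywise strictly positive with probability tending to one, and on that event Theorem~1 of~\cite{sinkhorn1967diagonal} (as already used for Proposition~\ref{prop:existcne and uniquness for Lambda}) yields a positive solution $(\hat{\mathbf{u}}^{(n)},\hat{\mathbf{v}}^{(n)})$ of~\eqref{eq:scaling equations from estimated variances}, unique up to the scalar gauge $(\hat{\mathbf{u}},\hat{\mathbf{v}}) \mapsto (a\hat{\mathbf{u}}, a^{-1}\hat{\mathbf{v}})$.

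For stage (ii), I introduce the auxiliary matrix $W_{i,j} := (u_i^{(n)})^2\, Y_{i,j}^{(n)}\, (v_j^{(n)})^2$, whose expectation $W^\ast_{i,j}=(u_i^{(n)})^2 X_{i,j}^{(n)} (v_j^{(n)})^2$ already has row sums $n$ and column sums $m$ by~\eqref{eq:scaling equations from true variances}. Setting $\alpha_i:=\hat{u}_i^{(n)}/u_i^{(n)}$ and $\beta_j:=\hat{v}_j^{(n)}/v_j^{(n)}$, the system~\eqref{eq:scaling equations from estimated variances} is equivalent to asking that $D(\alpha^2) W D(\beta^2)$ have row sums $n$ and column sums $m$. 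From the boundedness hypothesis on $X^{(n)}$ together with the scaling identities defining $(\mathbf{u}^{(n)},\mathbf{v}^{(n)})$, one obtains (gauge-invariantly) $u_i^{(n)} v_j^{(n)} \asymp 1/\sqrt{X_{i,j}^{(n)}}$, so that $W^\ast_{i,j} \asymp 1$ and $\mathrm{Var}(W_{i,j}) = (W^\ast_{i,j})^2 / X_{i,j}^{(n)} \lesssim (\log n)^{-(1+\epsilon)}$, with each $W_{i,j}$ uniformly bounded on the high-probability event that no $Y_{i,j}$ deviates too far from $X_{i,j}$. Bernstein's inequality applied to $\sum_j W_{i,j}$ and $\sum_i W_{i,j}$, followed by a union bound over $i$ and $j$, then gives
\begin{equation*}
\max_{i}\Bigl|\frac{1}{n}\sum_{j} W_{i,j}-1\Bigr| \;+\; \max_{j}\Bigl|\frac{1}{m}\sum_{i} W_{i,j}-1\Bigr| \;\lesssim\; \sqrt{\frac{\log n}{n}}
\end{equation*}
with probability $1-o(1)$.

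Stage (iii) is the crux and the main obstacle: a sensitivity bound for matrix scaling stating that if $W$ has $W_{i,j}\asymp 1$ and its row and column sums differ from $(n,m)$ by a multiplicative error $\eta = O(\sqrt{\log n/n})$, then the factors $(\alpha,\beta)$ that bring $W$ to exact row/column sums satisfy $\max_i|a_n\alpha_i - 1| + \max_j|a_n^{-1}\beta_j - 1| = O(\eta)$ for an appropriate gauge scalar $a_n$. I would execute this by parametrizing $\alpha_i^2 = e^{\phi_i}$, $\beta_j^2 = e^{\psi_j}$ on the gauge-fixing hyperplane $\sum_i\phi_i + \sum_j\psi_j = 0$, writing the scaling constraints as $F(\phi,\psi) = 0$, and applying a quantitative inverse function theorem at $(\phi,\psi)=(0,0)$. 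The Jacobian of $F$ there is (a diagonal rescaling of) the weighted bipartite graph Laplacian with edge weights $W^\ast_{i,j}$; because these weights are uniformly bounded above and below, the bipartite graph is a dense expander and its Laplacian admits a spectral gap bounded below by a universal constant transverse to the gauge. Combined with the $\|F(0,0)\|_\infty = O(\eta)$ estimate from stage~(ii), a standard Newton/contraction argument gives $\|\phi\|_\infty + \|\psi\|_\infty = O(\sqrt{\log n/n})$, and exponentiating (plus absorbing the gauge scalar into $a_n$) yields the stated multiplicative bounds on $\hat{u}^{(n)}_i/u^{(n)}_i$ and $\hat{v}^{(n)}_j/v^{(n)}_j$. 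An alternative route is to run the Sinkhorn--Knopp iteration on $W$ from the identity initialization and invoke Birkhoff contraction in Hilbert's projective metric, exploiting that $W$ is already a small multiplicative perturbation of $W^\ast$.
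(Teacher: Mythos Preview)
Your outline is sound and, if executed carefully, proves the lemma; it takes a genuinely different route from the paper. The paper does not re-derive a stability result for Sinkhorn scaling. Instead it constructs a mean-preserving bounded truncation $\overline{Y}^{(n)}$ of $Y^{(n)}$ (so that independence is retained while $\overline{Y}^{(n)}_{i,j}\in[\alpha_1 X^{(n)}_{i,j},\alpha_2 X^{(n)}_{i,j}]$), shows $\overline{Y}^{(n)}=Y^{(n)}$ with probability $1-o(1)$ via the Poisson tail and the growth condition on $X^{(n)}$, and then invokes Theorem~3 of~\cite{landa2020scaling} as a black box on $\overline{Y}^{(n)}$ to obtain the multiplicative error bounds directly. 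Your approach essentially reproves that cited theorem from scratch, which is more laborious but self-contained; the paper's approach is shorter because the concentration-plus-stability content is packaged into the citation, and its only new technical ingredient is the mean-preserving truncation.

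Two caveats on your execution. In stage~(ii), ``Bernstein on the high-probability event that no $Y_{i,j}$ deviates too far'' is not valid as stated, since conditioning on that event destroys independence; either use the sub-exponential Bernstein inequality directly on the weighted Poisson sums (this already gives the $\sqrt{\log n/n}$ rate without any truncation), or imitate the paper and truncate in a mean-preserving way first. In stage~(iii), the inverse-function-theorem route is more delicate than you indicate: the Laplacian spectral gap is an $\ell_2$ statement, and the step from $\|F(0,0)\|_\infty=O(\eta)$ to $\|(\phi,\psi)\|_\infty=O(\eta)$ via Newton loses a factor $\sqrt{m+n}$ that does not come back automatically. Your Hilbert-metric alternative does close this gap: since $W_{i,j}\asymp 1$ uniformly, the Birkhoff contraction coefficient of $e^{\psi}\mapsto We^{\psi}$ is at most $\tanh\!\bigl(\tfrac12\log(c_2/c_1)\bigr)<1$, and because $W$ is an $O(\eta)$ perturbation (in row- and column-sum sense) of the balanced $W^{\ast}$, the Sinkhorn fixed point has oscillation $O(\eta)$, which together with the gauge freedom yields the required $\ell_\infty$ bounds on $(\phi,\psi)$. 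I would commit to that route and drop the Laplacian-gap argument unless you add a separate $\ell_\infty$ bootstrap.
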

The proof of Lemma~\ref{lem:convergence of scaling factors for estimated variances} can be found in Appendix~\ref{appendix:proof of lemma on convergence of scaling factors}, and relies on the results in~\cite{landa2020scaling}.
Note that the convergence of the estimated scaling factors $(\hat{\mathbf{u}},\hat{\mathbf{v}})$ to the true scaling factors  $({\mathbf{u}},{\mathbf{v}})$ in Lemma~\ref{lem:convergence of scaling factors for estimated variances} is up to an arbitrary sequence of positive scalars $\{a_n\}$, which arise from the fundamental ambiguity in the uniqueness of the scaling factors; see Proposition~\ref{prop:existcne and uniquness for Lambda}. We note that the statement in Lemma~\ref{lem:convergence of scaling factors for estimated variances} is particularly useful for our subsequent analysis and is more informative for our purposes than a statement on the convergence rate of the errors $|\hat{u}_i^{(n)}\hat{v}_j^{(n)} - u_i^{(n)} v_j^{(n)} | $ (which do not involve the arbitrary scalars $\{a_n\}$).

Let us define $\hat{\Sigma}^{(n)} = n^{-1} \hat{\mathcal{E}}^{(n)} (\hat{\mathcal{E}}^{(n)})^T$. Using Lemma~\ref{lem:convergence of scaling factors for estimated variances} we obtain the following result, which establishes the convergence of the spectrum of $\hat{\Sigma}^{(n)}$ to the spectrum of $\widetilde{\Sigma}^{(n)}$ in probability as $n\rightarrow\infty$.
\begin{thm}[Spectral convergence of $\hat{\Sigma}^{(n)}$ to $\widetilde{\Sigma}^{(n)}$ ]\label{thm:Marchenko-Pastur for biwhitened noise from estimated variances}
Suppose that the conditions in Lemma~\ref{lem:convergence of scaling factors for estimated variances} hold. Then, $\max_{i\in [m_n]}\left\vert \lambda_i\{ \hat{\Sigma}^{(n)} \} - \lambda_i\{ \widetilde{\Sigma}^{(n)}  \} \right\vert \overset{p}{\longrightarrow} 0.$
\end{thm}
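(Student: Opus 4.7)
The plan is to express $\hat{\mathcal{E}}^{(n)}$ as a small multiplicative perturbation of $\widetilde{\mathcal{E}}^{(n)}$ using Lemma~\ref{lem:convergence of scaling factors for estimated variances}, bound the operator norm of their difference, and then invoke Weyl's inequality for singular values to convert this into an eigenvalue statement for $\hat{\Sigma}^{(n)}$ versus $\widetilde{\Sigma}^{(n)}$.

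First I would encode the conclusion of Lemma~\ref{lem:convergence of scaling factors for estimated variances} in matrix form. Writing $\hat{u}_i^{(n)}=a_n^{-1}u_i^{(n)}(1+\delta_i^{(n)})$ and $\hat{v}_j^{(n)}=a_n v_j^{(n)}(1+\epsilon_j^{(n)})$ with $\max_i|\delta_i^{(n)}|,\max_j|\epsilon_j^{(n)}|\leq \widetilde{C}\sqrt{\log n/n}$ with probability tending to $1$, I set $\Delta_1=\operatorname{diag}(\delta_i^{(n)})$ and $\Delta_2=\operatorname{diag}(\epsilon_j^{(n)})$. Since diagonal matrices commute and the scalars $a_n,a_n^{-1}$ cancel,
\begin{equation*}
    \hat{\mathcal{E}}^{(n)} = D(\hat{\mathbf{u}}^{(n)})\mathcal{E}^{(n)} D(\hat{\mathbf{v}}^{(n)}) = (I+\Delta_1)\,\widetilde{\mathcal{E}}^{(n)}\,(I+\Delta_2).
\end{equation*}
Expanding and using submultiplicativity of the operator norm gives
\begin{equation*}
    \bigl\Vert \hat{\mathcal{E}}^{(n)}-\widetilde{\mathcal{E}}^{(n)} \bigr\Vert_{\mathrm{op}} \le \bigl(\Vert\Delta_1\Vert_{\mathrm{op}} + \Vert\Delta_2\Vert_{\mathrm{op}} + \Vert\Delta_1\Vert_{\mathrm{op}}\Vert\Delta_2\Vert_{\mathrm{op}}\bigr)\bigl\Vert \widetilde{\mathcal{E}}^{(n)} \bigr\Vert_{\mathrm{op}},
\end{equation*}
and the prefactor is $O(\sqrt{\log n/n})$ on an event of probability tending to $1$.

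Next I would control $\Vert\widetilde{\mathcal{E}}^{(n)}\Vert_{\mathrm{op}}$. By Theorem~\ref{thm:Marchenko-Pastur for biwhitened noise}, $\lambda_1\{\widetilde{\Sigma}^{(n)}\}\overset{p}{\to}(1+\sqrt{\gamma})^2$, so $\Vert\widetilde{\mathcal{E}}^{(n)}\Vert_{\mathrm{op}}^2=n\,\lambda_1\{\widetilde{\Sigma}^{(n)}\}=O_p(n)$. Combining with the previous display yields $\Vert \hat{\mathcal{E}}^{(n)}-\widetilde{\mathcal{E}}^{(n)}\Vert_{\mathrm{op}}=O_p(\sqrt{\log n})$. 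In particular $\Vert\hat{\mathcal{E}}^{(n)}\Vert_{\mathrm{op}}=O_p(\sqrt{n})$ as well.

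Finally I would translate this into the desired eigenvalue bound. By Weyl's inequality for singular values, for every $i\in[m_n]$,
\begin{equation*}
    \bigl\vert \sigma_i(\hat{\mathcal{E}}^{(n)}) - \sigma_i(\widetilde{\mathcal{E}}^{(n)}) \bigr\vert \le \bigl\Vert \hat{\mathcal{E}}^{(n)}-\widetilde{\mathcal{E}}^{(n)} \bigr\Vert_{\mathrm{op}},
\end{equation*}
and since $\lambda_i\{\hat{\Sigma}^{(n)}\}=\sigma_i(\hat{\mathcal{E}}^{(n)})^2/n$ and similarly for $\widetilde{\Sigma}^{(n)}$,
\begin{equation*}
    \bigl\vert \lambda_i\{\hat{\Sigma}^{(n)}\} - \lambda_i\{\widetilde{\Sigma}^{(n)}\} \bigr\vert = \frac{1}{n}\bigl\vert \sigma_i(\hat{\mathcal{E}}^{(n)}) - \sigma_i(\widetilde{\mathcal{E}}^{(n)}) \bigr\vert \cdot \bigl(\sigma_i(\hat{\mathcal{E}}^{(n)}) + \sigma_i(\widetilde{\mathcal{E}}^{(n)})\bigr).
\end{equation*}
The first factor is $O_p(\sqrt{\log n})$ and the second is $O_p(\sqrt{n})$, so the right-hand side is $O_p(\sqrt{\log n/n})=o_p(1)$ uniformly in $i$, proving the claim.

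The only delicate point is organizing the probability statements so that Lemma~\ref{lem:convergence of scaling factors for estimated variances} and Theorem~\ref{thm:Marchenko-Pastur for biwhitened noise} apply on a common event of probability tending to $1$; the rest is deterministic manipulation of operator norms. Note also that a separate hypothesis verification is implicit: Lemma~\ref{lem:convergence of scaling factors for estimated variances} requires the lower bound $c(\log n)^{1+\epsilon}\leq \max_{i,j}X_{i,j}^{(n)}$, which is stronger than the uniform boundedness condition in Theorem~\ref{thm:Marchenko-Pastur for biwhitened noise}, so both sets of hypotheses are covered by the assumptions carried over from Lemma~\ref{lem:convergence of scaling factors for estimated variances}.
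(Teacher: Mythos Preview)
Your proof is correct and follows the same skeleton as the paper's: bound $\Vert\hat{\mathcal{E}}^{(n)}-\widetilde{\mathcal{E}}^{(n)}\Vert_{\mathrm{op}}$ via the scaling-factor perturbation from Lemma~\ref{lem:convergence of scaling factors for estimated variances}, then apply Weyl's inequality for singular values and the difference-of-squares identity to pass to eigenvalues of the Gram matrices.

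The one point of departure is how you control the operator norms. The paper factors $\hat{\mathcal{E}}^{(n)}-\widetilde{\mathcal{E}}^{(n)}$ through the \emph{unscaled} noise $\mathcal{E}^{(n)}$, which forces it to bound $\Vert\mathcal{E}^{(n)}\Vert_2$ via Lata{\l}a's moment bound and Markov's inequality, and separately to invoke the deterministic bounds on $u_i^{(n)}v_j^{(n)}$ from~\eqref{eq: u_i v_j squared upper bound}. Your factorization $\hat{\mathcal{E}}^{(n)}=(I+\Delta_1)\widetilde{\mathcal{E}}^{(n)}(I+\Delta_2)$ keeps everything relative to $\widetilde{\mathcal{E}}^{(n)}$, so that $\Vert\widetilde{\mathcal{E}}^{(n)}\Vert_{\mathrm{op}}=O_p(\sqrt{n})$ follows directly from the already-proved Theorem~\ref{thm:Marchenko-Pastur for biwhitened noise}. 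This is a cleaner and slightly more economical route (it yields $\Vert\hat{\mathcal{E}}^{(n)}-\widetilde{\mathcal{E}}^{(n)}\Vert_{\mathrm{op}}=O_p(\sqrt{\log n})$ versus the paper's $O_p((\log n)^{3/2})$), and it avoids importing an extra random-matrix tool. Either way the conclusion is the same.
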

The proof can be found in Appendix~\ref{appendix:proof of theorem on MP law using estimated variances}. 
We mention that the main difference between the conditions in Theorem~\ref{thm:Marchenko-Pastur for biwhitened noise} and those in Theorem~\ref{thm:Marchenko-Pastur for biwhitened noise from estimated variances} is that we further impose a growth rate -- slightly larger than logarithmic -- on the Poisson parameters $X_{i,j}$, namely the condition $(c/C) (\log n)^{1+\epsilon} \leq \min_{i,j}X_{i,j}^{(n)}$ for some $\epsilon>0$. Notably, this condition guarantees that the observation matrix $Y^{(n)}$ is strictly positive with probability approaching $1$ as $n\rightarrow\infty$ (using the fact that $\operatorname{Pr}\{Y^{(n)}_{i,j} = 0\} = \operatorname{exp}(-X_{i,j}^{(n)})$, which together with the union bound gives $\operatorname{Pr}\{\cup_{i,j} \{Y^{(n)}_{i,j} = 0 \} \} \leq m_n n \operatorname{exp}(-X_{i,j}^{(n)}) \underset{n\rightarrow\infty}{\longrightarrow} 0$, in the asymptotic regime $m_n,n\rightarrow\infty$, ${m_n}/n\rightarrow\gamma$ ). 
Although our analysis here is concerned with an asymptotically positive realization of the random matrix $Y$, the numerical experiments in Section~\ref{sec:experiments - fit to MP law} demonstrate that our results also hold when a realization of $Y$ can contain zeros.

Under the conditions in Lemma~\ref{lem:convergence of scaling factors for estimated variances} and by combining Theorems~\ref{thm:Marchenko-Pastur for biwhitened noise} and~\ref{thm:Marchenko-Pastur for biwhitened noise from estimated variances}, we get that the empirical spectral distribution of $\hat{\Sigma}^{(n)}$ converges to the MP distribution $F_{\gamma,1}$, and furthermore, $\lambda_1 \{ \hat{\Sigma}^{(n)} \} \overset{p}{\longrightarrow} (1 + \sqrt{\gamma})^2$.
Consequently, 
we propose to estimate $r$ from $\hat{Y}$ analogously to~\eqref{eq:r_tilde def} by replacing $Y$ with $\hat{Y}$.
We mention that an analogous version of Theorem~\ref{thm:r_tilde no false detect} can be proved for this estimator  by repeating the proof of Theorem~\ref{thm:r_tilde no false detect} and making use of Theorems~\ref{thm:Marchenko-Pastur for biwhitened noise} and~\ref{thm:Marchenko-Pastur for biwhitened noise from estimated variances}.

Recall that the equations for $\hat{\mathbf{u}}$ and $\hat{\mathbf{v}}$ in~\eqref{eq:scaling equations from estimated variances} are equivalent to the equations in~\eqref{eq:matrix scaling general equations} for $\mathbf{x}$ and $\mathbf{y}$ when taking $A = Y$, $x_i = \hat{u}_i^2$, $y_j = \hat{v}_j^2$, $r_i = n$, and $c_j = m$.
Since a realization of the random matrix $Y$ may contain zeros, it is important to understand under which circumstances
we have existence and uniqueness of positive $\mathbf{x}$ and $\mathbf{y}$ that satisfy~\eqref{eq:matrix scaling general equations} for $r_i=n$ and $c_j = m$, where $A$ is a deterministic nonnegative matrix that represents a realization of $Y$ with zeros. We provide a comprehensive review of this topic in Appendix~\ref{appendix: existence and uniqueness}, and derive the following simple guarantee on the existence and uniqueness of $\mathbf{x}$ and $\mathbf{y}$ in terms of the zeros in $A$.
\begin{prop}[Existence and uniqueness of $\mathbf{x}$ and $\mathbf{y}$ for $r_i=n$, $c_j=m$] \label{prop:existcne and uniquness for A simple cond}
Let $r_i=n$ and $c_j=m$ for all $i\in [m]$ and $j\in [n]$ in~\eqref{eq:matrix scaling general equations}. Suppose that $A$ does not have any zero rows and columns, and that both requirements below are met:
\begin{enumerate}
    \item For each $k = 1, 2,\ldots, \lfloor {n}/{2} \rfloor$, $A$ has less than $\lceil m k/n \rceil$ rows that have at least $n-k$ zeros each.
    \item For each $\ell = 1,2,\ldots \lfloor {m}/{2} \rfloor$, $A$ has less than $\lceil n \ell /m \rceil$ columns that have at least $m-\ell$ zeros each. 
\end{enumerate} 
Then, there exists a pair $(\mathbf{x},\mathbf{y})$ of positive vectors that satisfies~\eqref{eq:matrix scaling general equations}, and it is unique up to a positive scalar, namely it can only be replaced with $(a\mathbf{x},a^{-1}\mathbf{y})$ for any $a>0$.
\end{prop}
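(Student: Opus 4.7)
The plan is to invoke a classical existence-and-uniqueness theorem for matrix scaling (to be reviewed in Appendix~\ref{appendix: existence and uniqueness}) and then verify its hypothesis combinatorially from conditions 1 and 2. For the specific marginals $r_i=n$, $c_j=m$, which satisfy the necessary balance $\sum_i r_i = mn = \sum_j c_j$, the sharp form of that theorem (due to Sinkhorn, Brualdi, and Rothblum--Schneider) guarantees a positive scaling pair $(\mathbf{x},\mathbf{y})$, unique up to the ambiguity $(\mathbf{x},\mathbf{y})\mapsto(a\mathbf{x},a^{-1}\mathbf{y})$, provided $A$ has no zero rows or columns and every zero submatrix $A[I,J]=0$ with $(I,J)\notin\{(\emptyset,[n]),([m],\emptyset)\}$ satisfies the strict inequality
\[
 n|I| + m|J| < mn.
\]
Since the proposition's no-zero-row/column hypothesis already handles the two trivial boundary cases, the entire problem reduces to verifying this strict inequality for every nontrivial zero submatrix.

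To carry out the verification, I would fix any zero submatrix $A[I,J]=0$ with $I$ and $J$ both nonempty and proper, and set $k := n-|J|$ and $\ell := m-|I|$, so that $1\le k\le n-1$ and $1\le\ell\le m-1$. Rows in $I$ have all their nonzeros in the $k$ columns of $J^c$, hence each has at least $n-k$ zeros; symmetrically, each column in $J$ has at least $m-\ell$ zeros. I then split into three cases. (i) If $k\le\lfloor n/2\rfloor$, condition 1 yields $|I|<\lceil mk/n\rceil$, so $|I|\le\lceil mk/n\rceil-1<mk/n$, which gives $n|I|<mk=m(n-|J|)$, i.e.\ the target. (ii) If $\ell\le\lfloor m/2\rfloor$, the symmetric use of condition 2 gives $m|J|<n\ell=n(m-|I|)$, again equivalent to the target. (iii) Otherwise $k>n/2$ and $\ell>m/2$, and a direct computation yields
\[
 n|I| + m|J| \;=\; n(m-\ell) + m(n-k) \;=\; 2mn-(n\ell+mk) \;<\; 2mn-mn \;=\; mn.
\]
In every case the strict inequality holds, and the classical theorem delivers both existence and uniqueness up to a positive scalar.

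The hard part of the argument is not the reduction itself, which is routine combinatorics once the zero submatrix is parameterized by $k$ and $\ell$, but rather pinning down the correct sharp version of the classical matrix scaling result in Appendix~\ref{appendix: existence and uniqueness}. Weaker formulations in the literature give existence under a non-strict zero-submatrix inequality but do not directly yield uniqueness up to a scalar; the strict-inequality version, by contrast, packages existence and uniqueness together and is exactly what conditions 1 and 2 provide. A minor technical point is the elementary identity $\lceil mk/n\rceil-1<mk/n$, which needs $mk/n>0$; this is guaranteed by $k\ge 1$, and the excluded cases $k=0$ or $\ell=0$ precisely correspond to the $A$ having a zero row or column, which is ruled out by hypothesis.
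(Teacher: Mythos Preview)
Your proposal is correct and follows the same overall strategy as the paper: reduce to the classical matrix-scaling criterion and verify the strict zero-submatrix inequality $n|I|+m|J|<mn$ from conditions 1 and 2. The paper, however, organizes the verification slightly differently. It first states the classical result (Proposition~\ref{prop:existcne and uniquness for A}) under the additional standing hypothesis that $A$ is \emph{not completely decomposable}, and then proves two separate claims: (a) conditions 1 and 2 force $A$ to be not completely decomposable, and (b) conditions 1 and 2 force Condition~\ref{cond:existence of x and y for nonegative A}. For (b) the paper argues by contradiction, observing that a violating zero block must have $|\mathcal{I}_1|\ge m/2$ or $|\mathcal{I}_2|\ge n/2$, which immediately contradicts condition 1 or 2; this is exactly the contrapositive of your cases (i)--(ii), and your direct case (iii) is what makes the contradiction set-up unnecessary. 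Your packaging is a bit cleaner because the strict inequality for \emph{all} nontrivial zero blocks already rules out complete decomposability (summing the inequality over a block and its complement gives $2mn<2mn$), so you never need part (a) as a separate argument. The only thing to be careful about is that the version of the scaling theorem actually recorded in Appendix~\ref{appendix: existence and uniqueness} carries ``not completely decomposable'' as an explicit hypothesis; if you keep your formulation, you should add the one-line remark that this hypothesis is implied by your strict-inequality condition.
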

Observe that the conditions in Proposition~\ref{prop:existcne and uniquness for A simple cond} are only concerned with rows or columns of $A$ that have at least $\lceil m/2 \rceil$ and $\lceil n/2 \rceil$ zeros, respectively. Therefore, the existence and uniqueness guarantees in Proposition~\ref{prop:existcne and uniquness for A simple cond} hold if the majority of entries in each row and each column of $A$ are positive.
Aside from this simple sufficient condition for existence and uniqueness, Proposition~\ref{prop:existcne and uniquness for A simple cond} also allows $A$ to have a small number of rows and columns in which all but a few entries are zero, and moreover, $A$ can potentially have a large number of rows or columns with more than half their entries being zero. 
See Appendix~\ref{appendix: existence and uniqueness} for more details on the existence and uniqueness of $\mathbf{x}$ and $\mathbf{y}$, and on appropriate preprocessing steps that can be taken to guarantee them.

\section{Experiments on simulated Poisson data} \label{sec:Poisson simulations}
\subsection{Fit against the MP law} \label{sec:experiments - fit to MP law}
We begin by measuring how well our proposed scaling of Poisson noise fits the theoretical MP law with $\sigma = 1$ (see Theorem~\ref{thm:Marchenko-Pastur for biwhitened noise}), and compare it against the empirical eigenvalue density of the original noise matrix (without any scaling). 
In Figure~\ref{fig:MPfit2} we illustrate the eigenvalue histograms (normalized appropriately) for the matrices $\Sigma_n = n^{-1} Y Y^T$, $\hat{\Sigma}_n = n^{-1} \hat{Y} \hat{Y}^T$, and $\widetilde{\Sigma}_n = n^{-1} \widetilde{Y} \widetilde{Y}^T$, using the aspect ratios $\gamma = m/n = 1/2,\,1/5$ and increasing dimension $n=100,\,500,\,5000$. For visualization purposes, we normalized $\Sigma_n$ by a scalar so that its largest eigenvalue is the MP law upper edge $(1+\sqrt{\gamma})^2$ (for $\sigma=1$). More details for reproducibility can be found in Appendix~\ref{reproducibility details Poisson convergence to MP law}.
We observe that while the eigenvalue histogram for the original noise (i.e., without any scaling) is very different from the MP law, there is a good fit between the MP law and the eigenvalue histogram of the noise after scaling (using either the exact or the estimated scaling factor) which clearly improves upon increasing matrix dimensions. 
The analogous results for the aspect ratios $\gamma = m/n = 1/3,\,1/4$ can be found in Figure~\ref{fig:MPfit2_extra_aspect_ratios} in Appendix~\ref{appendix:MP fit Poisson additional figures}

\begin{figure}[]
    \makebox[\textwidth][c]{
    \hspace{-0.5cm}
        \subfloat[][$\gamma = 1/2$, $n = 100,\, 500,\, 5000$]{
        \includegraphics[width=0.6\textwidth]{./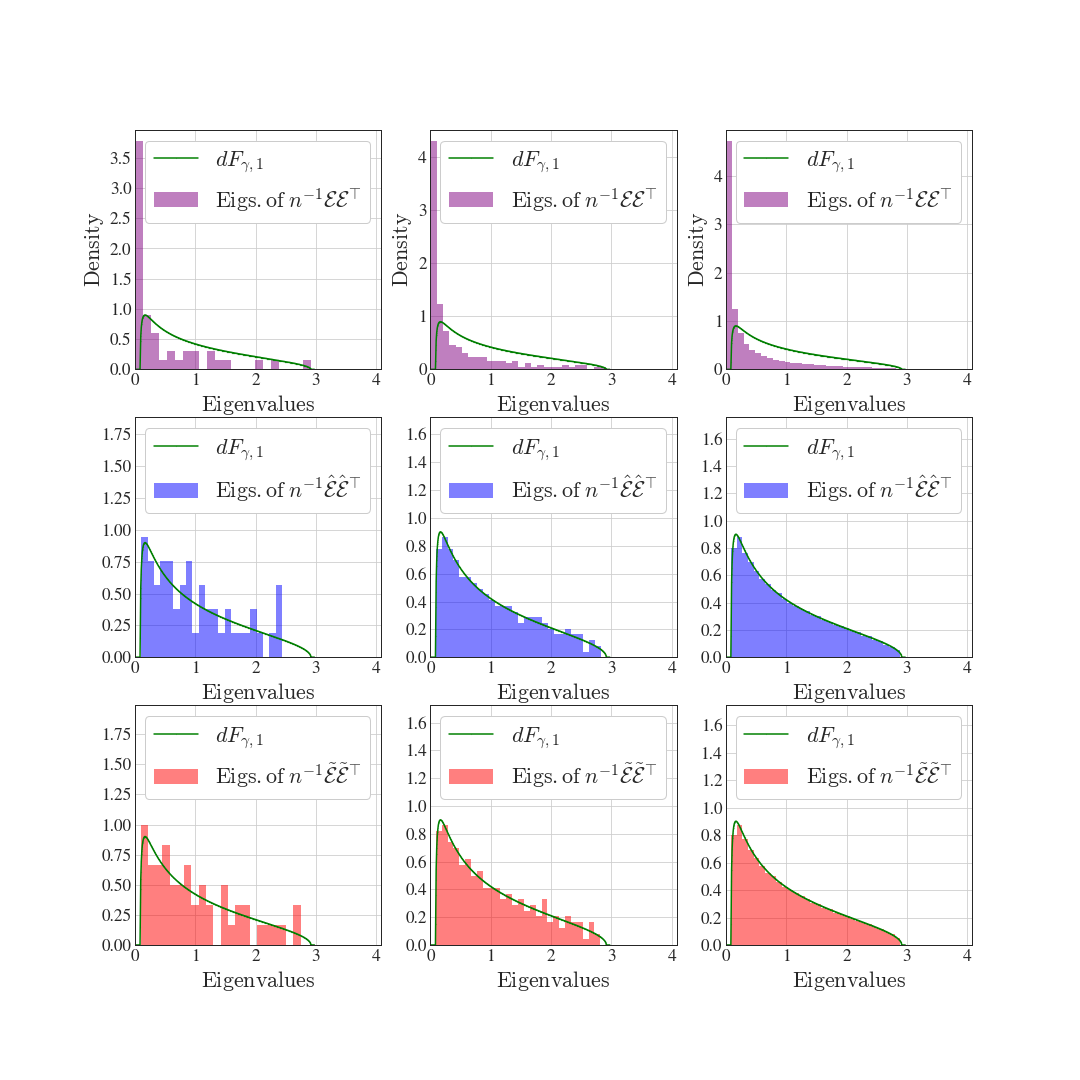}
        }
        \hspace{-1.3cm}
        \subfloat[][$\gamma = 1/5$, $n = 100,\, 500,\, 5000$]{
        \includegraphics[width=0.6\textwidth]{./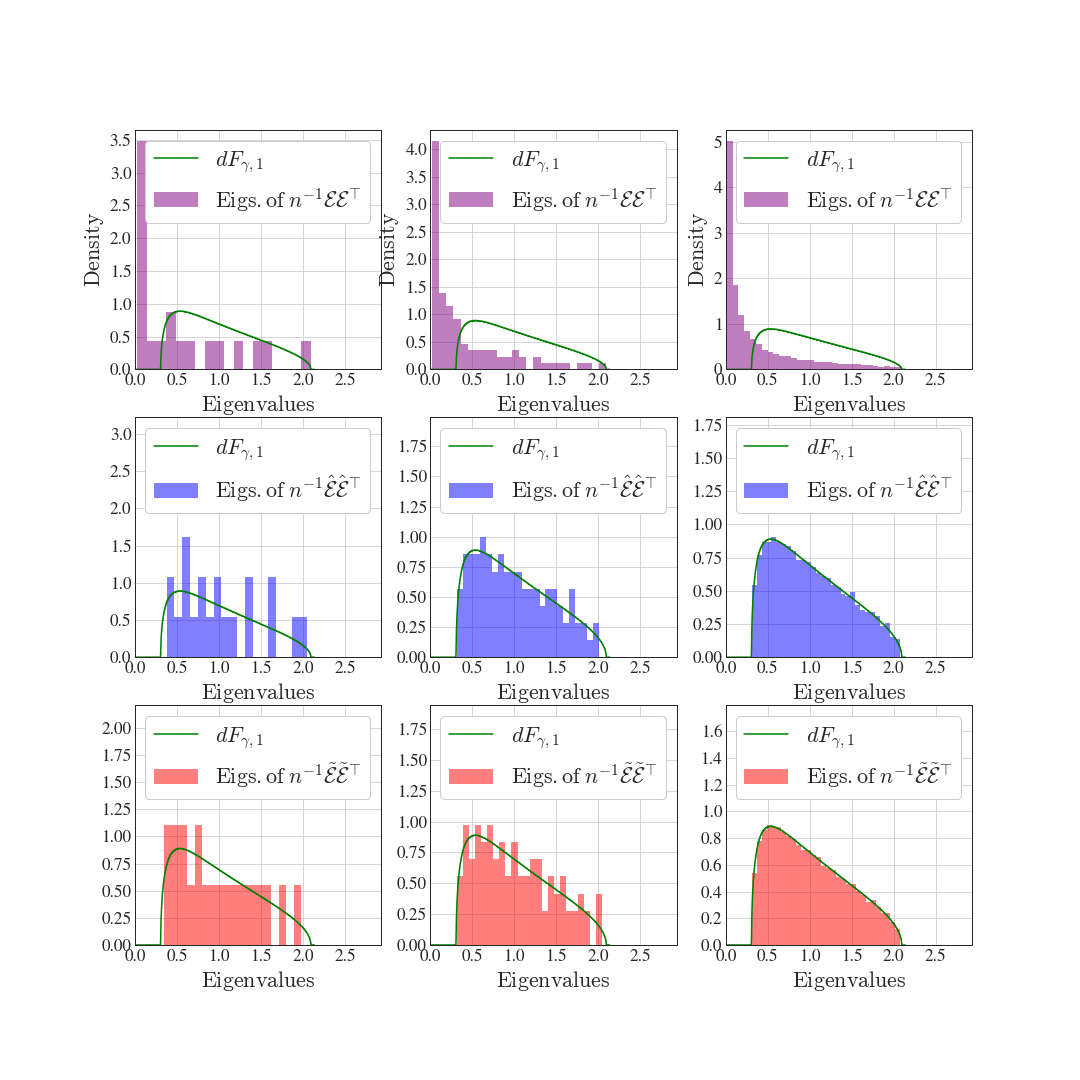}
        }
    }
    \caption{Spectrum of simulated Poisson noise versus the standard ($\sigma = 1$) Marchenko-Pastur density $dF_{\gamma, 1}$, for various aspect ratios $\gamma$ and matrix dimensions $n=100,500,5000$ (from left to right in each panel). The top row in each panel (purple) is the histogram of the eigenvalues of $\Sigma_n$ (i.e., without any row or column scaling). The center row in each panel (blue) is the histogram of the eigenvalues of $\hat{\Sigma}_n$ (i.e., after scaling with the estimated scaling factors). The bottom row in each panel (red) is the histogram of eigenvalues of $\widetilde{\Sigma}_n$ (i.e., after scaling with the exact scaling factors). 
    }
    \label{fig:MPfit2}
\end{figure}

In Figure~\ref{fig:MPfiterror}, we visualize the Kolmogorov-Smirnov (KS) distances (see~\cite{gotze2004rate}) $\sup_x \mathbb{E}\left[ |F_{\widetilde{\Sigma}_n}(x) - F_{\gamma, 1}(x)| \right]$ (red curve) and $\sup_x \mathbb{E}\left[ |F_{\hat{\Sigma}_n}(x) - F_{\gamma, 1}(x)| \right]$ (blue curve) as functions of the dimension $n$, where $F_{\widetilde{\Sigma}_n}$ and $F_{\hat{\Sigma}_n}$ are the empirical spectral distributions (see~\eqref{eq:empirical spectral distribution}) of $\widetilde{\Sigma}_n$ and $\hat{\Sigma}_n$, respectively, and $F_{\gamma, 1}$ is the MP distribution with $\sigma=1$. For this figure, we simulated $Y$ in the same way as we did for Figure~\ref{fig:MPfit2}, and averaged the KS distance over $20$ randomized experiments. As expected from the analysis in Section~\ref{sec:method and results}, we see that the empirical spectral distribution of the scaled noise matrix converges to the MP law, using either the exact or the estimated scaling factors. It is important to mention that even though Theorem~\ref{thm:Marchenko-Pastur for biwhitened noise from estimated variances} requires the Poisson parameters $X_{i,j}$ to increase with (slightly larger than a) logarithmic rate with $n$, the convergence to the MP law in this experiment is achieved without this condition (as the Poisson parameters in this experiment are upper bounded).

\begin{figure}[]
    \centering
    \makebox[\textwidth][c]{
        \subfloat[][$\gamma = 1/2$]{
        \includegraphics[width=0.5\textwidth]{./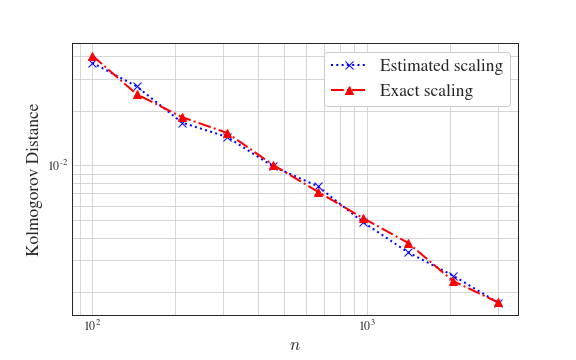}
        }
        
        \subfloat[][$\gamma = 1/5$]{
        \includegraphics[width=0.5\textwidth]{./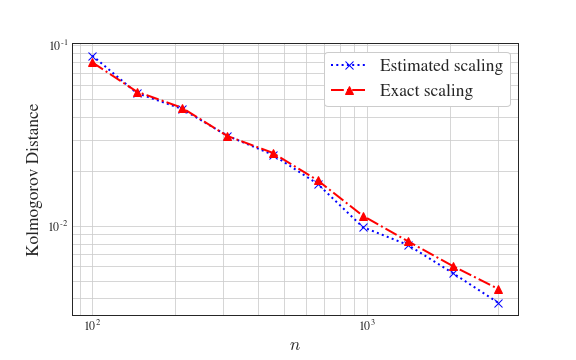}
        }
    }
    \caption{Convergence of the empirical spectral distribution (see~\eqref{eq:empirical spectral distribution}) of $\widetilde{\Sigma}_n$ (true scaling - red curve) and the empirical spectral distribution of $\hat{\Sigma}_n$ (estimated scaling - blue curve) to the MP distribution $F_{\gamma, 1}$, as the column dimension $n$ increases.}
    \label{fig:MPfiterror}
\end{figure}

\subsection{Rank estimation accuracy}\label{section:rank estimation}
Next, we demonstrate the accuracy of Algorithm~\ref{alg:noise standardization} for rank estimation under various degrees of heteroskedasticity, and compare it to several other methods: the Empirical Kaiser Criterion (EKC) \cite{braeken2017empirical}, pairwise Parallel Analysis (PA) (see Algorithm 1 in~\cite{dobriban2019deterministic}), Deflated Deterministic Parallel Analysis+ (DDPA+)~\cite{dobriban2019deterministic}, and Signflip Parallel Analysis (Signflip PA)~\cite{hong2020selecting}. For DDPA+, we input the column-wise centered matrix, where each column has its mean subtracted from it. For Signflip PA, we input the column-wise centered matrix whose rows are further normalized (i.e., each row is divided by its Euclidean norm), as suggested in~\cite{hong2020selecting} in their experiments. We found empirically that these preprocessing steps are essential for these methods to select a nontrivial rank (beyond $1$) in the settings of our experiments. 

We performed three experiments, each one simulating a different scenario. In each one, we generated a matrix $X$ of Poisson parameters in a different way, where the rank is always $20$ and the dimensions are $m=500$, $n=750$. In the first experiment, we generated a matrix $X$ with little variation in the magnitudes of the entries across rows or columns, facilitating a vanilla scenario of mild heteroskedasticity. In the second experiment, we generated a matrix $X$ with substantial variation in the magnitudes of the entries across rows or columns, corresponding to a challenging regime of strong heteroskedasticity. Lastly, in the third experiment we generated a matrix $X$ such that one of the $20$ signal components is much stronger than the rest (the noise is also heteroskedastic due to variation in the entries of the strong factor). This last regime is particularly challenging for methods based on Monte-Carlo simulations (PA, Signflip PA) due to \textit{shadowing}~\cite{dobriban2020permutation} -- a phenomenon where strong signal components introduce bias into the estimation of the spectrum of the noise. More details on the simulation of our different settings can be found in Appendix~\ref{appendix:reproducibility for rank selection simulations}. After generating $X$ for each experiment, we normalized $X$ (excluding the strong factor in the third experiment) so that its average Poisson parameter has a prescribed value that serves as an average signal-to-noise ratio. We then report the rank estimated by the different methods for a wide range of average Poisson parameter values. 
In Figure~\ref{fig: rank select heteroskedasticity} we plot the estimated ranks when averaged over $20$ randomized trials, as well as and their $[0.1, 0.9]$ quantiles, as a function of the average Poisson parameter in the matrix.

In the first scenario (Figure~\ref{fig:mild heteroskedasticity}), where the heteroskedasticity is mild, it is evident that all methods perform similarly. In particular, when the Poisson parameters are small, the signal eigenvalues are not large enough to be detected, hence the ranks estimated by all methods are $1$. Then, as the average Poisson parameter grows, more and more signal eigenvalues become large enough to be detected, and the estimated ranks (by all methods) gradually increase until they stabilize at the correct rank of $20$. Even in this easy setting, our method and the EKC perform slightly better than the other methods, allowing for accurate rank estimation for smaller Poisson parameters (lower signal-to-noise ratios). Intuitively, the reason for this is that both our method and the EKC perform a normalization that stabilizes the noise variances (explicitly in our method and implicitly in the EKC), which controls the spectrum of the noise and restricts its large eigenvalues from being dominated by rows/columns with larger noise variances.

In the second and third scenarios (Figures~\ref{fig:strong heteroskedasticity} and~\ref{fig:strong factor}), which are more challenging, the performance of the other methods substantially degrades. In particular, the methods that were not designed to handle heteroskedastic noise (all methods except ours and Signflip PA) perform poorly in the second scenario, and the methods that are based on simulations (PA, Signflip PA) perform poorly in the third scenario. DDPA+ can occasionally detect the correct rank in the third scenario (as it is designed to cope with strong signal factors by subtracting them), but its behavior is unstable due to the heteroskedastic noise. Evidently, our method is the only one that converges to the correct rank in a stable manner across all scenarios, and is able to detect the correct rank with smaller Poisson parameters (lower signal-to-noise ratios).

\begin{figure}
    \makebox[\textwidth][c]{
        \subfloat[Mild heteroskedasticity]
        {
        \includegraphics[width=0.38\textwidth]{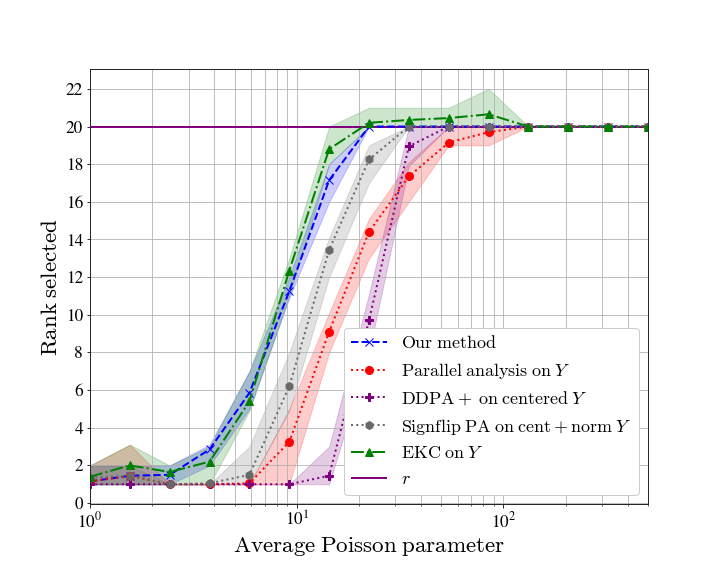}
        \label{fig:mild heteroskedasticity}
        } 
        \hspace{-1cm}
        \subfloat[Strong heteroskedasticity]{
        \includegraphics[width=0.38\textwidth]{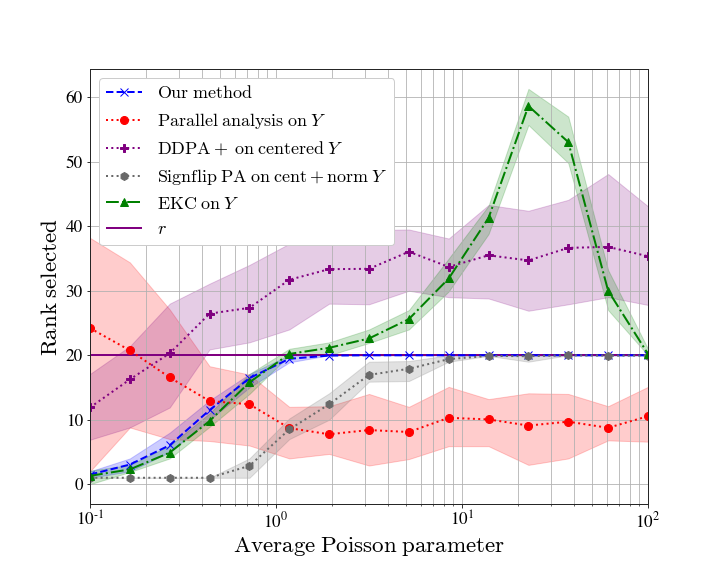} 
        \label{fig:strong heteroskedasticity}
        } 
        \hspace{-1cm}
        \subfloat[Presence of a strong factor]{
        \includegraphics[width=0.38\textwidth]{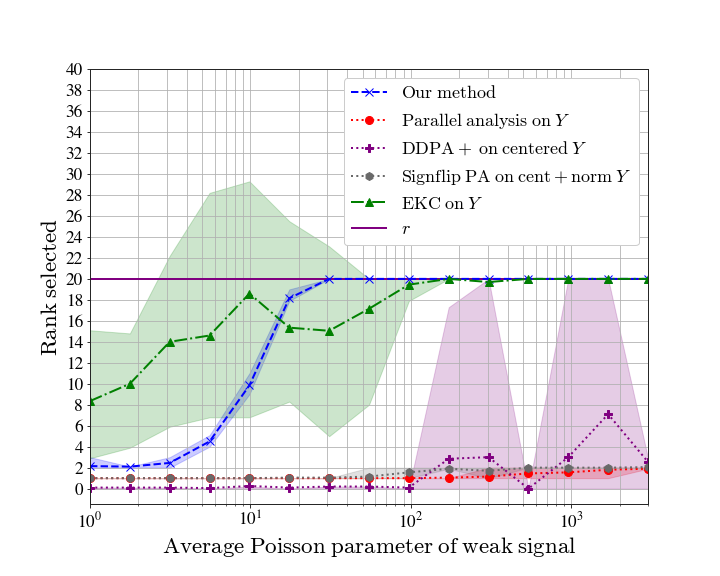} 
        \label{fig:strong factor}
        }
    }
    \caption{Rank selection accuracy of several methods, as well as their variability ($[0.1,0.9]$ quantile range depicted as the shaded areas), for a Poisson count matrix generated from $X$ with rank $r=20$, $m=500$, and $n=750$. On the x-axis, we have the average Poisson parameter in the matrix, while on the y-axis, we have the ranks determined by the algorithms. In the leftmost figure, the Poisson parameter matrix is generated to be relatively uniform, such that the noise is only mildly heteroskedastic. In the center figure, we demonstrate the impact of heteroskedasticity by substantially increasing the variability of the parameter matrix across rows and columns. In the rightmost figure, we consider the impact of having a strong signal component.} \label{fig: rank select heteroskedasticity}
\end{figure} 

\section{Beyond the Poisson distribution} \label{sec:beyond Poisson noise}
Let us consider the model~\eqref{eq:Y model def} where $Y_{i,j}$ are independent but not necessarily Poisson, $X = \mathbb{E}[Y]$ is the matrix whose rank is of interest, and $\mathcal{E} = Y - \mathbb{E}[Y]$ is the corresponding noise matrix. Recall that the main idea underlying our approach is to diagonally scale the data matrix $Y$ to $D(\mathbf{u}) Y D(\mathbf{v})$, so that the average variance in each row and in each column of the scaled noise matrix $D(\mathbf{u}) \mathcal{E} D(\mathbf{v})$ is $1$. 
Therefore, analogously to~\eqref{eq:scaling equations from true variances}, we consider positive $\mathbf{u}$ and $\mathbf{v}$ that satisfy
\begin{equation}
    1 = \frac{1}{m} \sum_{i=1}^m u_i^2 \operatorname{Var}[Y_{i,j}] v_j^2, \qquad \text{and} \qquad 1 = \frac{1}{n} \sum_{j=1}^n u_i^2 \operatorname{Var}[Y_{i,j}] v_j^2, \label{eq:scaling equations from true variances general model}
\end{equation}
for all $i\in[m]$ and $j\in [n]$, where $\operatorname{Var}[Y_{i,j}]$ is the variance of $Y_{i,j}$. Equivalently to Proposition~\ref{prop:existcne and uniquness for Lambda}, if all the variances $\operatorname{Var}[Y_{i,j}]$ are strictly positive, then positive $\mathbf{u}$ and $\mathbf{v}$ that satisfy~\eqref{eq:scaling equations from true variances general model} are guaranteed to exist and are unique up to a positive scalar.

Under appropriate conditions on the variances of $\{\mathcal{E}_{i,j}\}$ and their higher order moments, Theorem~\ref{thm:Marchenko-Pastur for biwhitened noise} can be trivially extended to account for distributions of $Y_{i,j}$ other than the Poisson. In particular, we have the following proposition, which provides general conditions under which the conclusions of Theorem~\ref{thm:Marchenko-Pastur for biwhitened noise} also hold.
\begin{prop} \label{prop:MP law for other distributions}
Suppose that there exist universal constants $C_k,C,c>0$ such that $ c < \max_{i,j} \operatorname{Var}[Y_{i,j}^{(n)}] \leq C \min_{i,j} \operatorname{Var}[Y_{i,j}^{(n)}]$ and $\mathbb{E}\vert\mathcal{E}_{i,j}^{(n)}\vert^{2k} \leq C_k (\operatorname{Var}[Y_{i,j}^{(n)}])^{k}$ for all $i\in[m]$, $j\in[n]$, $n\geq 1$, and $k\geq 1$. Then, the conclusions of Theorem~\ref{thm:Marchenko-Pastur for biwhitened noise} hold for $\widetilde{\Sigma}^{(n)} = n^{-1} \widetilde{\mathcal{E}}^{(n)} (\widetilde{\mathcal{E}}^{(n)})^T$, where $ \widetilde{\mathcal{E}}^{(n)}_{i,j} = u_i^{(n)} \mathcal{E}_{i,j}^{(n)} v_j^{(n)}$ and $(\mathbf{u}^{(n)},\mathbf{v}^{(n)})$ is a solution to~\eqref{eq:scaling equations from true variances general model}.
\end{prop}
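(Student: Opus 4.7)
The plan is to show that the proof of Theorem~\ref{thm:Marchenko-Pastur for biwhitened noise} carries over essentially verbatim once the Poisson-specific facts are replaced by the hypotheses of the proposition. Recall that the Poisson proof invoked two external results: a general Marchenko-Pastur convergence theorem for matrices with independent non-identically distributed entries from~\cite{girko2001theory} to obtain bulk convergence, and the sharp upper-edge estimate from~\cite{alt2017local} for $\lambda_1\{\widetilde{\Sigma}^{(n)}\}$. Both require four ingredients: (i) independent mean-zero entries, (ii) a variance profile whose entries are uniformly bounded above and below, (iii) the doubly-stochastic normalization that the row and column averages of the variance profile equal $1$, and (iv) uniform control on the higher moments of the entries. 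Condition (i) is inherited directly from $Y^{(n)}$, condition (iii) is enforced by construction via~\eqref{eq:scaling equations from true variances general model}, and conditions (ii) and (iv) are what must be checked from the new assumptions.

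First, I would show that the scaling factors $\mathbf{u}^{(n)}, \mathbf{v}^{(n)}$ exist, are unique up to the usual $(a,a^{-1})$ ambiguity, and are uniformly bounded above and below after fixing a normalization such as $\|\mathbf{u}^{(n)}\|_2 = \|\mathbf{v}^{(n)}\|_2$. Existence and uniqueness follow from the matrix scaling theorem of~\cite{sinkhorn1967diagonal} applied to the strictly positive matrix $V_{i,j}^{(n)} := \operatorname{Var}[Y_{i,j}^{(n)}]$ with target marginals $(n\mathbf{1}_{m_n}, m_n \mathbf{1}_n)$, exactly as in Proposition~\ref{prop:existcne and uniquness for Lambda}. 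Uniform boundedness follows from the hypothesis $c < \max_{i,j} V_{i,j}^{(n)} \leq C \min_{i,j} V_{i,j}^{(n)}$ by the same argument used in the Poisson case for $X^{(n)}$, since that argument only used the corresponding bound on $X^{(n)}_{i,j}$ (which served there as the variance).

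Second, I would verify that the scaled noise $\widetilde{\mathcal{E}}_{i,j}^{(n)} = u_i^{(n)} \mathcal{E}_{i,j}^{(n)} v_j^{(n)}$ satisfies (ii) and (iv). The variance profile is $\operatorname{Var}[\widetilde{\mathcal{E}}_{i,j}^{(n)}] = (u_i^{(n)} v_j^{(n)})^2 V_{i,j}^{(n)}$, which is bounded above and below by universal constants thanks to the boundedness of the scaling factors and of $V^{(n)}$. For the moments, the elementary identity
\begin{equation*}
  \mathbb{E}\bigl\vert\widetilde{\mathcal{E}}_{i,j}^{(n)}\bigr\vert^{2k}
  = (u_i^{(n)} v_j^{(n)})^{2k}\, \mathbb{E}\bigl\vert\mathcal{E}_{i,j}^{(n)}\bigr\vert^{2k}
  \leq C_k (u_i^{(n)} v_j^{(n)})^{2k} (V_{i,j}^{(n)})^{k}
  = C_k \bigl(\operatorname{Var}[\widetilde{\mathcal{E}}_{i,j}^{(n)}]\bigr)^{k}
\end{equation*}
combined with the upper bound on the scaled variance profile yields a uniform bound $\mathbb{E}|\widetilde{\mathcal{E}}_{i,j}^{(n)}|^{2k} \leq C_k'$ for every fixed $k$. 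With (i)--(iv) in hand, one applies the bulk result of~\cite{girko2001theory} and the edge estimate of~\cite{alt2017local} to $\widetilde{\mathcal{E}}^{(n)}$ in the same manner as in the proof of Theorem~\ref{thm:Marchenko-Pastur for biwhitened noise}, giving both the almost-sure convergence $F_{\widetilde{\Sigma}^{(n)}} \to F_{\gamma,1}$ and the convergence in probability $\lambda_1\{\widetilde{\Sigma}^{(n)}\} \to (1+\sqrt{\gamma})^2$.

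The main obstacle in this argument is the uniform boundedness of the scaling factors in the first step; once that is available the rest is essentially bookkeeping. This step, however, is not genuinely new: the hypothesis $c < \max V^{(n)}_{i,j} \leq C \min V^{(n)}_{i,j}$ is the exact structural analogue of the condition imposed on $X^{(n)}$ in Theorem~\ref{thm:Marchenko-Pastur for biwhitened noise}, and the scaling equations have the same form with $V^{(n)}_{i,j}$ in place of $X^{(n)}_{i,j}$, so the Poisson-case reasoning transports without modification.
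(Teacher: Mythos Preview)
Your proposal is correct and follows exactly the approach indicated by the paper, which states only that the proof is obtained by repeating the proof outline of Theorem~\ref{thm:Marchenko-Pastur for biwhitened noise}. You have in fact supplied more detail than the paper does: you correctly identify that the Poisson-specific moment recursion is replaced by the assumed bound $\mathbb{E}|\mathcal{E}_{i,j}^{(n)}|^{2k} \leq C_k (\operatorname{Var}[Y_{i,j}^{(n)}])^k$, that the scaling-factor boundedness transfers because Lemma~2 of~\cite{landa2020scaling} depends only on the ratio $\max V^{(n)}_{i,j}/\min V^{(n)}_{i,j}$, and that the applications of~\cite{girko2001theory} and~\cite{alt2017local} then go through unchanged.
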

The proof of Proposition~\ref{prop:MP law for other distributions} is obtained by repeating the proof outline of Theorem~\ref{thm:Marchenko-Pastur for biwhitened noise} and is omitted for the sake of brevity. 
Note that the proof of Theorem~\ref{thm:r_tilde no false detect} does not directly rely on the particular distribution of $Y_{i,j}$ but only on the results of Theorem~\ref{thm:Marchenko-Pastur for biwhitened noise} and Equation~\eqref{eq:rank(Lambda_tilde) = rank(Lambda)}. Hence, the conclusions of Theorem~\ref{thm:r_tilde no false detect} would also hold for distributions of $Y_{i,j}$ satisfying the conditions in Proposition~\ref{prop:MP law for other distributions}.

Continuing analogously to~\eqref{eq:scaling equations from estimated variances}, we propose to estimate the scaling factors $\mathbf{u}$ and $\mathbf{v}$ by replacing the true variances $\{\operatorname{Var}[Y_{i,j}]\}$ in~\eqref{eq:scaling equations from true variances general model} with corresponding independent unbiased estimators. That is, we propose to find positive $\hat{\mathbf{u}}$ and $\hat{\mathbf{v}}$ that satisfy
\begin{equation}
    1 = \frac{1}{m} \sum_{i=1}^m \hat{u}_i^2 \widehat{\operatorname{Var}}[Y_{i,j}] \hat{v}_j^2, \qquad \text{and} \qquad 1 = \frac{1}{n} \sum_{i=1}^n \hat{u}_i^2 \widehat{\operatorname{Var}}[Y_{i,j}] \hat{v}_j^2, \label{eq:scaling equations from estimated variances general model}
\end{equation}
where $\{\widehat{\operatorname{Var}}[Y_{i,j}]\}_{i\in[m],\;j\in[n]}$ are independent, nonnegative, and satisfy $\mathbb{E}[\widehat{\operatorname{Var}}[Y_{i,j}]] = \operatorname{Var}[Y_{i,j}]$ for all $i\in[m]$ and $j\in [n]$. Lemma~\ref{lem:convergence of scaling factors for estimated variances} and Theorem~\ref{thm:Marchenko-Pastur for biwhitened noise from estimated variances} can then be extended to distributions other than the Poisson by following our proof techniques and utilizing distribution-specific tail bounds.

Overall, Algorithm~\ref{alg:noise standardization} can be adapted to a distribution other than the Poisson by replacing $Y$ in Step~\ref{alg: step 1} with the matrix of noise variance estimators $(\widehat{\operatorname{Var}}[Y_{i,j}])_{i\in[m],\; j\in[n]}$, noting that existence and uniqueness of solutions to the scaling problem~\eqref{eq:scaling equations from estimated variances general model} depend on the zero pattern of this matrix as described at the end of Section~\ref{sec:estimating the scaling factors} and in Appendix~\ref{appendix: existence and uniqueness}. 

\subsection{Quadratic Variance Functions (QVFs)} \label{sec:quadratic variance functions}
We now consider a useful setting where $\widehat{\operatorname{Var}}[Y_{i,j}]$ can be computed directly from $Y_{i,j}$.
Suppose that the entries of $Y$ belong to a family of distributions that satisfies a quadratic relation between the mean and the variance, namely
\begin{equation}
    \operatorname{Var}[Y_{i,j}] = a + b X_{i,j} + cX_{i,j}^2, \label{eq: quadratic variance}
\end{equation}
for all $i\in[m], j\in[n]$, where $a,b,c$ are fixed constants. Evidently, the Poisson model~\eqref{eq:Y poisson model} satisfies~\eqref{eq: quadratic variance} with $a=c=0$ and $b=1$. To clarify the nomenclature, the term `family of distributions' in the context of the Poisson refers to the set of Poisson distributions with all possible Poisson parameters.

Perhaps the most studied families that satisfy the quadratic relation~\eqref{eq: quadratic variance} are the Natural Exponential Families with Quadratic Variance Functions (NEF-QVFs)~\cite{morris1982natural,morris1983natural}. The NEF-QVFs include six fundamental families: the normal with fixed variance ($b=c=0$, $a>0$), the Poisson ($a=c=0$, $b=1$), the binomial with fixed number of trials ($a=0$, $b=1$, $c<0$), the negative binomial with fixed number of failures ($a=0$, $b=1$, $c>0$), the gamma with fixed shape parameter ($a=b=0$, $c>0$), and a family known as the Generalized Hyperbolic Secant ($a>0$, $b=0$, $c>0$). It was shown in~\cite{morris1982natural} that these six fundamental families, together with all of their possible linear transformations (scaling and translation), convolutions (sum of $k$ i.i.d variables), and divisions (the inverse of a convolution), form all possible NEF-QVFs. If the entries of $Y$ belong one of these families, then $Y_{i,j}$ satisfies~\eqref{eq: quadratic variance} with a distinct set of coefficients $a,b,c$ (unique to a specific NEF-QVF family). The NEF-QVFs also admit many favorable properties and satisfy the moment condition required in Proposition~\ref{prop:MP law for other distributions} (see Theorem 3 in~\cite{morris1982natural} and the discussion immediately following the proof).

We now have the following proposition.
\begin{prop}[Unbiased variance estimator for QVFs] \label{prop:existence of variance estimator}
If the entries of $Y$ satisfy the QVF~\eqref{eq: quadratic variance} with $c\neq -1$, then an unbiased variance estimator for $Y_{i,j}$ is given by
\begin{equation}
    \widehat{\operatorname{Var}}[Y_{i,j}] = \frac{a + b Y_{i,j} + c Y_{i,j}^2}{1 + c}. \label{eq:quadratic noise variance estimator}
\end{equation}
Among all of NEF-QVFs, the family of distributions consisting of the Bernoulli and its linear transformations is {the only one} with $c=-1$. For this family there is no unbiased variance estimator that is only a function of $Y_{i,j}$. 
\end{prop}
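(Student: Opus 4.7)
The plan is to prove the three assertions by direct calculation, classification, and a parameter-counting impossibility argument, respectively.

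For the unbiasedness claim, I would use the single identity $\mathbb{E}[Y_{i,j}^2]=\operatorname{Var}[Y_{i,j}]+X_{i,j}^2$ combined with~\eqref{eq: quadratic variance}. Taking expectation of the numerator in~\eqref{eq:quadratic noise variance estimator},
\[
\mathbb{E}[a+bY_{i,j}+cY_{i,j}^2] = a + bX_{i,j} + c\bigl(\operatorname{Var}[Y_{i,j}]+X_{i,j}^2\bigr) = \operatorname{Var}[Y_{i,j}] + c\,\operatorname{Var}[Y_{i,j}] = (1+c)\operatorname{Var}[Y_{i,j}],
\]
where I used~\eqref{eq: quadratic variance} once more to collect the terms $a+bX_{i,j}+cX_{i,j}^2$. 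Dividing by $1+c$ (which requires $c\neq -1$) gives the claim. This part is entirely mechanical; no obstacle here.

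For the second assertion, I would enumerate the six fundamental NEF-QVFs from~\cite{morris1982natural} and read off their $c$-coefficients: normal ($c=0$), Poisson ($c=0$), binomial on $N$ trials ($c=-1/N$), negative binomial ($c>0$), gamma ($c>0$), and generalized hyperbolic secant ($c>0$). Among these, only the binomial with $N=1$, i.e.\ the Bernoulli, achieves $c=-1$. I would then check that the three construction operations that generate \emph{all} NEF-QVFs from these six families affect $c$ in a controlled way: a direct computation shows that a linear transformation $Y'=\alpha Y+\beta$ leaves $c$ unchanged (only $a$ and $b$ are altered), while a $k$-fold convolution sends $(a,b,c)\mapsto (ka,b,c/k)$. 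Therefore $c=-1$ can be reached only from a Bernoulli by a linear transformation, since convolutions of Bernoullis give binomials with $c=-1/N\in(-1,0)$, and divisions (the inverse of convolution, when they produce a valid NEF) would require $|c|>1$ from another family, which is not available. This is the most delicate step, because one needs to invoke the Morris classification to rule out pathological cases, but it reduces to citing the classification theorem and the elementary transformation rules.

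For the impossibility claim, I would use a dimension-counting argument on the parameter. Any linear transformation of a Bernoulli takes only two distinct values, say $\{\beta,\alpha+\beta\}$, so an arbitrary function $f$ of $Y_{i,j}$ satisfies
\[
\mathbb{E}[f(Y_{i,j})] = (1-p)f(\beta) + p\,f(\alpha+\beta),
\]
which is affine in $p$. However, $\operatorname{Var}[Y_{i,j}]=\alpha^2 p(1-p)$ is a nonzero quadratic polynomial in $p$. Since two polynomials in $p$ of different degrees cannot agree on the open interval $p\in(0,1)$, no function of $Y_{i,j}$ alone can be an unbiased estimator of the variance. This concludes the proof; the only substantive ingredient beyond direct calculation is the Morris classification used in the middle step.
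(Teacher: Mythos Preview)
Your proposal is correct and follows essentially the same approach as the paper's proof: the unbiasedness is shown by the identical expectation calculation, the classification step invokes the Morris list together with the invariance of $c$ under linear transformations and its change under convolution/division, and the impossibility for the Bernoulli family is the same affine-versus-quadratic argument in $p$. Your treatment of the second step is slightly more explicit (you actually compute $(a,b,c)\mapsto(ka,b,c/k)$ under $k$-fold convolution), but the logic matches the paper's.
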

The proof can be found in Appendix~\ref{appendix:proof of prop on existence of variance estimator}.
Proposition~\ref{prop:existence of variance estimator} shows that all NEF-QVFs except the Bernoulli (and its linear transformations) admit unbiased variance estimators that can be easily computed from $Y_{i,j}$. 
More generally, our methodology is applicable to any family of distributions satisfying a generic QVF~\eqref{eq: quadratic variance} (with $c \neq -1$), not just NEF-QVFs. Such families include the Generalized Poisson~\cite{consul1973generalization} with fixed dispersion, the log-normal with fixed variance (of the natural logarithm), the beta with fixed sample size (the sum of the two shape parameters), and the beta-binomial with fixed number of trials and intra-class correlation parameters, among (infinitely-many) other families of distributions. 
We note that since the quadratic variance relation~\eqref{eq: quadratic variance} involves only the first and second moments, the higher moments of a distribution with a QVF can be arbitrary. Hence, the coefficients $a,b,c$ do not uniquely identify a specific family of distributions in general.  

In Appendix~\ref{appendix:experiments for other distributions}  we conduct numerical experiments analogous to the ones in Section~\ref{sec:experiments - fit to MP law} on three families of distributions: the binomial, negative binomial, and generalized Poisson. For these families we demonstrate the convergence of the spectrum of the noise after scaling to the MP law empirically (using the variance estimator~\eqref{eq:quadratic noise variance estimator} as well as the true variance~\eqref{eq: quadratic variance}).

\subsection{Missing entries and zero-inflation}
It is worthwhile to mention that the QVF~\eqref{eq: quadratic variance} can also accommodate for entries missing at random when imputed with zeros, or equivalently, if some of the entries are randomly assigned a zero value (i.e., zero inflation). In particular, suppose that we observe 
\begin{equation} \label{eq:missing entries}
    \overline{Y}_{i,j} = 
    \begin{dcases}
    {Y}_{i,j}, & \text{with probability} \; p,\\
    0 & \text{with probability} \; 1 - p,
    \end{dcases}
\end{equation}
for some $p\in (0,1]$, where the entries of $Y$ satisfy the QVF~\eqref{eq: quadratic variance}. Then, we have $\overline{X}:= \mathbb{E}[\overline{Y}] = p X$, whose rank is the same as $X$, and a direct calculation shows that 
\begin{equation}
    \operatorname{Var}[\overline{Y}_{i,j}] = ap + bpX_{i,j} + p(c+1-p)X_{i,j}^2 = ap + b \overline{X}_{i,j} + \left(\frac{c+1-p}{p}\right) \overline{X}_{i,j}^2.
\end{equation}
Consequently, the observation model~\eqref{eq:missing entries} satisfies a QVF of the form~\eqref{eq: quadratic variance} with coefficients that depend on the observation probability $p$. Then, the corresponding variance estimator is
\begin{equation}
    \widehat{\operatorname{Var}}[\overline{Y}_{i,j}] = \frac{a p^2 + b p \overline{Y}_{i,j} + (1+c-p)\overline{Y}_{i,j}^2}{1+c}.
\end{equation}
Note that the variance estimator ends up with a quadratic term $\overline{Y}_{i,j}^2$ even if the QVF of $Y_{i,j}$ has $c=0$. Therefore, our biwhitening approach can be useful even in a setting of homoskedastic noise ($a>0$, $b=c=0$) with missing entries, in which case the noise becomes heteroskedastic and satisfies a nontrivial QVF.

\section{Practical considerations and real data} \label{sec:practical considerations and real data}
\subsection{Adapting to the data} \label{sec:adapting to the data}
When analyzing experimental data, it is desirable to have a practical approach for finding the most appropriate coefficients in the QVF~\eqref{eq: quadratic variance} automatically.
To that end, we focus on QVFs with $a=0$, $b\geq 0$, and $c\geq 0$. Such QVFs are naturally found in certain families of nonnegative random variables. In particular, any family that includes distributions with nonnegative variables with zero means must satisfy $a=0$ (since a nonnegative random variable with zero mean must have zero variance). The restriction $b\geq 0$ and $c\geq 0$ is convenient to insure that the variance estimator~\eqref{eq:quadratic noise variance estimator} is always nonnegative if $Y_{i,j}$ is nonnegative. 
Substituting $a=0$, $b = {\alpha (1-\beta)}/(1-\alpha\beta)$, and $c = {\alpha \beta}/(1-\alpha\beta)$ in~\eqref{eq:quadratic noise variance estimator} for any $\beta\in [0,1]$ and $\alpha > 0$, gives
\begin{equation}
    \widehat{\operatorname{Var}}[Y_{i,j}] = \alpha \left[ (1-\beta) Y_{i,j} + \beta Y_{i,j}^2 \right]. \label{eq:variance estimator with alpha and beta}
\end{equation}
Intuitively, the parameter $\beta\in[0,1]$ interpolates between a purely linear variance function and a purely quadratic variance function, whereas $\alpha>0$ controls the global scaling. 

Let us fix $\alpha = 1$ and some $\beta\in [0,1]$. For a given matrix $Y$, the scaling equations~\eqref{eq:scaling equations from estimated variances general model} can be solved when plugging the variance estimator~\eqref{eq:variance estimator with alpha and beta}, providing the pair of scaling factors $(\hat{\mathbf{u}},\hat{\mathbf{v}})$. The scaled data matrix is then given by $\hat{Y} = D(\hat{\mathbf{u}}) Y D(\hat{\mathbf{v}})$, and the corresponding Gram matrix is defined as $\hat{\Sigma} = n^{-1} \hat{Y} \hat{Y}^T$. Importantly, since $\alpha$ controls the global scaling of the matrix $(\widehat{\operatorname{Var}}[Y_{i,j}])_{i\in[m],\; j\in[n]}$, it is clear form the equations~\eqref{eq:scaling equations from estimated variances general model} that the pair of scaling factors that solves~\eqref{eq:scaling equations from estimated variances general model} for any $\alpha>0$ is given by $(\alpha^{-1/4} \hat{\mathbf{u}},\alpha^{-1/4} \hat{\mathbf{v}})$. Consequently, the scaled data matrix for any $\alpha>0$ is given by ${\alpha}^{-1/2} \hat{Y}$, and the corresponding Gram matrix is $\alpha^{-1} \hat{\Sigma}$.
Note that the eigenvalues of $\alpha^{-1} \hat{\Sigma}$ are related to those of $\hat{\Sigma}$ by the scalar multiple $\alpha^{-1}$. 

We now propose a method for choosing $\alpha$ and $\beta$ automatically from $Y$ by minimizing the discrepancy between the spectrum of the resulting scaled data matrix and the MP law (relying on the fact that a low-rank perturbation of the scaled noise matrix $\hat{\mathcal{E}}$ does not change its limiting spectral distribution). 
For any fixed $\beta \in [0,1]$, we choose $\alpha$ by matching the median of the eigenvalues of $\alpha^{-1}\hat{\Sigma}$ to the median of the MP distribution $F_{\gamma,1}$. That is, we take
\begin{equation}
    {\alpha} = \frac{\lambda_{\text{med}}\{\hat{\Sigma}\}}{\mu_{\gamma}}, \label{eq:estimator for alpha}
\end{equation}
where $\gamma = {m}/{n}$, $\lambda_{\text{med}}\{\hat{\Sigma}\}$ is the median eigenvalue of $\hat{\Sigma}$ (which depends on the value of $\beta$), and $\mu_\gamma$ is the median of the MP distribution with parameter $\gamma$ and noise variance $1$, i.e., $\mu_\gamma$ is the unique solution to the equation
\begin{equation}
    F_{\gamma,1}(t) = \int_{\beta_-}^t \frac{\sqrt{(\tau - \beta_-)(\beta_+-\tau)}}{2\pi \gamma \tau} d\tau = \frac{1}{2},
\end{equation}
 in the variable $t\in (\beta_- , \beta_+)$, and $\beta_{\pm} = (1\pm\sqrt{\gamma})^2$. Our approach here for choosing $\alpha$ is equivalent to the method proposed in~\cite{gavish2014optimal} for estimating $\sigma$ in the MP density~\eqref{eq:MP density}. The use of the median in this context is advantageous due its robustness to outlier eigenvalues, e.g., signal components or finite-sample fluctuations of the noise eigenvalues at the edges of the spectrum. 
 Then, we choose $\beta\in[0,1]$ that minimizes the Kolmogorov-Smirnov (KS) distance 
 \begin{equation}
     \sup_x \left\vert F_{[{\alpha}^{-1} \hat{\Sigma}]}(x) - F_{\gamma,1}(x) \right\vert, \label{eq: beta estimator}
 \end{equation}
where $F_{\gamma,1}$ is the MP distribution, $F_{[{\alpha}^{-1} \hat{\Sigma}]}$ is the ESD of ${\alpha}^{-1} \hat{\Sigma}$ (see~\eqref{eq:empirical spectral distribution}), and we emphasize that $\hat{\Sigma}$ and $\alpha$ depend on the value of $\beta$.
Since the minimization over all $\beta\in [0,1]$ is intractable (as it involves solving~\eqref{eq:scaling equations from estimated variances general model} for each $\beta$ separately), we propose to approximate this minimization by scanning over a finite grid of values in $[0,1]$. 

We demonstrate the above approach numerically in Appendix~\ref{appendix:adaptive method numerical experiments} on a simulated negative binomial matrix. We consider both a situation where the number of negative binomial failures is fixed across the entries in the matrix, and also a situation where the number of negative binomial failures is varying -- exemplifying the robustness of our approach to certain violations of the QVF model~\eqref{eq: quadratic variance}.

In order to validate a certain choice of $\alpha$ and $\beta$ in our framework, it is natural to consider the quality of the fit of the spectrum after scaling to the MP law (assuming that the rank of the signal matrix is sufficiently small). 
To account for the bias in the selection procedure of $\alpha$ and $\beta$, we propose to make use of a sample splitting scheme as follows. First, we determine $\alpha$ and $\beta$ from a submatrix of $Y$ (e.g., half the columns of $Y$) as described in this section. Then, we treat $\alpha$ and $\beta$ as known model parameters to solve~\eqref{eq:scaling equations from estimated variances general model} on a disjoint submatrix (e.g., the remaining columns), and measure the fit of the spectrum after scaling to the MP law. For the measure of goodness-of-fit, we use the Kolmogorov-Smirnov (KS) test (whose statistic is the quantity minimized in~\eqref{eq: beta estimator} to choose $\beta$ for a disjoint submatrix). This procedure is repeated several times on randomly-chosen submatrices and the results (KS distances, p-values) are averaged across trials.
We note that the null hypothesis underlying this methodology is that the eigenvalues of the scaled data matrix are sampled independently from the MP distribution. While this assumption does not strictly hold (due to the existence of signal components in the data, and also since the eigenvalues of a random noise matrix are dependent), it serves as a useful surrogate null hypothesis that allows for an interpretable measure of goodness-of-fit. 

\subsection{Fit to the MP law for real data} \label{sec:real data fit to the MP law}
We now exemplify our biwhitening approach on several real datasets from three domains of application: Single-Cell RNA Sequencing (scRNA-seq), High-Throughput Chromosome Conformation Capture (Hi-C), and document topic modeling. For scRNA-seq, we used the well-studied purified Peripheral Blood Mononuclear Cells (PBMCs) dataset from Zheng et al.~\cite{zheng2017massively}, and the dataset by Hrvatin et al.~\cite{hrvatin2018single} that contains mouse visual cortex cells. For Hi-C, we used the dataset by Johanson et al.~\cite{johanson2018genome} from Naïve CD4+ T cells from homo-sapiens, where we extracted the submatrix of interactions between chromosomes one and two (the largest chromosomes), corresponding to 4622$\times$4822 different pairs of loci. For document topic modeling, we used the Associated Press dataset~\cite{harman1993first} (containing 10473 terms in 2246 documents), and the 20 NewsGroups dataset~\cite{Lang95} (containing 61188 terms in 18774 documents). 

We applied downsampling and filtering steps to the datasets to control their size and sparsity; see Appendix~\ref{appendix:reproducibility details for real data MP fit} for more details. Then, we applied the procedure described in Section~\ref{sec:adapting to the data} to find $\alpha>0$ and $\beta \in \{0,0.05,0.1,\ldots,0.95,1\}$ automatically from half the observations (half the cells in each scRNA-seq dataset, half the loci in chromosome 1 in the Hi-C dataset, and half the documents in each topic modeling dataset), and employed the resulting variance estimator~\eqref{eq:variance estimator with alpha and beta} for the remaining half of the observations to find the scaling factors $(\hat{\mathbf{u}},\hat{\mathbf{v}})$ and compute the scaled matrix $\hat{Y} = D(\hat{\mathbf{u}}) Y D(\hat{\mathbf{v}})$. 

Figure~\ref{fig:MP fit all datasets} depicts the fit of the histogram of the eigenvalues of $n^{-1} \hat{Y} \hat{Y}^T$ to the MP law for each of the above-mentioned datasets (on the held-out part of the data matrix). For comparison, we also show the analogous fits for the standard Poisson variance estimator $\widehat{\operatorname{Var}}[Y_{i,j}] = Y_{i,j}$, as well as for a constant variance estimator $\widehat{\operatorname{Var}}[Y_{i,j}] = \alpha$, i.e., assuming homoskedastic noise with unknown variance, where $\alpha$ is set according to~\eqref{eq:estimator for alpha}. To provide an interpretable measure of goodness-of-fit, we applied the KS test as described at the of Section~\ref{sec:adapting to the data} over 10 randomized trials. The averaged KS distances and the corresponding p-values are summarized in Table~\ref{tab:fit to MP law for real data}. The average chosen parameters were $\alpha=1.02, \; \beta=0$ for the Hi-C dataset, $\alpha=1.01, \; \beta = 0.2$ for the PBMC dataset,  $\alpha=1.02, \; \beta = 0.41$ for the Hrvatin et al. dataset, $\alpha=0.88, \; \beta = 1$ for the AP dataset, and $\alpha=0.95, \; \beta = 1$ for the 20 NewsGroups dataset. 

It is evident that for each of the five datasets, the spectrum of the original counts does not agree with the MP law even after adjusting for an unknown scalar noise variance. Indeed, the p-values from the KS test in this case are all extremely small. Hence, none of these datasets can be assumed to have homoskedastic noise. On the other hand, the simple Poisson model is already useful for the Hi-C data, as it provides an excellent fit to the MP law, which is improved only slightly after quadratic variance adjustment (selecting parameters that are very close to those of the standard Poisson). For all other datasets the Poisson model is inadequate, but we obtain accurate fits to the MP law using the quadratic variance estimator~\eqref{eq:variance estimator with alpha and beta} with the chosen parameters $\alpha$ and $\beta$. In particular, the p-values after the quadratic variance adjustment are in a range where the KS test cannot be rejected with high significance (implying that it is quite likely to obtain the observed KS distance if the eigenvalues are actually sampled from the MP distribution). These results suggest that many real-world datasets can be scaled appropriately (by diagonal scaling) to make the empirical spectral distribution very close to the MP law, allowing for adaptive signal detection. Interestingly, the chosen $\alpha$ and $\beta$ for the scRNA-seq datasets agree well with a negative binomial model (where $b=1$). The negative binomial is a standard model for scRNA-seq data, explained by a Poisson observation model with a gamma prior on the Poisson parameter~\cite{sarkar2021separating}.

\begin{figure}
    \centering
    \makebox[\textwidth][c]{
        \subfloat[][PBMC: constant variance]{
        \includegraphics[width=0.25\textwidth]{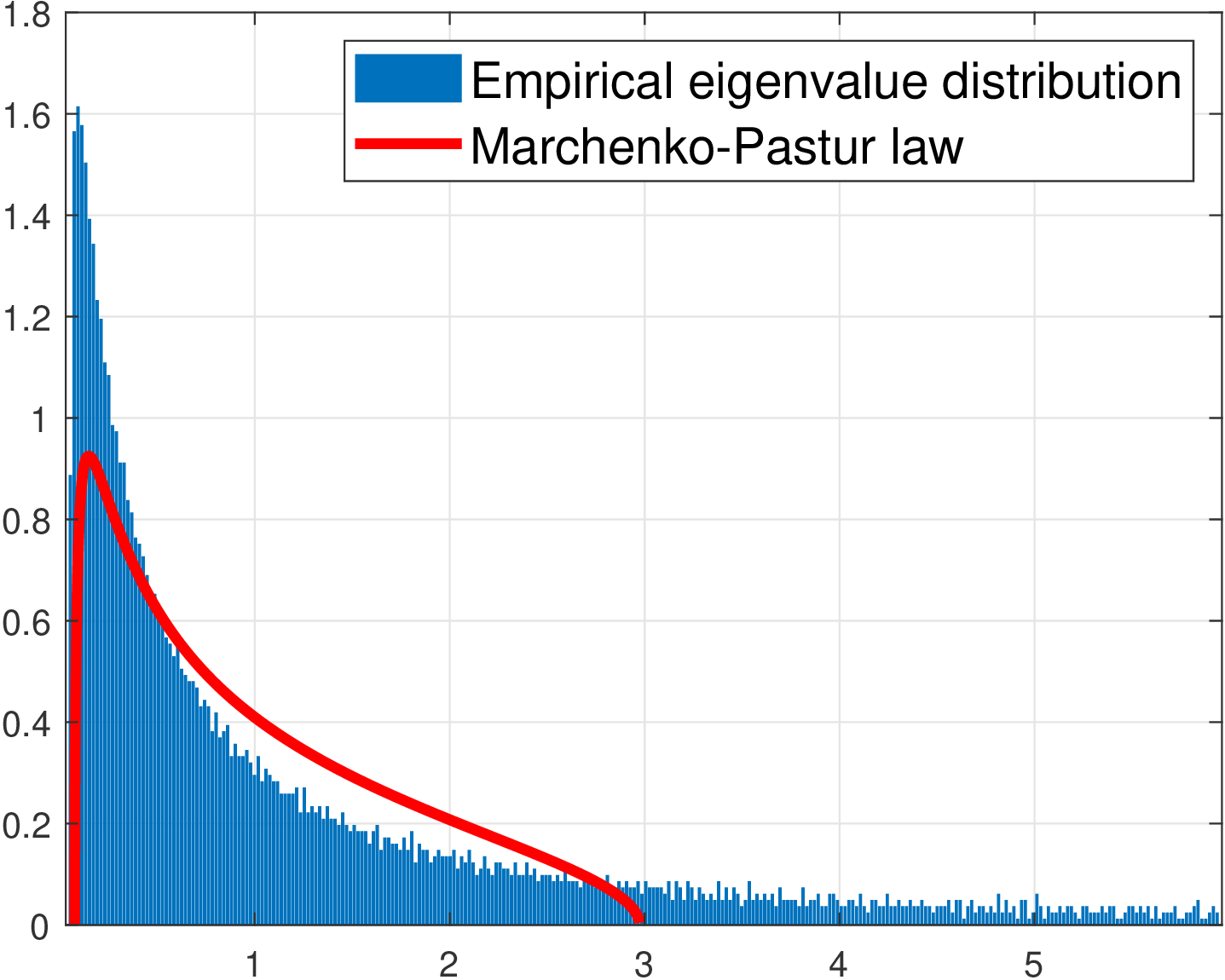}
        }
        
        \subfloat[][PBMC: Poisson variance]{
        \includegraphics[width=0.25\textwidth]{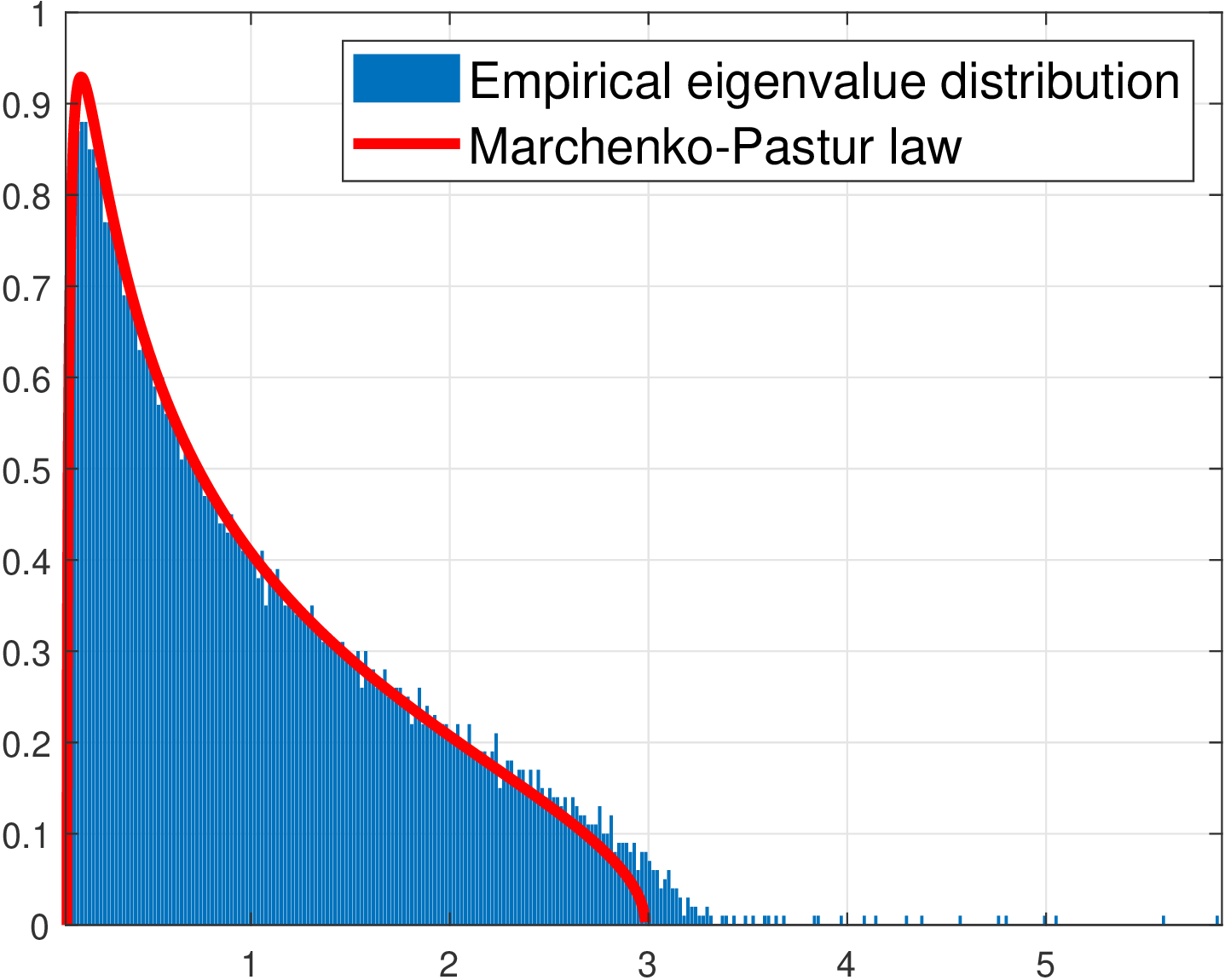}
        }
        
        \subfloat[][PBMC: quadratic variance]{
        \includegraphics[width=0.25\textwidth]{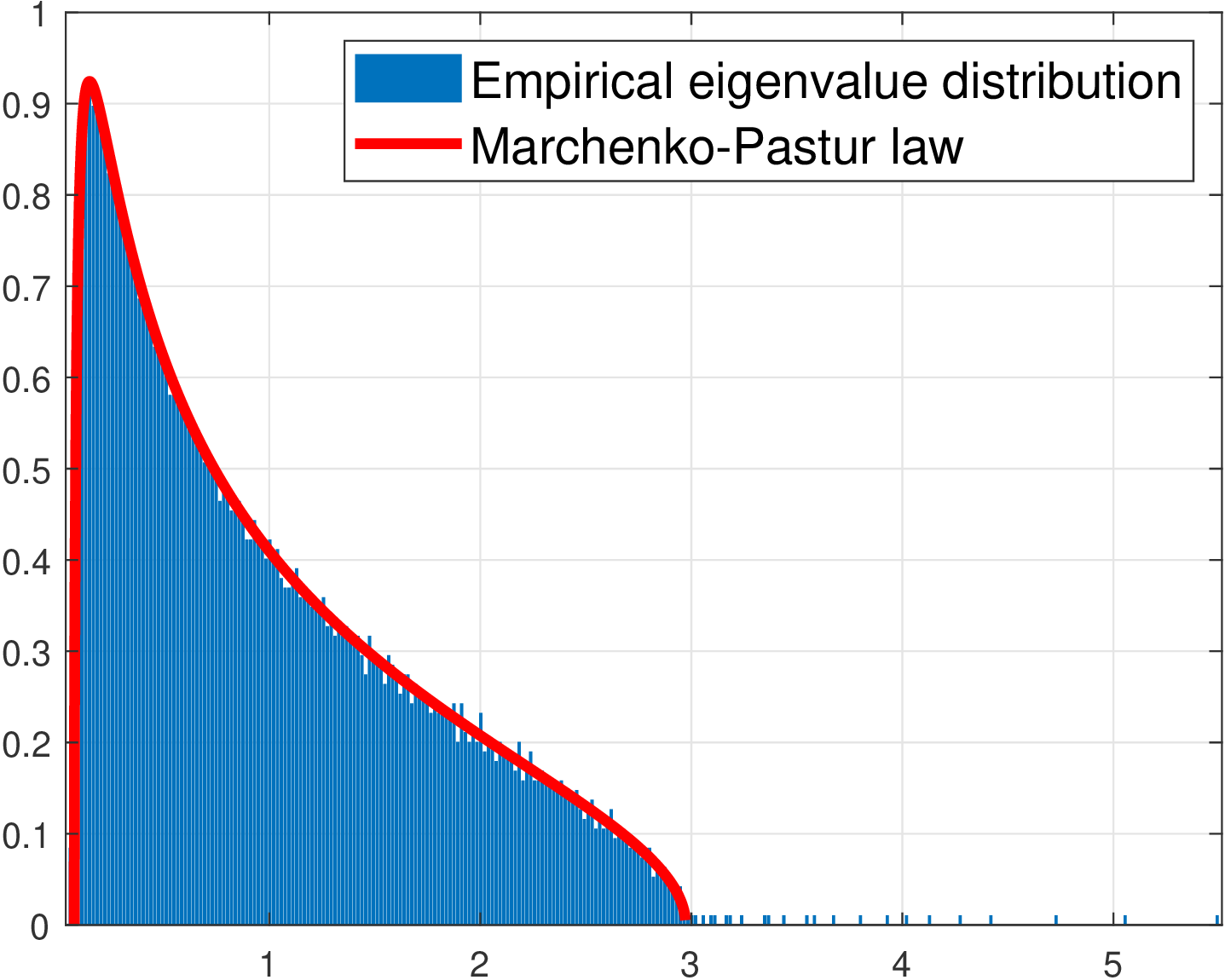}
        } 
    }
    \\
    \makebox[\textwidth][c]{
        \subfloat[][Hrvatin: constant variance]{
        \includegraphics[width=0.25\textwidth]{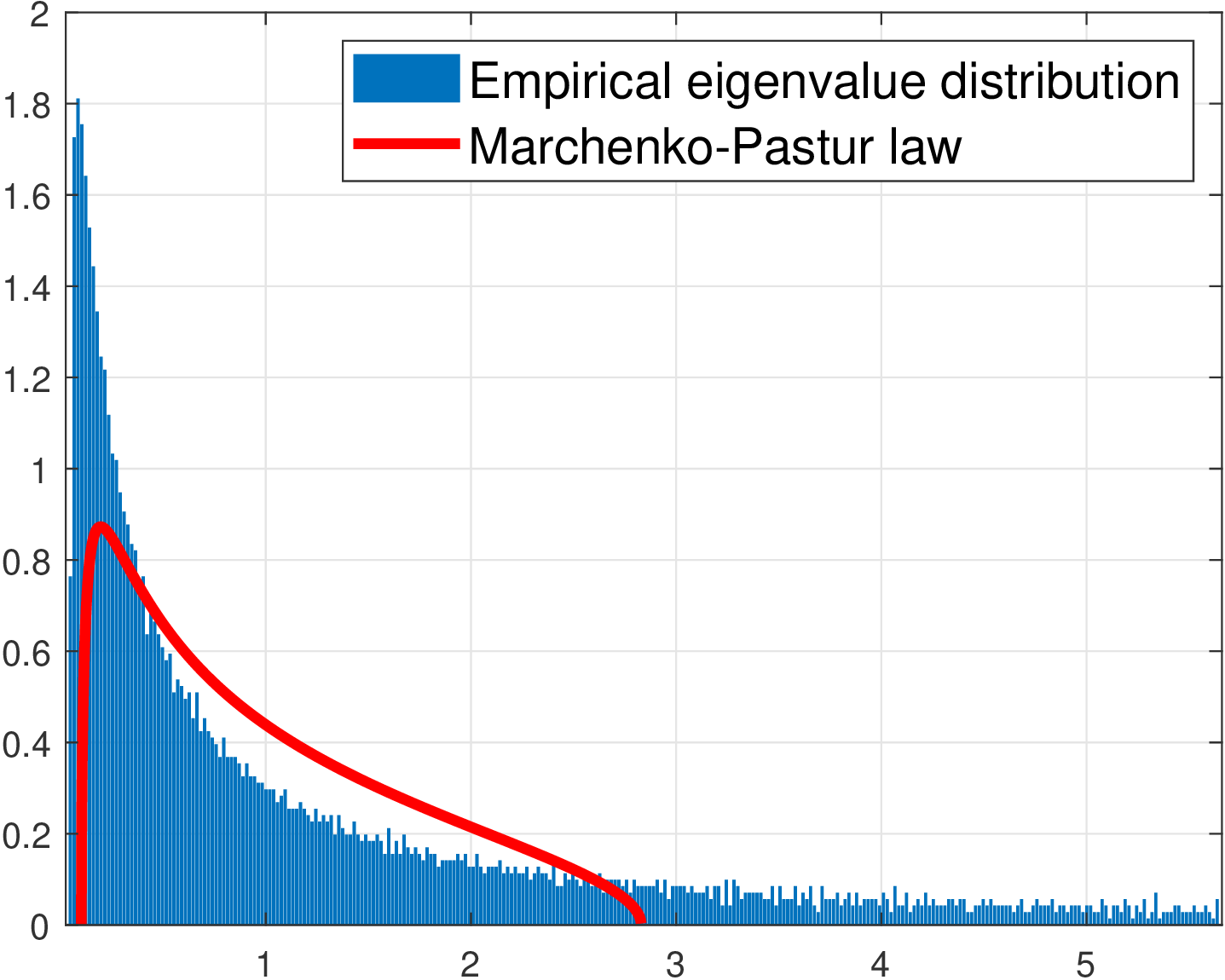}
        }
        
        \subfloat[][Hrvatin: Poisson variance]{
        \includegraphics[width=0.25\textwidth]{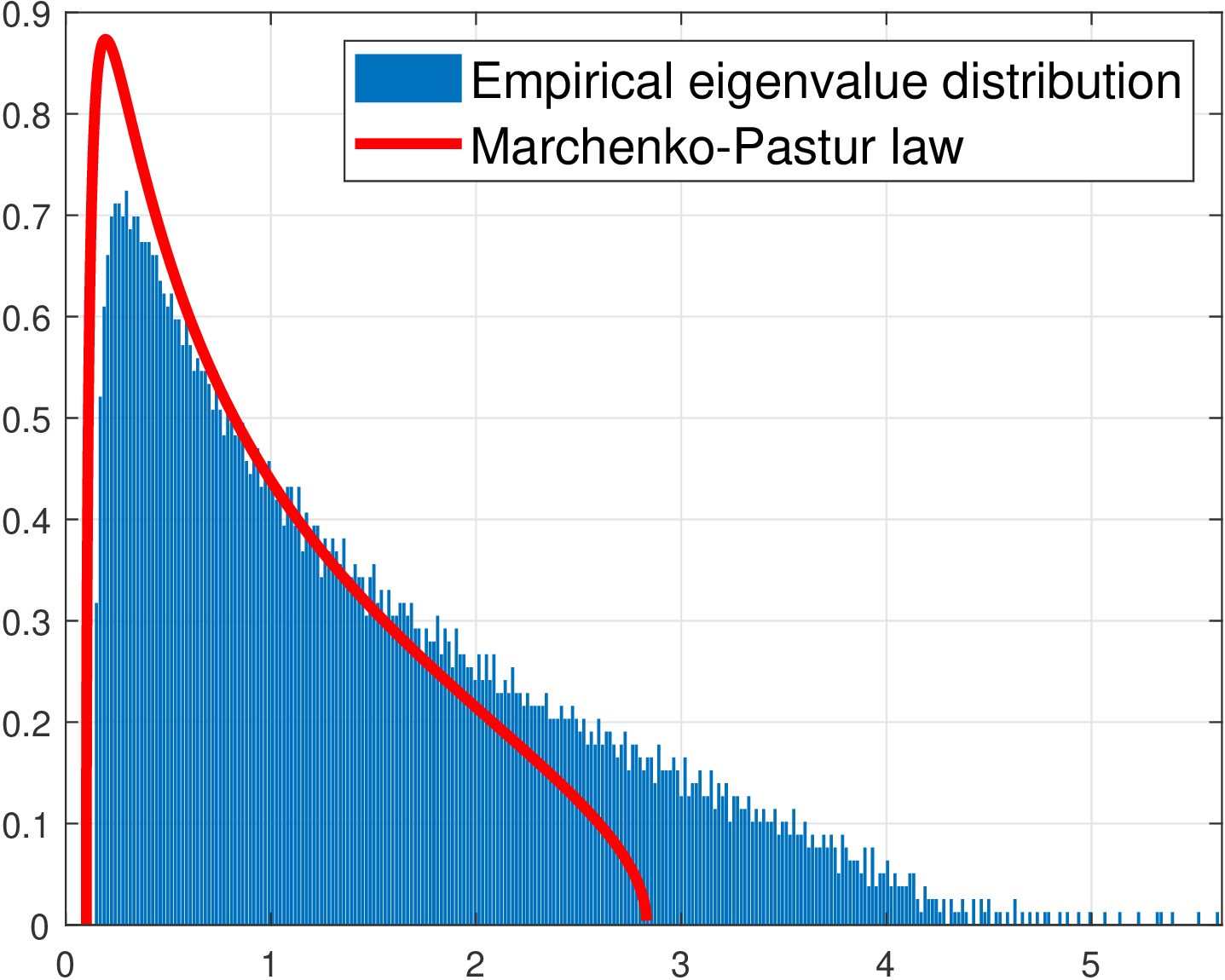}
        }
        
        \subfloat[][Hrvatin: quadratic variance]{
        \includegraphics[width=0.25\textwidth]{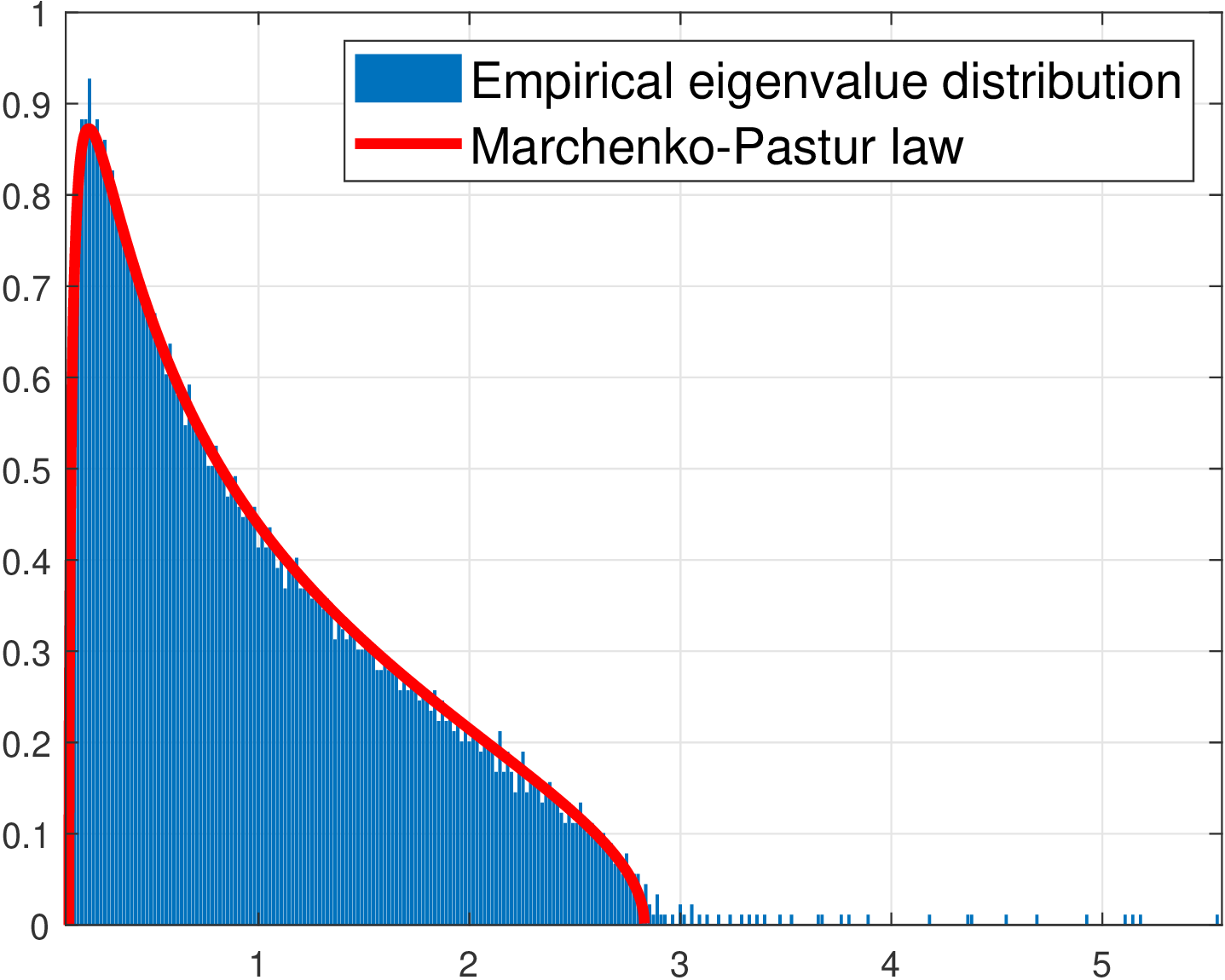}
        } 
    }
    \\
    \makebox[\textwidth][c]{
        \subfloat[][Hi-C: constant variance]{
        \includegraphics[width=0.25\textwidth]{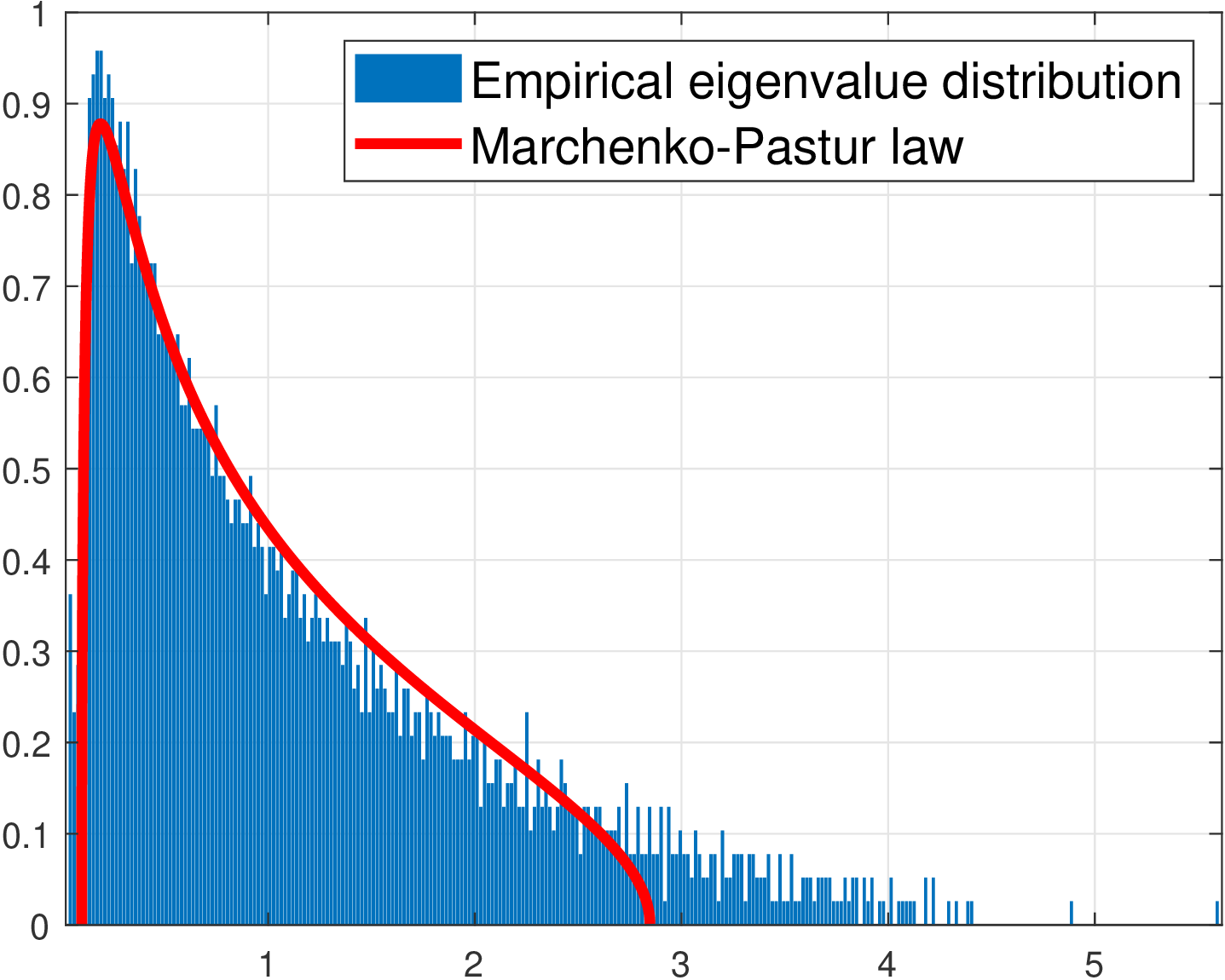}
        }
        
        \subfloat[][Hi-C: Poisson variance]{
        \includegraphics[width=0.25\textwidth]{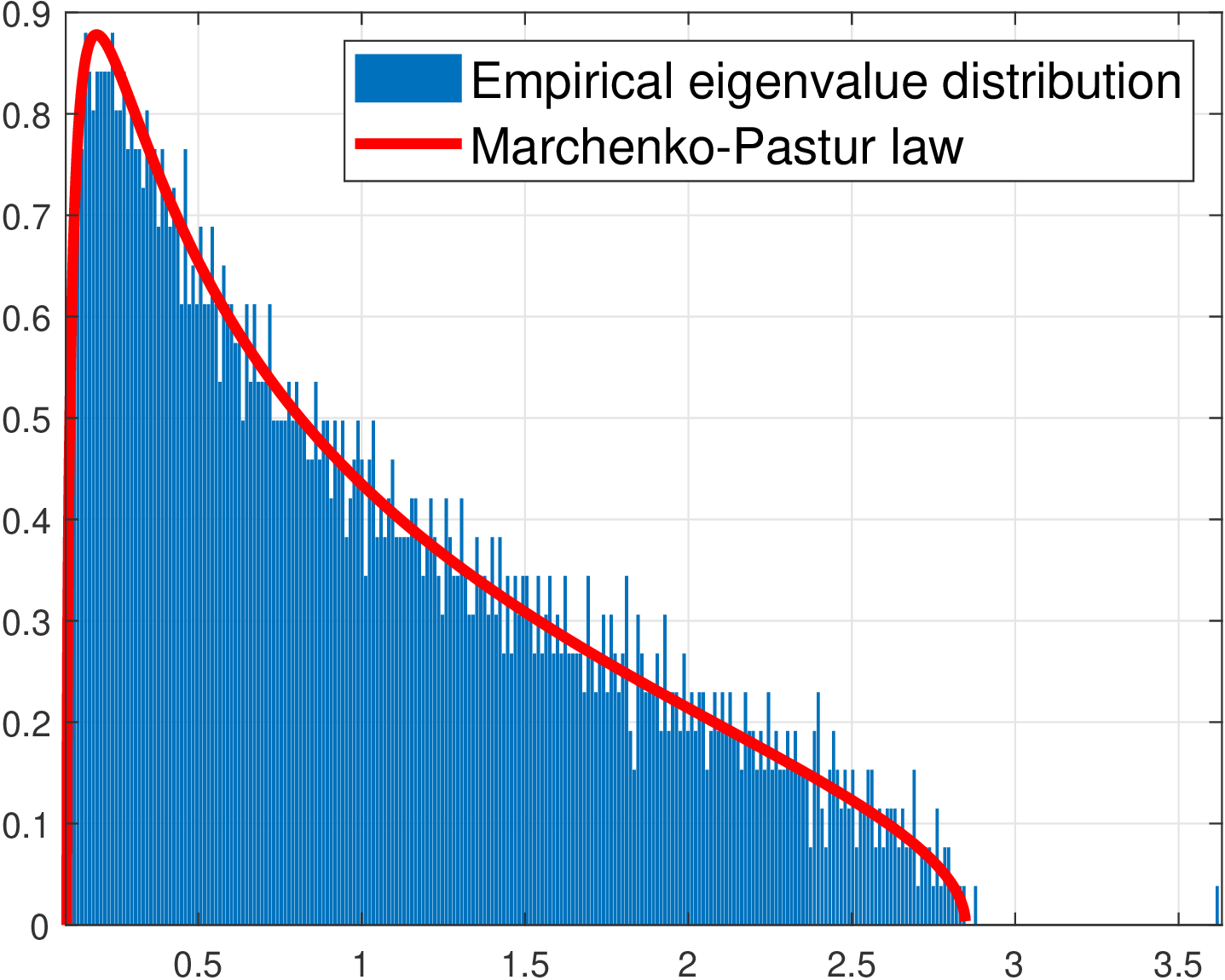}
        }
        
        \subfloat[][Hi-C: Quadratic variance]{
        \includegraphics[width=0.25\textwidth]{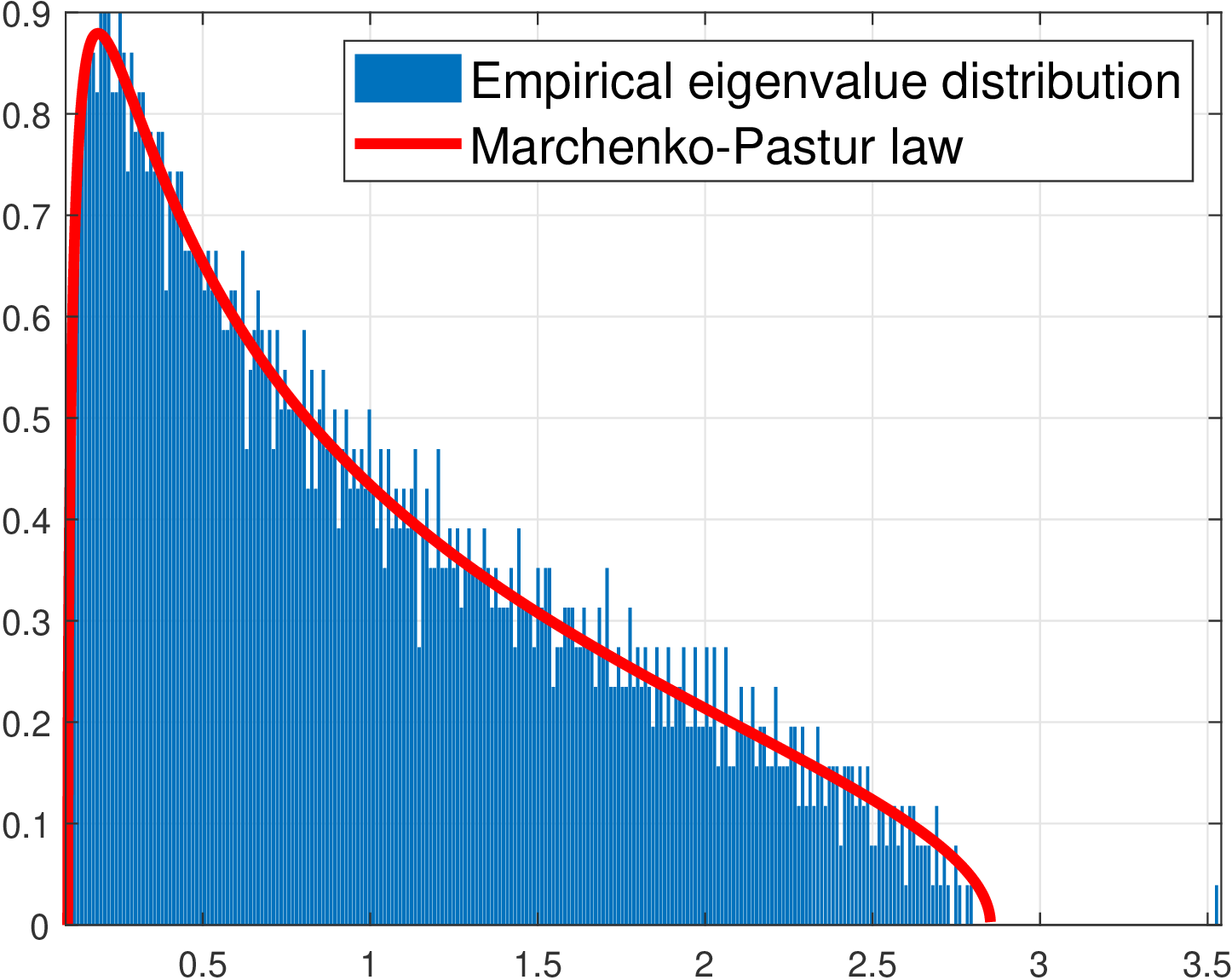}
        } 
    }
    \\
    \makebox[\textwidth][c]{
        \subfloat[][AP: constant variance]{
        \includegraphics[width=0.25\textwidth]{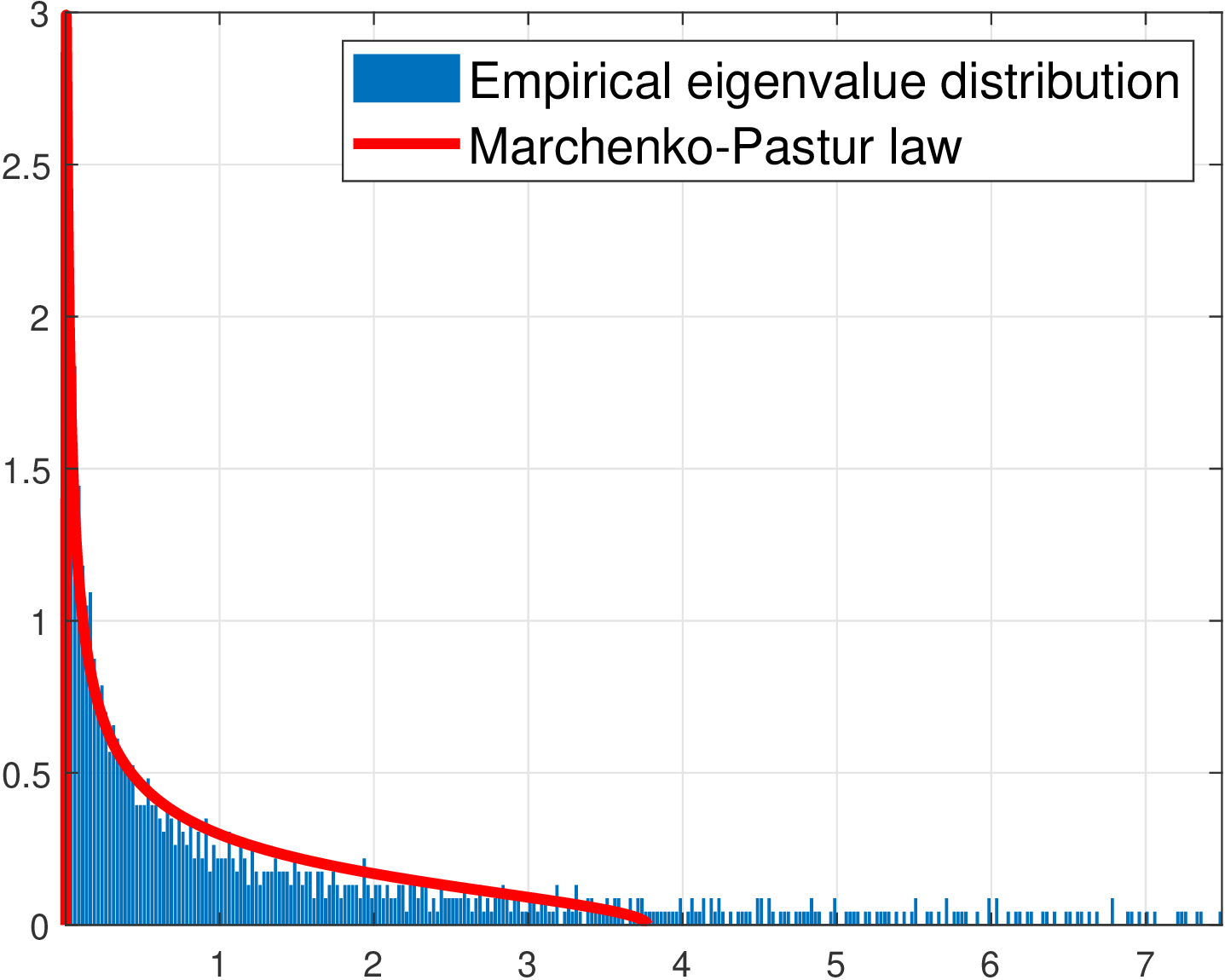}
        }
        
        \subfloat[][AP: Poisson variance]{
        \includegraphics[width=0.25\textwidth]{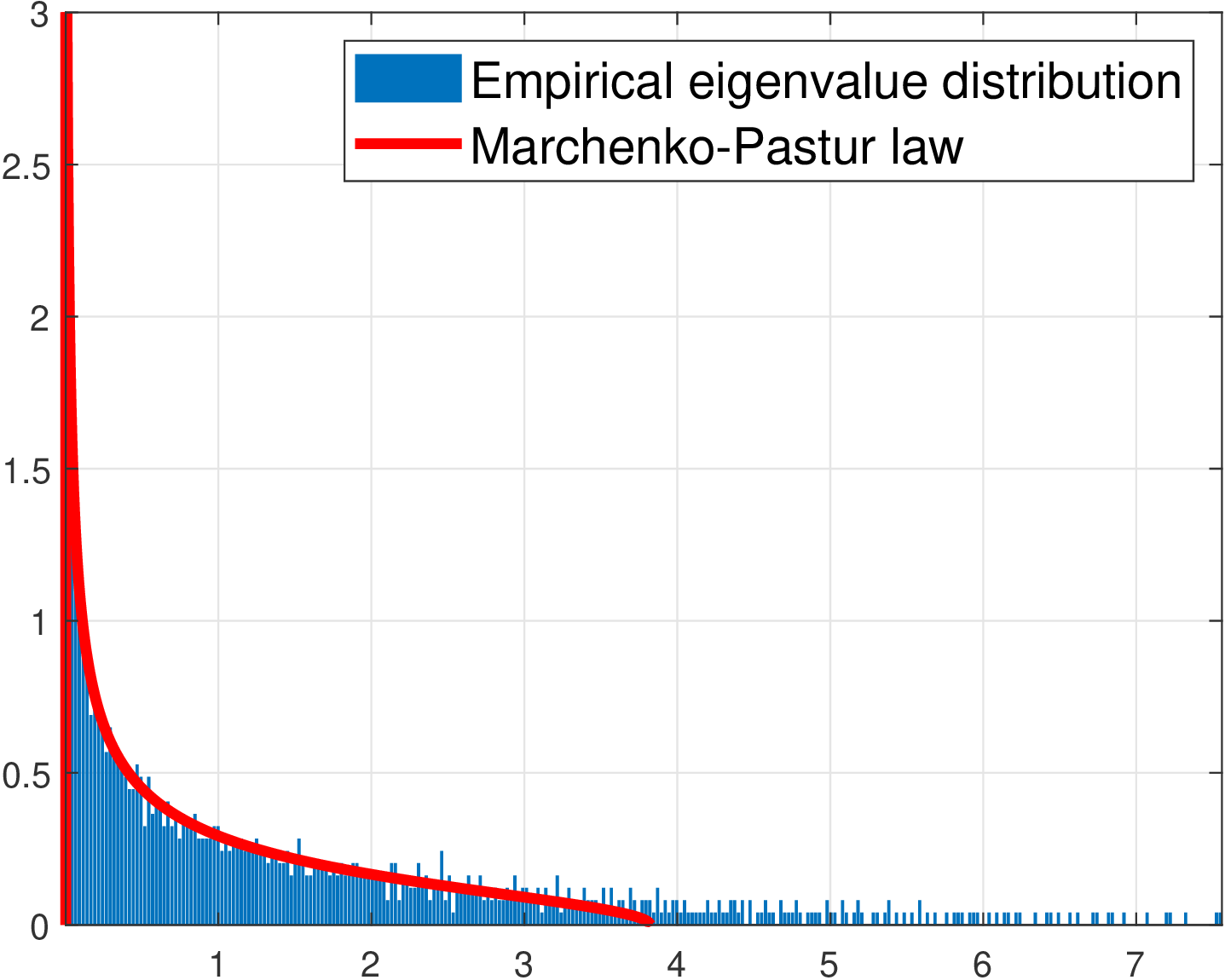}
        }
        
        \subfloat[][AP: quadratic variance]{
        \includegraphics[width=0.25\textwidth]{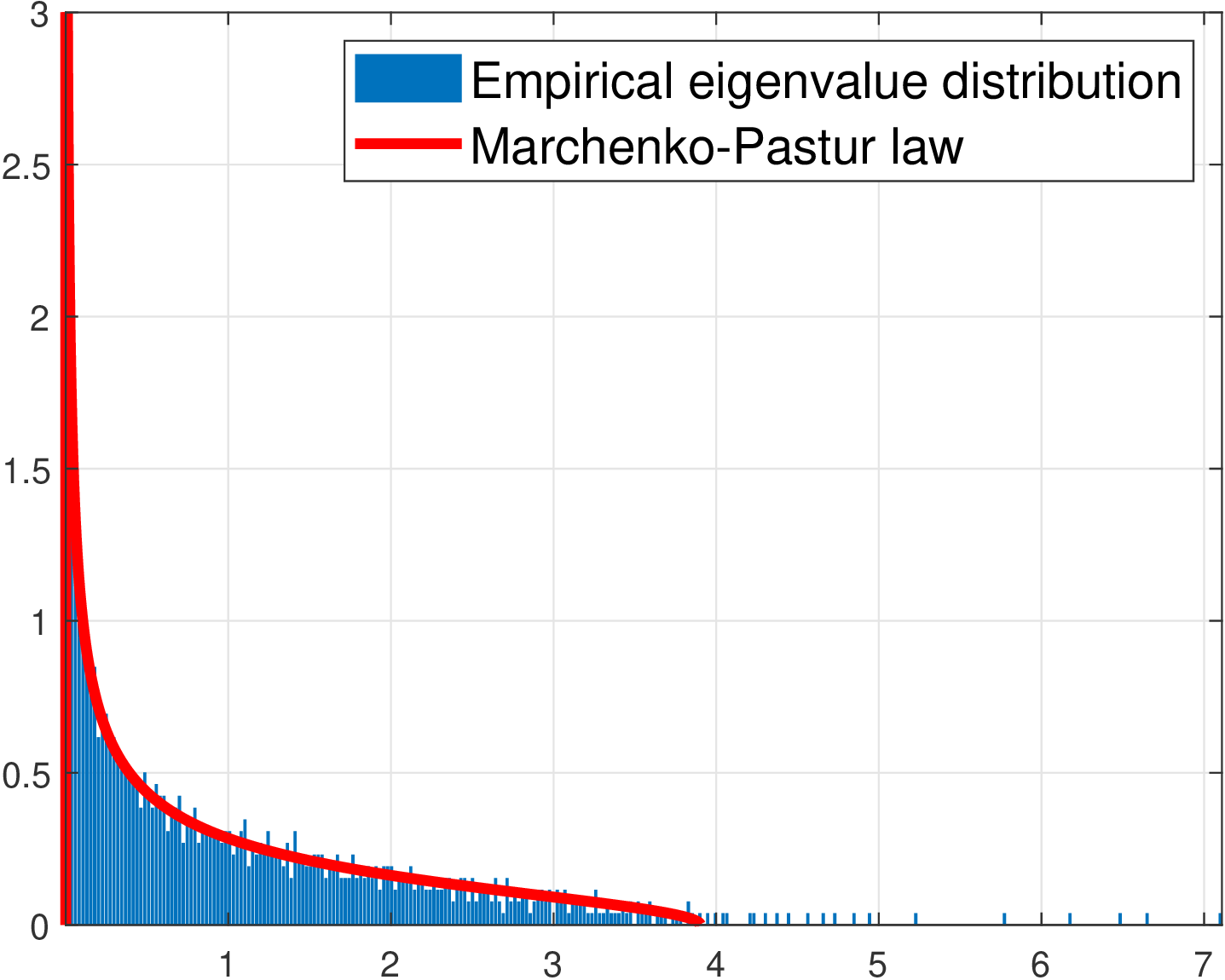}
        } 
    }
    \\
    \makebox[\textwidth][c]{
        \subfloat[][20 NG: constant variance]{
        \includegraphics[width=0.25\textwidth]{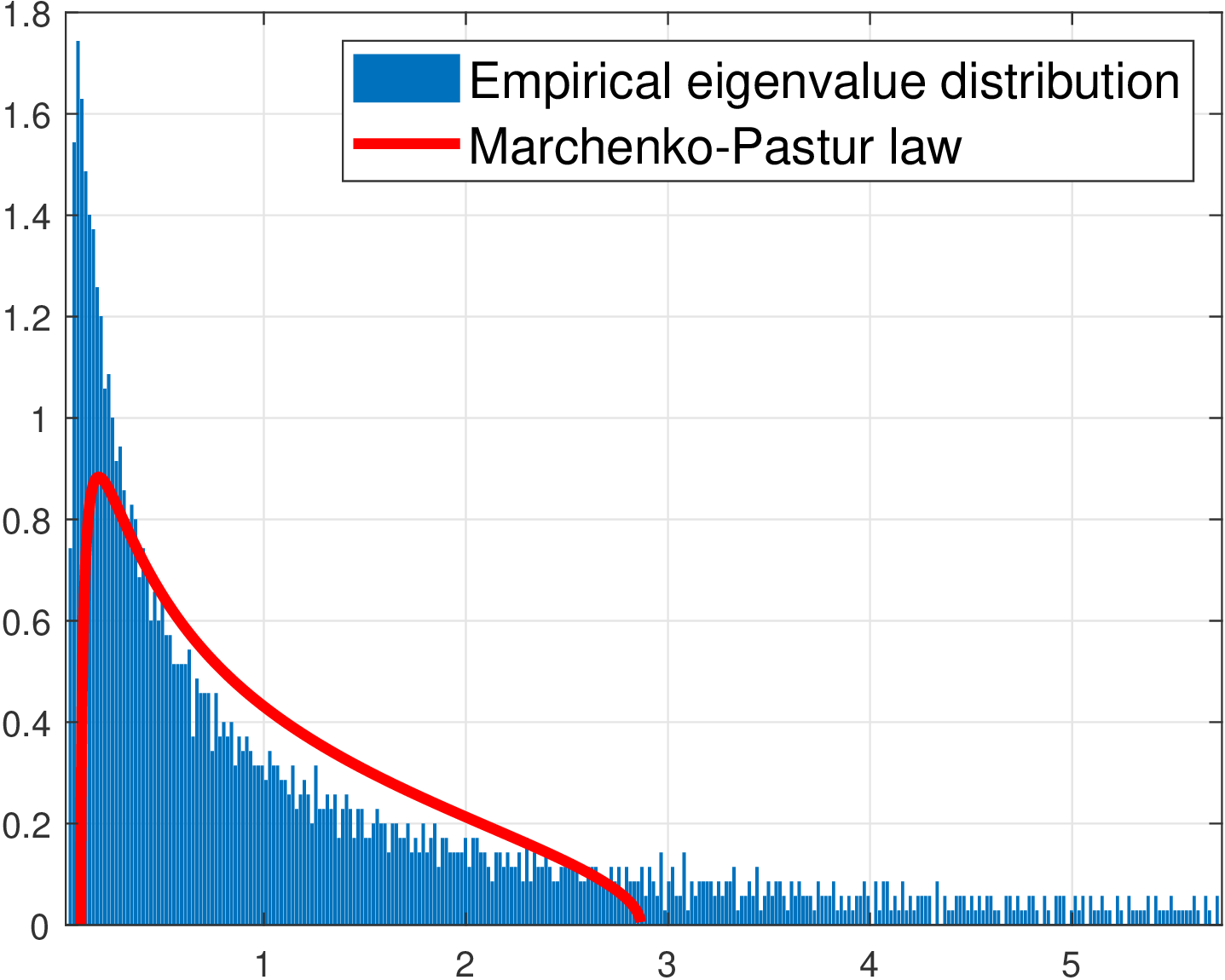}
        }
        
        \subfloat[][20 NG: Poisson variance]{
        \includegraphics[width=0.25\textwidth]{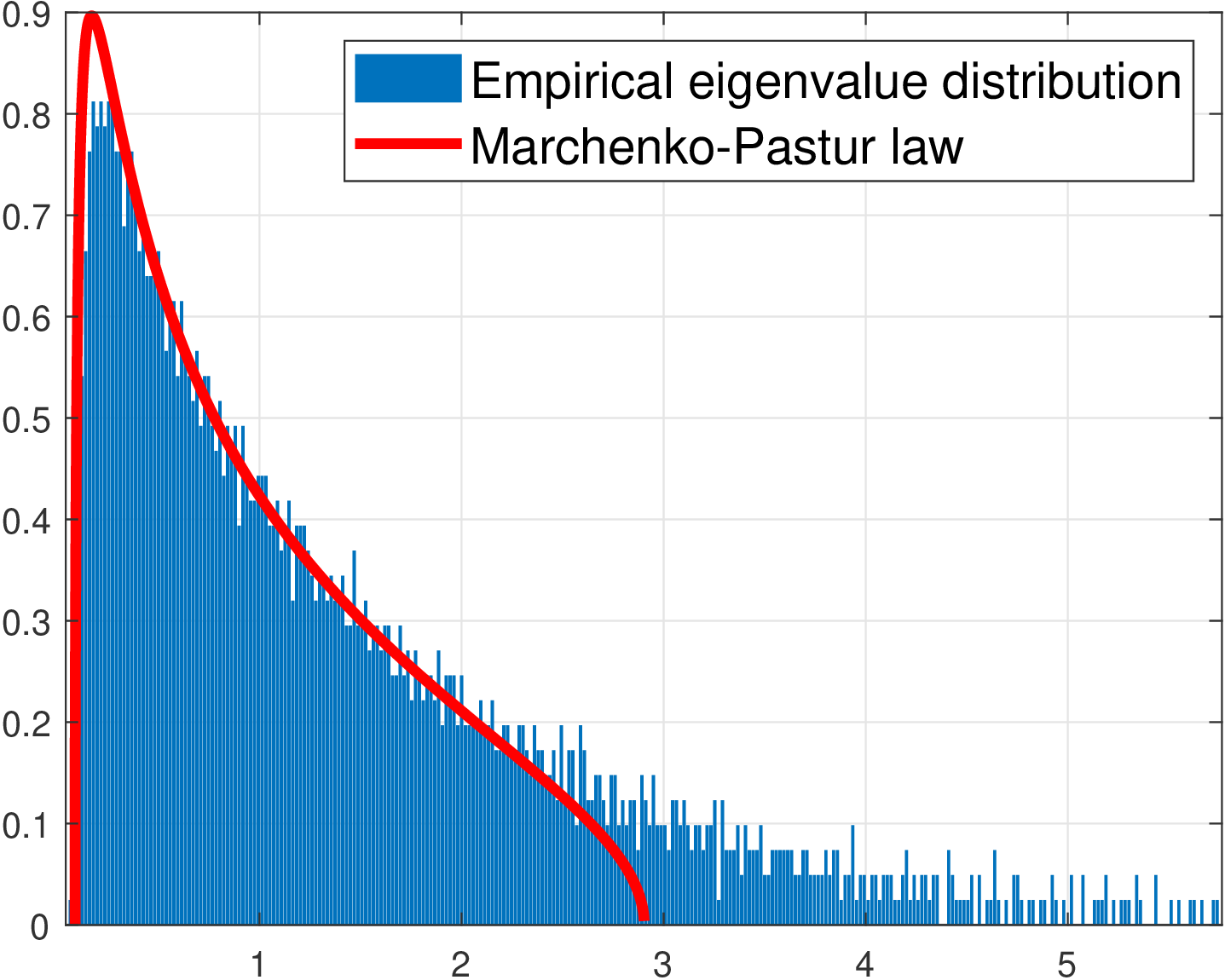}
        }
        
        \subfloat[][20 NG: quadratic variance]{
        \includegraphics[width=0.25\textwidth]{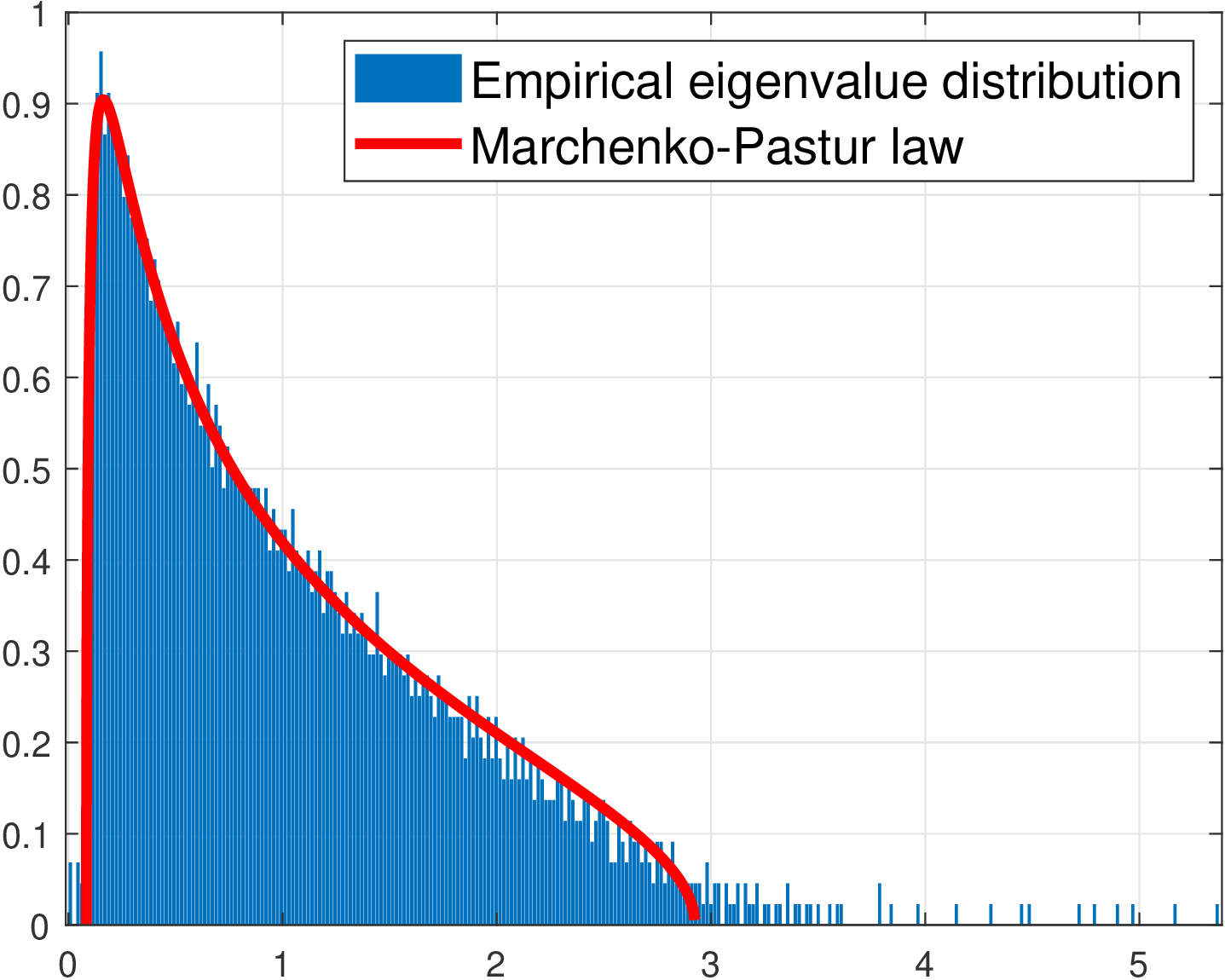}
        } 
    }
    
    \caption{Eigenvalue histograms obtained from several datasets (from top to bottom): Purified PBMC~\cite{zheng2017massively}, Hrvatin et al.~\cite{hrvatin2018single}, Hi-C~\cite{johanson2018genome}, Associated Press~\cite{harman1993first}, and 20 NewsGroups (20 NG)~\cite{Lang95}, versus the Marchenko-Pastur density $dF_{\gamma, 1}$, where $\gamma = m/n$. For each dataset, we used three variance estimators (from left to right) when solving~\eqref{eq:scaling equations from estimated variances general model}: constant variance ($\widehat{\operatorname{Var}}[Y_{i,j}] = \alpha$, where $\alpha$ is chosen by~\eqref{eq:estimator for alpha}), Poisson variance ($\widehat{\operatorname{Var}}[Y_{i,j}] = Y_{i,j}$), and adjusted quadratic variance ( Eq.~\eqref{eq:variance estimator with alpha and beta}, where $\alpha$ and $\beta$ are chosen as described in Section~\ref{sec:adapting to the data}). The eigenvalue histograms are shown for a held-out subset of the data matrix (whereas $\alpha$ and $\beta$ are determined from a disjoint subset of the data matrix).
    }
    \label{fig:MP fit all datasets}
\end{figure}

\begin{table}{
\caption{\noindent Goodness-of-fit between the eigenvalues of $n^{-1} \hat{Y} \hat{Y}^T$ and the MP law for several real datasets, where the scaling factors $(\hat{\mathbf{u}},\hat{\mathbf{v}})$ were obtained by solving~\eqref{eq:scaling equations from estimated variances general model} for three different variance estimators: constant, Poisson, and the quadratic variance~\eqref{eq:variance estimator with alpha and beta}. The entries in the table were obtained by averaging the results from $10$ randomized trials according to the sample splitting scheme described at the end of Section~\ref{sec:adapting to the data}}
\begin{center}
\begin{adjustbox}{max width = \textwidth}
\begin{tabular}{|l|l|l|l|l|l|l|}
\hline
\multirow{2}{*}{Dataset} & \multicolumn{2}{l|}{$\widehat{\operatorname{Var}}[Y_{i,j}]=\alpha$} & \multicolumn{2}{l|}{$\widehat{\operatorname{Var}}[Y_{i,j}]=Y_{i,j}$} & \multicolumn{2}{l|}{$\widehat{\operatorname{Var}}[Y_{i,j}]=\alpha[ (1-\beta) Y_{i,j} + \beta Y_{i,j}^2 ]$} \\ \cline{2-7} 
                         & KS distance                      & p-value                          & KS distance                       & p-value                          & KS distance                                                 & p-value                                                 \\ \hline
PBMC                     & $0.21$                           & $10^{-211}$                      & $0.03$                            & $10^{-6}$                        & $0.007$                                                     & $0.95$                                                  \\ \hline
Hrvatin et al.           & $0.24$                           & $10^{-250}$                      & $0.17$                            & $10^{-131}$                      & $0.01$                                                      & $0.26$                                                  \\ \hline
Hi-C                     & $0.07$                           & $10^{-10}$                       & $0.01$                            & $0.85$                           & $0.005$                                                     & $0.99$                                                  \\ \hline
AP                       & $0.18$                           & $10^{-31}$                       & $0.14$                            & $10^{-20}$                           & $0.02$                                                      & $0.37$                                                  \\ \hline
20 NewsGroups             & $0.2$                           & $10^{-103}$                       & $0.15$                            & $10^{-47}$                           & $0.02$                                                     & $0.1$                                                   \\ \hline
\end{tabular}
\end{adjustbox}

\end{center}
\label{tab:fit to MP law for real data}
}
\end{table}

\subsection{Rank estimation on annotated data} \label{sec:rank estmation real data}
To test the accuracy of our rank estimation method on real data, we used the class labels available for the cells in the PBMC dataset and the labels for the documents in the 20 NewsGroups dataset. We first randomly selected $500$ observations from each one of several classes, and then filtered the resulting matrices for sparsity. For the PBMC dataset we used 8 classes out of 10, and for the 20 NewsGroups we used 10 classes out of 20, choosing classes that should be well distinguishable; see more details in Appendix~\ref{appendix:reproducibility details for real data rank estimation}. Since the number of classes is generally not equal to the rank of the signal matrix but is only a lower bound (assuming that the classes correspond to subspaces that are linearly independent), we further performed the following  ``homogenization'' procedure. We randomly permuted the entries in each feature (a gene in the PBMC dataset and a word in the 20 NewsGroups dataset) across all observations in a class, for each class and each feature independently. This homogenization destroys the correlations that exist across observations or features within a class, so the resulting underlying signal matrix has as many large eigenvalues as the number of classes and the rest of the eigenvalues should be very small. While this homogenization undoubtedly removes information from the data (the within-class structure), it allows us to use the number of classes as a surrogate for the rank while preserving important characteristics of the data, such as the distribution of values within each feature and each class. 

Figure~\ref{fig:PBMC+20NG homogenized mat scree plot} illustrates the sorted singular values of the resulting homogenized matrices before and after biwhitening, where we used our adaptive version described in Section~\ref{sec:adapting to the data} to choose $\alpha>0$ and $\beta\in \{0,0.05,\ldots,1\}$ automatically. It is evident that the signal singular values are more easily detectable after biwhitening and emerge above the MP upper edge (noting that the $8$th signal singular value for the PBMC dataset is slightly above the MP upper edge but too close to it to be clearly visible). 
We repeated our data preprocessing and homogenization procedure for 10 randomized trials, each time estimating the rank by applying our method as well as the other methods described in Section~\ref{section:rank estimation}. Table~\ref{tab:PBMC+20NG homogenized mat rank selection methods} summarizes the average estimated ranks and their standard deviations. Overall, it is evident that our method provides the most accurate rank estimates for both datasets, while other methods consistently overselect or underselect the rank.

\begin{figure}[tbhp]
    \centering
    \makebox[\textwidth][c]{
        \subfloat[][Homogenized PBMC, original counts ]{
        \includegraphics[width=0.50\textwidth]{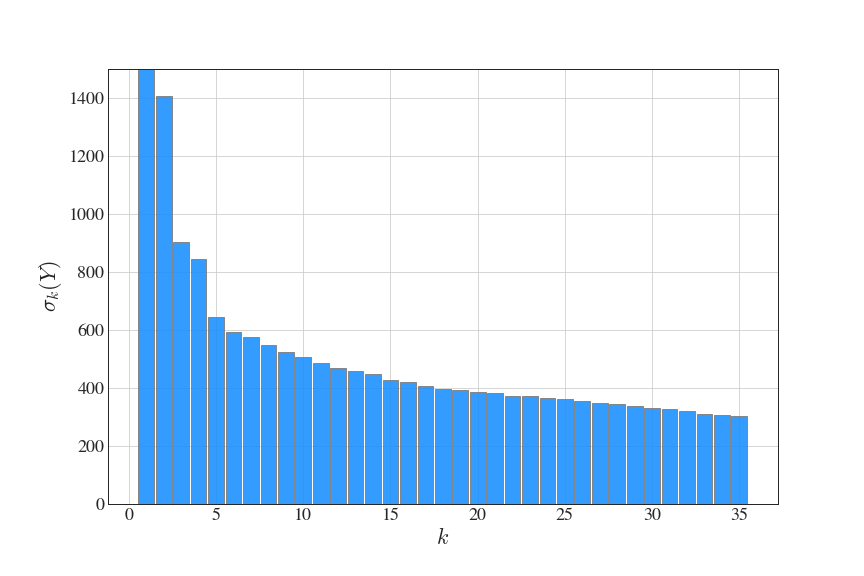}
        }
        
        \subfloat[][Homogenized PBMC, after biwhitening]{
        \includegraphics[width=0.50\textwidth]{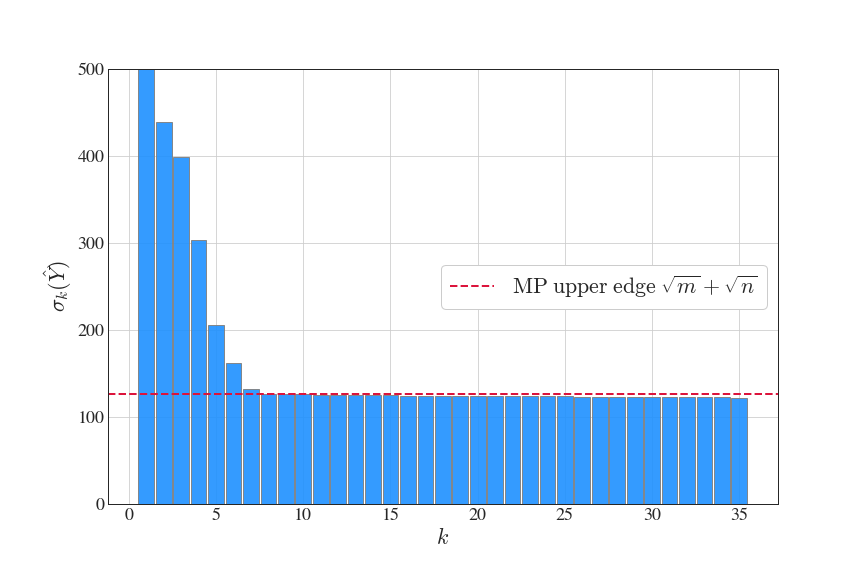}
        }
        
    }
    
    \makebox[\textwidth][c]{
        \subfloat[][Homogenized 20 NewsGroups, original counts]{
        \includegraphics[width=0.50\textwidth]{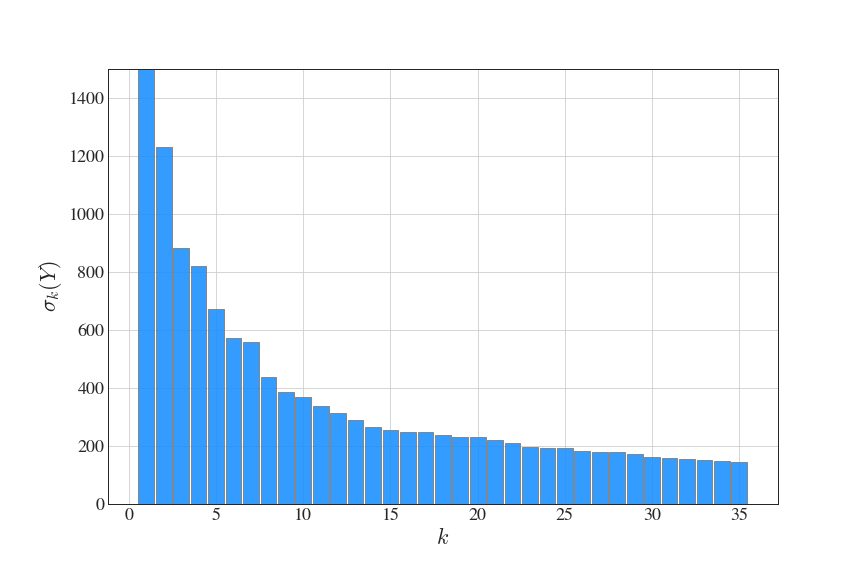}
        }
        
        \subfloat[][Homogenized 20 NewsGroups, after biwhitening]{
        \includegraphics[width=0.50\textwidth]{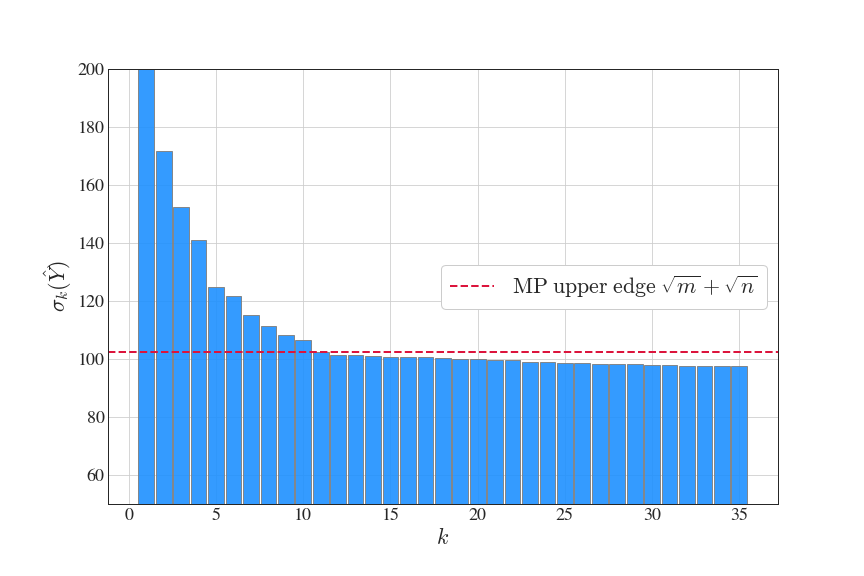}
        }
    }
    
    \caption{Sorted singular values of the homogenized matrices for the PBMC dataset (top) and the 20 NewsGroups dataset (bottom) before (left) and after (right) biwhitening. For the PBMC dataset we used 8 classes (cell typpes), and for the 20 NewsGroups we used 10 classes (news groups), hence the expected ranks are $8$ and $10$, respectively (see the details of homogenization). To perform biwhitening we used the procedure described in Section~\ref{sec:adapting to the data} to find $\alpha>0$ and $\beta\in\{0,0.05,\ldots,1\}$ adaptively.}
    \label{fig:PBMC+20NG homogenized mat scree plot}
\end{figure}

\begin{table}
{
\caption{\noindent Mean (standard deviation) of ranks selected by several methods (see Section~\ref{section:rank estimation}) for two real datasets with ground truth labels. The datasets were preprocessed and homogenized (observations with the same label were randomly permuted along each feature independently) over 10 randomized trials.}
\begin{center}
\begin{adjustbox}{max width = \textwidth}
{    \begin{tabular}{|l| *{6}{c|}   }
        \hline
         Dataset & $r$ & Our method & EKC & pairPA & DDPA+ & Signflip PA   \\ \hline
        PBMC & 8 & 8.4 (0.7) & 21.7 (3.1) & 69.1 (1.9) & 100.5 (68.6) & 5 (0.0) \\ \hline
        20 NewsGroups & 10 & 10 (0.0) & 40.2 (11.2) & 9.6 (1.4) & 3.1 (0.7) & 14.2 (0.9) \\ \hline
    \end{tabular}
}
\end{adjustbox}

    \label{tab:PBMC+20NG homogenized mat rank selection methods}
\end{center}
}
\end{table}

\section{Discussion}
Our biwhitening procedure for rank estimation has several important advantages over alternative methods. First and foremost, it can handle almost any pattern of entries in $X$, including those that lead to severe noise heteroskedasticity. In particular, and as suggested by our simulations in Section~\ref{section:rank estimation}, our method stabilizes the noise variances across rows and columns and prevents extreme rows or columns from dominating the spectrum of the noise, allowing our method to detect weak signal components that otherwise would be masked by the noise. Second, our method enforces the largest noise eigenvalue to admit a simple analytic expression -- the MP upper edge. This property obviates the need for estimating the largest noise eigenvalue by Monte Carlo simulations (such as permutations and signflips of the data), which are sensitive to the structure and magnitude of the unknown signal matrix. Lastly, our approach provides a simultaneous validation of our model assumptions through the fit of the resulting spectrum to the MP law. Such validation is an invaluable tool for exploratory data analysis, where ground truth information is seldom available. 

Since this work is concerned with rank estimation, it is worthwhile to discuss the closely related task of recovering the principal components. In~\cite{hong2018asymptotic} it was shown that the performance of standard PCA can significantly degrade  under heteroskedastic noise, even if the noise varies only along one dimension of the matrix. Therefore, while our approach is able to accurately detect informative signal components in heteroskedastic noise, it may be suboptimal to plug our estimated rank directly into standard PCA when the noise is strongly heteroskedastic. In such cases, one possibility is to apply our method in conjunction with recently proposed methods for PCA and matrix denoising under heteroskedastic noise; see e.g.,~\cite{zhang2018heteroskedastic} (or special cases such as Poisson noise~\cite{cao2015poisson}), which require knowledge of the rank. Another possibility is to apply standard PCA after biwhitening, which is particularly appealing since biwhitening stabilizes the average noise variances across rows and columns, alleviating much of the effect of heteroskedastic noise. However, the scaling of rows and columns introduces a bias into the principal components, modifying them in a nontrivial way. In certain applications this may be acceptable, and applying PCA after biwhitening can be favorable if more principal components are detected and utilized for subsequent analysis. In other applications, where interpretability of the principal components is important, the bias introduced by the scaling may need to be corrected. This topic is a promising future research direction but is beyond the scope of this paper and is left for future work. 

\section{Acknowledgements}
The authors would like to thank Edgar Dobriban, George Linderman, Jay Stanley, and Xiaoou Li for useful and insightful discussions. B.L., T.Z., and Y.K. acknowledge support by NIH grant R01GM131642. B.L. and Y.K. also acknowledge support by NIH grants UM1DA051410, U01DA053628, and U54AG076043. Y.K. acknowledges support by NIH grants R01GM135928 and 2P50CA1219.

\begin{appendices}

\section{Existence and uniqueness of $\mathbf{x}$ and $\mathbf{y}$ in~\eqref{eq:matrix scaling general equations} for $r_i=n$ and $c_j=m$} \label{appendix: existence and uniqueness}
We begin with the following definition. 
\begin{defn}[Completely decomposable matrix]
We say that a nonnegative matrix $A\in\mathbb{R}^{m\times n}$ is \textit{completely decomposable} if there exist proper nonempty subsets $\mathcal{I}_1\subset [m]$ and $\mathcal{I}_2\subset [n]$ such that $[A]_{i\in \mathcal{I}_1, \; j \in \mathcal{I}_2}$ and $[A]_{i\in \mathcal{I}_1^c, \; j \in \mathcal{I}_2^c}$ are both zero matrices, where $[A]_{i\in \mathcal{I}_1, \; j \in \mathcal{I}_2}$ is the submatrix of $A$ obtained by taking its rows in $\mathcal{I}_1$ and columns in $\mathcal{I}_2$, and $\mathcal{I}_1^c$ and $\mathcal{I}_2^c$ are the complements of  $\mathcal{I}_1$ and $\mathcal{I}_2$ in $[m]$ and $[n]$, respectively. In other words, $A$ is completely decomposable if there exist permutation matrices $P\in\mathbb{R}^{m\times m}$ and $Q\in\mathbb{R}^{n\times n}$ such that
\begin{equation}
    P A Q = 
    \begin{bmatrix}
    B_{1,1} & \mathbf{0}_{|\mathcal{I}_1|\times |\mathcal{I}_2|} \\
    \mathbf{0}_{|\mathcal{I}_1^c|\times |\mathcal{I}_2^c|} & B_{2,2}
    \end{bmatrix}, \label{eq:completeley decomposable matrix}
\end{equation}
where $\mathbf{0}_{d_1\times d_2}$ is a matrix of zeros of size $d_1\times d_2$. We say that $A$ is \textit{not completely decomposable} if $P$ and $Q$ such as in~\eqref{eq:completeley decomposable matrix} do not exist. 
\end{defn}
A useful equivalent characterization of a completely decomposable matrix can be obtained by inspecting the connectivity of the bipartite graph described by $A$. Specifically, consider the undirected and unweighted bipartite graph $\mathcal{G}_A(\mathcal{U}, \mathcal{V},E)$ whose nodes $\mathcal{U}$ correspond to the rows of $A$, nodes $\mathcal{V}$ correspond to the columns of $A$, and edges $E$ between $\mathcal{U}$ and $\mathcal{V}$ correspond to the nonzero entries of $A$, i.e., the $i$'th node in $\mathcal{U}$ is connected to the $j$'th node in $\mathcal{V}$ if and only if $A_{i,j} > 0$. Then, it immediately follows that $A$ is completely decomposable if and only if the graph $\mathcal{G}_A$ is disconnected, and furthermore, the connected components of $\mathcal{G}_A$ correspond to the blocks $B_{1,1}$ and $B_{2,2}$ in~\eqref{eq:completeley decomposable matrix} after permuting the nodes $\mathcal{U}$ and $\mathcal{V}$ according to $P$ and $Q$ from~\eqref{eq:completeley decomposable matrix}, respectively.

We now consider two important zero patterns of $A$.
First, it is clear that if $A$ admits any zero row or column, then $\mathbf{x}$ and $\mathbf{y}$ that solve~\eqref{eq:matrix scaling general equations} cannot exist. Hence, any zero rows or columns in a realization of $Y$ must first be removed before attempting to solve~\eqref{eq:scaling equations from estimated variances}. 
Second, observe that according to~\eqref{eq:completeley decomposable matrix}, if $A$ is completely decomposable and does not have any zero rows or columns, then after a certain permutation of its rows and columns it can be written as a direct sum of smaller nonnegative matrices that are not completely decomposable. In other words, if each row and column of $A$ has at least one positive entry, then there exist permutation matrices $P$ and $Q$ such that 
\begin{equation}
    P A Q = \bigoplus_{k=1}^K B_{k,k} = 
    \begin{bmatrix}
    B_{1,1} & \mathbf{0} & \mathbf{0} & \mathbf{0} \\
    \mathbf{0} & B_{2,2} & \mathbf{0} & \mathbf{0} \\
    \mathbf{0} & \mathbf{0} & \ddots & \mathbf{0} \\
    \mathbf{0} & \mathbf{0} & \mathbf{0} & B_{K,K}
    \end{bmatrix}, \label{eq:completely decomposable direct sum}
\end{equation}
where $\bigoplus$ is the direct sum operation, $B_{k,k}\in \mathbb{R}^{d_1^{(k)}\times d_2^{(k)}}$ are nonnegative matrices that are not completely decomposable, and $\mathbf{0}$ represents a block of zeros of appropriate size (not necessarily square). Importantly, since permuting rows and columns does not change their sums, the task of scaling $A$ to row sums $n$ and column sums $m$ is equivalent to that of scaling each of the matrices $B_{k,k}$ to these row and column sums. However, scaling the matrix $B_{k,k} \in \mathbb{R}^{d_1^{(k)}\times d_2^{(k)}}$ to row sums $n$ and column sums $m$ is possible only if 
\begin{equation}
    {d_1^{(k)} n } = d_2^{(k)} m, \label{eq:aspect ratio necessary condition}
\end{equation}
since the sum of all row sums is equal to the sum of all the entries in the matrix and must be the same as the sum of all column sums. Equation~\eqref{eq:aspect ratio necessary condition} implies that the aspect ratios (i.e., the number of columns divided by the number of rows) of each of the blocks $\{B_{k,k}\}$ must be exactly the same as the aspect ratio of matrix $A$, which is clearly a restrictive requirement.

To circumvent the above-mentioned issue, observe that whenever the realization of $Y$ is completely decomposable, the singular value decomposition of $Y$ can be written explicitly using the singular value decompositions of the blocks $\{B_{k,k}\}$ from the decomposition~\eqref{eq:completely decomposable direct sum} of $Y$ (replacing $A$). In particular, the singular values of $Y$ are given by concatenating the singular values of each of the blocks $B_{k,k}$. This suggests that one should treat each block $B_{k,k}$ separately as a $d_1^{k}\times d_2^{k}$ matrix, and scale it accordingly to row sums $d_2^{k}$ and column sums $d_1^{k}$. Correspondingly, the theory in Section~\ref{sec:method and results} would apply to each block $B_{k,k}$ separately. 
Therefore, for a given realization of $Y$, we propose to first remove its zero rows and columns, and to find its blocks $B_{k,k}$ in the decomposition~\eqref{eq:completely decomposable direct sum} by finding the connected components in the bipartite graph represented by $Y$. We then treat each block $B_{k,k}$ as a $d_1^{k}\times d_2^{k}$ matrix that should be scaled to row sums $d_2^{k}$ and column sums $d_1^{k}$ (instead of $n$ and $m$, respectively), and the rank for each  block $B_{k,k}$ should be chosen separately according to Section~\ref{sec:method and results}. 
Note that each $B_{k,k}$ does not have any zero rows and columns and is not completely deomposable, hence in what follows we proceed by treating the case where $A$ assumes the same properties.

For a matrix $A$ that does not have any zero rows and columns and is not completely decomposable, the following is the precise requirement from $A$ so that positive $\mathbf{x}$ and $\mathbf{y}$ that satisfy~\eqref{eq:matrix scaling general equations} for $r_i = n$ and $c_j = m$ exist and are unique.
\begin{cond} \label{cond:existence of x and y for nonegative A}
For all non-empty subsets $\mathcal{I}_1\subset [m]$ and $\mathcal{I}_2\subset [n]$ for which $[A]_{i\in \mathcal{I}_1, \; j \in \mathcal{I}_2}$ is a matrix of zeros, $|\mathcal{I}_1| n + | \mathcal{I}_2 | m < m n$.
\end{cond}
We then have the following proposition, which characterizes the existence and uniqueness of positive $\mathbf{x}$ and $\mathbf{y}$ that satisfy~\eqref{eq:matrix scaling general equations} for the case of $r_i=n$ and $c_j=m$.
\begin{prop}[Existence and uniqueness of $\mathbf{x}$ and $\mathbf{y}$ for $r_i=n$, $c_j=m$] \label{prop:existcne and uniquness for A}
Let $r_i=n$ and $c_j=m$ for all $i\in [m]$ and $j\in [n]$ in~\eqref{eq:matrix scaling general equations}, and suppose that $A$ does not have any zero rows and columns and is not completely decomposable. Then, there exists a pair $(\mathbf{x},\mathbf{y})$ of positive vectors that satisfies~\eqref{eq:matrix scaling general equations} if and only if Condition~\ref{cond:existence of x and y for nonegative A} holds. If such a pair $(\mathbf{x},\mathbf{y})$ exists, it is unique up to a positive scalar, namely it can only be replaced with $(a\mathbf{x},a^{-1}\mathbf{y})$ for any $a>0$.
\end{prop}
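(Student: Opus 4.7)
The plan is to derive the proposition from the classical theory of matrix scaling with prescribed marginals, which provides a combinatorial characterization of when positive scaling factors exist and when they are unique up to the inherent rescaling symmetry; see the survey~\cite{idel2016review} for a catalog of such results.

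I would begin by recasting Condition~\ref{cond:existence of x and y for nonegative A} in the language of the general scaling problem. Setting $r_i = n$ for all $i$ and $c_j = m$ for all $j$ in~\eqref{eq:matrix scaling general equations}, the general subadditivity condition $\sum_{i \in \mathcal{I}_1} r_i + \sum_{j \in \mathcal{I}_2} c_j < \sum_i r_i$, required over every zero submatrix with non-empty proper subsets $\mathcal{I}_1 \subsetneq [m]$ and $\mathcal{I}_2 \subsetneq [n]$, reduces precisely to $|\mathcal{I}_1| n + |\mathcal{I}_2| m < mn$, matching Condition~\ref{cond:existence of x and y for nonegative A}.

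For necessity of the condition, I would argue directly. Suppose positive $(\mathbf{x}, \mathbf{y})$ solve~\eqref{eq:matrix scaling general equations} and $[A]_{\mathcal{I}_1, \mathcal{I}_2} = 0$ for non-empty proper $\mathcal{I}_1, \mathcal{I}_2$. Writing $B = D(\mathbf{x}) A D(\mathbf{y})$, the scaled matrix has the same zero pattern as $A$, and the total mass in the rows indexed by $\mathcal{I}_1^c$ equals $(m - |\mathcal{I}_1|) n$. Since the rows of $\mathcal{I}_1$ contribute zero in columns $\mathcal{I}_2$, this mass must cover at least the column sums $|\mathcal{I}_2| m$, giving $|\mathcal{I}_1| n + |\mathcal{I}_2| m \leq mn$. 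If equality held, then the entire row-mass of $\mathcal{I}_1^c$ would be consumed by columns $\mathcal{I}_2$, forcing $B_{\mathcal{I}_1^c, \mathcal{I}_2^c}$ to sum to zero and hence $[A]_{\mathcal{I}_1^c, \mathcal{I}_2^c} = 0$ by positivity of the scaling factors. Combined with $[A]_{\mathcal{I}_1, \mathcal{I}_2} = 0$, this is exactly complete decomposability of $A$, contradicting the hypothesis. Hence the strict inequality in Condition~\ref{cond:existence of x and y for nonegative A} holds.

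For sufficiency and uniqueness up to the $(a\mathbf{x}, a^{-1}\mathbf{y})$ ambiguity, I would invoke the classical existence-uniqueness result for matrix scaling with prescribed marginals (originating in~\cite{sinkhorn1967diagonal, brualdi1966diagonal, brualdi1974dad} and catalogued in~\cite{idel2016review}). A concise self-contained derivation proceeds by convex analysis: the scaling factors arise as stationary points of the strictly convex potential $\phi(\mathbf{s}, \mathbf{t}) = \sum_{i,j} A_{i,j} e^{s_i + t_j} - n \sum_i s_i - m \sum_j t_j$, where $x_i = e^{s_i}$ and $y_j = e^{t_j}$. The strict subadditivity provided by Condition~\ref{cond:existence of x and y for nonegative A} is precisely what yields coercivity of $\phi$ modulo the one-parameter symmetry $(s_i, t_j) \mapsto (s_i + c, t_j - c)$, hence existence of an interior minimizer and, by strict convexity on the quotient space (guaranteed by the fact that $A$ is not completely decomposable, so the Hessian of $\phi$ is positive definite transverse to the symmetry direction), uniqueness up to the claimed rescaling. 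The main technical obstacle lies in the coercivity step: one must show that whenever a sequence of candidates has coordinates escaping to $0$ or $\infty$, some subset of indices $(\mathcal{I}_1, \mathcal{I}_2)$ emerges on which $\phi$ grows unboundedly unless a zero block violates the strict inequality--a Hall-type combinatorial argument on the bipartite graph $\mathcal{G}_A$ of the support of $A$.
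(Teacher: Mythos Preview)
Your proposal is correct. Both you and the paper ultimately ground the result in the classical matrix scaling literature, and in particular both point to~\cite{brualdi1974dad}; the paper's proof is in fact nothing more than a direct invocation of Theorem~1 in~\cite{brualdi1974dad}, after observing that Condition~\ref{cond:existence of x and y for nonegative A} specializes condition~(1) there to the marginals $r_i=n$, $c_j=m$ under the non-decomposability hypothesis. Where you differ is in the level of self-containment: you supply a direct mass-balance argument for necessity (rather than reading it off from the cited theorem), and for sufficiency/uniqueness you sketch the convex-potential viewpoint in which the scaling factors arise as the minimizer of $\phi(\mathbf{s},\mathbf{t})=\sum_{i,j}A_{i,j}e^{s_i+t_j}-n\sum_i s_i - m\sum_j t_j$ modulo the one-parameter symmetry. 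This buys you some intuition for \emph{why} the strict inequality in Condition~\ref{cond:existence of x and y for nonegative A} is exactly what is needed (it is the coercivity obstruction) and why non-decomposability gives uniqueness (it is positive-definiteness of the Hessian transverse to the symmetry), at the cost of a longer argument whose coercivity step you only outline. The paper's one-line citation is of course shorter and entirely adequate given that the proposition is a straightforward specialization of a known result.
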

\begin{proof}
Proposition~\ref{prop:existcne and uniquness for A} follows directly from Theorem 1 in~\cite{brualdi1974dad}. Specifically, our Condition~\ref{cond:existence of x and y for nonegative A} is a equivalent to condition (1) in~\cite{brualdi1974dad} for the case of $r_i = n$, $c_j=m$, and under the assumption that $A$ is not completely decomposable. 
\end{proof}
Note that Condition~\ref{cond:existence of x and y for nonegative A} does not hold if $A$ includes any zero submatrix whose number of rows and columns exceed $\lfloor m/2 \rfloor$ and $\lfloor n/2 \rfloor$, respectively. 
In more generality, we anticipate that positive $\mathbf{x}$ and $\mathbf{y}$ that satisfy~\eqref{eq:matrix scaling general equations} for the case of $r_i=n$ and $c_j=m$ might not exist if $A$ is too sparse.
Since Condition~\ref{cond:existence of x and y for nonegative A} as stated is somewhat obscure and is non-trivially verified from a given matrix $A$, it is worthwhile to provide a simpler condition only in terms of the number of zeros in the rows and columns of $A$. This is the purpose of the following proposition, which describes a sufficient condition for a matrix $A$ to simultaneously satisfy Condition~\ref{cond:existence of x and y for nonegative A} and not be completely decomposable.
\begin{prop} \label{prop:simple condtion for existence and uniquness for A}
Suppose that $A$ has no zero rows or columns and both requirements below are met:
\begin{enumerate}
    \item For each $k \leq \lfloor {n}/{2} \rfloor$, $A$ has less than $\lceil m k/n \rceil$ rows that have at least $n-k$ zeros each. \label{Cond: cond 2 in prop}
    \item For each $\ell \leq \lfloor {m}/{2} \rfloor$, $A$ has less than $\lceil n \ell /m \rceil$ columns that have at least $m-\ell$ zeros each. \label{Cond: cond 3 in prop}
\end{enumerate}
Then, Condition~\ref{cond:existence of x and y for nonegative A} holds and $A$ is not completely decomposable.
\end{prop}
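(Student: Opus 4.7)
I observe first that proving Condition~\ref{cond:existence of x and y for nonegative A} already implies that $A$ is not completely decomposable, so I plan to focus on establishing Condition~\ref{cond:existence of x and y for nonegative A} alone. Indeed, suppose $A$ were completely decomposable with proper nonempty $\mathcal{I}_1 \subset [m]$ and $\mathcal{I}_2 \subset [n]$ producing the two complementary zero blocks $[A]_{\mathcal{I}_1, \mathcal{I}_2}$ and $[A]_{\mathcal{I}_1^c, \mathcal{I}_2^c}$. Summing the ``sizes'' $|J_1| n + |J_2| m$ over $(J_1, J_2) = (\mathcal{I}_1, \mathcal{I}_2)$ and $(J_1, J_2) = (\mathcal{I}_1^c, \mathcal{I}_2^c)$ yields exactly $2mn$, so at least one of these nonempty zero submatrices would violate Condition~\ref{cond:existence of x and y for nonegative A}.

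To prove Condition~\ref{cond:existence of x and y for nonegative A}, I would argue by contradiction: assume there exist nonempty $\mathcal{I}_1 \subset [m]$ and $\mathcal{I}_2 \subset [n]$ with $[A]_{\mathcal{I}_1, \mathcal{I}_2} = 0$ and $sn + tm \geq mn$, where $s = |\mathcal{I}_1|$ and $t = |\mathcal{I}_2|$. I partition into three cases: (i) both $s \leq \lceil m/2 \rceil - 1$ and $t \leq \lceil n/2 \rceil - 1$; (ii) $t \geq \lceil n/2 \rceil$; (iii) $s \geq \lceil m/2 \rceil$. In case (i), the simple bounds $s \leq (m-1)/2$ and $t \leq (n-1)/2$ immediately give $sn + tm \leq (m-1)n/2 + (n-1)m/2 = mn - (m+n)/2 < mn$, a contradiction; notably, this sub-case does not require requirements 1 or 2 of the proposition.

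For case (ii), I set $k = n - t$, so $k \leq \lfloor n/2 \rfloor$ and every row in $\mathcal{I}_1$ has at least $n-k$ zeros (those in the $t$ columns of $\mathcal{I}_2$). Requirement 1 of the proposition then gives $s < \lceil mk/n \rceil$, i.e., $s \leq \lceil mk/n \rceil - 1$. Plugging this into $sn + tm = sn + (n-k)m$ and using the elementary estimate $n \lceil mk/n \rceil < mk + n$ (from $\lceil x \rceil < x + 1$) yields $sn + tm < mn$, again a contradiction. Case (iii) is handled symmetrically with $\ell = m - s \leq \lfloor m/2 \rfloor$ using requirement 2. The only real subtlety is making sure the strict versus non-strict inequalities line up correctly in cases (ii) and (iii); the strict bound $\lceil x \rceil < x + 1$ provides exactly the needed slack, and everything else reduces to routine integer arithmetic.
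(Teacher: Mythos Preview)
Your proof is correct, and the case analysis for Condition~\ref{cond:existence of x and y for nonegative A} (your cases (ii) and (iii)) is essentially the paper's argument. Two small remarks. First, your case (i) is redundant: once you assume $sn+tm\ge mn$, dividing by $mn$ gives $s/m+t/n\ge 1$, so one of $s\ge m/2$, $t\ge n/2$ must hold and you land directly in (ii) or (iii); the paper uses exactly this shortcut. Second, in case (ii) you should note that $k=n-t\ge 1$ (otherwise $t=n$ forces a zero row in $\mathcal{I}_1$), so that requirement~1 indeed applies; this is implicit but worth saying.

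Where your write-up genuinely differs from the paper is the first paragraph. The paper proves ``not completely decomposable'' as a standalone argument: it takes the two complementary zero blocks, assumes WLOG $|\mathcal{I}_2|\ge n/2$, and then plays requirements~1 and~2 against each other to reach $k<k$. Your averaging observation---that the two blocks together contribute $2mn$, so one of them must already violate Condition~\ref{cond:existence of x and y for nonegative A}---is cleaner and avoids this separate case analysis entirely. It buys you a strictly shorter proof, since once Condition~\ref{cond:existence of x and y for nonegative A} is established you are done.
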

The proof of Proposition~\ref{prop:simple condtion for existence and uniquness for A} can be found in Appendix~\ref{appendix:proof of simple condition for existence and uniqueness for A}.

Importantly, the conditions in Proposition~\ref{prop:simple condtion for existence and uniquness for A} can be easily verified for any given matrix $A$ by counting the number zeros in each row and each column.
In case that a given matrix $A$ does not satisfy these conditions, it can be modified by removing its sparsest rows and columns until these conditions are met. 
In particular, one can check if the rows (columns) of $A$ are in violation of the requirements in Proposition~\ref{prop:simple condtion for existence and uniquness for A}, and if so remove the sparsest row (column) of $A$, repeating the process until no violations are found.

\section{Fit against the MP law for Poisson noise with $\gamma = 1/3,1/4$} \label{appendix:MP fit Poisson additional figures}
In Figure~\eqref{fig:MPfit2_extra_aspect_ratios} we depict the results of the experiment described in Section~\ref{sec:experiments - fit to MP law} for the aspect ratios $\gamma = m/n = 1/3,\;1/4$.
\begin{figure}[] 
    \makebox[\textwidth][c]{
        \hspace{-0.5cm}
        \subfloat[][$\gamma = 1/3$, $n = 100,\, 500,\, 5000$]{
        \includegraphics[width=0.6\textwidth]{./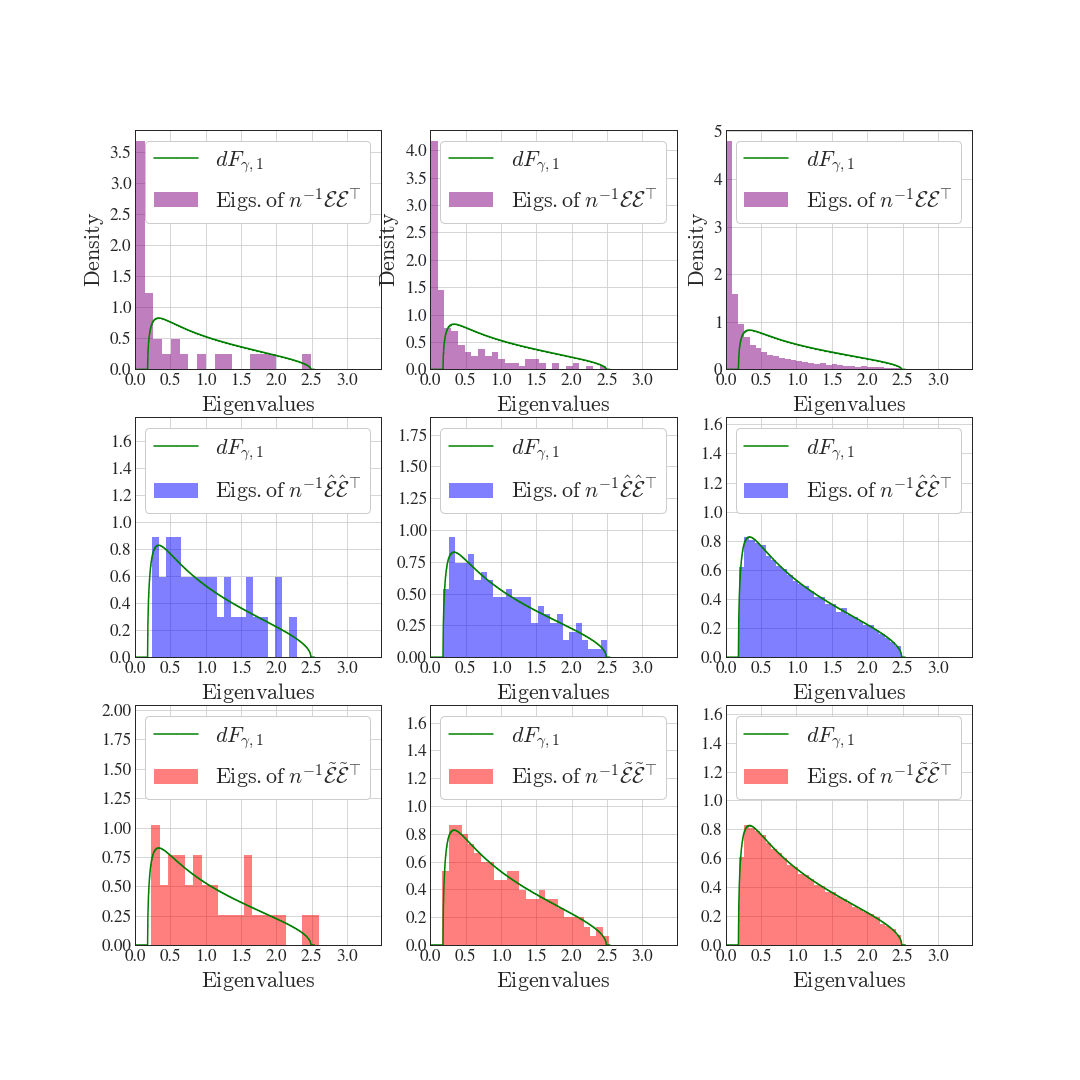}
        }
        \hspace{-1.3cm}
        \subfloat[][$\gamma = 1/4$, $n = 100,\, 500,\, 5000$]{
        \includegraphics[width=0.6\textwidth]{./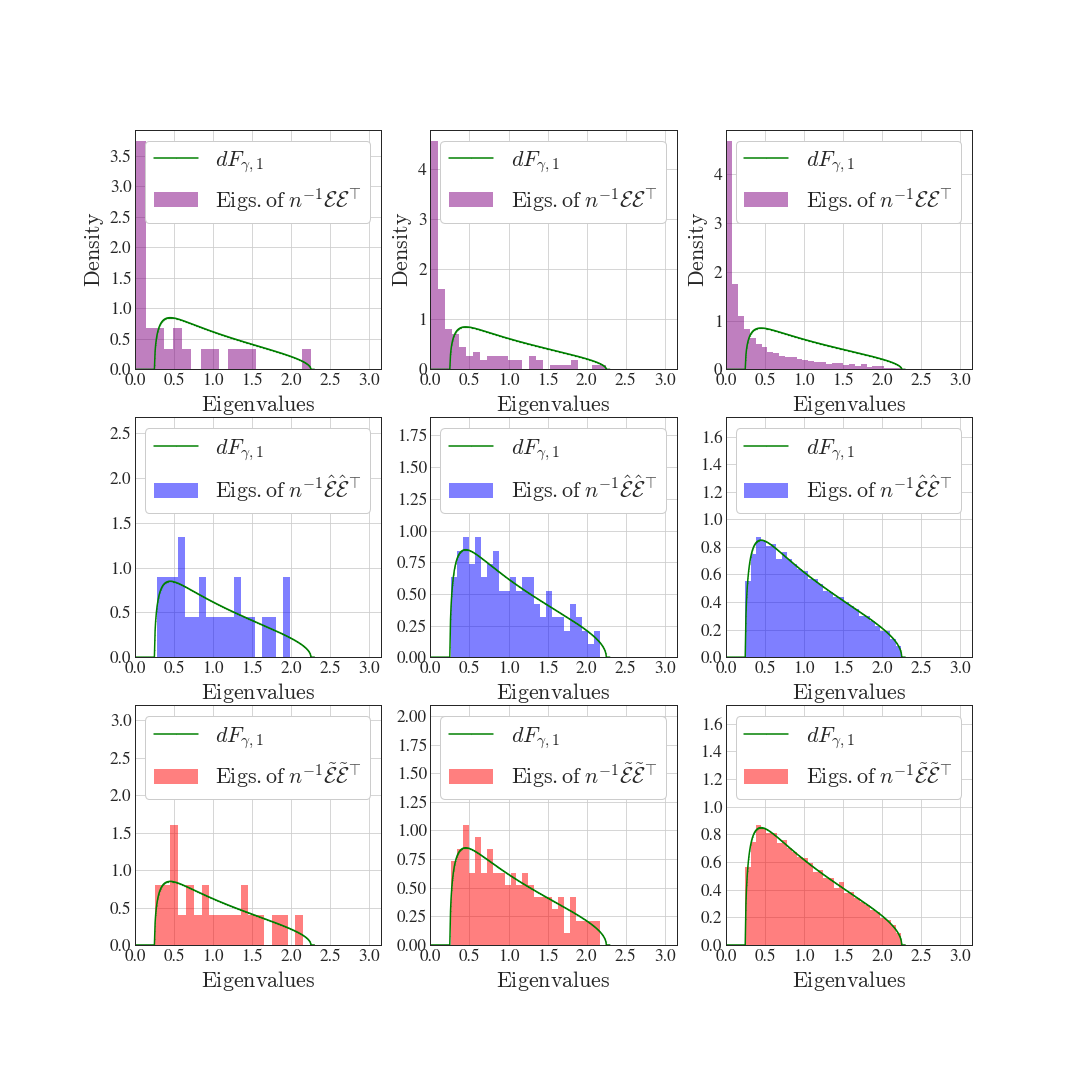}
        }
    }

    \caption{Spectrum of simulated Poisson noise versus the standard ($\sigma = 1$) Marchenko-Pastur density $dF_{\gamma, 1}$, for aspect ratios $\gamma = m/n = 1/3,\,1/4$ and matrix dimensions $n=100,500,5000$ (from left to right in each panel). The top row in each panel (purple) corresponds to the eigenvalues of $\Sigma_n$ (i.e., without any row or column scaling). The center row in each panel (blue) corresponds to the eigenvalues of $\hat{\Sigma}_n$ (i.e., after scaling with the estimated scaling factors). The bottom row in each panel (red) correspond to the eigenvalues of $\widetilde{\Sigma}_n$ (i.e., after scaling with the exact scaling factors). 
    }
     \label{fig:MPfit2_extra_aspect_ratios}
\end{figure}

\section{Reproducibility details}
\subsection{Figure~\ref{fig:introduction example}} \label{reproducibility details introduction example}
We first generated an $m\times r$ matrix $B$ by sampling its entries independently from the log-normal distribution with mean $0$ and variance $4$ (i.e., from $\operatorname{exp}(2Z)$, where $Z\sim \mathcal{N}(0,1)$). Then, we generated an $r\times n$ matrix $C$ by sampling its entries independently from the uniform distribution over $[0,1]$. Lastly, we computed ${X} = BC$, normalized $X$ by a scalar so that its average entry is $1$, and sampled the entries of $Y$ from the Poisson distribution as in~\eqref{eq:Y poisson model}. After generating $Y$, Algorithm~\ref{alg:noise standardization} was applied to $Y$ with scaling tolerance $\delta = 10^{-12}$. 

\subsection{Figures~\ref{fig:MPfit2} and~\ref{fig:MPfiterror}} \label{reproducibility details Poisson convergence to MP law}
We generated the matrix $X$ by sampling its entries independently from $\operatorname{Unif}(1,2)$, namely, the uniform distribution over $[1,2]$, and multiplied the resulting matrix from left and right by diagonal matrices whose entries (on the main diagonal) were sampled independently from $\exp(\operatorname{Unif}(-2,2))$. Then, each entry $Y_{i,j}$ was sampled independently from \sloppy $\operatorname{Poisson}(X_{i,j})$. 
Note that Theorems~\ref{thm:Marchenko-Pastur for biwhitened noise} and~\ref{thm:Marchenko-Pastur for biwhitened noise from estimated variances} in Section~\ref{sec:method and results} do not make any assumptions about the rank of $X$, and indeed, in this experiment the matrix $X$ has full rank with probability $1$.
After generating $Y$, we obtained the eigenvalues of the matrices $\Sigma_n = n^{-1}\mathcal{E}\mathcal{E}^\top$, $\hat{\Sigma}_n =  n^{-1}\hat{\mathcal{E}}\hat{\mathcal{E}}^\top$, $\tilde{\Sigma}_n = n^{-1}\widetilde{\mathcal{E}}\widetilde{\mathcal{E}}^\top$ (corresponding to the original noise matrix, the biwhitened noise matrix using the estimated scalings factors, and the biwhitened noise matrix using the exact scaling factors, respectively). To compute these matrices, the Sinkhorn-Knopp algorithm (Algorithm~\ref{alg:SK}) was used with tolerance $\delta=10^{-11}$.

\subsection{Rank estimation accuracy (Section~\ref{section:rank estimation})} \label{appendix:reproducibility for rank selection simulations}
For the first two experiments (Figures~\ref{fig:mild heteroskedasticity} and~\ref{fig:strong heteroskedasticity}), the Poisson parameter matrix was generated as $X = cUV$, $U \in \mathbb{R}^{500 \times 20}$, $V \in \mathbb{R}^{20 \times 750}$, where $c$ is a positive scalar. For Figure~\ref{fig:mild heteroskedasticity}, we used $U_{ij}\sim \exp(\mathrm{Unif}(-1,1))$ and $V_{ij} \sim \mathrm{Unif}(0,1)$, whereas for Figure~\ref{fig:strong heteroskedasticity} we used $U_{ij}\sim \exp(2\cdot \mathcal{N}(0,1))$ and $V_{ij} \sim \mathrm{Unif}(0,1)$. In both cases we used the scalar $c$ to control the average value of $X$.
For the third experiment (Figure~\ref{fig:strong factor}), the Poisson parameter matrix was generated as $X = c UV + pq^\top$, $U \in \mathbb{R}^{500 \times 19}$, $V \in \mathbb{R}^{19 \times 750}$, and $c$ is a positive scalar, where $U_{ij}\sim \exp(\mathrm{Unif}(-1,1))$, $V_{ij} \sim \mathrm{Unif}(0,1)$, and the rank-1 factor $pq^\top$ was generated by sampling $p_i, q_j \in \exp(2\cdot \mathcal{N}(0,1))$. We then varied the scalar $c$ to control the average value of the matrix $cUV$.

\subsection{Experiments on real data} \label{appendix:reproducibility details for real data}
\subsubsection{Fits to the MP law (Section~\ref{sec:real data fit to the MP law})} \label{appendix:reproducibility details for real data MP fit}
For the PBMC dataset, for each trial of sample-splitting we randomly chose $20000$ cells and split them into two equal groups to create two matrices. Then, for each of these matrices we removed all columns (genes) that had less than or equal to $200$ nonzeros, and removed all rows (cells) that had less than or equal to $200$ nonzeros in the resulting matrix. We applied the same pipeline to the Hrvatin dataset except that we initially retained $10000$ cells from the data, and later used $100$ as a threshold for the sparsity of genes and cells. For the Hi-C dataset, for each trial of sample-splitting we randomly split the loci of chromosome 1 into two equal groups to create two matrices. We then removed from each of them all columns (chromosome 1 loci) that had less than or equal to $10$ nonzeros, and further removed all rows (chromosome 2 loci) that had less than or equal to $10$ nonzeros in the resulting matrix. For the AP dataset, for each trial of sample-splitting we randomly split the documents into two groups to create two matrices. Then, we removed from each of them all columns (terms/words) with $30$ or less nonzeros, and further removed all rows (documents) with $2$ or less nonzeros in the resulting matrix. In addition, we removed duplicate documents and terms from the filtered matrices. For the 20 NewsGroups dataset, we used the same pipeline except that that we first randomly chose $10000$ documents and removed $1\%$ of the most popular terms across the chosen documents, before the rest of the sample splitting procedure and sparsity filtering. 

\subsubsection{Rank estimation with ground truth (Section~\ref{sec:rank estmation real data})} \label{appendix:reproducibility details for real data rank estimation}
For the PBMC dataset, each cell in the dataset was initially labelled with one of $10$ cell types. We chose the following $8$ cell types that should be well separated: `naive\_t', `b\_cells', `cd14\_monocytes', `naive\_cytotoxic', `memory\_t', `regulatory\_t', `cd56\_nk', and `cytotoxic\_t'. We randomly sampled $500$ cells from each type to form a matrix with $4000$ columns, and removed the rows (genes) that have 100 or fewer nonzeros.
For the 20 Newsgroups dataset, each document was initially labelled with one of $20$ topics. We selected the following $10$ topics: `alt.atheism', `comp.sys.mac.hardware', `comp.windows.x', `misc.forsale', `rec.motorcycles', `rec.sport.hockey', `sci.space', `soc.religion.christian', `talk.politics.guns', `talk.politics.misc'. We then randomly chose $500$ documents from each topic to form a matrix with $5000$ columns. We removed the rows (terms) that had $100$ or less nonzeros, and further removed the resulting zero columns. 

\section{Fit against the MP law for several families with quadratic variance functions} \label{appendix:experiments for other distributions}

\newcommand{\Ex}{\mathbb{E}}
\newcommand{\paren}[1]{\left(#1\right)}
\newcommand{\curly}[1]{\left\{#1\right\}}
\newcommand{\brac}[1]{\left[#1\right]}

In this section we provide results analogous to the ones described in Section~\ref{sec:experiments - fit to MP law} (fit of the spectrum of Poisson noise after scaling to the MP law) for the binomial, negative binomial, and generalized Poisson; see Section~\ref{sec:quadratic variance functions}.

\subsection{Binomial}
The binomial distribution depends on two parameters: the success probability and the number of trials. We generated the success probability matrix $P=(p_{i,j})_{i\in[m],\;j\in[n]}$ in the same way as we generated the Poisson parameter matrix $X$ in Section~\ref{sec:experiments - fit to MP law} (see Appendix~\ref{reproducibility details Poisson convergence to MP law}), except that we also normalized each column to sum to $1$. As for the number of binomial trials, we set it as a constant $\ell = 5$ for all $i\in[m],\;j\in[n]$.
Then, we sampled $Y_{i,j}$ independently from $\operatorname{Binomial}(p_{i,j},\ell)$. We used the true variances~\eqref{eq: quadratic variance} with $a=0$, $b=1$, $c=-1/\ell$ and their unbiased estimators~\eqref{eq:quadratic noise variance estimator} to solve the systems of equations~\eqref{eq:scaling equations from true variances general model} and~\eqref{eq:scaling equations from estimated variances general model}, respectively, using the Sinkhorn-Knopp algorithm with tolerance $\delta=10^{-11}$. 
In Figure~\ref{fig:MPBinomFit} we plot the eigenvalue histograms (normalized appropriately) of $\Sigma_n$, $\hat{\Sigma}_n$, and $\tilde{\Sigma}_n$, for aspect ratios $\gamma = m/n = 1/2, 1/3,1/4,1/5$ and column dimensions $n = 100,500,5000$.
Similarly to the Poisson (Figure~\ref{fig:MPfit2}), we obtain an accurate fit to the MP law even for moderate matrix dimensions.

\begin{figure}
    \makebox[\textwidth][c]{
        \hspace{-0.5cm}
        \subfloat[][$\gamma = 1/2$, $n = 100,\, 500,\, 5000$]{
        \includegraphics[width=0.6\textwidth]{./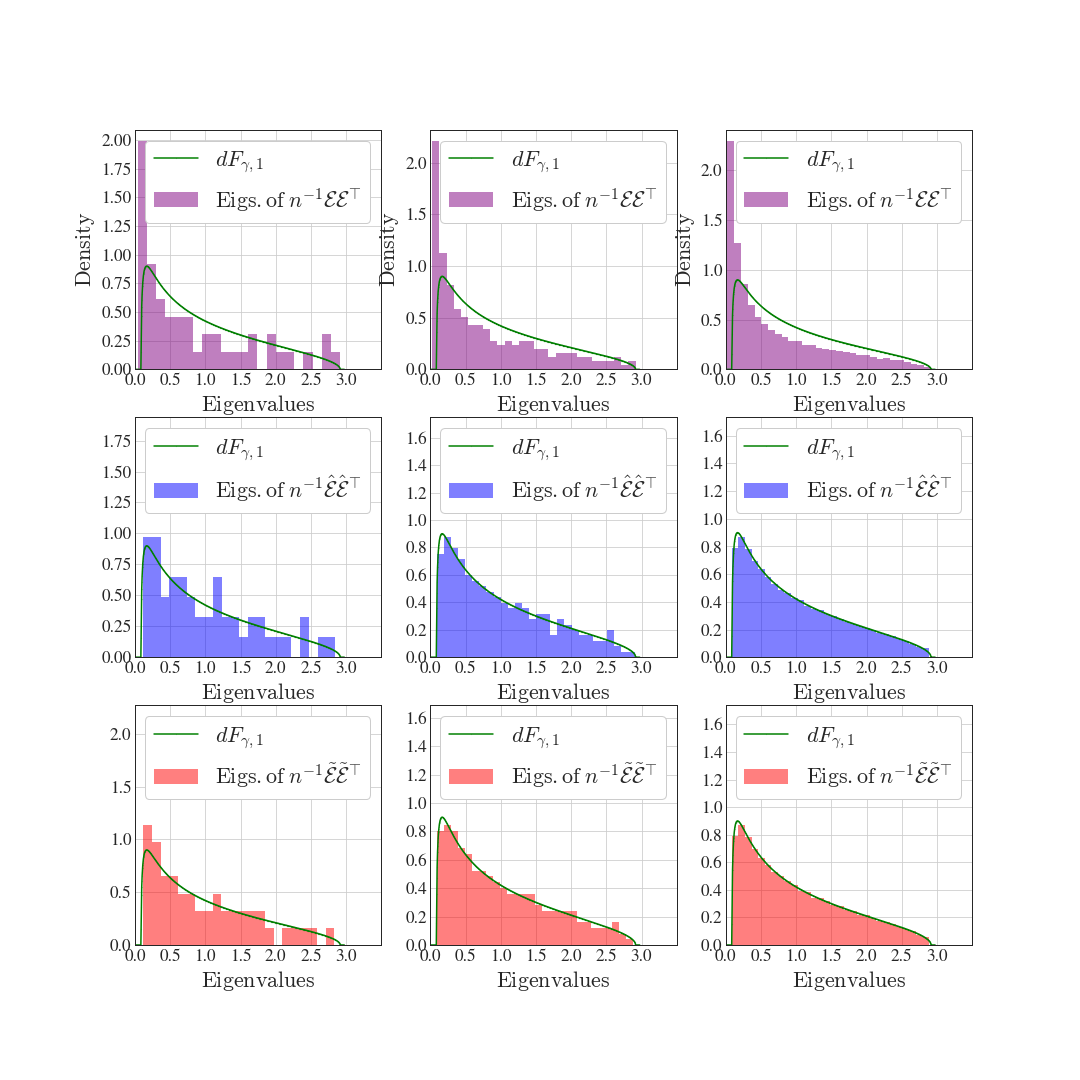}
        }
        \hspace{-1.3cm}
        \subfloat[][$\gamma = 1/3$, $n = 100,\, 500,\, 5000$]{
        \includegraphics[width=0.6\textwidth]{./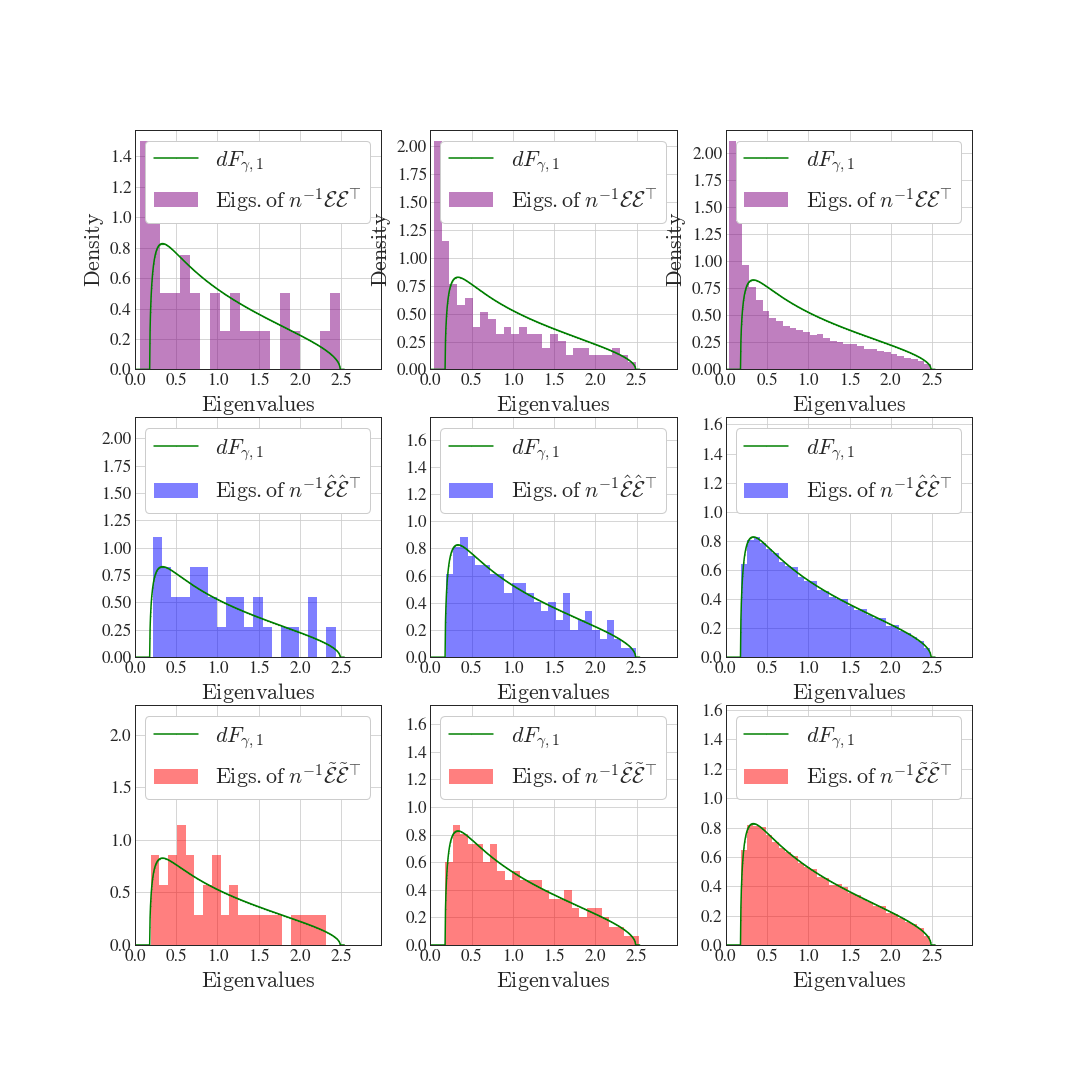}
        }
    }
    
    \makebox[\textwidth][c]{
        \hspace{-0.5cm}
        \subfloat[][$\gamma = 1/4$, $n = 100,\, 500,\, 5000$]{
        \includegraphics[width=0.6\textwidth]{./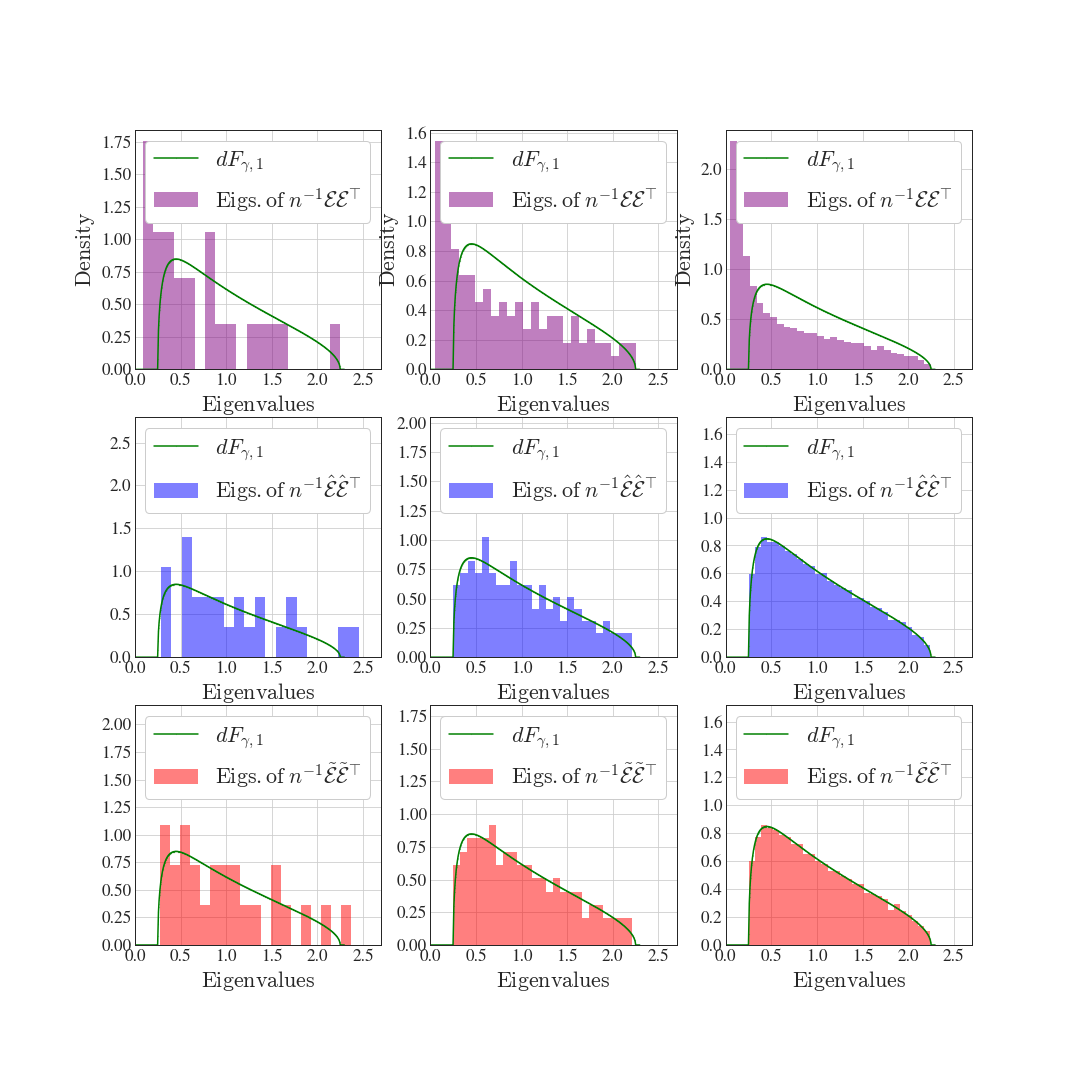}
        }
        \hspace{-1.3cm}
        \subfloat[][$\gamma = 1/5$, $n = 100,\, 500,\, 5000$]{
        \includegraphics[width=0.6\textwidth]{./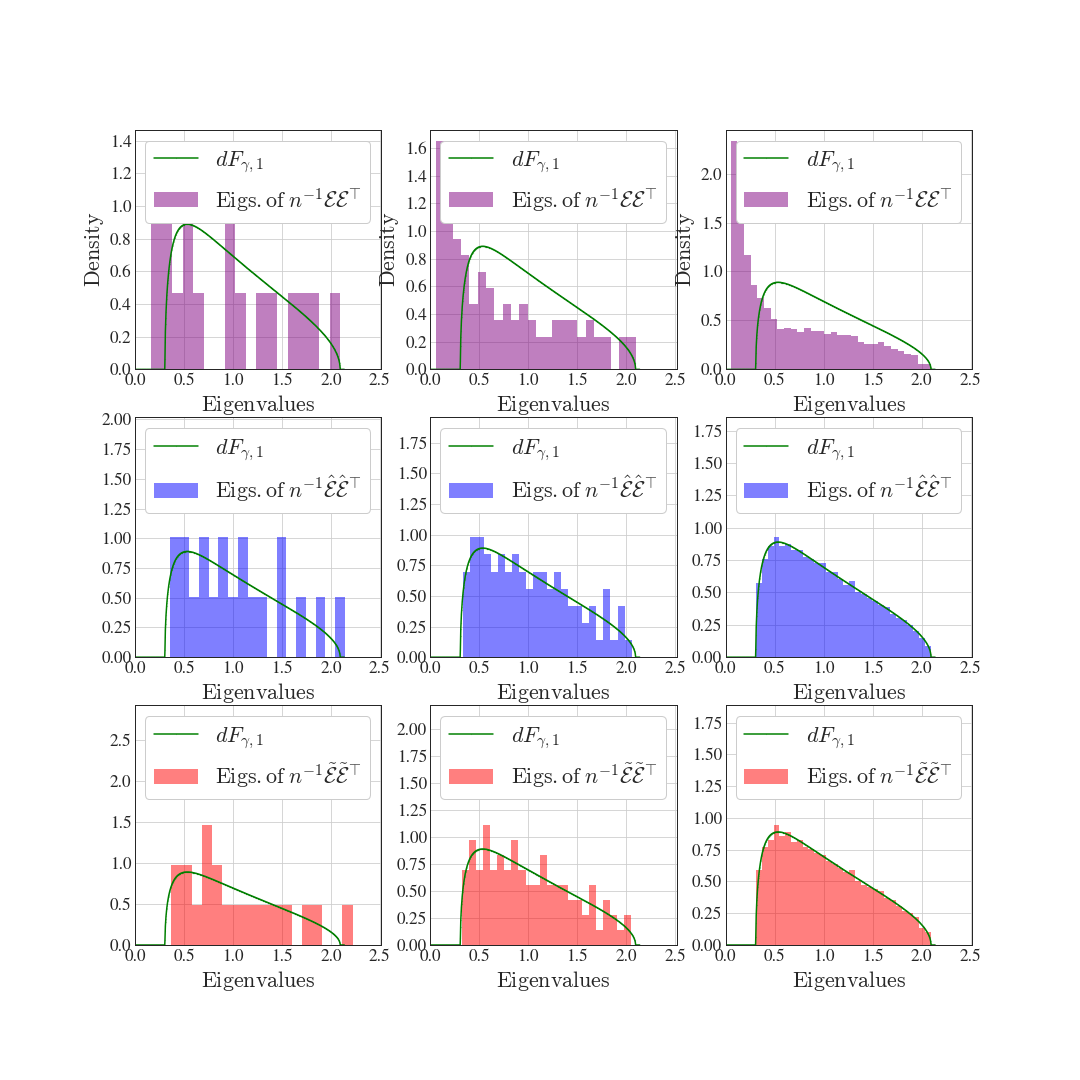}
        }
    }
    
    \caption{Spectrum of simulated binomial noise versus the standard ($\sigma = 1$) Marchenko-Pastur density $dF_{\gamma, 1}$, for various aspect ratios $\gamma$ and matrix dimensions $n=100,500,5000$ (from left to right in each panel). The top row in each panel (purple) corresponds to the eigenvalues of $\Sigma_n$ (i.e., without any row or column scaling). The center row in each panel (blue) corresponds to the eigenvalues of $\hat{\Sigma}_n$ (i.e., after scaling with the estimated scaling factors). The bottom row in each panel (red) correspond to the eigenvalues of $\widetilde{\Sigma}_n$ (i.e., after scaling with the true scaling factors). 
    }
    \label{fig:MPBinomFit}
\end{figure}

\subsection{Negative Binomial}

The negative binomial distribution depends on two parameters: the number of failures and the probability of success. In this experiment, we set the number of failures for all entries in the matrix to be $\rho = 3$. We generated the matrix $X$ as for the experiment in Section~\ref{sec:experiments - fit to MP law} (see Appendix~\ref{reproducibility details Poisson convergence to MP law}), and formed the matrix of success probabilities for the negative binomials as $p_{i,j} = {X_{i,j}}/(r+X_{i,j})$. We then sampled each $Y_{i,j}$ independently from $\operatorname{NegBinomial}(p_{i,j},\rho)$, and computed the eigenvalues of $\Sigma_n$, $\hat{\Sigma}_n$, and $\widetilde{\Sigma}_n$, where we used the true variance~\eqref{eq: quadratic variance} with $a=0$, $b=1$, $c=1/\rho$ and its unbiased estimator~\eqref{eq:variance estimator with alpha and beta} to solve the systems of equations~\eqref{eq:scaling equations from true variances general model} and~\eqref{eq:scaling equations from estimated variances general model} (using the Sinkhorn-Knopp algorithm with tolerance $\delta = 10^{-11}$). 
In Figure~\ref{fig:MPNegBinomFit} we plot the resulting eigenvalue histograms (normalized appropriately) of $\Sigma_n$, $\hat{\Sigma}_n$, and $\widetilde{\Sigma}_n$, for aspect ratios $\gamma = m/n = 1/2, 1/3,1/4,1/5$ and column dimensions $n = 100,500,5000$.
As before, the results are very similar to the ones in the Poisson case (Figure~\ref{fig:MPfit2}), demonstrating the convergence to the MP law as the dimensions grow.

\begin{figure}
    \makebox[\textwidth][c]{
    \hspace{-0.5cm}
        \subfloat[][$\gamma = 1/2$, $N = 100,\, 500,\, 5000$]{
        \includegraphics[width=0.6\textwidth]{./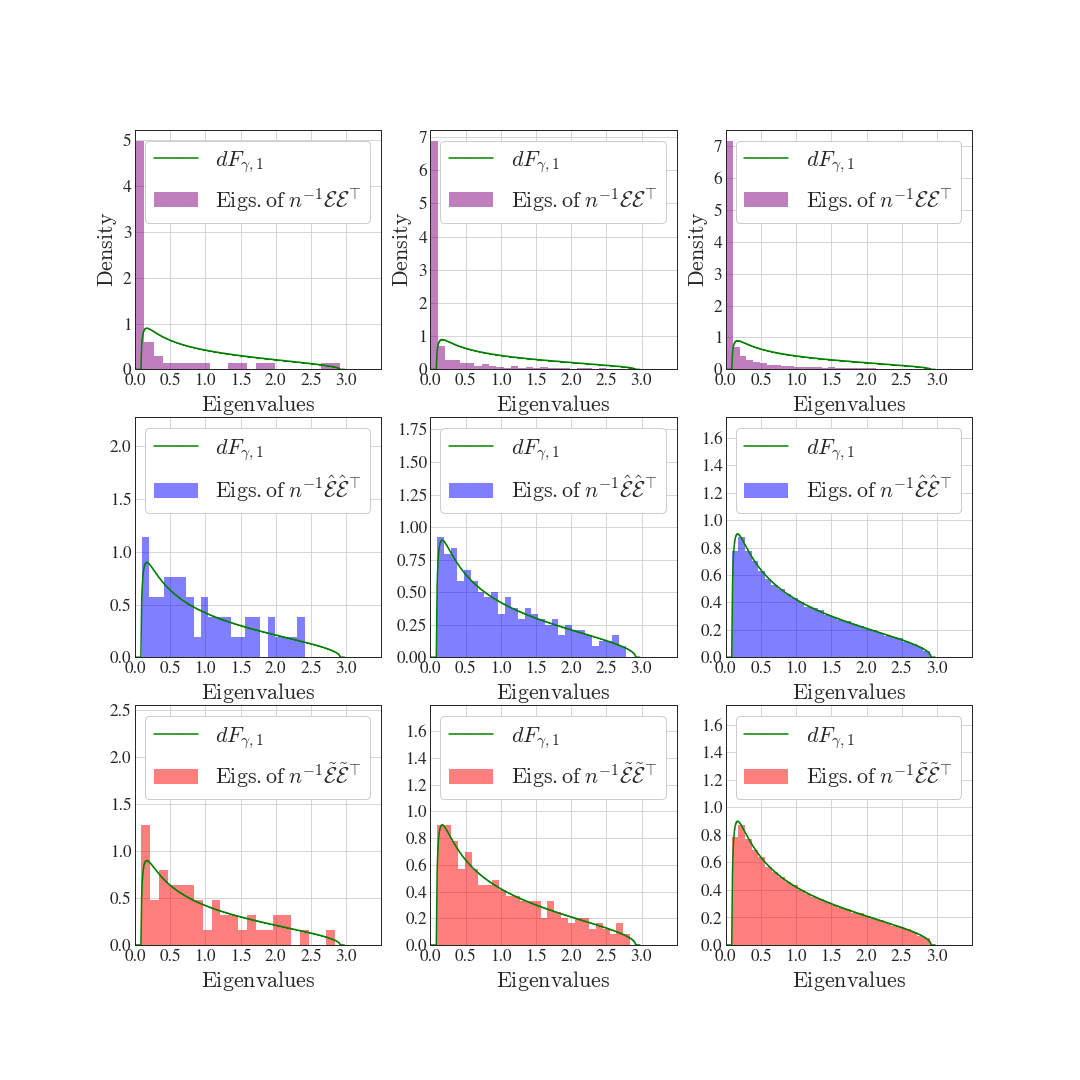}
        }
        \hspace{-1.3cm}
        \subfloat[][$\gamma = 1/3$, $N = 100,\, 500,\, 5000$]{
        \includegraphics[width=0.6\textwidth]{./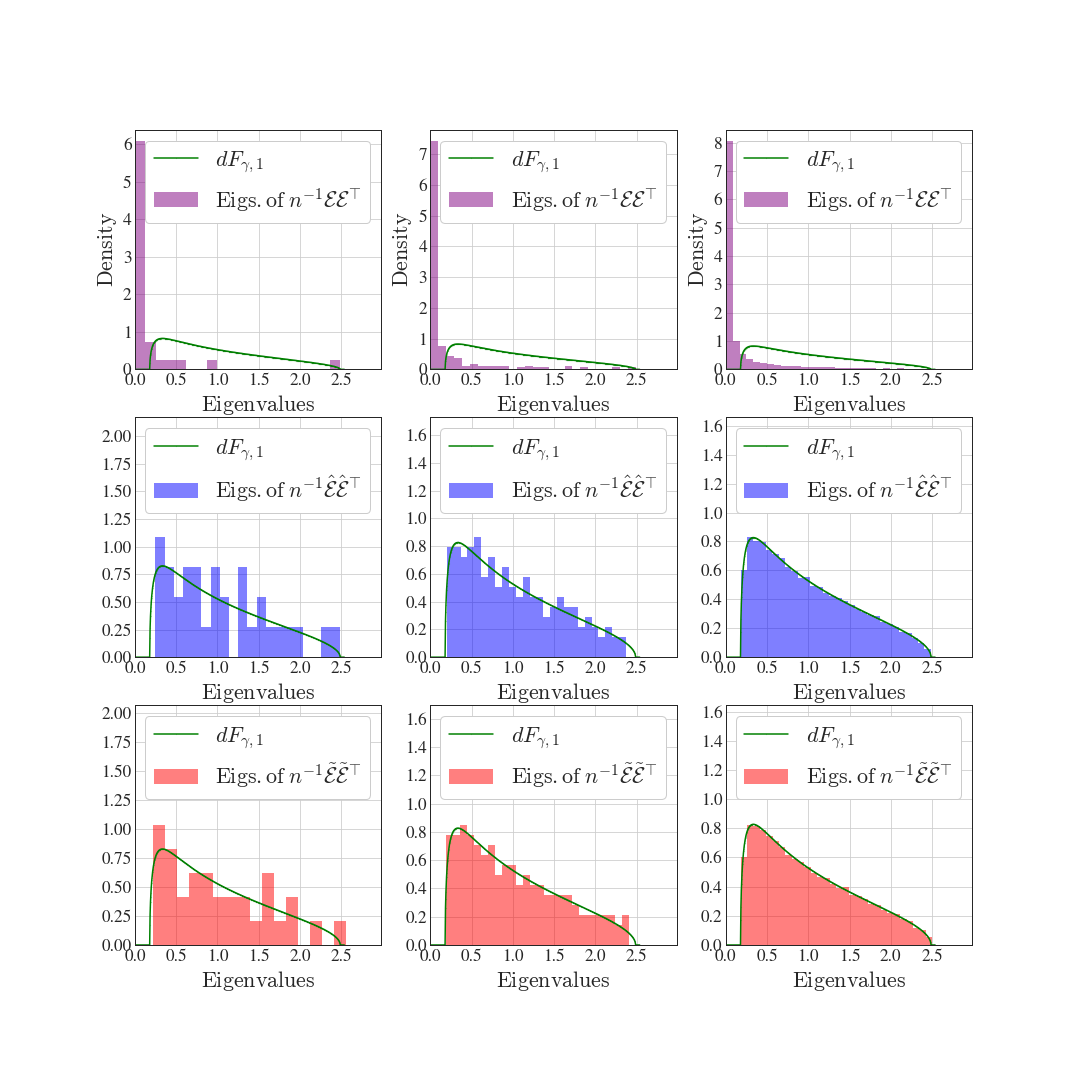}
        }
    }
    
    \makebox[\textwidth][c]{
    \hspace{-0.5cm}
        \subfloat[][$\gamma = 1/4$, $N = 100,\, 500,\, 5000$]{
        \includegraphics[width=0.6\textwidth]{./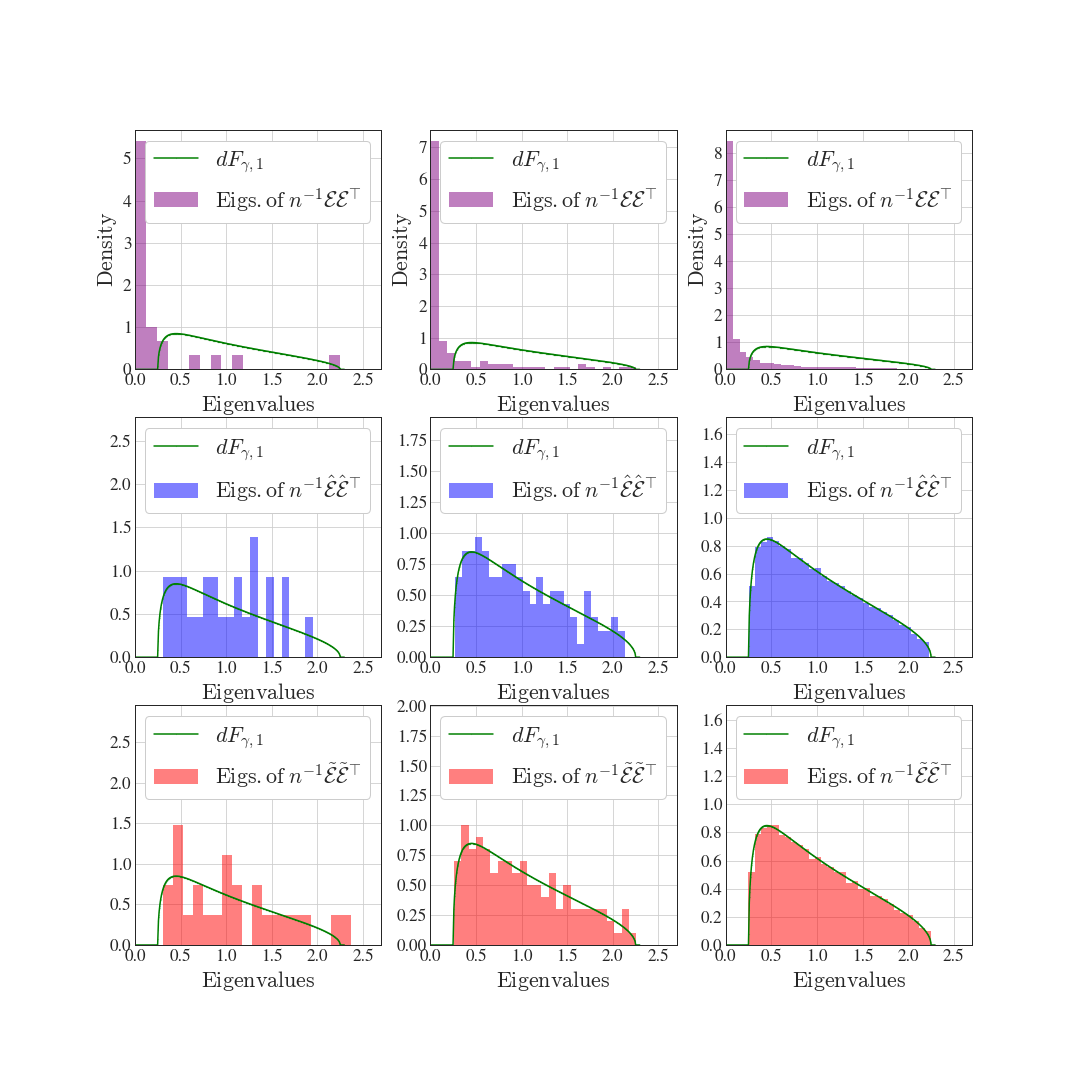}
        }
        \hspace{-1.3cm}
        \subfloat[][$\gamma = 1/5$, $N = 100,\, 500,\, 5000$]{
        \includegraphics[width=0.6\textwidth]{./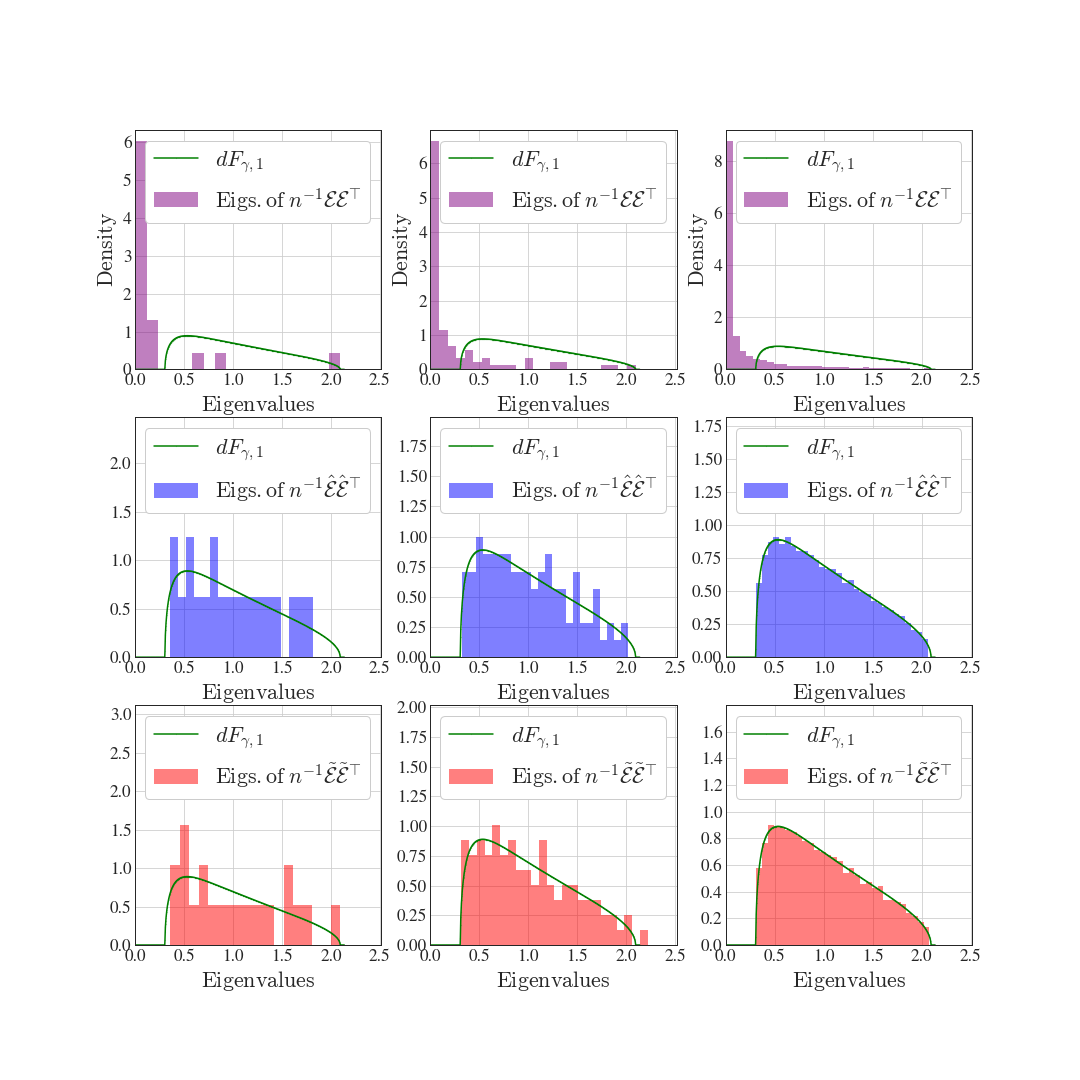}
        }
    }
    
    \caption{Spectrum of simulated negative binomial noise versus the standard ($\sigma = 1$) Marchenko-Pastur density $dF_{\gamma, 1}$, for various aspect ratios $\gamma$ and matrix dimensions $n=100,500,5000$ (from left to right in each panel). The top row in each panel (purple) corresponds to the eigenvalues of $\Sigma_n$ (i.e., without any row or column scaling). The center row in each panel (blue) corresponds to the eigenvalues of $\hat{\Sigma}_n$ (i.e., after scaling with the estimated scaling factors). The bottom row in each panel (red) correspond to the eigenvalues of $\widetilde{\Sigma}_n$ (i.e., after scaling with the true scaling factors)}
    \label{fig:MPNegBinomFit}
\end{figure}

\subsection{Generalized Poisson}
We simulated data from the Generalized Poisson distribution~\cite{consul1973generalization} as $Y_{i,j} \sim \operatorname{GP}(X_{i,j}, \eta)$, where $X_{i,j}$ is the rate parameter and  $\eta$ is the dispersion parameter. We randomly generated the rate parameters $X_{i,j}$ in the same way as we generated the Poisson parameters in the example of Section~\ref{sec:experiments - fit to MP law} (see Appendix~\ref{reproducibility details Poisson convergence to MP law}), and fixed the dispersion parameter $\eta = 0.1$. We used the true noise variances~\eqref{eq: quadratic variance} with $a=0$, $b = 1/(1-\eta)^2$, $c=0$ (see~\cite{consul1973generalization}) and their unbiased estimators~\eqref{eq:quadratic noise variance estimator} to solve the systems of equations~\eqref{eq:scaling equations from true variances general model} and~\eqref{eq:scaling equations from estimated variances general model}, respectively, using the Sinkhorn-Knopp algorithm with tolerance $\delta=10^{-11}$. 
Next, we computed $\mathcal{E} = Y - \Ex\brac{Y}$, $\Sigma_n = n^{-1}\mathcal{E}\mathcal{E}^T$, $\hat{\Sigma}_n = n^{-1}\hat{\mathcal{E}}\hat{\mathcal{E}}^T$, and $\widetilde{\Sigma}_n = n^{-1}\widetilde{\mathcal{E}}\widetilde{\mathcal{E}}^T$.  
In Figure~\ref{fig:MPGPFit} we plot the eigenvalue histograms (normalized appropriately) of $\Sigma_n$, $\hat{\Sigma}_n$, and $\tilde{\Sigma}_n$, for aspect ratios $\gamma = m/n = 1/2, 1/3,1/4,1/5$ and column dimensions $n = 100,500,5000$.
The results are very similar to the ones in the Poisson case (Figure~\ref{fig:MPfit2}), and we see an excellent fit to the MP law even for moderate matrix dimensions.

\begin{figure}
    \makebox[\textwidth][c]{
        \hspace{-0.5cm}
        \subfloat[][$\gamma = 1/2$, $N = 100,\, 500,\, 5000$]{
        \includegraphics[width=0.6\textwidth]{./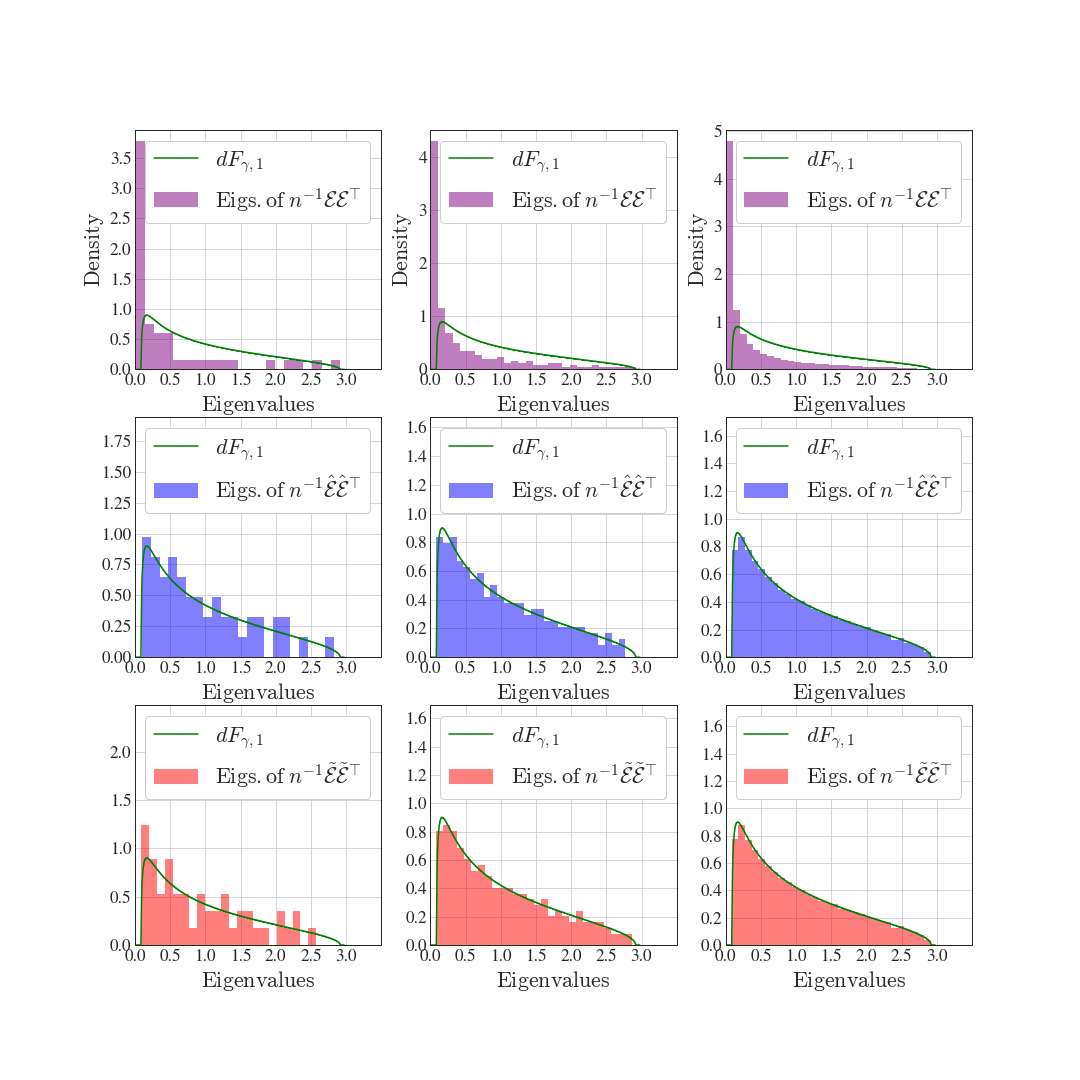}
        }
        \hspace{-1.3cm}
        \subfloat[][$\gamma = 1/3$, $N = 100,\, 500,\, 5000$]{
        \includegraphics[width=0.6\textwidth]{./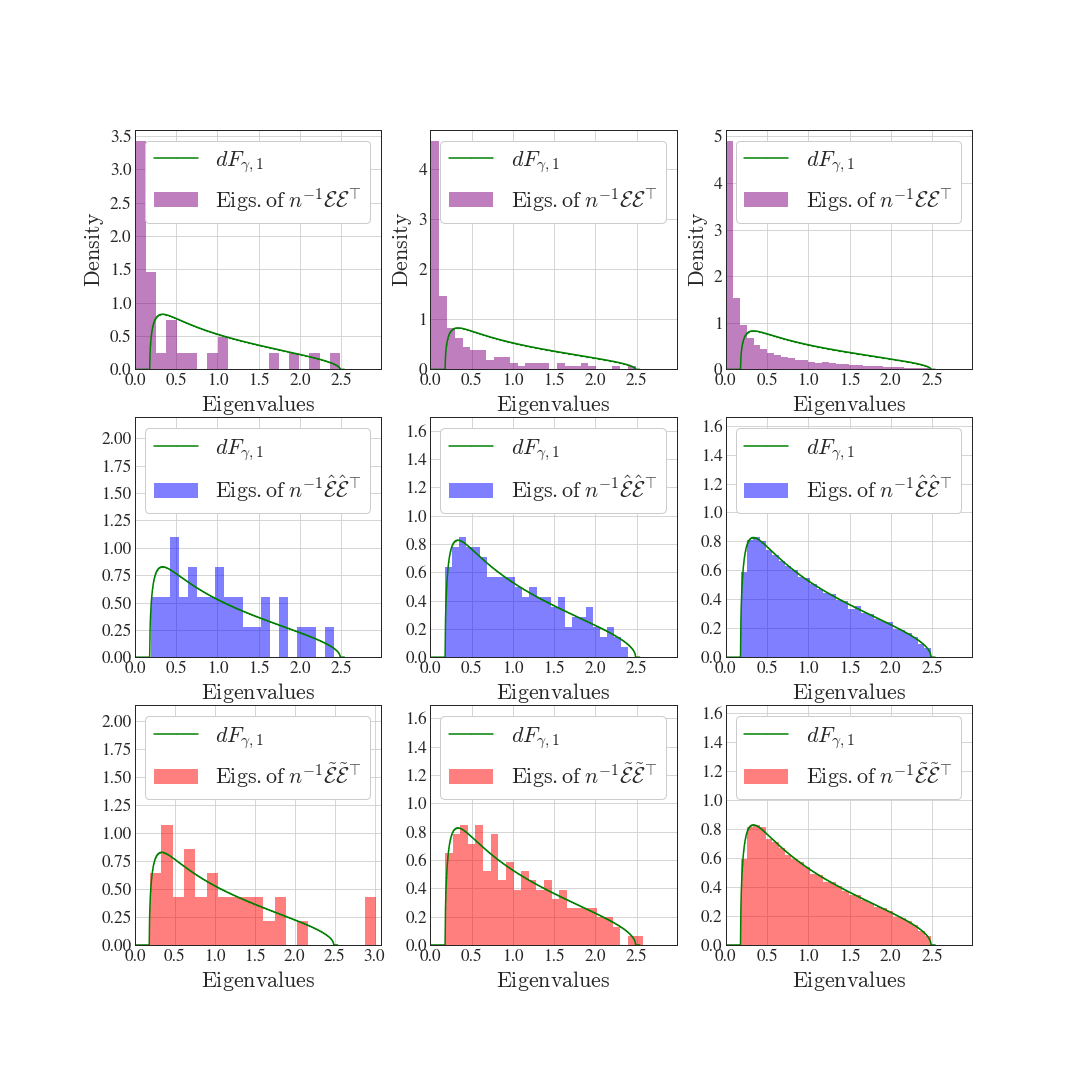}
        }
    }
    
    \makebox[\textwidth][c]{
        \hspace{-0.5cm}
        \subfloat[][$\gamma = 1/4$, $N = 100,\, 500,\, 5000$]{
        \includegraphics[width=0.6\textwidth]{./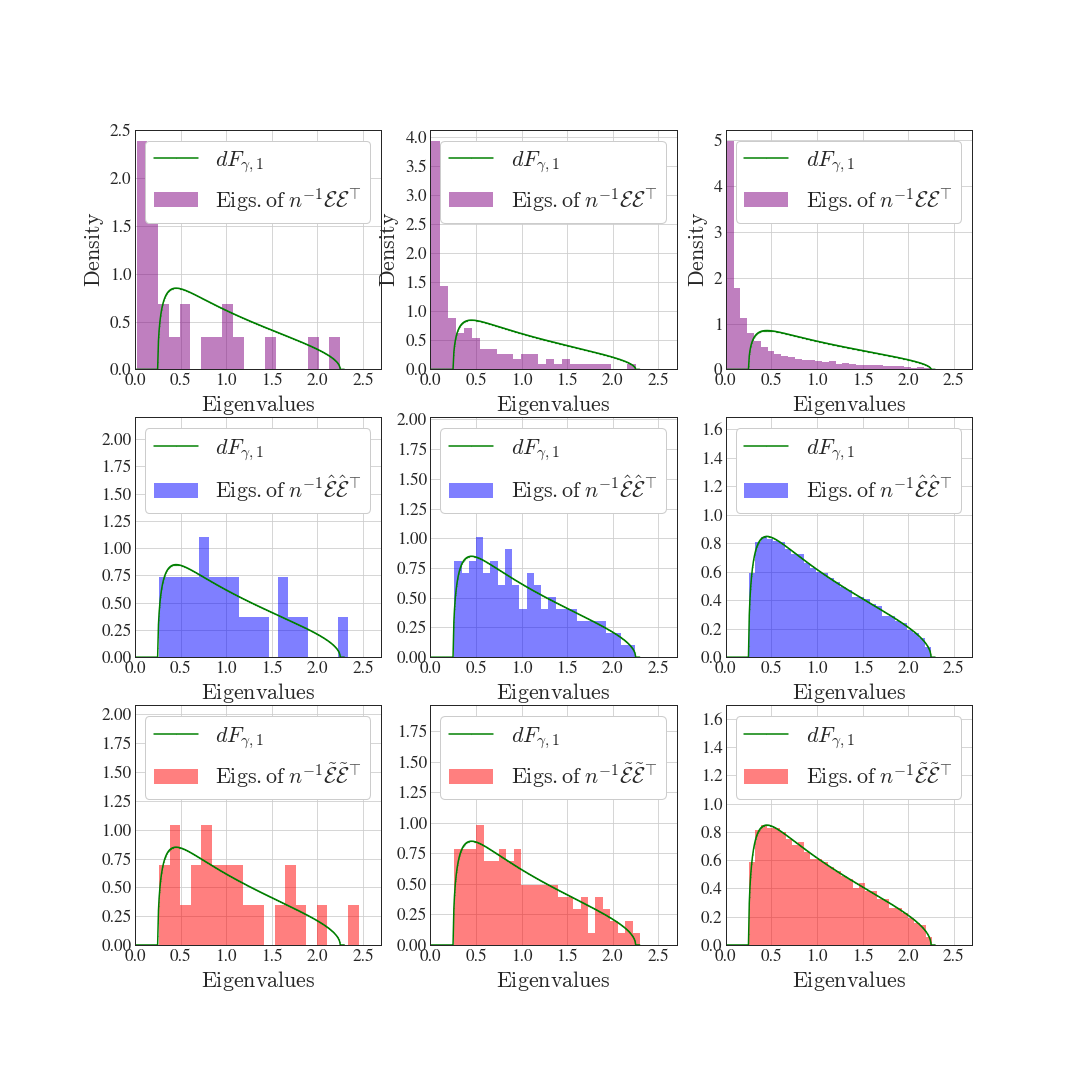}
        }
        \hspace{-1.3cm}
        \subfloat[][$\gamma = 1/5$, $N = 100,\, 500,\, 5000$]{
        \includegraphics[width=0.6\textwidth]{./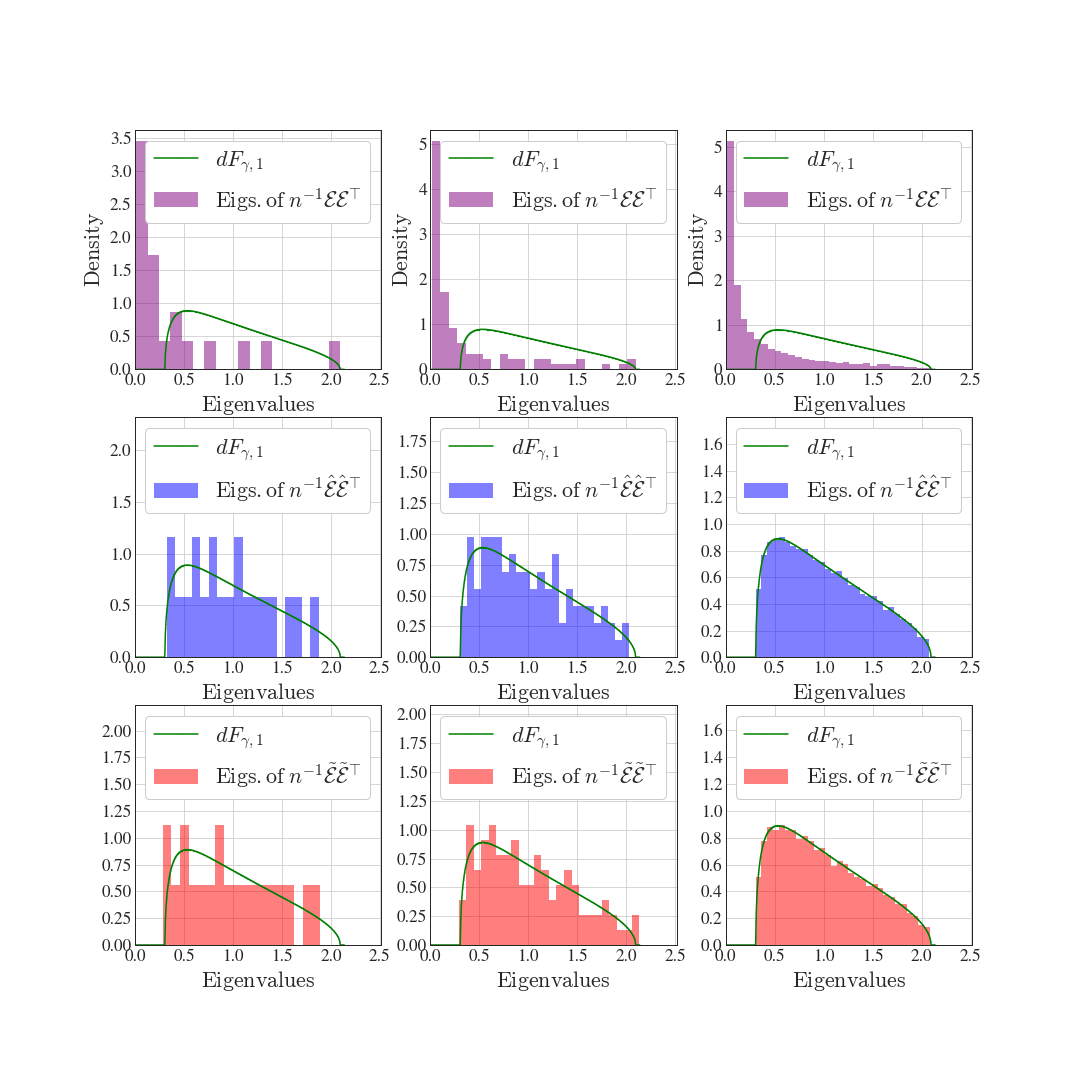}
        }
    }
    
    \caption{Spectrum of simulated Generalized Poisson noise versus the standard ($\sigma = 1$) Marchenko-Pastur density $dF_{\gamma, 1}$, for various aspect ratios $\gamma$ and matrix dimensions $n=100,500,5000$ (from left to right in each panel). The top row in each panel (purple) corresponds to the eigenvalues of $\Sigma_n$ (i.e., without any row or column scaling). The center row in each panel (blue) corresponds to the eigenvalues of $\hat{\Sigma}_n$ (i.e., after scaling with the estimated scaling factors). The bottom row in each panel (red) correspond to the eigenvalues of $\widetilde{\Sigma}_n$ (i.e., after scaling with the true scaling factors)}
    \label{fig:MPGPFit}
\end{figure}

\section{Numerical experiments for Section~\ref{sec:adapting to the data}} \label{appendix:adaptive method numerical experiments}

We exemplify the adaptive approach described in Section~\ref{sec:adapting to the data} on a simulated negative binomial matrix with $m=1000$, $n=2000$,  and rank $r=10$, where the number of negative binomial failures was set to 3. Consequently, the QVF satisfies $a=0$, $b=1$, $c=1/3$, and it easy to verify that the corresponding parameters in~\eqref{eq:variance estimator with alpha and beta} are $\alpha = 1$ and $\beta = 0.25$. 
To create the matrix of negative binomial probabilities, we first generated $X = UV$, $U \in \mathbb{R}^{1000 \times 10}$, $V \in \mathbb{R}^{10 \times 2000}$, where $U_{i,j} \sim \exp(2\cdot \mathcal{N}(0,1))$ and $V_{i,j} \sim \exp(1\cdot \mathcal{N}(0,1))$. We then normalized $X$ by a scalar to make its average entry equal to $1$, and set the negative binomial failure probabilities as $p_{i,j} = X_{i,j} / (\rho_{i,j} + X_{i,j})$, where $\rho_{i,j}$ is the number of failures for $Y_{i,j}\sim \operatorname{NegBinoimal}(\rho_{i,j},p_{i,j})$. This choice of $p_{i,j}$ ensures that $\mathbb{E}[Y_{i,j}] = X_{i,j}$. 

Figure~\ref{fig:negative binomial example Kolmogorv error vs beta} depicts the KS distance~\eqref{eq: beta estimator} for the grid of values $\beta\in \{0,0.05,\ldots,0.95,1\}$. It is evident that the smallest KS distance is $0.01$ and is attained for $\beta \in [0.2,0.4]$ (each one with its corresponding $\alpha$ chosen according to~\eqref{eq:estimator for alpha}). The reason that the distance is saturated at $0.01$ is that for these values of $\beta$ the supremum in~\eqref{eq: beta estimator} is attained at $\tau = \beta_+$, where the distance is precisely $r/m = 0.01$. 
In order to compare between the estimated scaling factors $(\hat{\mathbf{u}},\hat{\mathbf{v}})$ and the true scaling factors $(\mathbf{u},\mathbf{v})$, we eliminated the fundamental ambiguity in the scaling factors (see Proposition~\ref{prop:existcne and uniquness for Lambda}) by ensuring that $\Vert \mathbf{u} \Vert_1 = \Vert \mathbf{v} \Vert_1$ and $\Vert \hat{\mathbf{u}} \Vert_1 = \Vert \hat{\mathbf{v}} \Vert_1$. Figures~\eqref{fig:negative binomial example estimated vs true scaling factors beta theorey} and~\eqref{fig:negative binomial example estimated vs true scaling factors beta est} compare between the pair $(\hat{\mathbf{u}},\hat{\mathbf{v}})$ and the pair $(\mathbf{u},\mathbf{v})$ (each pair concatenated into a single vector) for the correct value $\beta=0.25$ and for $\beta = 0.4$, which is the value furthest away from the correct $\beta$ that achieves the minimal KS distance. As expected, the estimated scaling factors using $\beta=0.25$ are nearly identical to the true scaling factors. However, the estimated scaling factors using $\beta = 0.4$ are also very close to the true scaling factors, and allow for an excellent fit to the MP law and correct rank estimation, as can be seen in Figure~\ref{fig:negative binomial example}. We found empirically that all values of $\beta$ that attain the smallest KS distance (which is $r/m=0.01$) provide excellent fits to the MP law and lead to correct rank estimation. 

In addition to the above, we conducted a similar experiment to demonstrate the robustness of the QVF assumption~\eqref{eq: quadratic variance} to certain violations. Specifically, we randomly sampled the number of failures for each negative binomial entry uniformly at random from $\{1,\ldots,10\}$ (while keeping all other aspects of the experiment unchanged). Therefore, $\operatorname{Var}[Y_{i,j}] = X_{i,j} + c_{i,j} X_{i,j}^2$, where $c_{i,j}$ is varying between $0.1$ and $1$ across different indices $i,j$, and no single QVF exists for $Y$. Nonetheless, according to the results in~\cite{landa2020scaling}, the scaling factors of the matrix $(X_{i,j} + c_{i,j} X_{i,j}^2)_{i\leq m, \; j\leq n}$ are expected to concentrate around the scaling factors of the matrix $(X_{i,j} + \mathbb{E}[c_{i,j}] X_{i,j}^2)_{i\leq m, \; j\leq n}$, which corresponds to a standard QVF of the form~\eqref{eq: quadratic variance} with $c = \mathbb{E}[c_{i,j}] = \sum_{k=1}^{10} {k}^{-1}/10 = 0.29$.
Indeed, Figures~\ref{fig:negative binomial example KS distance rand} and~\ref{fig:negative binomial example rand} show that the procedure described in Section~\ref{sec:adapting to the data} can identify a range of parameters $\alpha$ and $\beta$ that provide accurate estimates of the true scaling factors $(\mathbf{u},\mathbf{v})$ (obtained from~\eqref{eq:scaling equations from true variances general model} using the true noise variances $\operatorname{Var}[Y_{i,j}] = X_{i,j} + c_{i,j} X_{i,j}^2$), and consequently, an excellent fit to the MP law and correct rank estimation. 
Therefore, the assumption~\eqref{eq: quadratic variance} can also serve as a useful approximation in situations where the parameters $a,b,c$ are not constant but randomly perturbed (with respect to some baseline values). 

\begin{figure} 
  \centering
  \makebox[\textwidth]{
  	{
  	\subfloat[KS distance vs. $\beta$]  
  	{ \label{fig:negative binomial example Kolmogorv error vs beta}
    \includegraphics[width=0.33\textwidth]{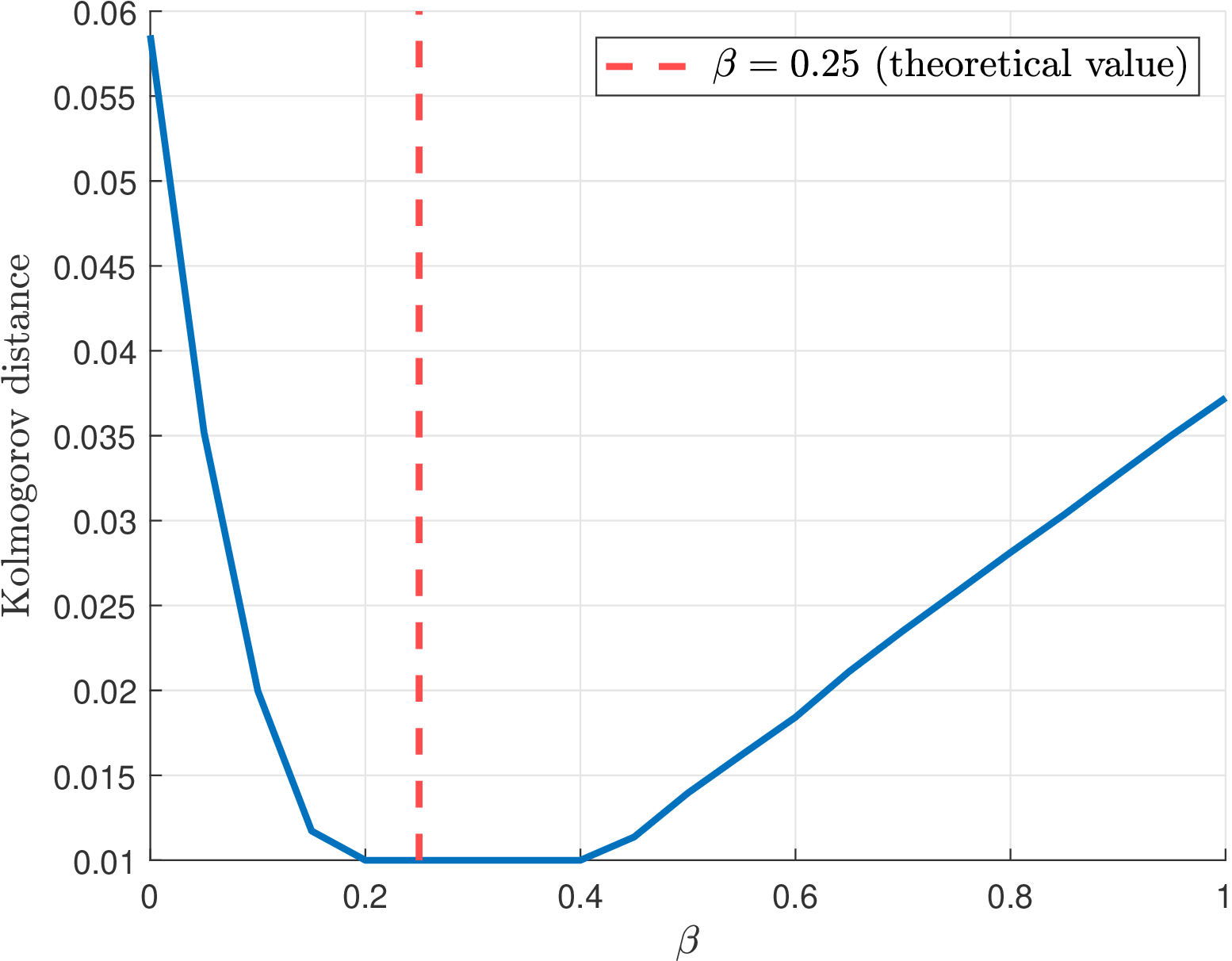} 
    }
    \subfloat[$\beta = 0.25$, $\alpha=1.01$] 
  	{ \label{fig:negative binomial example estimated vs true scaling factors beta theorey}
    \includegraphics[width=0.33\textwidth]{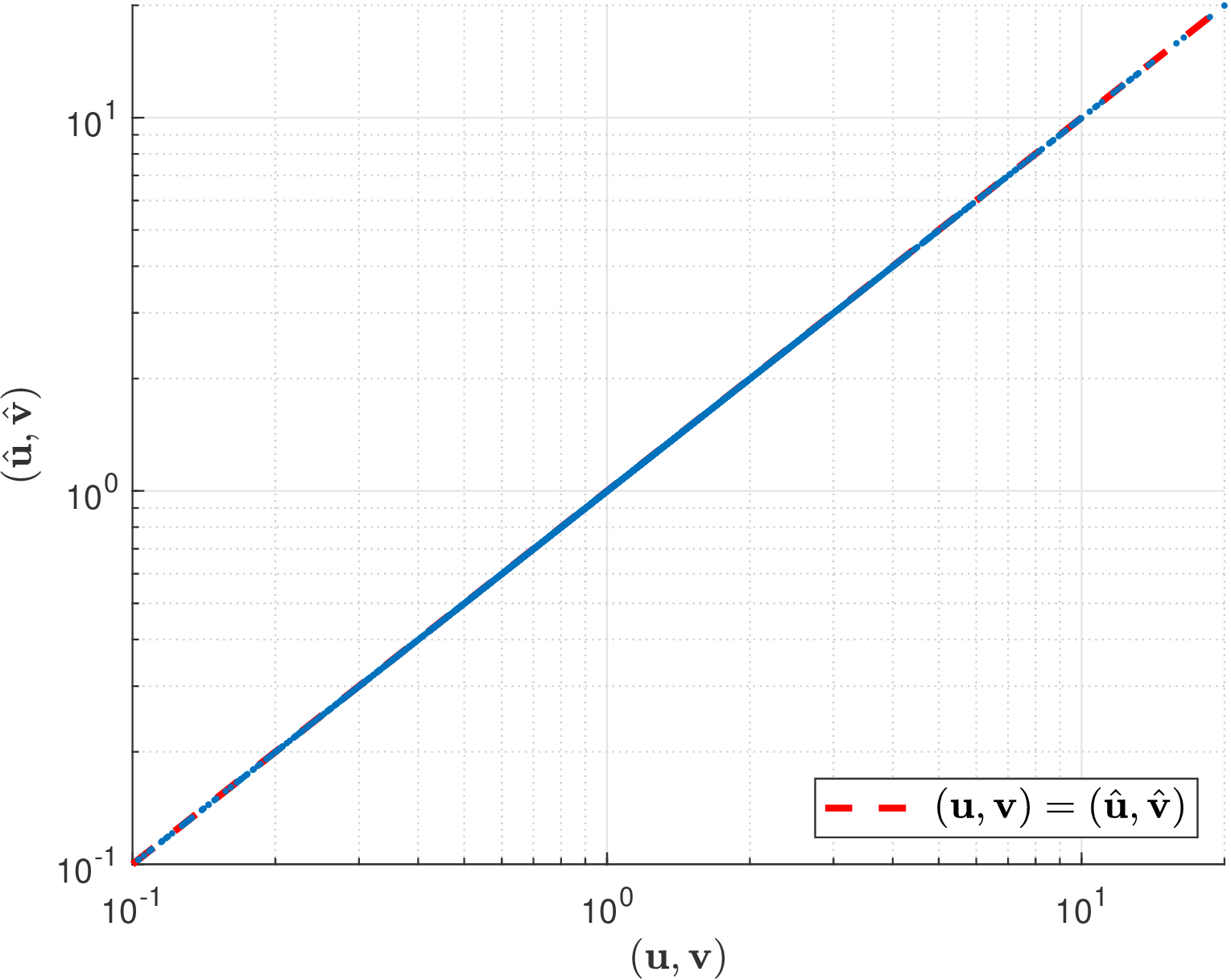}
    }
    \subfloat[$\beta = 0.4$, $\alpha=0.88$]  
  	{ \label{fig:negative binomial example estimated vs true scaling factors beta est}
    \includegraphics[width=0.33\textwidth]{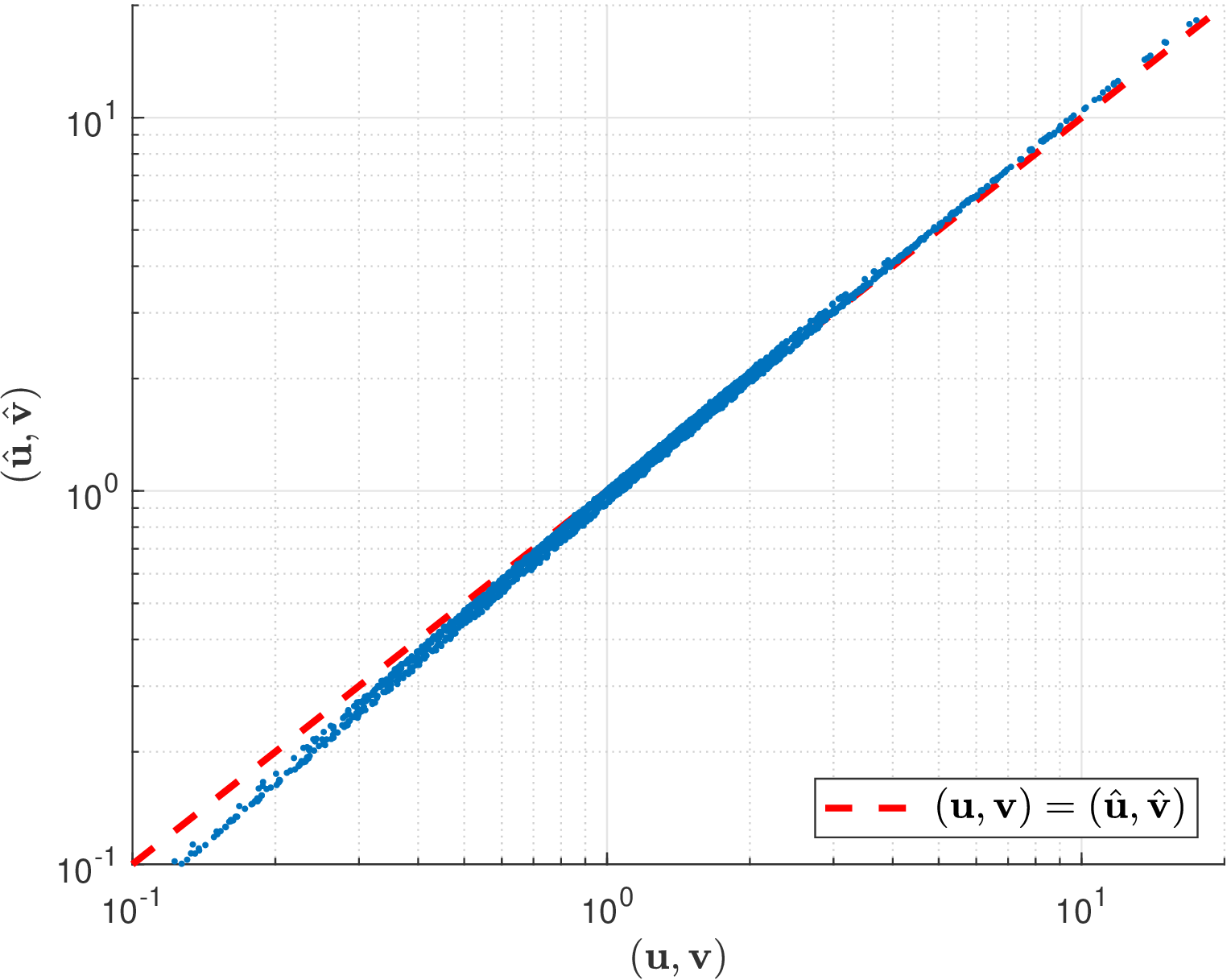} 
    }
    }
    }
    \caption
    {(a) KS distance~\eqref{eq: beta estimator} for $\beta\in \{0,0.05,\ldots,0.95,1\}$. (b) Estimated scaling factors $(\hat{\mathbf{u}},\hat{\mathbf{v}})$ versus the true scaling factors $({\mathbf{u}},{\mathbf{v}})$ using the theoretical $\beta=0.25$ and $\alpha$ chosen by~\eqref{eq:estimator for alpha}. (c) Estimated scaling factors $(\hat{\mathbf{u}},\hat{\mathbf{v}})$ versus the true scaling factors $({\mathbf{u}},{\mathbf{v}})$ using $\beta=0.4$ (the value furthest away from $\beta=0.25$ that attains the minimal KS distance of $0.01$) and $\alpha$ chosen by~\eqref{eq:estimator for alpha}.} \label{fig:negative binomial example KS distance}
    \end{figure} 

\begin{figure} 
  \centering
  	{
  	\subfloat[][Sorted eigenvalues, original data]
  	{
    \includegraphics[width=0.35\textwidth]{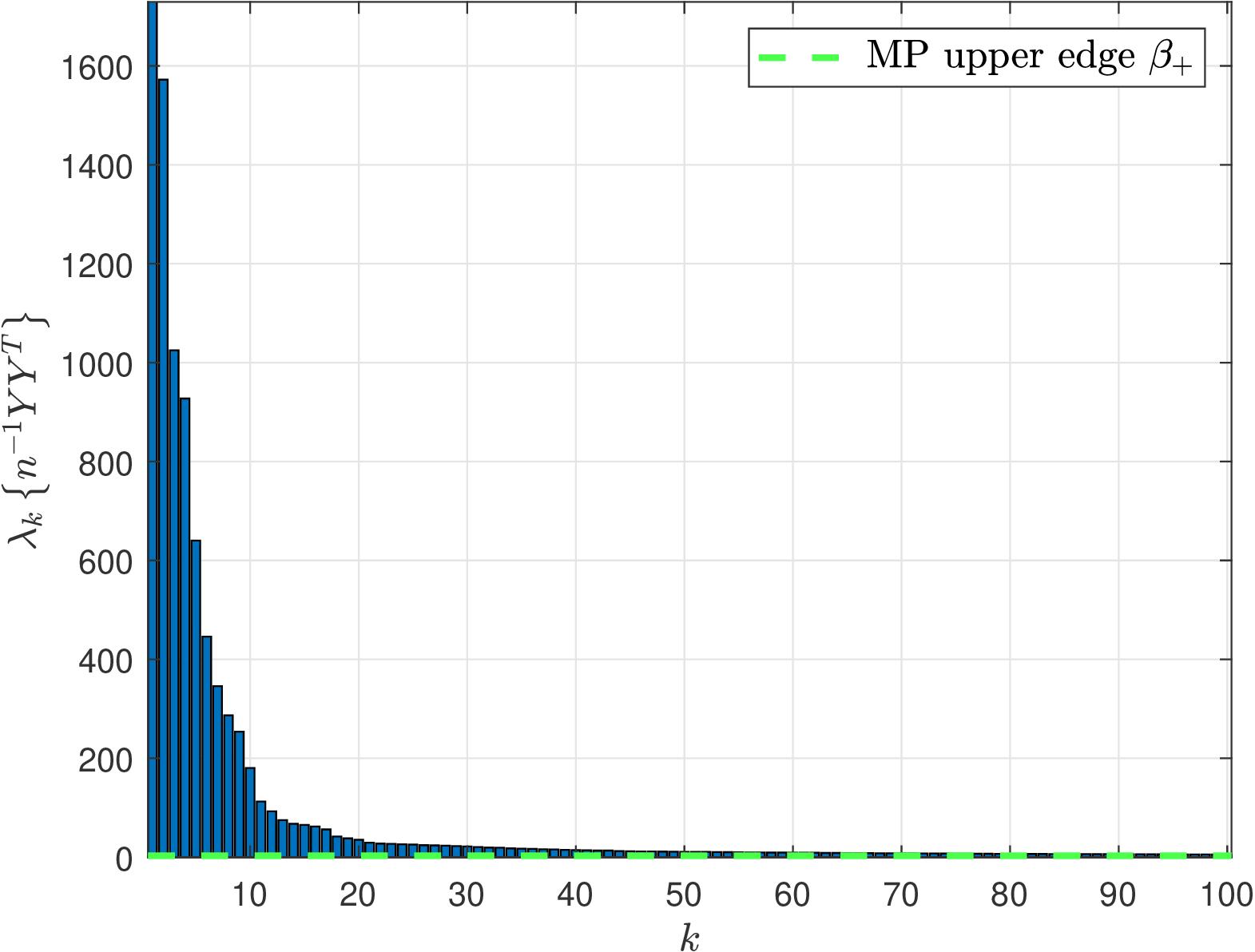} 
    }
    \subfloat[][Sorted eigenvalues, {after} biwhitening] 
  	{
    \includegraphics[width=0.35\textwidth]{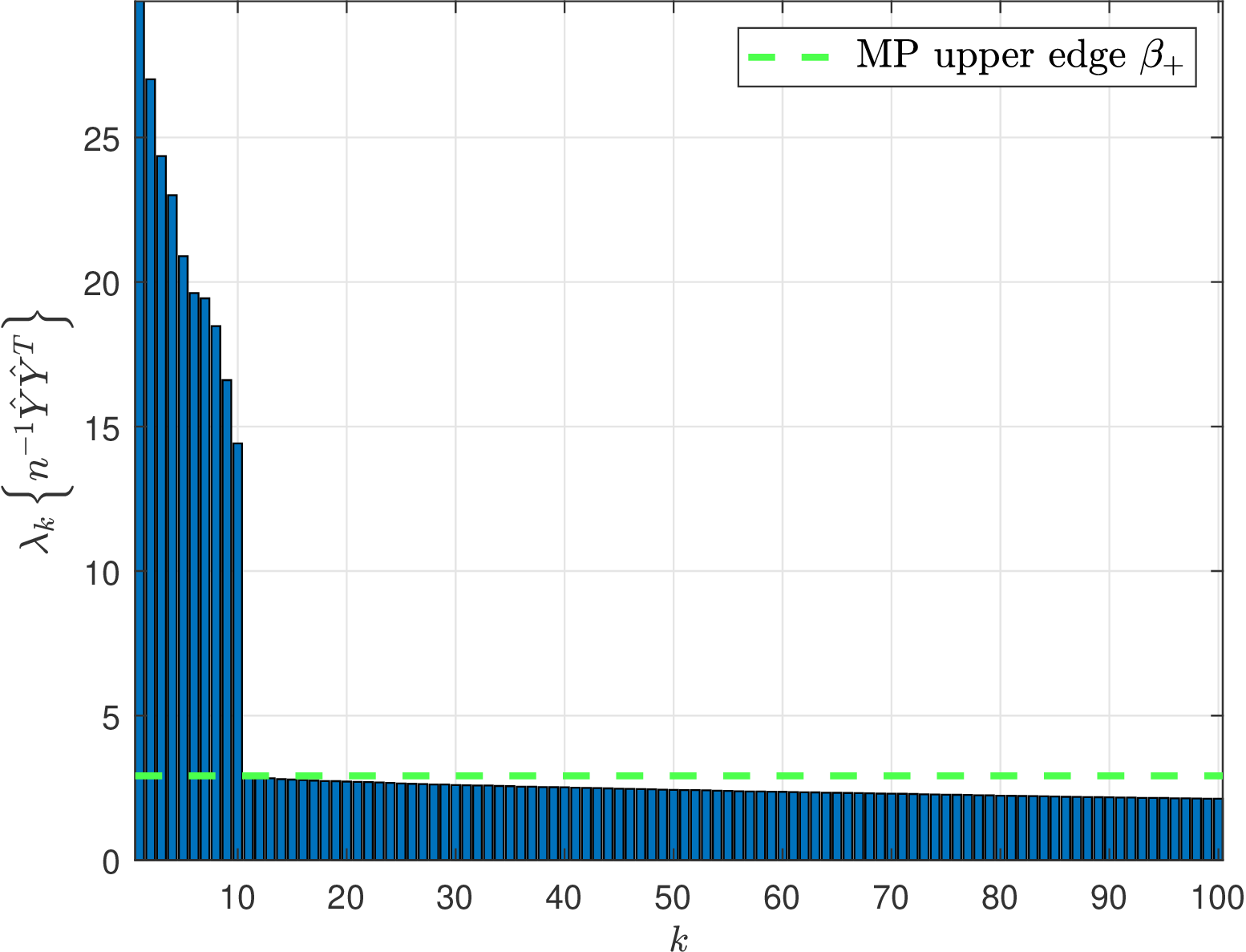} 
    }
    \\
    \subfloat[][Eigenvalue density, original data]  
  	{
    \includegraphics[width=0.35\textwidth]{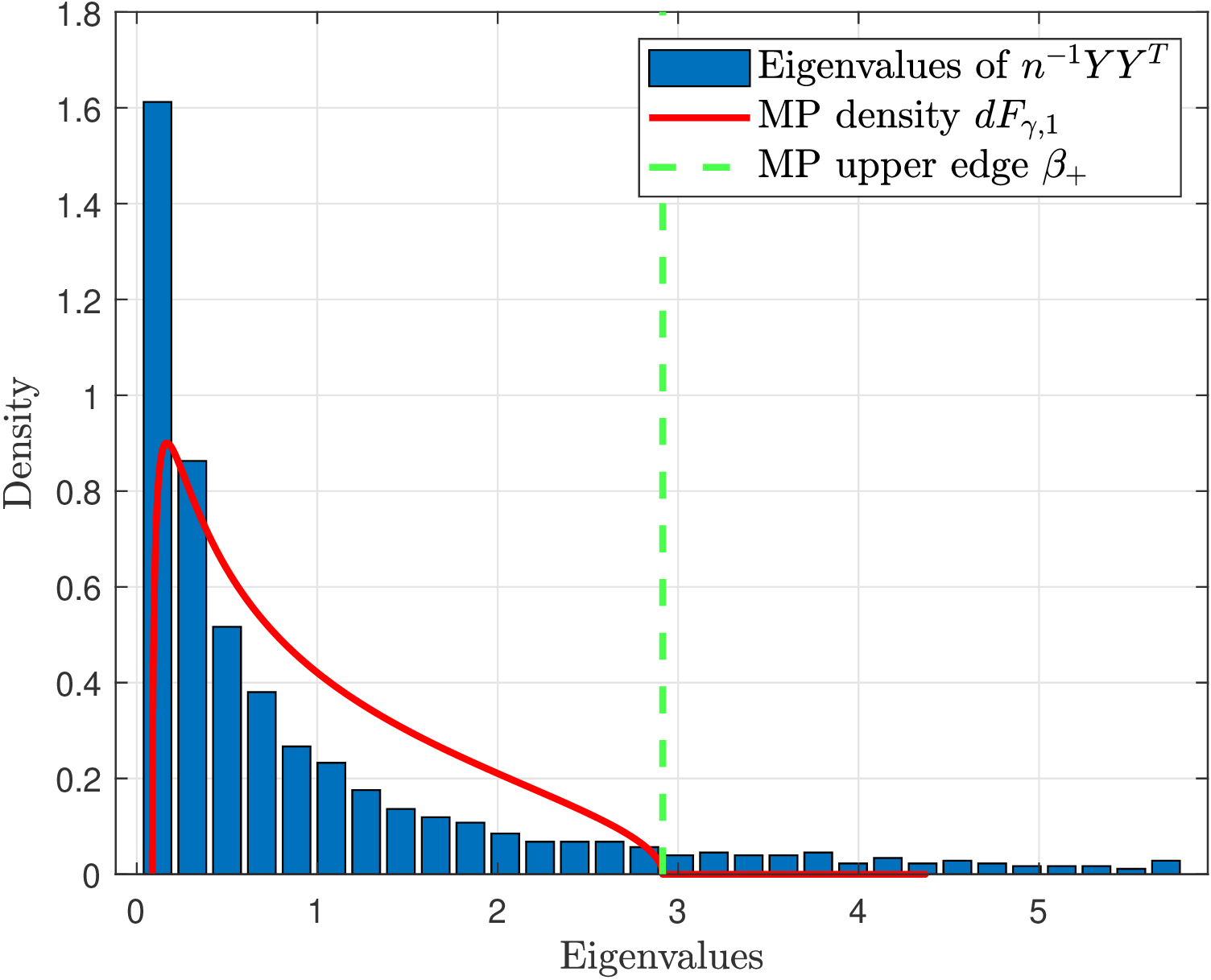}  
    }
    \subfloat[][Eigenvalue density, after biwhitening] 
  	{
    \includegraphics[width=0.35\textwidth]{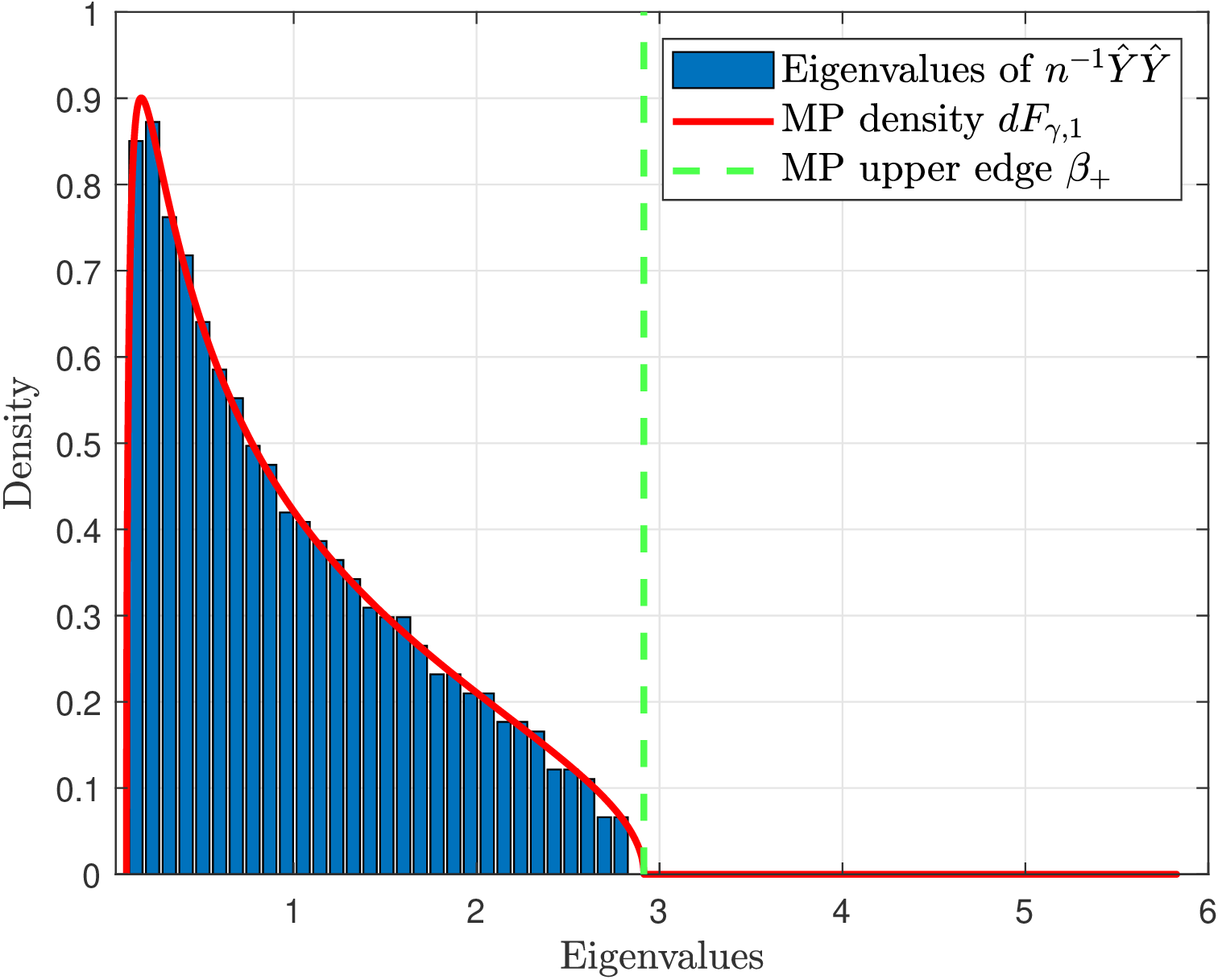} 
    } 

    }
    \caption
    {The spectrum of a simulated negative binomial matrix $Y$ with $m=1000$, $n=2000$, and $r=10$, versus the spectrum of its biwhitened version $D(\hat{\mathbf{u}}) Y D(\hat{\mathbf{v}})$. The number of negative binomial failures was set to $3$. To find $\hat{\mathbf{u}}$ and $\hat{\mathbf{v}}$ we solved~\eqref{eq:scaling equations from estimated variances general model} with the variance estimator~\eqref{eq:variance estimator with alpha and beta}, where $\beta=0.4$ and $\alpha$ was chosen according to~\eqref{eq:estimator for alpha}. 
    } \label{fig:negative binomial example}
\end{figure} 

\begin{figure}
  \centering
  \makebox[\textwidth][c]{
  	{
  	\subfloat[KS distance]  
  	{ \label{fig:negative binomial example Kolmogorv error vs beta rand}
    \includegraphics[width=0.33\textwidth]{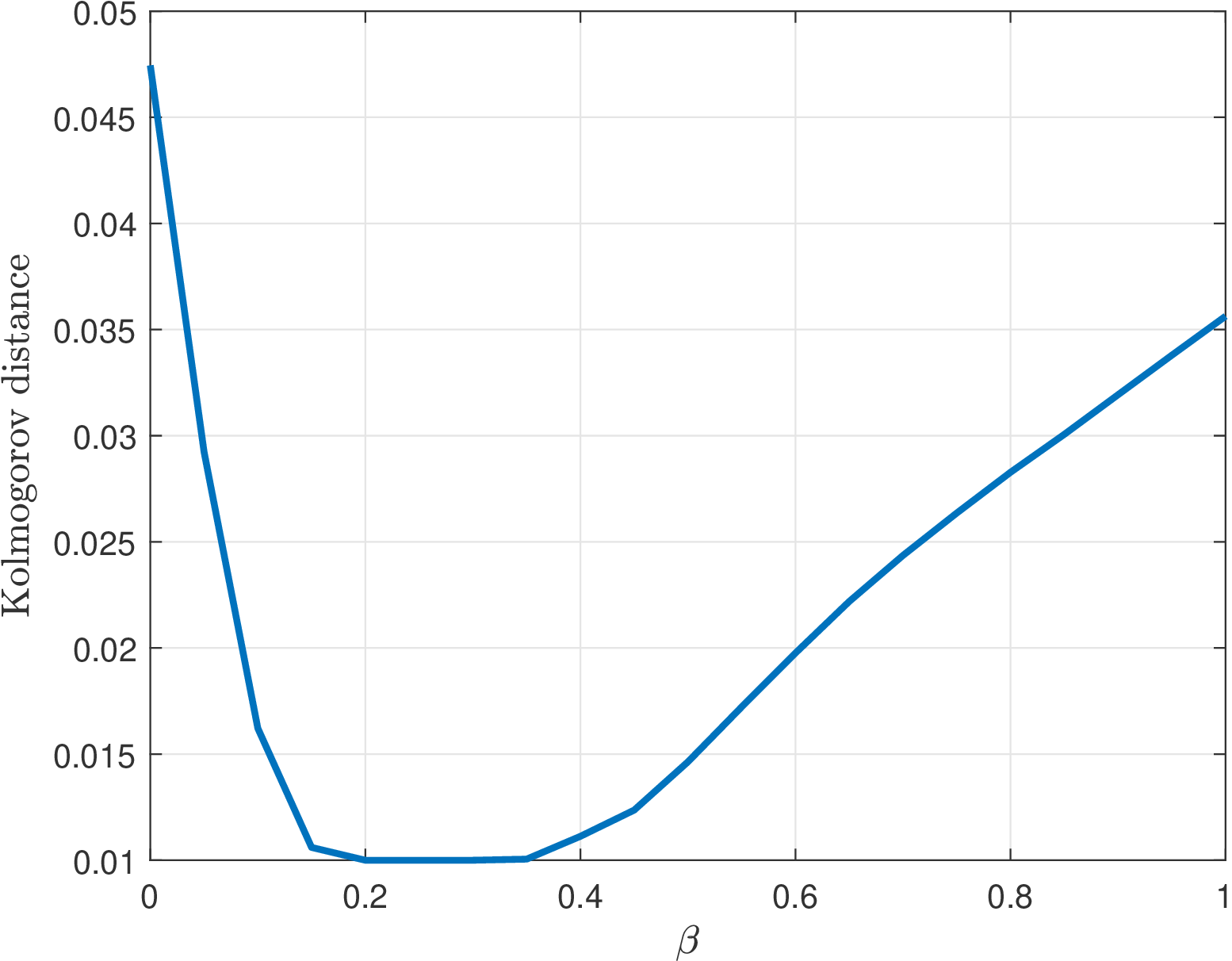} 
    }
    \subfloat[$\beta = 0.2$, $\alpha=1.03$] 
  	{ \label{fig:negative binomial example estimated vs true scaling factors beta theorey rand}
    \includegraphics[width=0.33\textwidth]{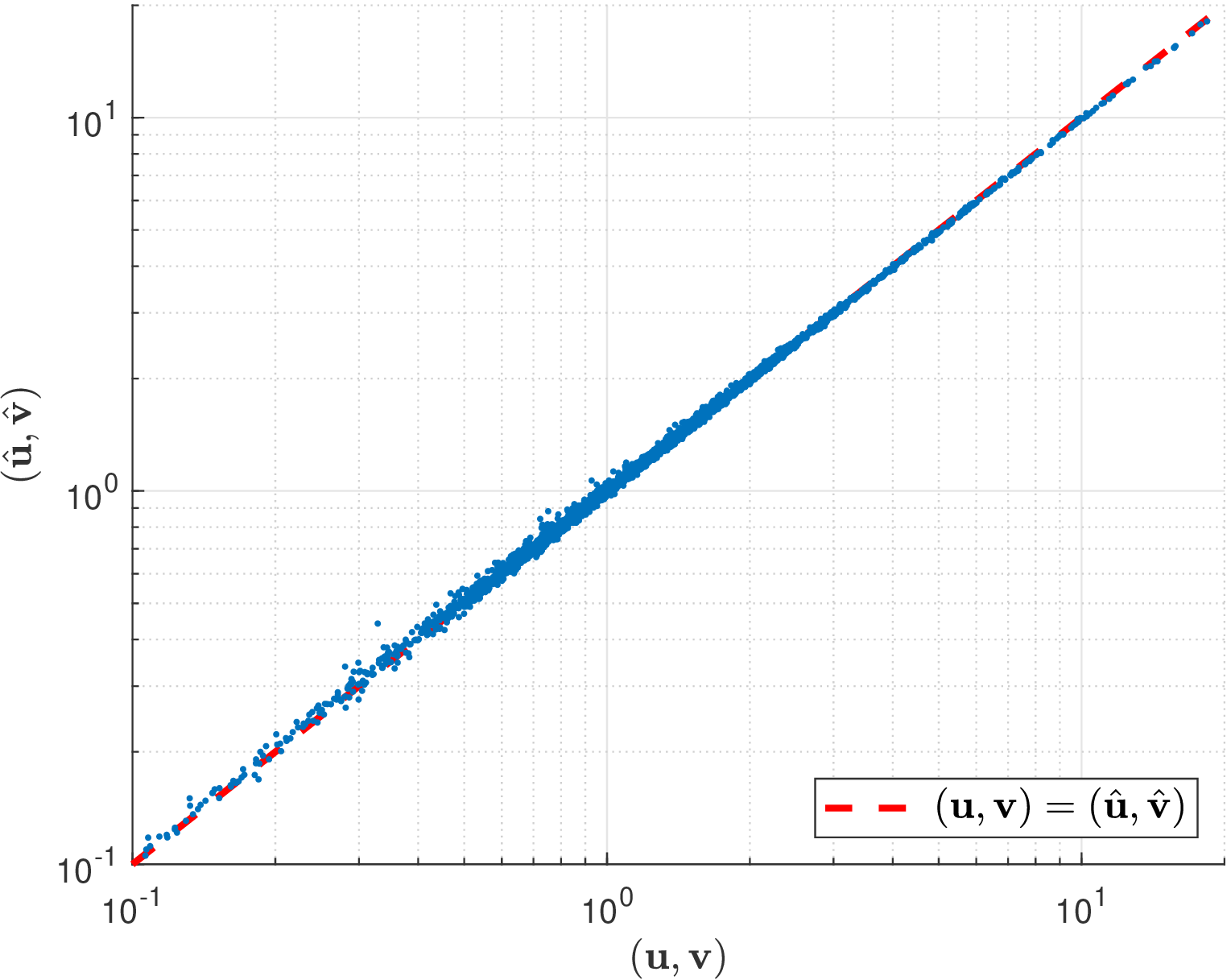}
    }
    \subfloat[$\beta = 0.3$, $\alpha=0.95$]  
  	{ \label{fig:negative binomial example estimated vs true scaling factors beta est rand}
    \includegraphics[width=0.33\textwidth]{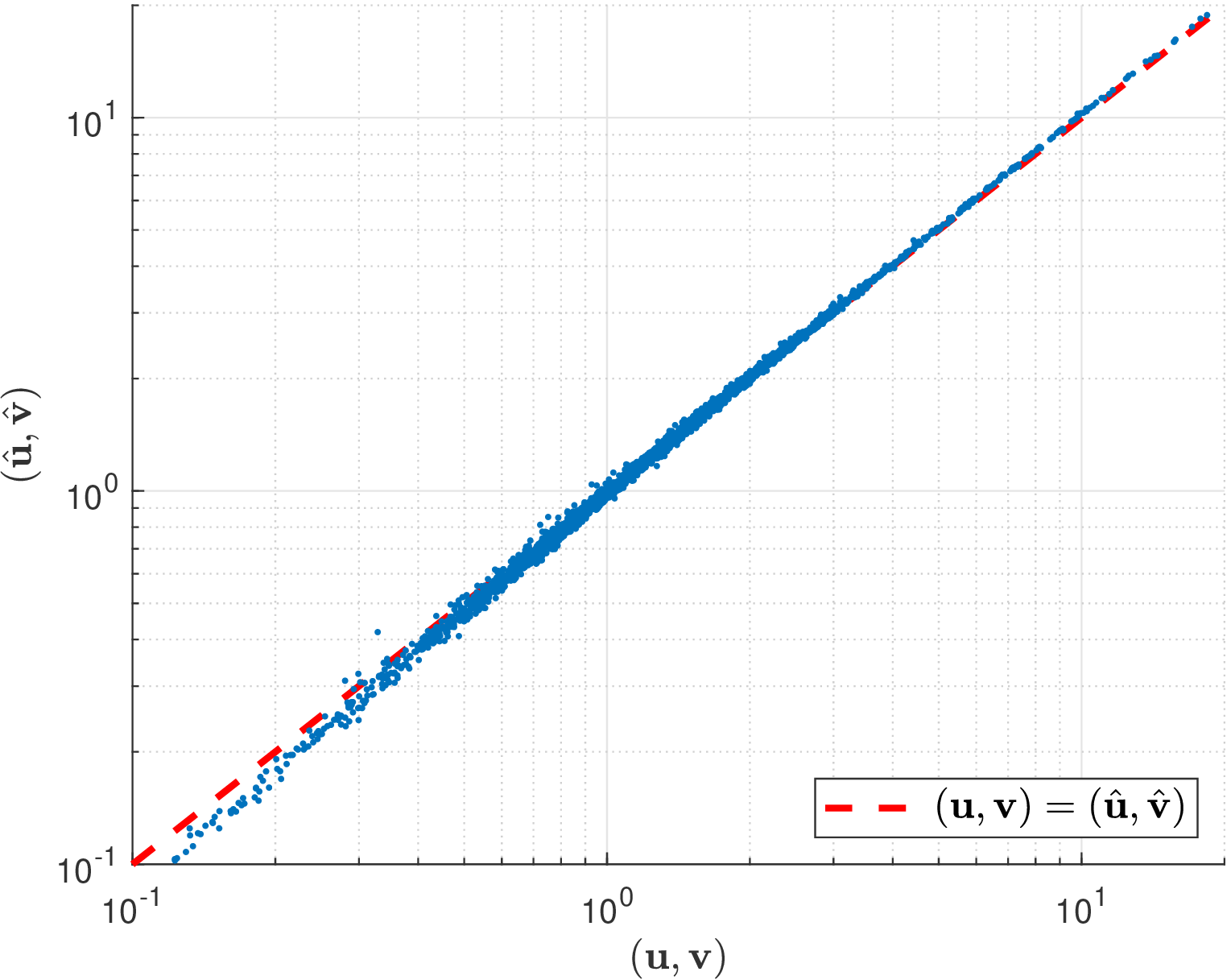} 
    }
    }
    }
    \caption
    {(a) KS distance~\eqref{eq: beta estimator} for $\beta\in \{0,0.05,\ldots,0.95,1\}$. (b) Estimated scaling factors $(\hat{\mathbf{u}},\hat{\mathbf{v}})$ versus the true scaling factors $({\mathbf{u}},{\mathbf{v}})$ using $\beta=0.2$ and $\alpha$ chosen by~\eqref{eq:estimator for alpha}. (c) Estimated scaling factors $(\hat{\mathbf{u}},\hat{\mathbf{v}})$ versus the true scaling factors $({\mathbf{u}},{\mathbf{v}})$ using $\beta=0.3$ and $\alpha$ chosen by~\eqref{eq:estimator for alpha}.} \label{fig:negative binomial example KS distance rand}
    \end{figure}

\begin{figure}
  \centering
  	{
  	\subfloat[][Sorted eigenvalues, original data]
  	{
    \includegraphics[width=0.35\textwidth]{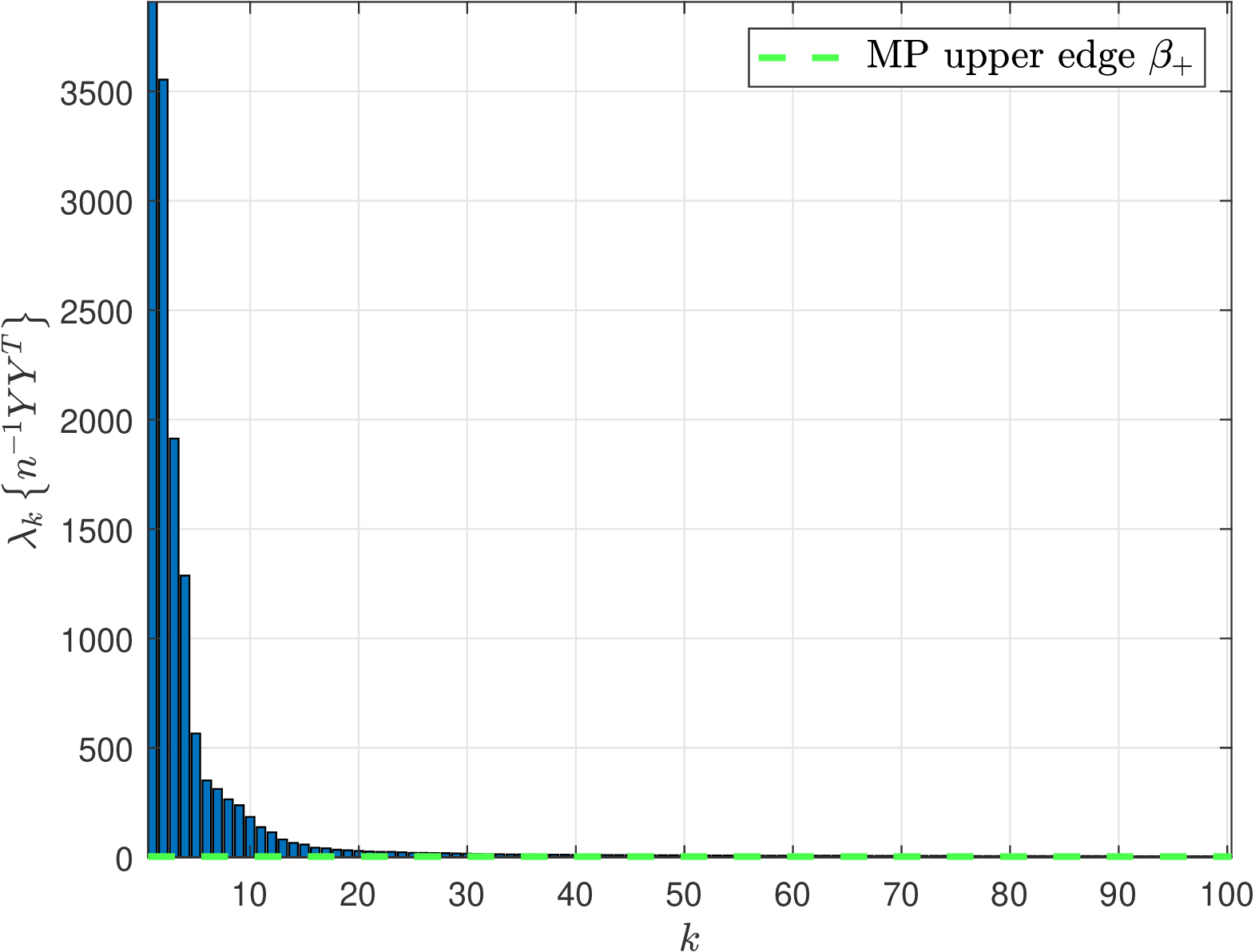} 
    }
    \subfloat[][Sorted eigenvalues, {after} biwhitening] 
  	{
    \includegraphics[width=0.35\textwidth]{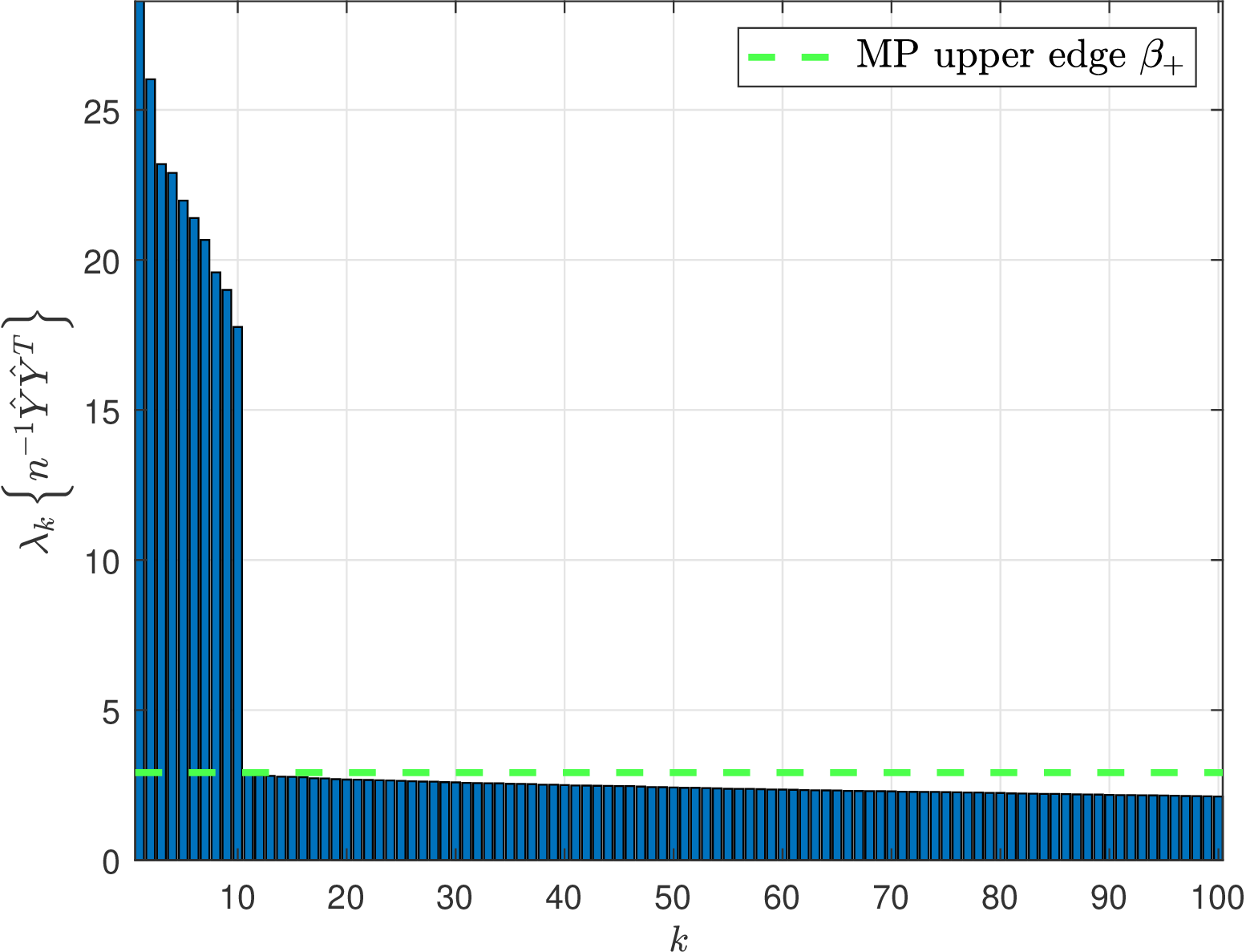} 
    }
    \\
    \subfloat[][Eigenvalue density, original data]  
  	{
    \includegraphics[width=0.35\textwidth]{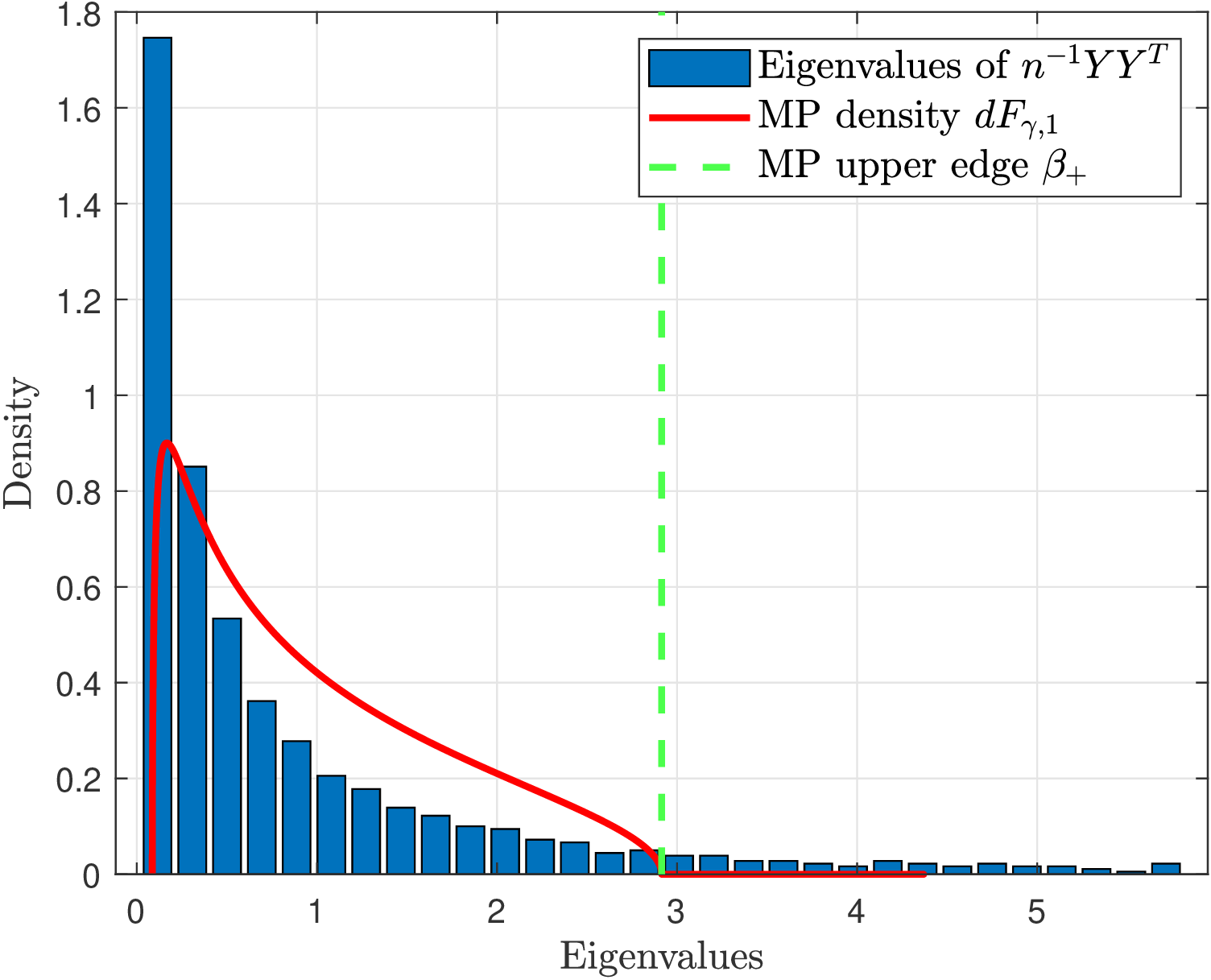}  
    }
    \subfloat[][Eigenvalue density, after biwhitening] 
  	{
    \includegraphics[width=0.35\textwidth]{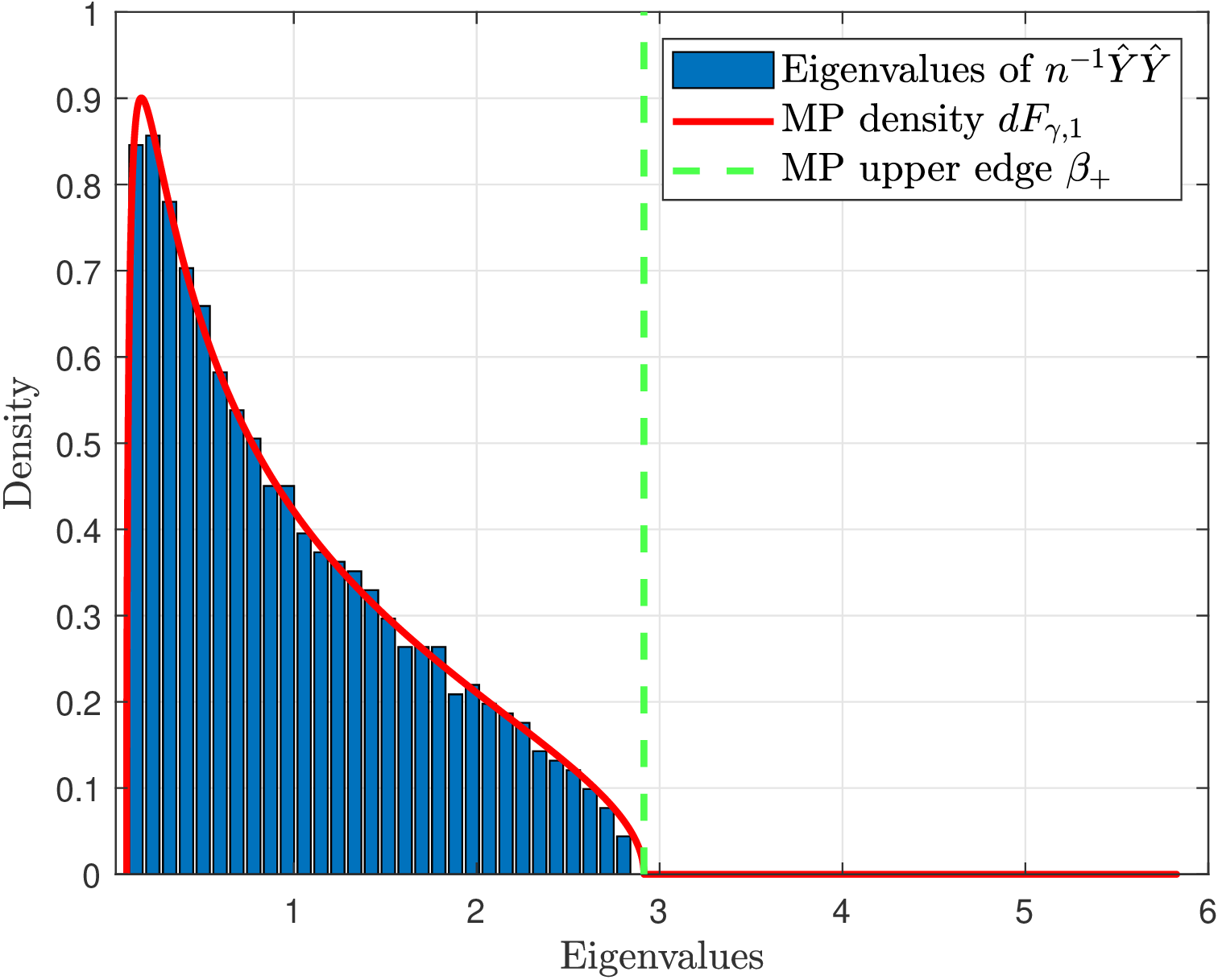} 
    } 

    }
    \caption
    {The spectrum of a simulated negative binomial matrix $Y$ with $m=1000$, $n=2000$, and $r=10$, versus the spectrum of its biwhitened version $D(\hat{\mathbf{u}}) Y D(\hat{\mathbf{v}})$. The number of negative binomial failures for each entry was sampled uniformly at random from $\{1,\ldots,10\}$. To find $\hat{\mathbf{u}}$ and $\hat{\mathbf{v}}$ we solved~\eqref{eq:scaling equations from estimated variances general model} with the variance estimator~\eqref{eq:variance estimator with alpha and beta}, where $\beta=0.3$ and $\alpha$ was chosen according to~\eqref{eq:estimator for alpha}. 
    } \label{fig:negative binomial example rand}
\end{figure}

\section{Proof of Theorem~\ref{thm:Marchenko-Pastur for biwhitened noise}} \label{appendix:proof of theorem on MP law using true variances}
Since the pair $(\mathbf{u}^{(n)},\mathbf{v}^{(n)})$ is defined up to a constant, namely $(a\mathbf{u}^{(n)},a^{-1}\mathbf{v}^{(n)})$ for any $a>0$, we take $a$ such that $\Vert \mathbf{u}^{(n)} \Vert_2 = \Vert \mathbf{v}^{(n)} \Vert_2$.
Then, since $X^{(n)}$ is a positive matrix, applying Lemma 2 in~\cite{landa2020scaling} using $A = X^{(n)}$, $r_i = n$,  $c_j = m$, $x_i = (u_i^{(n)})^2$ and $y_j = (v_j^{(n)})^2$, implies that
\begin{align}
\begin{aligned}
        \sqrt{\frac{n \min_{i,j} X_{i,j}^{(n)}}{m_n (\max_{i,j} X_{i,j}^{(n)} )^2 }} 
        &\leq {(u_i^{(n)})^2} 
        \leq \sqrt{\frac{m_n \max_{i,j} X_{i,j}^{(n)}}{n (\min_{i,j} X_{i,j}^{(n)} )^2 }}, \\
        \sqrt{\frac{n \min_{i,j} X_{i,j}^{(n)}}{m_n (\max_{i,j} X_{i,j}^{(n)} )^2 }} 
        &\leq {(v_j^{(n)})^2} 
        \leq \sqrt{\frac{m_n \max_{i,j} X_{i,j}^{(n)}}{n (\min_{i,j} X_{i,j}^{(n)} )^2 }},
        \end{aligned}
        \label{eq:x_i and y_i bounds}
    \end{align}
for all $i\in [m_n]$ and $j\in [n]$.
According to the assumptions in Theorem~\ref{thm:Marchenko-Pastur for biwhitened noise} we have $\max_{i,j} X_{i,j}^{(n)} \leq C \min_{i,j} X_{i,j}^{(n)}$, and therefore \eqref{eq:x_i and y_i bounds} asserts that 
\begin{equation}
     \frac{1}{C^2 \min_{i,j} X_{i,j}^{(n)}} \leq \frac{\min_{i,j} X_{i,j}^{(n)}}{\left(\max_{i,j} X_{i,j}^{(n)}\right)^2} \leq (u_i^{(n)})^2 (v_j^{(n)})^2 \leq \frac{\max_{i,j} X_{i,j}^{(n)}}{\left(\min_{i,j} X_{i,j}^{(n)}\right)^2} \leq \frac{C^2}{\max_{i,j} X_{i,j}^{(n)}}, \label{eq: u_i v_j squared upper bound}
\end{equation}
for all $i\in [m_n]$ and $j\in [n]$.

We now provide an upper bound on the moments of $\widetilde{\mathcal{E}}_{i,j}^{(n)}$. We can write
\begin{align}
    \mathbb{E} \left[ |\widetilde{\mathcal{E}}_{i,j}^{(n)}|^k \right] 
    &= \mathbb{E} \left[ u_i^k |{\mathcal{E}}_{i,j}^{(n)}|^k  v_j^k\right] 
    \leq  \left(\frac{C^2}{\max_{i,j} X_{i,j}^{(n)}}\right)^{k/2} \mathbb{E} \left[ |{\mathcal{E}}_{i,j}^{(n)}|^k \right].
\end{align}
Recall that $\mathbb{E}[\mathcal{E}_{i,j}^{k}]$ is the $k$'th central moment of the binomial variable $Y_{i,j}$. According to eq. (4.16) in~\cite{johnson2005univariate}, for all $k\geq 2$ we have the recurrence relation
\begin{equation}
    \mathbb{E}[(\mathcal{E}_{i,j}^{(n)})^{k}] = X_{i,j}^{(n)} \sum_{\ell=0}^{k-2} {k-1 \choose \ell} \mathbb{E}[(\mathcal{E}_{i,j}^{(n)})^{\ell}]. \label{eq:Poisson recursive moment formula}
\end{equation}
Therefore, using the fact that $\mathbb{E}[\mathcal{E}_{i,j}^{(n)}]=0$, it follows by induction that for all $k=2,3,\ldots,\infty$
\begin{equation}
    \mathbb{E}[(\mathcal{E}_{i,j}^{(n)})^{k}] = \sum_{\ell=1}^{\lfloor k/2 \rfloor} a_\ell^{(k)} (X_{i,j}^{(n)})^{\ell}, \label{eq:Poisson moment formula}
\end{equation}
for some constant coefficients $\{a_\ell^{(k)}\}_{\ell=1}^{\lfloor k/2 \rfloor}$. Hence, for even values of $k$ we can write
\begin{align}
    & \mathbb{E} \left[ |\widetilde{\mathcal{E}}_{i,j}^{(n)}|^k \right] 
    \leq 
    \left(\frac{C^2}{\max_{i,j} X_{i,j}^{(n)}}\right)^{k/2} \mathbb{E} \left[ {(\mathcal{E}}_{i,j}^{(n)})^k \right] \nonumber \\
    &\leq C^k \sum_{\ell=1}^{ k/2 } a_\ell^{(k)} \frac{(X_{i,j}^{(n)})^{\ell}}{\max_{i,j}(X_{i,j}^{(n)})^{k/2}} 
    \leq C^k \sum_{\ell=1}^{ k/2 } a_\ell^{(k)} c^{\ell - k/2},
\end{align}
where we used the fact that $\max_{i,j} X_{i,j}^{(n)} > c$ for all $n$.
Eventually, for all values of $k$ and $n$ we have
\begin{equation}
    \mathbb{E} \left[ |\widetilde{\mathcal{E}}^{(n)}_{i,j}|^k \right] \leq \sqrt{\mathbb{E}[|\widetilde{\mathcal{E}}^{(n)}_{i,j}|^{2k}]} \leq C^{k} \sqrt{\sum_{\ell=1}^{ k } a_\ell^{(k)} c^{\ell - k}} := \widetilde{C}_k. \label{eq:moment bound scaled centered Poisson}
\end{equation}

To prove the MP law, we apply Theorem 8.2 in~\cite{girko2001theory} to the matrix $\xi^{(n)} = n^{-1/2} \widetilde{\mathcal{E}}^{(n)}$. To that end, note that the entries of $\xi^{(n)} = (n^{-1/2} \widetilde{\mathcal{E}}^{(n)}_{i,j})_{i\in[m_n],\;j\in[n]}$ are independent, and $\xi^{(n)}$ satisfies $\sum_{i=1}^{m_n} \xi^{(n)}_{i,j} = m_n/n$, $\sum_{j=1}^{n} \xi^{(n)}_{i,j} = 1$ for all $i\in[m_n]$ and $j\in[n]$. We now establish the required Lindeberg condition. Observe that
\begin{align}
    &\mathbb{E}\left[(\xi_{i,j}^{(n)})^2 \mathbbm{1}(|\xi_{i,j}^{(n)}| > \tau) \right] = \frac{1}{n}\mathbb{E}\left[(\widetilde{\mathcal{E}}_{i,j}^{(n)})^2 \mathbbm{1}(|\widetilde{\mathcal{E}}_{i,j}^{(n)}| > \tau \sqrt{n}) \right] 
    = \frac{1}{n}\int_{|t|>\tau \sqrt{n}} t^2 d\mu_{\widetilde{\mathcal{E}}_{i,j}^{(n)}} (t) \nonumber \\
    &< \frac{1}{n} \int_{|t|>\tau \sqrt{n}} \frac{t^4}{\tau^2 n} d\mu_{\widetilde{\mathcal{E}}_{i,j}^{(n)}} (t) \leq \frac{1}{\tau^2 n^2} \mathbb{E}[(\widetilde{\mathcal{E}}_{i,j}^{(n)})^4] \leq \frac{\widetilde{C}_4}{\tau^2 n^2}, 
\end{align}
where $d\mu_{\widetilde{\mathcal{E}}_{i,j}^{(n)}}$ is the probability density of $\widetilde{\mathcal{E}}_{i,j}^{(n)}$, and we used~\eqref{eq:moment bound scaled centered Poisson} to get the last inequality.
We then have
\begin{align}
    &\lim_{n\rightarrow\infty} \max_{i\in [m_n]\;, j\in [n]} \left[ \sum_{j=1}^n \mathbb{E}\left[(\xi_{i,j}^{(n)})^2 \mathbbm{1}(|\xi_{i,j}^{(n)}| > \tau) \right] + \sum_{i=1}^{m_n} \mathbb{E}\left[(\xi_{i,j}^{(n)})^2 \mathbbm{1}(|\xi_{i,j}^{(n)}| > \tau) \right] \right] \nonumber \\
    &< \lim_{n\rightarrow\infty} \max_{i\in [m_n]\;, j\in [n]} \left[ \frac{\widetilde{C}_4}{\tau^2} \left(\frac{1}{n} + \frac{m_n}{n^2}\right)\right] = 0,
\end{align}
for every $\tau>0$. Then, according to Theorem 8.2 in~\cite{girko2001theory}, we have almost surely for all $\tau$ that
\begin{equation}
    \lim_{n\rightarrow\infty} \left| F_{\widetilde{\Sigma}^{(n)}}(\tau) - F_{\frac{m_n}{n},1}(\tau) \right| = \lim_{n\rightarrow\infty} \left| F_{\widetilde{\Sigma}^{(n)}}(\tau) - F_{\gamma,1}(\tau) \right| = 0.
\end{equation}
We note that Theorem 8.2 in~\cite{girko2001theory} is actually stated in terms of a distribution associated with the solution to a certain Dyson equation. To see that this distribution is in fact the MP distribution, we refer the reader to~\cite{marvcenko1967distribution} where the same equation is analyzed and is shown to provide the distribution whose density has the explicit form~\eqref{eq:MP density}.

Next, to prove the convergence of the largest eigenvalue to $\beta_+$, we apply Theorem 2.4 part II in~\cite{alt2017local} to the matrix $\xi^{(n)} = n^{-1/2} \widetilde{\mathcal{E}}^{(n)}$ (as $X$ in~\cite{alt2017local}). To that end, we need to show that the Conditions (A) -- (D) in~\cite{alt2017local} hold, which we consider next. To show Condition (A) in~\cite{alt2017local}, we have
\begin{equation}
    \mathbb{E}[(\xi^{(n)}_{i,j})^2] = \frac{\mathbb{E}[(\widetilde{\mathcal{E}}^{(n)}_{i,j})^2]}{n} \leq \frac{\widetilde{C}_2}{n} \leq \left(\frac{\widetilde{C}_2}{n + m_n}\right) (1 + \frac{m_n}{n}) \leq \frac{\widetilde{C}_2 (1+2\gamma) }{n + m_n},
\end{equation}
for all sufficiently large $n$, where we also used~\eqref{eq:moment bound scaled centered Poisson}. 
To show Condition (B) in~\cite{alt2017local}, observe that
\begin{equation}
    \mathbb{E}[(\xi^{(n)}_{i,j})^2] = \frac{\mathbb{E}[(\widetilde{\mathcal{E}}^{(n)}_{i,j})^2]}{n} = \frac{(u_i^{(n)})^2 (v_j^{(n)})^2 X_{i,j}^{(n)}}{n} \geq \frac{X_{i,j}^{(n)}}{n C^2 \min_{i,j} X_{i,j}^{(n)}} \geq \frac{1}{n C^2} \geq \frac{C^{-2}(1+0.5 \gamma)}{n + m_n}, \label{eq:condition B proof}
\end{equation}
for all sufficiently large $n$, where we also used~\eqref{eq: u_i v_j squared upper bound}. Note that~\eqref{eq:condition B proof} immediately establishes Condition (B) as explained in Remark 2.8 in~\cite{alt2017local}.
Last, Condition (C) in~\cite{alt2017local} follows by combining~\eqref{eq:condition B proof} with~\eqref{eq:moment bound scaled centered Poisson}, and Condition (D) in~\cite{alt2017local} follows from our asymptotic setting where $m_n/n\rightarrow\gamma$.

Then, according to Theorem 2.4 part II in~\cite{alt2017local}, 
\begin{equation}
    \operatorname{Pr}  \left\{ \lambda_1\{n^{-1} \widetilde{\mathcal{E}}^{(n)} (\widetilde{\mathcal{E}}^{(n)})^T \} > \beta_+ + \varepsilon_\star \right\} \rightarrow 0, \label{eq:no eigenvalues above beta_+}
\end{equation}
for any $\varepsilon_{*}>0$, where $\beta_{+}$ is the upper edge of the support of the MP density~\eqref{eq:MP density}. 
We mention that Theorem 2.4 part II in~\cite{alt2017local} is actually stated in terms of the support of a density satisfying an appropriate Dyson equation. 
To see that this density is in fact the MP density, see the short proof of Theorem 8.2 in~\cite{girko2001theory}, where it is shown that the Dyson equation that governs the case of general variances reduces to the Dyson equation giving rise to the MP density if the average variance in each row and in each column of the random matrix is $1$.

Last, since the limiting spectral distribution of $n^{-1} \widetilde{\mathcal{E}}^{(n)} (\widetilde{\mathcal{E}}^{(n)})^T$, given by $F_{\widetilde{\Sigma}^{(n)}}(\tau)$, converges almost surely to the MP distribution $F_{\gamma,1}(\tau)$, which is strictly positive for any $\tau\in (\beta_-,\beta_+)$, then we also have that
\begin{equation}
    \operatorname{Pr}  \left\{ \lambda_1\{n^{-1} \widetilde{\mathcal{E}}^{(n)} (\widetilde{\mathcal{E}}^{(n)})^T \} < \beta_+ - \varepsilon_\star \right\} \rightarrow 0,
\end{equation}
for any $\varepsilon_{*}>0$, which together with~\eqref{eq:no eigenvalues above beta_+} establishes that  $\lambda_1\{\widetilde{\Sigma}^{(n)} \} \overset{p}{\longrightarrow} \beta_+ = (1 + \sqrt{\gamma})^2$.

\section{Proof of Theorem~\ref{thm:r_tilde no false detect}} \label{appendix:proof of theorem on rank estimator}
Suppose in negation that $\operatorname{Pr} \{ r^{(n)} < \widetilde{r}_\varepsilon^{(n)}\} $ does not converge to $0$ as $n\rightarrow\infty$. Then, there exists a sequence $\{n_k\}_{k\geq 1}$ with $n_k\underset{k\rightarrow\infty}{\longrightarrow} \infty$ and a constant $\beta > 0$, such that
\begin{equation}
    \operatorname{Pr} \{ r^{(n_k)}  < \widetilde{r}_\varepsilon^{(n_k)}\} \geq \beta,
\end{equation}
for all $k\geq 1$.
In addition, according to the definition of $\widetilde{r}_\varepsilon^{(n)}$ we have
\begin{equation}
    \lambda_{\widetilde{r}_\varepsilon^{(n_k)}} \{ n_k^{-1} \widetilde{Y}^{(n)} (\widetilde{Y}^{(n)})^T \} > \left(1 + \sqrt{\frac{m_{n_k}}{n_k}}\right)^2 + \varepsilon,
\end{equation}
for all $k\geq 1$.
We can now write
\begin{align}
    0 < \beta &\leq \operatorname{Pr} \{ r^{(n_k)} < \widetilde{r}_\varepsilon^{(n_k)}\} 
    = \operatorname{Pr} \{ r^{(n_k)} + 1 \leq \widetilde{r}_\varepsilon^{(n_k)}\} \nonumber \\
    &\leq \operatorname{Pr} \left\{ \lambda_{\widetilde{r}_\varepsilon^{(n_k)}} \{ n_k^{-1} \widetilde{Y}^{(n_k)} (\widetilde{Y}^{(n_k)})^T \} \leq \lambda_{r^{(n_k)}+1} \{ n_k^{-1} \widetilde{Y}^{(n_k)} (\widetilde{Y}^{(n_k)})^T \} \right\} \nonumber \\
    &\leq \operatorname{Pr} \left\{ \left(1 + \sqrt{\frac{m_{n_k}}{n_k}}\right)^2 +\varepsilon < \lambda_{r^{(n_k)}+1} \{ n_k^{-1} \widetilde{Y}^{(n_k)} (\widetilde{Y}^{(n_k)})^T \} \right\}. \label{eq:beta lower bound}
\end{align}
According to Theorem 3.3.16 in~\cite{horn1994topics} we have
\begin{equation}
    \sigma_{r^{(n_k)}+1}\{\widetilde{Y}^{(n)}\} =  \sigma_{r^{(n_k)}+1}\{\widetilde{X}^{(n_k)} + \widetilde{\mathcal{E}}^{(n_k)} \} \leq \sigma_{r^{(n_k)}+1}\{\widetilde{X}^{(n_k)}\} + \sigma_{1}\{\mathcal{E}^{(n_k)}\} = \sigma_{1}\{\mathcal{E}^{(n_k)}\},
\end{equation}
where $\sigma_i\{\widetilde{Y}\}$ is the $i$'th largest singular value of $\widetilde{Y}$, and we used the fact that $\operatorname{rank}\{\widetilde{X}^{(n_k)}\} = \operatorname{rank}\{{X}^{(n_k)} \} = r^{(n_k)}$. Therefore, we have
\begin{equation}
    \lambda_{r^{(n_k)}+1} \{ n_k^{-1} \widetilde{Y}^{(n_k)} (\widetilde{Y}^{(n_k)})^T\} = n_k^{-1} (\sigma_{r^{(n_k)}+1}\{\widetilde{Y}^{(n_k)}\})^2 \leq n_k^{-1} (\sigma_{1}\{\mathcal{E}^{(n_k)}\})^2 = \Vert \widetilde{\Sigma}^{(n_k)} \Vert_2.
\end{equation}
Combining the above with~\eqref{eq:beta lower bound} we obtain
\begin{equation}
    0 < \beta \leq \operatorname{Pr} \left\{ \left(1 + \sqrt{\frac{m_{n_k}}{n_k}}\right)^2 +\varepsilon < \Vert \widetilde{\Sigma}^{(n_k)} \Vert_2 \right\} \leq \operatorname{Pr} \left\{ \left(1 + \sqrt{\gamma}\right)^2 + \frac{\varepsilon}{2} < \Vert \widetilde{\Sigma}^{(n_k)} \Vert_2 \right\}, \label{eq:beta lowe bound 2}
\end{equation}
for all sufficiently large $k$, where we used the fact that $m_{n_k}/n_k \rightarrow \gamma$. However, from Theorem~\ref{thm:Marchenko-Pastur for biwhitened noise} we know that $\operatorname{Pr}\{\Vert \widetilde{\Sigma}^{(n_k)} \Vert_2 > \varepsilon/2 + \left(1 + \sqrt{\gamma}\right)^2\} \rightarrow 0$ as $n\rightarrow\infty$ for any $\varepsilon > 0$, which is a contradiction to~\eqref{eq:beta lowe bound 2}.

\section{Proof of Lemma~\ref{lem:convergence of scaling factors for estimated variances}} \label{appendix:proof of lemma on convergence of scaling factors}
To prove Lemma~\ref{lem:convergence of scaling factors for estimated variances}, we rely on Theorem 3 in~\cite{landa2020scaling}. However, since this theorem requires a random matrix with bounded variables (from above and from below away from zero), we define a truncated version of $Y_{i,j}\sim \operatorname{Poisson}(X_{i,j})$ that retains its mean as follows. For any $m$ and $n$, let $\alpha > 0$, and define the random variable $\{\overline{Y}_{i,j}\}_{i\in [m],\;j\in[n]}$ according to
\begin{equation}
    (\overline{Y}_{i,j}|Y_{i,j} = y) = 
    \begin{dcases}
    y, &  X_{i,j} / \alpha \leq y \leq \alpha X_{i,j} , \\
    X_{i,j} \frac{\operatorname{Pr}\{Y_{i,j} < X_{i,j}/\alpha  - 1\}}{\operatorname{Pr}\{ Y_{i,j} < X_{i,j}/\alpha \}}, & y < X_{i,j} / \alpha \\
    X_{i,j} \frac{\operatorname{Pr}\{Y_{i,j} > \alpha X_{i,j} - 1\}}{\operatorname{Pr}\{ Y_{i,j} > \alpha X_{i,j} \}}, & y > \alpha X_{i,j}.
    \end{dcases} \label{eq:Y_overline def}
\end{equation}
We then have the following lemma.
\begin{lem}
$\{\overline{Y}_{i,j}\}_{i\in[m],\;j\in[n]}$ are independent random variables with $\mathbb{E}[\overline{Y}_{i,j}] = X_{i,j}$, and
\begin{equation}
     \min \left\{\frac{1}{\alpha} , 1 - \frac{X_{i,j}}{X_{i,j} + \lceil X_{i,j}/\alpha \rceil - 1} \right\} \leq \frac{\overline{Y}_{i,j}}{X_{i,j}} \leq  1 + \alpha +  X_{i,j}^{-1} . \label{eq:Y_overline bounds}
\end{equation}
\end{lem}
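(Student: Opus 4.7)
\medskip

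\textbf{Proof proposal.}
The plan is to handle the three assertions of the lemma one at a time. \emph{Independence} is immediate: inspecting~\eqref{eq:Y_overline def}, in each of the three regions the value of $\overline{Y}_{i,j}$ depends only on $Y_{i,j}$ (the two ``exceptional'' values $X_{i,j}\,\operatorname{Pr}\{Y_{i,j} < X_{i,j}/\alpha - 1\}/\operatorname{Pr}\{Y_{i,j} < X_{i,j}/\alpha\}$ and its upper-tail counterpart are deterministic functions of $X_{i,j}$ alone). Hence $\overline{Y}_{i,j} = g_{i,j}(Y_{i,j})$ for a measurable $g_{i,j}$, and independence of $\{\overline{Y}_{i,j}\}$ follows from independence of $\{Y_{i,j}\}$.

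For the \emph{mean}, write $L = X_{i,j}/\alpha$ and $U = \alpha X_{i,j}$, and split $\mathbb{E}[\overline{Y}_{i,j}]$ according to the three cases. The key identity, which I will use twice, is the Poisson tail-moment relation
\begin{equation}
    \mathbb{E}\!\left[Y_{i,j}\,\mathbbm{1}(Y_{i,j}\geq k)\right] \;=\; X_{i,j}\,\operatorname{Pr}\{Y_{i,j}\geq k-1\}, \qquad k \geq 1,
\end{equation}
which follows at once from $y\cdot e^{-X}X^y/y! = X\cdot e^{-X}X^{y-1}/(y-1)!$. Taking $k = \lceil U \rceil$ gives $\mathbb{E}[Y_{i,j}\mathbbm{1}(Y_{i,j}>U)] = X_{i,j}\operatorname{Pr}\{Y_{i,j}>U-1\}$, and analogously for the lower tail with $k=\lceil L\rceil$. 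Since the exceptional values in~\eqref{eq:Y_overline def} were engineered so that multiplying by $\operatorname{Pr}\{Y_{i,j}<L\}$ (resp.\ $\operatorname{Pr}\{Y_{i,j}>U\}$) yields exactly $X_{i,j}\operatorname{Pr}\{Y_{i,j}<L-1\}$ (resp.\ $X_{i,j}\operatorname{Pr}\{Y_{i,j}>U-1\}$), summing the three contributions telescopes to $\mathbb{E}[Y_{i,j}]=X_{i,j}$. The only minor care is with integer $L,U$, which I treat by noting $\operatorname{Pr}\{Y<L\}=\operatorname{Pr}\{Y\leq\lfloor L\rfloor\}$ etc.

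For the \emph{bounds}, I will again go case by case. In the middle region $\overline{Y}_{i,j}/X_{i,j}\in[1/\alpha,\alpha]$, which contributes $1/\alpha$ to the lower bound. For the upper tail value, the ratio
\begin{equation}
    \frac{\operatorname{Pr}\{Y_{i,j}>U-1\}}{\operatorname{Pr}\{Y_{i,j}>U\}} \;=\; 1 + \frac{p(\lceil U\rceil - 1)}{\sum_{k\geq \lceil U\rceil}p(k)} \;\leq\; 1 + \frac{p(\lceil U\rceil-1)}{p(\lceil U\rceil)} \;=\; 1 + \frac{\lceil U\rceil}{X_{i,j}} \;\leq\; 1+\alpha+X_{i,j}^{-1},
\end{equation}
where $p(k)=e^{-X_{i,j}}X_{i,j}^k/k!$ and I used the elementary Poisson ratio $p(k-1)/p(k)=k/X_{i,j}$ together with $\lceil\alpha X_{i,j}\rceil\leq \alpha X_{i,j}+1$. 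Symmetrically, for the lower tail value
\begin{equation}
    \frac{\operatorname{Pr}\{Y_{i,j}<L-1\}}{\operatorname{Pr}\{Y_{i,j}<L\}} \;=\; 1 - \frac{p(\lfloor L\rfloor)}{\sum_{k\leq \lfloor L\rfloor}p(k)} \;\geq\; 1 - \frac{p(\lfloor L\rfloor)}{p(\lfloor L\rfloor)+p(\lfloor L\rfloor-1)} \;=\; 1 - \frac{X_{i,j}}{X_{i,j}+\lfloor L\rfloor},
\end{equation}
and since $\lfloor X_{i,j}/\alpha\rfloor\geq \lceil X_{i,j}/\alpha\rceil - 1$, this exactly matches the second branch of the claimed lower bound. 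Collecting the cases yields the displayed min/max. The only real obstacle is keeping the floor/ceiling bookkeeping consistent at the boundary values where $L$ or $U$ happen to be integers; this will be handled by rewriting $\operatorname{Pr}\{Y<L\}=\operatorname{Pr}\{Y\leq\lceil L\rceil-1\}$ (and the analogue for $U$) uniformly, after which all the above steps go through verbatim.
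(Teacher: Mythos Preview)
Your proposal is correct and follows essentially the same route as the paper. Independence is argued identically; for the mean, your tail-moment identity $\mathbb{E}[Y\mathbbm{1}(Y\geq k)]=X\operatorname{Pr}\{Y\geq k-1\}$ is just a tidy repackaging of the paper's direct expansion via $k\,p(k)=X\,p(k-1)$; and for the bounds, both arguments reduce the tail sums to one or two consecutive Poisson masses and invoke the ratio $p(k-1)/p(k)=k/X$, with the same floor/ceiling bookkeeping at integer endpoints.
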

\begin{proof}
The fact that $\overline{Y}_{i,j}$ are independent follows from their definition. We now prove that $\mathbb{E}[\overline{Y}_{i,j}] = X_{i,j}$. We can write
\begin{align}
    \mathbb{E}[\overline{Y}_{i,j}] &= \sum_{k = \lceil X_{i,j}/\alpha \rceil}^{\lfloor \alpha X_{i,j} \rfloor} k \operatorname{Pr}\{Y_{i,j} = k\} + X_{i,j} {\operatorname{Pr}\{Y_{i,j} < X_{i,j}/\alpha  - 1\}} + X_{i,j} {\operatorname{Pr}\{Y_{i,j} > \alpha X_{i,j} - 1\}} \nonumber \\
    &= \sum_{k = 0}^{\infty} k \operatorname{Pr}\{Y_{i,j} = k\} - \sum_{k = 0}^{\lceil X_{i,j}/\alpha \rceil - 1} k \operatorname{Pr}\{Y_{i,j} = k\} - \sum_{k = \lfloor \alpha X_{i,j} \rfloor + 1}^{\infty} k \operatorname{Pr}\{Y = k\} \nonumber \\ 
    &+ X {\operatorname{Pr}\{Y_{i,j} < X_{i,j}/\alpha  - 1\}} + X {\operatorname{Pr}\{Y_{i,j} > \alpha X_{i,j} - 1\}} \nonumber \\
    &= X_{i,j} - \sum_{k = 1}^{\lceil X_{i,j}/\alpha \rceil - 1} \frac{X_{i,j}^k e^{-X_{i,j}}}{(k-1)!} - \sum_{k = \lfloor \alpha X_{i,j} \rfloor + 1}^{\infty} \frac{X_{i,j}^k e^{-X_{i,j}}}{(k-1)!} \nonumber \\ 
    &+ X_{i,j} {\operatorname{Pr}\{Y_{i,j} < X_{i,j}/\alpha  - 1\}} + X_{i,j} {\operatorname{Pr}\{Y_{i,j} > \alpha X_{i,j} - 1\}} \nonumber \\
    &= X_{i,j} - X_{i,j} \sum_{k = 0}^{\lceil X_{i,j}/\alpha \rceil - 2} \operatorname{Pr}\{Y_{i,j} = k\} - X_{i,j} \sum_{k = \lfloor \alpha X_{i,j} \rfloor }^{\infty} \operatorname{Pr}\{Y_{i,j} = k\} \nonumber \\ 
    &+ X_{i,j} {\operatorname{Pr}\{Y_{i,j} < X_{i,j}/\alpha  - 1\}} + X_{i,j} {\operatorname{Pr}\{Y_{i,j} > \alpha X_{i,j} - 1\}} \nonumber \\
    & = X_{i,j}.
\end{align}

To show~\eqref{eq:Y_overline bounds}, observe that
\begin{align}
    &X_{i,j} \geq X_{i,j} \frac{\operatorname{Pr}\{Y < X_{i,j}/\alpha  - 1\}}{\operatorname{Pr}\{ Y < X_{i,j}/\alpha \}}
    = X_{i,j} \frac{\operatorname{Pr}\{Y < X_{i,j}/\alpha\} - \operatorname{Pr}\{Y = \lceil X_{i,j}/\alpha \rceil - 1\}}{\operatorname{Pr}\{ Y < X_{i,j}/\alpha \}} \nonumber \\
    &= X_{i,j} \left( 1 - \frac{\operatorname{Pr}\{Y = \lceil X_{i,j}/\alpha \rceil - 1\}}{\operatorname{Pr}\{ Y < X_{i,j}/\alpha \}} \right) 
    \geq X_{i,j} \left( 1 - \frac{\operatorname{Pr}\{Y = \lceil X_{i,j}/\alpha \rceil - 1\}}{\operatorname{Pr}\{Y = \lceil X_{i,j}/\alpha \rceil - 1\} + \operatorname{Pr}\{Y = \lceil X_{i,j}/\alpha \rceil - 2\}} \right) \nonumber \\
    &= X_{i,j} \left( 1 - \frac{\operatorname{Pr}\{Y = \lceil X_{i,j}/\alpha \rceil - 1\}}{\operatorname{Pr}\{Y = \lceil X_{i,j}/\alpha \rceil - 1\} \left( 1 + ( \lceil X_{i,j}/\alpha \rceil - 1)/X_{i,j}\right)} \right)
    = X_{i,j} \left(1 - \frac{X_{i,j}}{X_{i,j} + \lceil X_{i,j}/\alpha \rceil - 1}\right), \label{eq:Y_overline upper bound proof}
    \end{align}
where we used the property of the Poisson distribution $\operatorname{Pr}\{Y_{i,j} = k\} = \operatorname{Pr}\{Y_{i,j} = k+1\} (k+1)/X_{i,j}$ (which follows immediately from its probability-mass function). Similarly, we have
\begin{align}
    &X_{i,j} \leq X_{i,j} \frac{\operatorname{Pr}\{Y_{i,j} > \alpha X_{i,j}  - 1\}}{\operatorname{Pr}\{ Y_{i,j} > \alpha X_{i,j} \}}
    = X_{i,j} \frac{\operatorname{Pr}\{Y_{i,j} > \alpha X_{i,j}\} + \operatorname{Pr}\{Y_{i,j} = \lfloor \alpha X_{i,j} \rfloor \}}{\operatorname{Pr}\{ Y_{i,j} > \alpha X_{i,j} \}} \nonumber \\
    &= X_{i,j} \left( 1 + \frac{\operatorname{Pr}\{Y_{i,j} = \lfloor \alpha X_{i,j} \rfloor\}}{\operatorname{Pr}\{ Y_{i,j} > \alpha X_{i,j} \}} \right) 
    \leq X_{i,j} \left( 1 + \frac{\operatorname{Pr}\{Y_{i,j} = \lfloor \alpha X_{i,j} \rfloor + 1\} (\lfloor \alpha X_{i,j} \rfloor + 1) / X_{i,j}}{\operatorname{Pr}\{ Y_{i,j} > \alpha X_{i,j} \}} \right)  \nonumber \\
    &< X_{i,j} \left( 1 + \frac{\lfloor \alpha X_{i,j} \rfloor + 1}{ X_{i,j}} \right)
    \leq X_{i,j} \left( 1 + \alpha +  X_{i,j}^{-1} \right), \label{eq:Y_overline lower bound proof}
\end{align}
where we used the fact that $\operatorname{Pr}\{Y = \lfloor \alpha X_{i,j} \rfloor + 1\} < \operatorname{Pr}\{ Y > \alpha X_{i,j} \}$, and again the property $\operatorname{Pr}\{Y_{i,j} = k\} = \operatorname{Pr}\{Y_{i,j} = k+1\} (k+1)/X_{i,j}$.
Combining~\eqref{eq:Y_overline lower bound proof} and~\eqref{eq:Y_overline upper bound proof} together with the definition of $\overline{Y}_{i,j}$ proves~\eqref{eq:Y_overline bounds}.
\end{proof}

Now, let $\overline{Y}^{(n)}_{i,j}$ be as in~\eqref{eq:Y_overline def} when replacing $Y_{i,j}$ and $X_{i,j}$ with $Y_{i,j}^{(n)}$ and $X_{i,j}^{(n)}$, respectively.
Observe that since $X_{i,j}^{(n)} \rightarrow \infty$ as $n\rightarrow \infty$ (by the conditions in Lemma~\ref{lem:convergence of scaling factors for estimated variances}) and according to~\eqref{eq:Y_overline bounds}, for any $\alpha>0$ there exist constants $0 < \alpha_1 \leq \alpha_2$ such that for all sufficiently large $n$:
\begin{equation}
     \alpha_1 X_{i,j}^{(n)}\leq \overline{Y}_{i,j}^{(n)} \leq \alpha_2 X_{i,j}^{(n)}. \label{eq:Y_overline upper and lower asymptotic bounds}
\end{equation}
Next we will show that $\overline{Y}_{i,j}^{(n)} = Y_{i,j}^{(n)}$ for all $i\in [m]$ and $j\in [n]$ with probability tending $1$ as $n\rightarrow \infty$. To that end, according to Proposition 11.15 in~\cite{goldreich2017introduction}, each variable $Y_{i,j}\sim \operatorname{Poisson}(X_{i,j})$ admits the following sub-exponential tail bound:
\begin{equation}
    \operatorname{Pr}\{|Y_{i,j} - X_{i,j}|\geq t\} \leq \operatorname{exp} \left( - c^{'} \frac{ t^2 }{{X_{i,j}} + t} \right),
\end{equation}
for all $t > 0$ and some universal constant $c^{'}>0$. For any index $n$ , taking $t = (\alpha - 1) X_{i,j}^{(n)}$ for $\alpha > 1$ gives
\begin{equation}
    \operatorname{Pr}\{Y_{i,j}^{(n)} \geq \alpha X_{i,j}^{(n)}\} \leq \operatorname{exp} \left( -  \frac{c^{'}(\alpha-1)^2 X_{i,j}^{(n)}}{\alpha} \right) 
    \leq \operatorname{exp} \left( - \frac{c c^{'}(\alpha-1)^2}{C \alpha} (\log n)^{1 + \epsilon} \right).
\end{equation}
where we used the fact that $c (\log n)^{1+\epsilon} \leq \max_{i,j} X_{i,j}^{(n)} \leq C \min_{i,j} X_{i,j}^{(n)}$.
Analogously, taking $t = (1 - \alpha^{-1}) X_{i,j}^{(n)}$ for $\alpha>1$ gives
\begin{equation}
    \operatorname{Pr}\{Y_{i,j}^{(n)} \leq  \frac{X_{i,j}^{(n)}}{\alpha}\} \leq  \operatorname{exp} \left( - \frac{c^{'} (1-\alpha^{-1})^2}{2-\alpha^{-1}} X_{i,j}^{(n)} \right) \leq \operatorname{exp} \left( - \frac{c^{'} (1-\alpha^{-1})^2}{C(2-\alpha^{-1})} (\log n)^{1+\epsilon} \right).
\end{equation}
Therefore, using the union bound and the fact that $m_n/n \rightarrow \gamma$ we have that
\begin{align}
    &\operatorname{Pr}\left\{ \overline{Y}^{(n)} \neq Y^{(n)} \right\} 
    = \operatorname{Pr}\left\{ \cup_{i,j} \{ \overline{Y}_{i,j}^{(n)} \neq Y_{i,j}^{(n)}\} \right\}
    \nonumber \\
    &\leq \sum_{i\in[m_n],\; j\in[n]} \left[  \operatorname{Pr}\{Y_{i,j}^{(n)} \geq \alpha X_{i,j}^{(n)}\} +  \operatorname{Pr}\{Y_{i,j}^{(n)} \leq  \frac{X_{i,j}^{(n)}}{\alpha}\} \right]
    \\
    &\leq m_n n \left[ \operatorname{exp} \left( - \frac{c c^{'}(\alpha-1)^2}{C \alpha} (\log n)^{1 + \epsilon} \right) + \operatorname{exp} \left( - \frac{c^{'} (1-\alpha^{-1})^2}{C(2-\alpha^{-1})} (\log n)^{1+\epsilon} \right) \right] \underset{n \rightarrow\infty}{\longrightarrow} 0. \label{eq:probability that Y_overline is not Y}
\end{align}

We can now apply Theorem 3 in~\cite{landa2020scaling} to the random matrix $\overline{Y}^{(n)} = (\overline{Y}_{i,j}^{(n)})_{i\in [m_n],\; j\in [n]}$ for sufficiently large $n$ using the bounds in~\eqref{eq:Y_overline upper and lower asymptotic bounds}. Then, together with~\eqref{eq:probability that Y_overline is not Y} it follows that for any $\delta \in (0, 1]$, with probability at least
\begin{equation}
    1 - 2 m_n \operatorname{exp}\left( -\frac{\delta^2 n}{ C_p^2 } \right) - 2 n \operatorname{exp}\left(-\frac{\delta^2 m_n}{ C_p^2}\right) - \operatorname{Pr}\left\{ \overline{Y}^{(n)} \neq Y^{(n)} \right\} , \label{eq:probability of tail bounds on x_tilde and y_tilde}
\end{equation}
there exists a pair of positive random vectors $(\widetilde{\mathbf{x}}^{(n)},\widetilde{\mathbf{y}}^{(n)})$ that scales $Y^{(n)}$ to row sums $n$ and column sums $m_n$, such that for all $i,j$:
    \begin{equation}
        \frac{\vert \widetilde{x}_i^{(n)} - (u_i^{(n)})^2 \vert}{(u_i^{(n)})^2} \leq  
      { C_e \delta } , \qquad \qquad
     \frac{\vert \widetilde{y}^{(n)}_j - (v_j^{(n)})^2 \vert}{(v_j^{(n)})^2} \leq{ C_e \delta }, \label{eq:x_tilde_i and y_tilde_j relative error bounds}
    \end{equation}
where
\begin{equation}
    C_p = \sqrt{2}\left(\frac{\alpha_2}{\alpha_1}\right)^2, \qquad \qquad C_e = 1 + 2\left(\frac{\alpha_2}{\alpha_1} \right)^{7/2}. \label{eq:C_p and C_e expressions}
\end{equation}
Taking any $\alpha > 1$ and $\delta = C_p \sqrt{2 \max\{1,\gamma^{-1}\} \log n /n}$, we obtain that there exists $C^{'}>0$ such that
\begin{equation}
    \frac{\vert \widetilde{x}_i^{(n)} - (u_i^{(n)})^2 \vert}{(u_i^{(n)})^2} \leq  
      C^{'} \sqrt{\frac{\log n}{n} }, \qquad \qquad
     \frac{\vert \widetilde{y}_j^{(n)} - (v_j^{(n)})^2 \vert}{(v_j^{(n)})^2} \leq C^{'} \sqrt{\frac{\log n}{n} },
\end{equation}
for all $i,j$, with probability that tends to $1$ as $n\rightarrow\infty$. Note that we can always find a constant $a_{n}>0$ such that $\widetilde{x}_i^{(n)} = a_{n}^2 (\hat{u}_i^{(n)})^2$ and $\widetilde{y}_j^{(n)} = a_n^{-2} (\hat{v}_j^{(n)})^2$ for all $i,j$. Therefore, we have
\begin{equation}
    {\left\vert \left(\frac{a_n \hat{u}_i^{(n)}}{u_i^{(n)}}\right)^2 - 1 \right\vert} \leq  
      C^{'} \sqrt{\frac{\log n}{n} } , \qquad \qquad
     {\left\vert \left(\frac{a_n^{-1} \hat{v}_j^{(n)}}{v_j^{(n)}}\right)^2 - 1 \right\vert} \leq C^{'} \sqrt{\frac{\log n}{n} },
\end{equation}
for all $i,j$, with probability that tends to $1$ as $n\rightarrow\infty$. Using the Taylor expansion of the function $\sqrt{1 + \varepsilon}$ around $\varepsilon=0$, we get
\begin{equation}
    {\left\vert \frac{a_n \hat{u}_i^{(n)}}{u_i^{(n)}} - 1 \right\vert} \leq  
      \widetilde{C} \sqrt{\frac{\log n}{n} } , \qquad \qquad
     {\left\vert\frac{a_n^{-1} \hat{v}_j^{(n)}}{v_j^{(n)}} - 1 \right\vert} \leq \widetilde{C} \sqrt{\frac{\log n}{n} },
\end{equation}
for all $i,j$ and some constant $\widetilde{C}>0$, with probability that tends to $1$ as $n\rightarrow\infty$.

\section{Proof of Theorem~\ref{thm:Marchenko-Pastur for biwhitened noise from estimated variances}} \label{appendix:proof of theorem on MP law using estimated variances}
Let us denote $\mathbf{e}_1^{(n)} = a_n\hat{\mathbf{u}}^{(n)} - \mathbf{u^{(n)}}$ and $\mathbf{e}_2^{(n)} = a_n^{-1} \hat{ \mathbf{v}}^{(n)} - \mathbf{v}^{(n)}$. We can now write
\begin{align}
   &\frac{1}{\sqrt{n}}\Vert \widetilde{\mathcal{E}}^{(n)} - \hat{ \mathcal{E}}^{(n)} \Vert_2 = \frac{1}{\sqrt{n}}\Vert D(\mathbf{u}^{(n)}) \mathcal{E}^{(n)} D(\mathbf{v}^{(n)}) - D(\hat{\mathbf{u}}^{(n)}) \mathcal{E}^{(n)} D(\hat{\mathbf{v}}^{(n)}) \Vert_2 \nonumber \\
   &= \frac{1}{\sqrt{n}}\Vert D(\mathbf{u}^{(n)}) \mathcal{E}^{(n)} D(\mathbf{v}^{(n)}) - D(a_n \hat{\mathbf{u}}^{(n)}) \mathcal{E}^{(n)} D(a_n^{-1} \hat{\mathbf{v}}^{(n)}) \Vert_2 \nonumber \\
   &\leq \frac{1}{\sqrt{n}}\Vert D(\mathbf{u}^{(n)}) \mathcal{E}^{(n)} D(\mathbf{e}_2^{(n)}) \Vert_2 + \frac{1}{\sqrt{n}}\Vert D(\mathbf{e}_1^{(n)}) \mathcal{E}^{(n)} D(\mathbf{v}^{(n)}) \Vert_2 + \frac{1}{\sqrt{n}}\Vert D(\mathbf{e}_1^{(n)}) \mathcal{E}^{(n)} D(\mathbf{e}_2^{(n)}) \Vert_2 \nonumber \\
   &\leq \frac{\Vert \mathcal{E}^{(n)} \Vert_2}{\sqrt{n}}  \left(\Vert D(\mathbf{u}^{(n)}) \Vert_2 \cdot  \Vert D(\mathbf{e}_2^{(n)}) \Vert_2 + \Vert D(\mathbf{v}^{(n)}) \Vert_2 \cdot \Vert D(\mathbf{e}_1^{(n)}) \Vert_2 + \Vert D(\mathbf{e}_1^{(n)}) \Vert_2 \cdot \Vert D(\mathbf{e}_2^{(n)}) \Vert_2\right). \label{eq:spectral error initial bound}
\end{align}
According to Lemma~\ref{lem:convergence of scaling factors for estimated variances} we have
\begin{align}
    &\Vert D(\mathbf{e}_1^{(n)}) \Vert_2 = \max_j |a_n^{-1} \hat{v}_j^{(n)} - v_j^{(n)} | \leq \widetilde{C} \sqrt{\frac{\log n}{n}} \max_j v_j^{(n)} \label{eq:e_1 bound}\\
    &\Vert D(\mathbf{e}_2^{(n)}) \Vert_2 = \max_i |a_n \hat{u}_i^{(n)} - u_i^{(n)} | \leq \widetilde{C} \sqrt{\frac{\log n}{n}} \max_i u_i^{(n)} \label{eq:e_2 bound},
\end{align}
with probability that tends to $1$ as $n\rightarrow\infty$.
Therefore, substituting~\eqref{eq:e_1 bound} and~\eqref{eq:e_2 bound} into~\eqref{eq:spectral error initial bound} while utilizing~\eqref{eq: u_i v_j squared upper bound} gives
\begin{equation}
    \frac{1}{\sqrt{n}}\Vert \widetilde{\mathcal{E}}^{(n)} - \hat{ \mathcal{E}}^{(n)} \Vert_2  \leq \frac{\Vert \mathcal{E}^{(n)} \Vert_2}{\sqrt{n}} \left(  \frac{2 \widetilde{C} C^2 \sqrt{\log n}}{\sqrt{n \max_{i,j} X_{i,j}^{(n)}}} + \frac{\widetilde{C}^2 C^2 \log n}{ n \sqrt{\max_{i,j} X_{i,j}^{(n)}}}\right), \label{eq:eta_tilde - eta_hat norm bound}
\end{equation}
with probability that tends to $1$ as $n\rightarrow\infty$.
Since $\{\mathcal{E}_{i,j}^{(n)}\}_{i\in[m_n],\;j\in[n]}$ are independent, $\mathbb{E} [\mathcal{E}_{i,j}^{(n)}] = 0$, $\mathbb{E} (\mathcal{E}_{i,j}^{(n)})^2 = X_{i,j}^{(n)}$, and $\mathbb{E} (\mathcal{E}_{i,j}^{(n)})^4 = X_{i,j}^{(n)} + 3(X_{i,j}^{(n)})^2$ (see~\eqref{eq:Poisson recursive moment formula}),~\cite{latala2005some} asserts that
\begin{align}
    \mathbb{E}\Vert \mathcal{E}^{(n)} \Vert_2 
    &\leq \widetilde{C}_2 \left[{ \sqrt{n \max_{i,j} X_{i,j}^{(n)}} + \sqrt{m_n \max_{i,j} X_{i,j}^{(n)}} + \left( n m_n \max_{i,j} \left\{ X_{i,j}^{(n)} + 3 (X_{i,j}^{(n)})^2 \right\} \right)^{1/4} }\right] \nonumber \\
    &\leq \widetilde{C}_3 \sqrt{ n \max_{i,j} X_{i,j}^{(n)}},
\end{align}
for all sufficiently large $n$ and some universal constants $\widetilde{C}_2,\widetilde{C}_3$, where we used the fact that $X_{i,j}^{(n)} = (X_{i,j}^{(n)})^2/X_{i,j}^{(n)} \leq C (X_{i,j}^{(n)})^{2} / (c (\log n)^{1+\epsilon})$ and that $m_n/n \rightarrow \gamma$ as $n\rightarrow \infty$. Consequently, applying Markov's inequality gives
\begin{equation}
    \Vert \mathcal{E}^{(n)} \Vert_2 \leq \log n \sqrt{ n \max_{i,j} X_{i,j}^{(n)}}, \label{eq:eta norm bound}
\end{equation}
with probability that tends to $1$ as $n\rightarrow\infty$. Combining~\eqref{eq:eta norm bound} with~\eqref{eq:eta_tilde - eta_hat norm bound}, we get
\begin{equation}
    \frac{1}{\sqrt{n}}\Vert \widetilde{\mathcal{E}}^{(n)} - \hat{ \mathcal{E}}^{(n)}  \Vert_2\leq \widetilde{C}_3 \frac{(\log n)^{3/2}}{\sqrt{n} }, \label{eq:E_tilde - E_hat bound}
\end{equation}
for some constant $\widetilde{C}_3$, with probability that tends to $1$ as $n\rightarrow\infty$. Hence, it follows that (see Theorem 3.3.16 in~\cite{horn1994topics})
\begin{equation}
    \max_{i} \left\vert {s}_i\{\frac{1}{\sqrt{n}} \widetilde{\mathcal{E}}^{(n)} \} - s_i\{\frac{1}{\sqrt{n}} \hat{\mathcal{E}}^{(n)} \} \right\vert \leq \widetilde{C}_3 \frac{(\log n)^{3/2}}{\sqrt{n} },
\end{equation}
with probability that tends to $1$ as $n\rightarrow\infty$, where $s_i\{A\}$ is the $i$'th largest singular value of $A$. We then have that
\begin{align}
    &\max_{i} \left\vert {\lambda}_i\{\frac{1}{{n}} \widetilde{\mathcal{E}}^{(n)} (\widetilde{\mathcal{E}}^{(n)})^T \} - \lambda_i\{\frac{1}{{n}} \hat{\mathcal{E}}^{(n)} (\hat{\mathcal{E}}^{(n)})^T \} \right\vert 
    = \max_{i} \left\vert {s}^2_i\{\frac{1}{\sqrt{n}} \widetilde{\mathcal{E}}^{(n)} \} - s_i^2\{\frac{1}{\sqrt{n}} \hat{\mathcal{E}}^{(n)} \} \right\vert \nonumber \\
    &= \max_{i} \left\{ \left({s}_i\{\frac{1}{\sqrt{n}} \widetilde{\mathcal{E}}^{(n)}  \} + s_i\{\frac{1}{\sqrt{n}} \hat{\mathcal{E}}^{(n)} \}\right)\cdot \left\vert {s}_i\{\frac{1}{\sqrt{n}} \widetilde{\mathcal{E}}^{(n)}  \} - s_i\{\frac{1}{\sqrt{N}} \hat{\mathcal{E}}^{(n)} \} \right\vert \right\} \nonumber \\ 
    &\leq \widetilde{C}_3 \frac{(\log n)^{3/2}}{\sqrt{n} }  \max_{i} \left\{{s}_i\{\frac{1}{\sqrt{n}} \widetilde{\mathcal{E}}^{(n)}  \} + s_i\{\frac{1}{\sqrt{n}} \hat{\mathcal{E}}^{(n)} \}\right\} = \widetilde{C}_3 \frac{(\log n)^{3/2}}{{n} }  \left( \Vert \widetilde{\mathcal{E}}^{(n)} \Vert_2 + \Vert \hat{\mathcal{E}}^{(n)} \Vert_2 \right),
\end{align}
with probability that tends to $1$ as $n\rightarrow\infty$.
Using~\eqref{eq:eta norm bound} and~\eqref{eq: u_i v_j squared upper bound} we have
\begin{align}
    \Vert \widetilde{\mathcal{E}}^{(n)} \Vert_2 
    &= \Vert D(\mathbf{u}^{(n)}){\mathcal{E}^{(n)}} D(\mathbf{v}^{(n)}) \Vert_2 
    \leq \Vert D(\mathbf{u}^{(n)}) \Vert_2 \cdot \Vert {\mathcal{E}^{(n)}} \Vert_2 \cdot \Vert D(\mathbf{v}^{(n)}) \Vert_2 \nonumber \\
    &\leq \log n \sqrt{ n \max_{i,j} X_{i,j}^{(n)}} \max_i u_i^{(n)} \max_j v_j^{(n)} 
    \leq C \sqrt{n} \log n,
\end{align}
with probability that tends to $1$ as $n\rightarrow\infty$. Employing the above together with~\eqref{eq:E_tilde - E_hat bound} we obtain 
\begin{align}
    \Vert \hat{\mathcal{E}} \Vert_2 
    &\leq \Vert \widetilde{\mathcal{E}}^{(n)} \Vert_2 + \Vert \hat{\mathcal{E}}^{(n)} - \widetilde{\mathcal{E}}^{(n)} \Vert_2 
    \leq C \sqrt{n} \log n + \widetilde{C}_3 (\log n)^{3/2},
\end{align}
with probability that tends to $1$ as $n\rightarrow\infty$.
Overall, it follows that
\begin{equation}
    \max_{i} \left\vert \lambda_i\{\frac{1}{{n}} \widetilde{\mathcal{E}}^{(n)} (\widetilde{\mathcal{E}}^{(n)})^T \} - \lambda_i\{\frac{1}{{n}} \hat{\mathcal{E}}^{(n)} (\hat{\mathcal{E}}^{(n)})^T \} \right\vert \overset{p}{\underset{n\rightarrow \infty}{\longrightarrow}} 0,
\end{equation}
where $\overset{p}{{\longrightarrow}}$ refers to convergence in probability.

\section{Proof of Proposition~\ref{prop:simple condtion for existence and uniquness for A}} \label{appendix:proof of simple condition for existence and uniqueness for A}
\subsection{Proof that $A$ is not completely decomposable}
We begin by showing that $A$ is not completely decomposable under the conditions in Proposition~\ref{prop:existcne and uniquness for A}.
To that end, assume in negation that $A$ is completely decomposable. Then, since $A$ does not have any zero rows and columns, there must exist proper nonempty subsets $\mathcal{I}_1\subset [m]$ and $\mathcal{I}_2\subset [n]$ such that $[A]_{i\in \mathcal{I}_1, \; j \in \mathcal{I}_2}$ and $[A]_{i\in \mathcal{I}_1^c, \; j \in \mathcal{I}_2^c}$ are both zero matrices. We can assume without loss of generality that $|\mathcal{I}_2| \geq n/2$, as otherwise we simply replace $(\mathcal{I}_1,\mathcal{I}_2)$ with $(\mathcal{I}_1^c,\mathcal{I}_2^c)$. Let us define $k = n - |\mathcal{I}_2|$, and since $|\mathcal{I}_2| \geq n/2$, we have that $n \geq 2k$. Now, if $|\mathcal{I}_1| \geq m/2$ we immediately get a contradiction to condition~\ref{Cond: cond 2 in prop} in Proposition~\ref{prop:existcne and uniquness for A}, since $[A]_{i\in \mathcal{I}_1, \; j \in \mathcal{I}_2}$ has $|\mathcal{I}_1|$ rows with $|\mathcal{I}_2| = n-k$ zeros each, where
\begin{equation}
    |\mathcal{I}_1| \geq \frac{m}{2} = \frac{m k}{n} \frac{n}{2k} \geq \frac{m k}{n},
\end{equation}
and we used the fact that $n \geq 2k$.
We next consider the alternative possibility that $|\mathcal{I}_1| < m/2$. In this case, we must have that 
\begin{equation}
    |\mathcal{I}_1| < \frac{m k}{n}, \label{eq:I_1 cardinality bound}
\end{equation}
as otherwise we get a contradiction to condition~\ref{Cond: cond 2 in prop} in Proposition~\ref{prop:existcne and uniquness for A} (using the fact that $|\mathcal{I}_1|$ is an integer).
In addition, the matrix $[A]_{i\in \mathcal{I}_1^c, \; j \in \mathcal{I}_2^c}$ has $|\mathcal{I}_2^c|=n - |\mathcal{I}_2|$ columns that have $|\mathcal{I}_1^c| = m - | \mathcal{I}_1|$ zeros each. We define $\ell = | \mathcal{I}_1|$, which satisfies $\ell < m/2$. Therefore, we also must have that
\begin{equation}
    n - |\mathcal{I}_2| < \frac{n \ell}{m} = \frac{n | \mathcal{I}_1|}{m} < k, \label{eq:I_2 cardinality bound}
\end{equation}
as otherwise we get a contradiction to condition~\ref{Cond: cond 3 in prop} in Proposition~\ref{prop:existcne and uniquness for A}, where we used~\eqref{eq:I_1 cardinality bound} and the fact that $n - |\mathcal{I}_2|$ is an integer. Overall, recall that $k = n - |\mathcal{I}_2|$, which together with~\eqref{eq:I_2 cardinality bound} gives $k < k$, a contradiction to our initial assumption that $A$ is completely decomposable. 

\subsection{Proof that Condition~\ref{cond:existence of x and y for nonegative A} holds}
We next prove that that Condition~\ref{cond:existence of x and y for nonegative A} holds. 
Let us assume in negation that Condition~\ref{cond:existence of x and y for nonegative A} does not hold. Then, since $A$ does not have any zero rows and columns, we must have that
\begin{equation}
    |\mathcal{I}_1| n + | \mathcal{I}_2 | m \geq m n,
\end{equation} 
for some proper nonempty subsets $\mathcal{I}_1\subset [m]$ and $\mathcal{I}_2\subset [n]$ for which $[A]_{i\in \mathcal{I}_1, \; j \in \mathcal{I}_2}$ is a zero matrix. It follows that either $|\mathcal{I}_2| \geq n/2$ or $|\mathcal{I}_1|\geq m/2$. Suppose that the former holds, that is $|\mathcal{I}_2|\geq n/2$, and define $k = n - |\mathcal{I}_2|$ which satisfies $0 < k \leq n/2$. Observe that the matrix $[A]_{i\in \mathcal{I}_1, \; j \in \mathcal{I}_2}$ has $|\mathcal{I}_1|$ rows with $|\mathcal{I}_2| = n - k$ zeros each. Using the fact that $|\mathcal{I}_1| n + | \mathcal{I}_2 | m \geq m n$ we have
\begin{equation}
    |\mathcal{I}_1| \geq \frac{m (n - | \mathcal{I}_2 |)}{n} = \frac{m k}{n},
\end{equation}
which is a contradiction to condition~\ref{Cond: cond 2 in prop} in Proposition~\ref{prop:existcne and uniquness for A} (using the fact that $|\mathcal{I}_1|$ is an integer). We next assume that the other possibility holds, namely that $|\mathcal{I}_1|\geq m/2$, and define $\ell= m - |\mathcal{I}_1|$ which satisfies $0 < \ell \leq m/2$. Observe that the matrix $[A]_{i\in \mathcal{I}_1, \; j \in \mathcal{I}_2}$ has $|\mathcal{I}_2|$ columns with $|\mathcal{I}_1| = m - \ell$ zeros each. Using the fact that $|\mathcal{I}_1| n + | \mathcal{I}_2 | m \geq m n$ we have
\begin{equation}
    |\mathcal{I}_2| \geq \frac{n (m - | \mathcal{I}_1 |)}{m} = \frac{n \ell}{m},
\end{equation}
which is a contradiction to condition~\ref{Cond: cond 3 in prop} in Proposition~\ref{prop:existcne and uniquness for A} (using the fact that $|\mathcal{I}_2|$ is an integer). Therefore, we have a contradiction to our assumption that Condition~\ref{cond:existence of x and y for nonegative A} does not hold, thereby concluding the proof.

\section{Proof of Proposition~\ref{prop:existence of variance estimator}} \label{appendix:proof of prop on existence of variance estimator}
The fact that~\eqref{eq:quadratic noise variance estimator} is an unbiased estimator for $\operatorname{Var}[Y_{i,j}]$ follows from direct calculation, as
\begin{align}
    \mathbb{E}[\widehat{\operatorname{Var}}[Y_{i,j}]] &= \frac{a + b X_{i,j} + c \mathbb{E}[Y_{i,j}^2]}{1 + c} \nonumber \\
    &= \frac{a + b X_{i,j} + c \left( a + b X_{i,j} + (c + 1)X_{i,j}^2 \right)}{1 + c} =  \operatorname{Var}[Y_{i,j}], \label{eq:quadratic variance derivation}
\end{align}
where we used $\mathbb{E}[Y_{i,j}^2] = \operatorname{Var}[Y_{i,j}] + X_{i,j}^2$ together with the QVF property~\eqref{eq: quadratic variance}.
Next, if $Y_{i,j}\sim \operatorname{Bernoulli}(p_{i,j})$ then $\operatorname{Var}[Y_{i,j}] = p_{i,j}-p_{i,j}^2$, hence $c=-1$. Among the six fundamental NEF-QVFs, the binomial is the only family with $c<0$, and according to the formulas in~\cite{morris1982natural}, 
while the value of $c$ is invariant to linear transformations, it must change under a (non-null) convolution or division. Hence, the case of $c=-1$ corresponds uniquely to a linear transformation of a Bernoulli. Lastly, for any $f:\{0,1\} \rightarrow\{c_0,c_1\}$ we have $\mathbb{E}[f(Y_{i,j})] = c_0 p_{i,j} + c_1 (1-p_{i,j})$, which is a first degree polynomial in $p_{i,j}$ that cannot possibly match $\operatorname{Var}[Y_{i,j}]$ for all values of $p_{i,j}\in  [0,1]$.

\end{appendices}

\bibliographystyle{plain}
\bibliography{mybib}

\end{document}